\numberwithin{equation}{section}
\newcommand{\be}{\begin{equation}}
\newcommand{\ee}{\end{equation}}
\newcommand{\benn}{\begin{equation*}}
\newcommand{\eenn}{\end{equation*}}
\newcommand{\bea}{\begin{eqnarray}}
\newcommand{\eea}{\end{eqnarray}}
\newcommand{\beann}{\begin{eqnarray*}}
\newcommand{\eeann}{\end{eqnarray*}}
\newtheorem{theorem}{Theorem}[section]
\newtheorem{lemma}[theorem]{Lemma}
\newtheorem{definition}[theorem]{Definition}
\newtheorem{refproof}[theorem]{Proof}
\newtheorem{remark}[theorem]{Remark}
\newtheorem{example}[theorem]{Example}
\newtheorem{assumptions}[theorem]{Assumptions}
\newtheorem{notation}[theorem]{Notation}
\newcommand{\qed}{\hfill $\Box$\smallskip}
\def\R{\mathbb{R}}
\def\C{\mathbb{C}}
\def\N{\mathbb{N}}
\def\Z{\mathbb{Z}}
\def\P{\mathbb{P}}
\def\U{\mathbb{U}}
\def\W{\mathbb{W}}
\def\bfW{\textbf{W}}
\def\balpha{\boldsymbol{\alpha}}
\def\cB{\mathcal{B}}
\def\cD{\mathcal{D}}
\def\cF{\mathcal{F}}
\def\cL{\mathcal{L}}
\def\cM{\mathcal{M}}
\def\cO{\mathcal{O}}
\def\cV{\mathcal{V}}
\def\cX{\mathcal{X}}
\def\txtd{{\textnormal{d}}}
\def\txte{{\textnormal{e}}}
\def\txti{{\textnormal{i}}}
\def\txtD{{\textnormal{D}}}
\def\Id{{\textnormal{Id}}}
\def\I{\infty}
\def\eps{\varepsilon}
\newcommand{\norm}[1]{\left\lVert #1 \right\rVert}
\newcommand{\Dnorm}[1]{\left\lVert #1 \right\rVert_{\cD^{2\gamma}_W}}
\newcommand{\Bnorm}[1]{\left\lVert #1 \right\rVert_{BC^\eta\left(\cD^{2\gamma}_W\right)}}
\newcommand{\Xnorm}[1]{\left\lVert #1 \right\rVert_{\cX}}
\newcommand*\samethanks[1][\value{footnote}]{\footnotemark[#1]}
\date{}
\title{Taylor-like approximations of center manifolds for rough differential equations}
\author{Alexandra Blessing Neam\c tu\thanks{University of Konstanz, Department of Mathematics and Statistics,  Universit\"atsstra\ss{}e~10 78464 Konstanz, Germany. E-Mail: alexandra.blessing@uni-konstanz.de, dennis.rudik@uni-konstanz.de}
~~and~~Dennis Rudik\samethanks } 
\begin{document}
	\maketitle	 
\begin{abstract}
	
The dynamics of rough differential equations (RDEs) has recently received a lot of interest.~For example, the existence of local random center manifolds for RDEs has been established. In this work, we present an approximation for local random center manifolds for RDEs driven by geometric rough paths.~To this aim, we combine tools from rough path and deterministic center manifold theory to derive Taylor-like approximations of local random center manifolds.~The coefficients of this approximation are stationary solutions of RDEs driven by the same geometric rough path as the original equation.~We illustrate our approach for stochastic differential equations (SDEs) with linear and nonlinear multiplicative noise.

 

\end{abstract}
{\bf Keywords}: rough differential equations, center manifolds, Taylor expansions. \\
	{\bf Mathematics Subject Classification (2020)}: 60L10, 
	37H10, 37L10. 

    \section{Introduction}


    Center manifolds capture fundamental properties of dynamical systems being a key tool for the analysis of stability of equilibrium points or providing a reduction of the dimension of the essential dynamics~\cite{Carr,Holmes,KnoblauchW}.~However, little is known about the structure of center manifolds for stochastic systems.~Here we contribute to this aspect and establish an approximation result for random center manifolds using tools from rough path theory.~We consider RDEs of the form 
    \begin{align}\label{rde:intro}
        \txtd Y_t = [A Y_t + F(Y_t)]~\txtd t + G(Y_t)~\txtd \mathbf{W}_t,
    \end{align}
    where $\mathbf{W}=(W,\mathbb{W})$ is a $\gamma$-H\"older geometric rough path, for $\gamma\in(1/3,1/2]$ and the coefficients satisfy the suitable assumptions stated in Subsection~\ref{setting}.~Rough path theory~\cite{FritzHairer,Gubinelli,GubinelliTinde,Lyons} provides a pathwise interpretation of~\eqref{rde:intro} making the random dynamical system framework~\cite{Arnold} accessible.~This allows one to investigate dynamical phenomena for~\eqref{rde:intro} such as stability, bifurcations, invariant manifolds, synchronization or chaos.~Rough path theory was used to construct random dynamical systems for RDEs~\cite{BailleulRiedelScheutzow}, rough partial differential equations (RPDEs)~\cite{HN1,HN2}, McKean-Vlasov SDEs~\cite{Gess} and for Navier-Stokes equations with transport type noise~\cite{Martina2,Martina1}, whereas the existence of random invariant manifolds for R(P)DEs was established in~\cite{NA,GhaniVarzanehRiedel, Ibounds, KN23,KN23P}.~In~\cite{KN23,KN23P}, rough path tools were explored in order to set up a Lyapunov-Perron method to derive the existence of center manifolds for R(P)DEs, leaving the computation of such sets open.~Therefore, this work aims to make a first step in this direction, proving an abstract approximation result of random center manifolds using Taylor expansions, inspired by the deterministic setting stated in~\cite{Carr}. \\
    
    Such a situation remained unexplored in the stochastic case, where to the best of our knowledge, approximation results have been established for SDEs with linear multiplicative noise, using a Doss-Sussmann type transformation~\cite{ChekrounLiuWang, ChekrounLiuWang example, Chekroun2}.~These results can be extended to multiplicative noise satisfying a suitable structure as stated in~\cite[Theorem 2.1]{ChekrounLiuWang example}.~For  random dynamical systems generated by differential equations with random coefficients or linear multiplicative noise, an application of the techniques of~\cite{Carr} was considered in~\cite{Boxler1} entailing a polynomial approximation of the corresponding manifolds up to second order.~Our approach based on rough path theory allows one to incorporate nonlinear multiplicative noise interactions as well as to treat non-Markovian fluctuations without relying on any kind of flow transformations.~Another technical challenge that one has to overcome in the stochastic case is to find an appropriate parametrization of the random manifold,  as illustrated in~\cite{ArnoldImkeller}. After a suitable linearization one obtains a system of affine SDEs and computes its stationary solutions. The computation involves a random coordinate transformation $h(\omega)$ which conjugates two random dynamical systems $\varphi$ and $\psi$ meaning that
    \[ \varphi(t,\omega) h (\omega)= h(\theta_t\omega) \psi(t,\omega). \]
    Here $\theta_t\omega$ denotes the usual Wiener shift, see Section~\ref{s:rd} for more details. However, it is difficult to find such a map, even if recent progress on parametrization for invariant manifolds in the presence of noise has been done in~\cite{ChekrounL,Chekroun2}.~Moreover, the procedure developed by~\cite{ArnoldImkeller} gives rise to {anticipative} SDEs which contain coefficients depending on the future of the noise. Therefore pathwise techniques based on rough path theory are convenient \cite{CFV,HN24}.~Notably, the reduced flow derived in~\cite{ChekrounLiuWang, ChekrounLiuWang example, Chekroun2} contains non-Markovian terms incorporating the past of the noise.~This non-Markovianity is reflected by stationary solutions of the SDEs arising in the approximation procedure.~This phenomenon occurs even if the driving noise is given by a Brownian motion which is Markov.~Another advantage of rough path theory is that it allows one to consider non-Markovian noise as well.   \\
    
  The existence of invariant manifolds for rough (partial) differential equations was also established in   \cite{GhRiedel, GhaniVarzanehRiedel} using complementary techniques to~\cite{KN23,KN23P} based on the multiplicative ergodic theorem.~First, the corresponding RDE or RPDE is linearized along a stationary solution.~Thereafter based on sign information of the Lyapunov exponents, an application of the multiplicative ergodic theorem yields the existence of stable/unstable or center manifolds.~The works~\cite{KN23,KN23P} use the Lyapunov-Perron method entailing additional information about the structure of the manifold, i.e.~this can be represented by the graph of a locally Lipschitz function $h^c$, which can be determined from the fixed point of the Lyapunov-Perron map.~This feature is common in the context of invariant manifolds, see~\cite{ChekrounLiuWang, ChekrounLiuWang example, DuanLuSchmalfuss, GarridoLuSchmalfuss, LuSchmalfuss} and the references specified therein.~The starting point of this work is to approximate the function $h^c$ up to order $q\geq 2$ by Taylor polynomials of the form  $\sum\limits_{i=1}^q \alpha_i x^i$, 
where the coefficients $\alpha_i$ solve rough differential equations, see Subsection~\ref{sec:generalidea}.~We show that this ansatz is consistent with the results obtained in~\cite{ChekrounLiuWang, ChekrounLiuWang example} for linear multiplicative noise.~Our setting extends the results in~\cite{ChekrounLiuWang} by incorporating nonlinear multiplicative noise, which can be non-Markovian and improving the approximation order.~One of the main technical challenges is to set up a suitable function space incorporating the fixed point argument for the Lyapunov-Perron map as well as the polynomial dependence of the approximation in order to rely on the rough path machinery.\\

    Since the methods presented in this work are independent of the dimension, we expect that they carry over to rough PDEs, for which the existence of center manifolds was established in~\cite{KN23P}.~Approximation results of center manifolds for stochastic partial differential equations with linear multiplicative noise based on flow transformations are available for e.g.~Burgers equation and Rayleigh-B\'enard convection~\cite{ChekrounLiuWang,ChekrounLiuWang example}.~Another exciting direction is given by rough stochastic differential equations investigated in~\cite{Fritz} by means of the stochastic sewing lemma~\cite{Le}.~Combining perfection arguments for random dynamical systems together with the techniques in~\cite{KN23}, we believe that we can investigate center manifolds in this situation as well.~Their approximation based on the tools developed in this work would be highly challenging.~Naturally, we plan to use the center manifold reduction for the stability analysis of random equilibrium points and advance based on this reduction method the stochastic bifurcation theory.~Finally, we intend to develop tools for the computation of the approximation derived in Section~\ref{sec:main} possibly using signature methods~\cite{Lyons:S} and compare this for instance with forward-backward schemes developed in~\cite[Chapter 4]{Chekroun2}.~As already mentioned, these lead to non-Markovian approximations of the manifolds even if the driving noise is Markovian.~In our setting we would face two challenges: the non-Markovian nature of the approximation together with the non-Markovianity of the noise.\\

    Another natural question is given by the approximation of stable and unstable manifolds for RDEs.~In the deterministic setting, this aspect was investigated by~\cite{Kr}.~Moreover, in~\cite{PoetzscheRasmussen}, the approximation of such manifolds is considered for deterministic nonautonomous systems, where the nonautonomous forcing is smooth in time.~In this case, the coefficients of the Taylor approximation solve nonautonomous ODEs, which are derived from the Lyapunov-Perron map.~Such techniques could be used for RDEs, approximating the geometric rough path by a sequence of smooth paths, applying the the results of \cite{PoetzscheRasmussen} and investigating if the resulting sequence of manifolds converge to the manifold of the original problem, whose existence can be inferred by similar arguments to~\cite{KN23} and~\cite{GhRiedel,GhaniVarzanehRiedel}.\\
    
    We finally mention that approximations of center manifolds in the small noise regime using tools from multiscale analysis and amplitude equations have been obtained in~\cite{BloemkerHairer}. Moreover, for slow fast systems normal form coordinate transformations have been investigated in~\cite{Roberts}.~Stochastic center manifold theory for small noise is in particular relevant for applications in physics, as considered in~\cite{SchoenerHaken}.~In this case approximations of local center manifolds have been derived by~\cite{ArnoldImkeller,SchoenerHaken} using expansions of the nonlinear terms together with expansions of the small diffusion coefficient $\varepsilon$ controlling the noise intensity.


  \subsection*{Structure of the paper}
  This manuscript is structured as follows. In Section~\ref{sec:p} we collect preliminaries in rough paths and random dynamical systems with a particular emphasis on the existence of local random center manifolds established in~\cite{KN23}. Section~\ref{sec:main}
contains the main result of this work stated in Theorem~\ref{thm:approx:m} and is divided into several subsections explaining our setting and the steps of the proof. First, we specify that we are working in a center-stable situation, meaning that the linear part of the system under consideration has zero and strictly negative eigenvalues. Since we are only interested in establishing an approximation of the manifold in a neighborhood of the origin, similar to~\cite{KN23}, we use a cut-off argument in Subsection~\ref{sec:cutoff} to truncate the coefficients of the RDE.~In Subsection~\ref{sec:approx} we formulate the general ansatz which relies on a Taylor-like approximation of the manifold.~We provide in Subsection~\ref{sec:rpest} suitable estimates in the controlled rough path norm of the terms arising in the ansatz.~Section~\ref{sec:generalidea} contains the general idea of the approximation arguments together with the main steps of the proofs.~This technique is inspired by the deterministic setting investigated in~\cite{Carr}, which is also explained in detail in Appendix~\ref{sec:carr} for the convenience of the reader.~Similar to~\cite{KN23}, we need to discretize the terms appearing in the approximation ansatz, due to the fact that these contain rough integrals which are defined on an unbounded time interval.~This technical step is described in Subsection~\ref{sec:discretization}.~Finally, the necessary fixed point argument for the approximation result is contained in Subsection~\ref{sec:fp}.~The main challenge is to find a suitable function space in order to perform the fixed point argument.~The formal definition of this space is stated in Subsection~\ref{sec:generalidea} and is thereafter made rigorous in Subsection~\ref{sec:discretization}.   \\

Section~\ref{sec:app} is devoted to applications of the results established in Section~\ref{sec:main}.~First of all, for 
 linear multiplicative noise, we compare our ansatz with the results established in \cite{ChekrounLiuWang,ChekrounLiuWang example}.~As already mentioned, we can treat nonlinear, non-Markovian noise and
 illustrate this fact in Example~\ref{Chekroun nonlinear example}, which exclusively relies on rough path theory.\\

For the convenience of the reader we sketch the main ideas of the approximation of deterministic center manifolds in Appendix~\ref{sec:carr}. Auxiliary results which are necessary for the proofs in Section~\ref{sec:main} are collected in Appendix~\ref{a} and~\ref{Polynomial calculation}. As already mentioned, we plan to develop in a future work computational tools for the coefficients of the center manifold approximation, in particular of the terms derived in Appendix~\ref{Polynomial calculation}.~For the sake of completeness, we further provide in Appendix~\ref{ChekrounProofWithRP} an approximation result of the center manifold using the leading order terms of the coefficients of the RDE~\eqref{rde:intro}.~This is inspired by the setting of~\cite{ChekrounLiuWang, ChekrounLiuWang example, Chekroun2} treating SDEs with linear multiplicative noise and essentially simplifies the arguments in Section~\ref{sec:main}.~More precisely, given an approximation of $F$ and $G$ up to leading order, one can directly perform suitable estimates of the corresponding Lyapunov-Perron map in the rough path norm without relying on a fixed point argument as in Subsection~\ref{sec:fp}.~Since combining this technique with rough path tools is interesting in its own right, we provided further details on this topic in Appendix~\ref{ChekrounProofWithRP}.~The proof relies on additional estimates for the leading order term of the nonlinearities modified by a cut-off function in the controlled rough path norm. These are provided in Appendix~\ref{order:cutoff}. 
\\


 \subsection*{Acknowledgements}
 The authors have been supported by the  Deutsche Forschungsgemeinschaft (DFG, German Research Foundation) - Project ID 543163250.~A. Blessing acknowledges funding by the DFG CRC/TRR 388 "Rough Analysis, Stochastic Dynamics and Related Fields" - Project ID 516748464 and DFG CRC 1432 " Fluctuations and Nonlinearities in Classical and Quantum Matter beyond Equilibrium" - Project ID 425217212.~The authors thank Christian Kuehn and Giacomo Landi for useful discussions. 
 
\section{Preliminaries}\label{sec:p}

\subsection{Rough paths}\label{rp}
Let $\cX,\cV$ be two finite-dimensional vector spaces (e.g. $\cX=\R^n,~\cV=\R^d$ for $n,d\geq 1$). For convenience, we work throughout this subsection on the time interval $[0,1]$. We first introduce some basic concepts from the theory of rough paths.

\begin{definition}
   We fix $\gamma\in (1/3,1/2]$. A pair 
$\textbf{W}:=(W,\mathbb{W})$ is called  a $\gamma$-H\"older rough-path, if
$
W\in C^{\gamma}([0,1];\cV), 
\mathbb{W}\in C^{2\gamma}([0,1]^{2};\cV \otimes \cV)$
and the connection between $W$ and $\mathbb{W}$ is given by Chen's relation. 
This means that
\begin{align}
\label{chen}
\mathbb{W}_{s,t} -\mathbb{W}_{s,u} -\mathbb{W}_{u,t} = {W}_{s,u} \otimes W_{u,t}.
\end{align}
where $W_{s,t}:=W_{t} -W_{s}$.
\end{definition}
\begin{definition}
    Given two $\gamma$-H\"older rough paths $\mathbf{W}=(W,\mathbb{W})$ and $\mathbf{\tilde{W}}=(\tilde{W},\tilde{\mathbb{W}})$ we define the $\gamma$-H\"older rough path distance as
    \begin{align}\label{rpmetric}
    \rho_\gamma(\mathbf{W},\tilde{\mathbf{W}})
    = \sup\limits_{0\leq s< t \leq 1} \frac{|W_{s,t} - \tilde{W}_{s,t}|}{|t-s|^\gamma} + \sup\limits_{0\leq s< t \leq 1} \frac{|\mathbb{W}_{s,t} - \tilde{\mathbb{W}}_{s,t}|}{|t-s|^{2\gamma}}. 
    \end{align}
\end{definition}

\begin{definition}\label{def:geometricrp}
A $\gamma$-H\"older rough path $\mathbf{W}=(W,\mathbb{W})$ is geometric if and only if there exists a sequence of smooth paths $(W^n)_{n \geq 1}$ such that $\rho_\gamma(\mathbf{W},\mathbf{W}^n) \to 0 $ where $\mathbf{W}^n=(W^n,\mathbb{W}^n)$ and $\mathbb{W}^n_{s,t}=\int_s^t W^n_{s,r}~\otimes \txtd W^n_r $  for  all $0\leq s \leq t\leq 1$. In particular, we have that 
\[ \emph{ Sym}(\mathbb{W}_{s,t})=\frac{1}{2} W_{s,t}\otimes W_{s,t}.  \]
 \end{definition}

\begin{remark}
    For our aims we only consider geometric rough paths, since they satisfy the chain rule. This turns out to be essential for the arguments in Section~\ref{sec:main}. In particular, for the Brownian rough path $\mathbf{B}=(B,\mathbb{B})$, this means that $\mathbb{B}$ is the Stratonovich lift of the Brownian motion $B$. 
\end{remark}

 We now consider the rough differential equation (RDE) on a time interval $[0,1]$ given by 
\begin{equation}
\label{sde1}
\txtd U_t = [A U_t + F(U_t)] ~\txtd t + G(U_t) ~\txtd \textbf{W}_t,\qquad U_0=\xi\in\cX,
\end{equation}
where $A\in\cL(\cX)$ is a matrix and $\textbf{W}=(W,\mathbb{W})$ is a $\gamma$-H\"older rough path.~The assumptions on the coefficients $F$ and $G$ will be stated below.~The mild solution of~\eqref{sde1} is given by
\begin{equation}
\label{integraleq}
U_{t}= S(t) \xi + \int\limits_{0}^{t} S(t-r)F(U_{r})~\txtd r 
+ \int\limits_{0}^{t} S(t-r) G(U_{r}) ~\txtd \textbf{W}_{r},
\end{equation}
where $S(t):=\txte^{tA}$, and the last integral is given by Gubinelli's controlled 
rough integral defined below in Theorem \ref{gubinelliintegral}. \\

Throughout this section $\Xnorm{\cdot}$ denotes the usual Euclidean norm, 
and $\Xnorm{\cdot}$ also denotes the induced Euclidean norm of linear operators. We write    
$\|\cdot\|_{\infty}$ for the supremum norm and $\|\cdot\|_{\gamma}$ for the $\gamma$-H\"older 
semi-norm on $[0,1]$. 
\begin{definition}
\label{def:crp} 
A path $Y\in C^{\gamma}([0,1];\cX)$ is controlled by $W\in C^{\gamma}([0,1];\cV)$ 
if there exists $Y'\in C^{\gamma}([0,1];\cL(\cV,\cX))$ such that
\begin{align}
\label{def1}
Y_{t} = Y_{s} + Y'_{s} W_{s,t} + R^{Y}_{s,t},
\end{align}
where the remainder $R^{Y}$ has $2\gamma$-H\"older regularity. 
The space of controlled rough paths $(Y,Y')$ on the time interval $[0,1]$ is denoted by $\cD^{2\gamma}_{W}([0,1];\cX)=:\cD^{2\gamma}_{W}$. 
This space is endowed with the semi-norm 
\begin{align}\label{norm}
\|Y,Y'\|_{\cD^{2\gamma}_W}:= \|Y'\|_{\gamma} + \|R^{Y}\|_{2\gamma}.
\end{align}
\end{definition}
The norm on $\cD^{2\gamma}_W$ is given by $\Xnorm{Y_0} + \Xnorm{Y'_{0}} + \|Y'\|_{\gamma} + \|R^{Y}\|_{2\gamma} $ and will be denoted by $\|\cdot\|_{\cD^{2\gamma}_{W}([0,1];\cX)}$ (short $\|\cdot\|_{\cD^{2\gamma}_{W}}$). Obviously, if $Y_0=0$ and 
$Y'_0=0$, then~\eqref{norm} is a norm on $\cD^{2\gamma}_{W}$.\\

\begin{remark}
For convenience, we use throughout the manuscript the equivalent norm on $\cD^{2\gamma}_W$ given by  \begin{align}
    \|Y,Y'\|_{\cD^{2\gamma}_W}= \|Y\|_{\infty} + \|Y'\|_{\infty} + \|Y'\|_{\gamma} + \|R^{Y}\|_{2\gamma}. \label{Dnorm}
\end{align} 
\end{remark}
As a next step we explain the integral we use in \eqref{integraleq}
\begin{theorem}{\em{(\cite[Prop.~1]{Gubinelli})}}
\label{gubinelliintegral}
Let $(Y,Y')\in \cD^{2\gamma}_{W}([0,1];\cX)$ and $\bm{W}=(W,\mathbb{W})$ 
be an $\cV$-valued 
$\gamma$-H\"older rough path for some $\gamma\in(\frac{1}{3},\frac{1}{2}]$. Furthermore, 
$\mathcal{P}$ stands for a partition of $[0,1]$. Then, the integral of $Y$ against 
$\bm{W}$ is defined as
\begin{equation}\label{Gintegral}
	\int_{s}^{t} Y_{r} ~\txtd\textbf{W}_{r} 
	:= \lim\limits_{|\mathcal{P}|\to 0} \sum\limits_{[u,v]\in\mathcal{P}} 
	(Y_{u}W_{u,v} + Y'_{u}\mathbb{W}_{u,v} )
\end{equation}
and exists for every pair $s,t\in[0,1]$. Moreover, the estimate
\begin{equation}\label{estimate_g_integral}
	\left| \int_{s}^{t} Y_{r}~ \txtd\textbf{W}_{r} - Y_{s}W_{s,t} - 
	Y'_{s}\mathbb{W}_{s,t} \right| \leq C (\|W\|_{\gamma} \|R^{Y}\|_{2\gamma} 
	+ \|\mathbb{W} \|_{2\gamma} \|Y' \|_{\gamma} ) |t-s|^{3\gamma}
\end{equation} 
holds true for all $s,t\in[0,1]$.
\end{theorem}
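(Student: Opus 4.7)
The plan is to apply the sewing lemma to the local germ $\Xi_{u,v} := Y_u W_{u,v} + Y'_u \mathbb{W}_{u,v}$, which is the natural second-order Taylor approximation to the integral suggested by the controlled structure of $Y$. First I would verify that the partition sums $\sum_{[u,v]\in\mathcal{P}} \Xi_{u,v}$ are Cauchy as $|\mathcal{P}|\to 0$ by controlling the defect $\delta\Xi_{s,u,t}:=\Xi_{s,t}-\Xi_{s,u}-\Xi_{u,t}$ for $s<u<t$.

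The key algebraic step is to expand this defect using additivity of $W$ and Chen's relation~\eqref{chen}. Substituting $W_{s,t}=W_{s,u}+W_{u,t}$ and $\mathbb{W}_{s,t}-\mathbb{W}_{s,u}=\mathbb{W}_{u,t}+W_{s,u}\otimes W_{u,t}$ into $\Xi_{s,t}-\Xi_{s,u}-\Xi_{u,t}$ and using the Gubinelli expansion $Y_u - Y_s = Y'_s W_{s,u} + R^Y_{s,u}$ from Definition~\ref{def:crp}, one obtains after cancellation
\begin{equation*}
\delta\Xi_{s,u,t} = -R^Y_{s,u}\, W_{u,t} - (Y'_u - Y'_s)\, \mathbb{W}_{u,t}.
\end{equation*}
The H\"older bounds on $R^Y$, $Y'$, $W$ and $\mathbb{W}$ then yield
\begin{equation*}
|\delta\Xi_{s,u,t}| \le \bigl(\|W\|_{\gamma}\|R^Y\|_{2\gamma} + \|\mathbb{W}\|_{2\gamma}\|Y'\|_{\gamma}\bigr)|t-s|^{3\gamma}.
\end{equation*}
Since $\gamma>1/3$ gives $3\gamma>1$, this is exactly the summability condition required by the sewing lemma.

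Next I would invoke the sewing lemma (e.g.\ in the form of \cite[Lemma~4.2]{FritzHairer}) to deduce that there exists a unique additive map $(s,t)\mapsto\int_s^t Y_r\,\txtd\mathbf{W}_r$ satisfying $\bigl|\int_s^t Y_r\,\txtd\mathbf{W}_r - \Xi_{s,t}\bigr|\lesssim |t-s|^{3\gamma}$ with the multiplicative constant inherited from the defect bound; this simultaneously delivers the convergence of the Riemann-type sums in~\eqref{Gintegral} along any sequence of partitions with mesh tending to zero and the a priori estimate~\eqref{estimate_g_integral}.

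The only delicate point in this plan is the algebraic identity for $\delta\Xi_{s,u,t}$: one must carefully apply Chen's relation on the $\mathbb{W}$-part and match it against the $Y'_s W_{s,u}\otimes W_{u,t}$ term produced by the controlled path expansion, so that all terms involving $Y_s$ cancel and only the two manifestly small quantities $R^Y_{s,u}$ and $Y'_u-Y'_s$ survive. Once this cancellation is in place, the rest is a direct appeal to the sewing machinery and the estimate falls out automatically.
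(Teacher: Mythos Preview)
Your proposal is correct and follows the standard sewing-lemma argument. Note that the paper does not actually prove this statement: it is quoted verbatim as \cite[Prop.~1]{Gubinelli} without proof, so there is no ``paper's own proof'' to compare against. Your route via the defect computation and the sewing lemma is precisely the canonical argument (as in \cite[Chapter~4]{FritzHairer}), and the algebraic cancellation you describe for $\delta\Xi_{s,u,t}$ is correct.
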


For the sake of completeness and a better comprehension, we state some well-known results for sums and products of controlled rough paths, the deterministic and stochastic convolution together with some important estimates in the $\cD^{2\gamma}_W$-norm of the difference of two controlled rough paths.
\begin{lemma}\label{lem:addition and multiplication of RP}
    Let $(Y,Y'),(\tilde Y,\tilde Y')\in \cD^{2\gamma}_W([0,1],\cX)$. Then $(Y+\tilde Y,Y'+\tilde Y')\in \cD^{2\gamma}_W([0,1],\cX)$ and $(Y\tilde Y, Y'\tilde Y+ Y\tilde Y')\in\cD^{2\gamma}_W([0,1],\cX)$. Moreover, we have
    \begin{align*}
        \Dnorm{Y+\tilde Y,Y'+\tilde Y'} \le \Dnorm{Y,Y'}+\Dnorm{\tilde Y,\tilde Y'},
    \end{align*}
    and 
    \begin{align*}
        \Dnorm{Y\tilde Y, Y'\tilde Y+ Y\tilde Y'} \le C(1+\norm{W}_\gamma)^2 \Dnorm{Y,Y'}\Dnorm{\tilde Y,\tilde Y'}.
    \end{align*}
\end{lemma}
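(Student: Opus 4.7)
The additivity statement is immediate: writing the controlled expansions $Y_t = Y_s + Y'_s W_{s,t} + R^Y_{s,t}$ and $\tilde Y_t = \tilde Y_s + \tilde Y'_s W_{s,t} + R^{\tilde Y}_{s,t}$ and summing shows that the Gubinelli derivative of $Y+\tilde Y$ is $Y'+\tilde Y'$ with remainder $R^Y + R^{\tilde Y}$, which is $2\gamma$-Hölder, and the triangle inequality applied term-by-term to the equivalent norm~\eqref{Dnorm} gives the claimed additive bound.

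For the product, my plan is to compute $Y_t\tilde Y_t - Y_s\tilde Y_s$ directly. Using the telescoping identity $Y_t\tilde Y_t - Y_s\tilde Y_s = Y_s(\tilde Y_t-\tilde Y_s) + (Y_t - Y_s)\tilde Y_s + (Y_t-Y_s)(\tilde Y_t-\tilde Y_s)$ and substituting the controlled expansions, I would isolate the linear part in $W_{s,t}$ to identify
\begin{equation*}
(Y\tilde Y)'_s = Y'_s \tilde Y_s + Y_s \tilde Y'_s,
\end{equation*}
and collect the remaining contributions into
\begin{equation*}
R^{Y\tilde Y}_{s,t} = Y_s R^{\tilde Y}_{s,t} + \tilde Y_s R^Y_{s,t} + Y'_s \tilde Y'_s \, W_{s,t}\otimes W_{s,t} + Y'_s W_{s,t}\, R^{\tilde Y}_{s,t} + R^Y_{s,t}\, \tilde Y'_s W_{s,t} + R^Y_{s,t} R^{\tilde Y}_{s,t}.
\end{equation*}

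The bulk of the work is then the term-by-term $2\gamma$-Hölder estimate of this remainder and the $\gamma$-Hölder/supremum estimate of $(Y\tilde Y)'$. For the remainder, the first two terms are bounded by $\|Y\|_\infty\|R^{\tilde Y}\|_{2\gamma}|t-s|^{2\gamma}$ and $\|\tilde Y\|_\infty\|R^Y\|_{2\gamma}|t-s|^{2\gamma}$; the quadratic-in-$W$ term is bounded by $\|Y'\|_\infty\|\tilde Y'\|_\infty\|W\|_\gamma^2|t-s|^{2\gamma}$; the remaining cross terms carry a factor $|t-s|^{3\gamma}\leq |t-s|^{2\gamma}$ on $[0,1]$ and produce the mixed contributions $\|W\|_\gamma\|R^{\tilde Y}\|_{2\gamma}$, $\|R^Y\|_{2\gamma}\|W\|_\gamma$ and $\|R^Y\|_{2\gamma}\|R^{\tilde Y}\|_{2\gamma}$. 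For the derivative, a Leibniz-type expansion gives
\begin{equation*}
\|(Y\tilde Y)'\|_\gamma \leq \|Y'\|_\gamma\|\tilde Y\|_\infty + \|Y'\|_\infty\|\tilde Y\|_\gamma + \|Y\|_\gamma\|\tilde Y'\|_\infty + \|Y\|_\infty\|\tilde Y'\|_\gamma,
\end{equation*}
together with $\|(Y\tilde Y)'\|_\infty\le \|Y'\|_\infty\|\tilde Y\|_\infty+\|Y\|_\infty\|\tilde Y'\|_\infty$ and $\|Y\tilde Y\|_\infty \le \|Y\|_\infty\|\tilde Y\|_\infty$.

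The last ingredient is the auxiliary estimate $\|Y\|_\gamma \leq (1+\|W\|_\gamma)\Dnorm{Y,Y'}$, obtained directly from $Y_t-Y_s = Y'_s W_{s,t}+R^Y_{s,t}$ using $|t-s|^{2\gamma}\leq |t-s|^\gamma$ on $[0,1]$; analogously for $\tilde Y$. Substituting these into the estimates above and regrouping, every quantity involving $\|W\|_\gamma$ is dominated by $(1+\|W\|_\gamma)^2$ times the product of the two $\cD^{2\gamma}_W$-norms, which yields the stated bound. The only potential pitfall is keeping track of which factor goes with which norm so that the final product $\Dnorm{Y,Y'}\Dnorm{\tilde Y,\tilde Y'}$ appears cleanly; this is the main bookkeeping obstacle but involves no conceptual difficulty.
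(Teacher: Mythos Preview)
Your proposal is correct and follows essentially the same route as the paper. The only cosmetic difference is in the remainder: the paper keeps the cross term as the single expression $(Y_t-Y_s)(\tilde Y_t-\tilde Y_s)$ and bounds it directly by $\|Y\|_\gamma\|\tilde Y\|_\gamma|t-s|^{2\gamma}$, whereas you expand it further into $Y'_s\tilde Y'_s\,W_{s,t}\otimes W_{s,t}$ plus mixed $W\cdot R$ and $R\cdot R$ terms; both then invoke the same auxiliary estimate $\|Y\|_\gamma\le(1+\|W\|_\gamma)\Dnorm{Y,Y'}$ to close the bound.
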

\begin{proof}
    The claim for addition follows directly by applying the definition of the $\cD^{2\gamma}_W$-norm. For multiplication we use the definition of the $\cD^{2\gamma}_W$-norm and get
    \begin{align*}
        &\Dnorm{Y\tilde Y, Y'\tilde Y+ Y\tilde Y'} \\
        &= \norm{Y\tilde Y}_\I + \norm{Y'\tilde Y+ Y\tilde Y'}_\I + \norm{Y'\tilde Y+ Y\tilde Y'}_\gamma + \norm{R^{Y\tilde Y}}_{2\gamma} \\
        &\le \norm{Y}_\I\norm{\tilde Y}_\I + \norm{Y'}_\I\norm{\tilde Y}_\I + \norm{Y}_\I\norm{\tilde Y'}_\I + \norm{Y'}_\gamma\norm{\tilde Y}_\I + \norm{Y}_\I \norm{\tilde Y'}_\gamma + \norm{R^{Y\tilde Y}}_{2\gamma}\\
        &\le C \Dnorm{Y,Y'}\Dnorm{\tilde Y,\tilde Y'} + \norm{R^{Y\tilde Y}}_{2\gamma}.
    \end{align*}
    It remains to compute $\norm{R^{Y\tilde Y}}_{2\gamma}$.~This results in
    \begin{align*}
        \norm{R^{Y\tilde Y}}_{2\gamma}
        &=\sup_{s,t\in[0,1]} \frac{\Xnorm{Y_t\tilde Y_t-Y_s\tilde Y_s - Y_s'\tilde Y_s W_{s,t}- Y_s\tilde Y_s'W_{s,t}}}{|t-s|^{2\gamma}} \\
        &=\sup_{s,t\in[0,1]} \frac{\Xnorm{Y_sR^{\tilde Y}_{s,t}+\tilde Y_s R^{Y}_{s,t}+(Y_t-Y_s)(\tilde Y_t-\tilde Y_s)}}{|t-s|^{2\gamma}} \\
        &\le \norm{Y}_\I\norm{R^{\tilde Y}}_{2\gamma} + \norm{\tilde Y}_\I \norm{R^Y}_{2\gamma} + \norm{Y}_\gamma\norm{\tilde Y}_\gamma \\
        &\le C \Dnorm{Y,Y'}\Dnorm{\tilde Y,\tilde Y'} + C\left(1+\norm{W}_\gamma\right)^2 \Dnorm{Y,Y'}\Dnorm{\tilde Y,\tilde Y'},
    \end{align*}
    where we used $\norm{Y}_\gamma\le C\left(1+\norm{W}_\gamma\right)\Dnorm{Y,Y'}$.  \qed\\
\end{proof}

\begin{lemma}{\em{(\cite[Lemma 2.10]{KN23})}}\label{D2gamma bound for non-random integral}
    Let $\xi\in\cX$ and $F:\cX \to \cX$ be in $C_b^1$. 
Then for $(Y,Y'),(\tilde Y,\tilde Y')\in \cD^{2\gamma}_{W}([0,1];\cX)$, a Gubinelli derivative is given by
\begin{align}\label{gderivative:drfift}
\left(S(\cdot)\xi + \int\limits_{0}^{\cdot} S(\cdot-r)\left(F(Y_{r})-F(\tilde Y_r)\right)~\txtd r  \right)'= 0.
\end{align}
Moreover, we have the estimate 
\begin{align}
\label{fdrift}
\left\| S(\cdot)\xi + \int\limits_{0}^{\cdot} S(\cdot-r)\left(F(Y_{r})-F(\tilde Y_r)\right)~\txtd r, 
0 \right\|_{\cD^{2\gamma}_{W}} &\leq  C[\Xnorm{A} ]\left(\Xnorm{\xi} + \|F(Y)-F(\tilde Y)\|_{\I}\right) \nonumber\\
&\le C[\Xnorm{A} ]\left(\Xnorm{\xi} + L_F\Dnorm{Y-\tilde Y,Y'-\tilde Y'}\right).
\end{align}
\end{lemma}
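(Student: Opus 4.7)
The statement concerns the drift part of the mild formulation, where both the Bochner integral and the semigroup action are smooth in time, so the natural Gubinelli derivative of the whole expression is zero. Let me denote
\[
Z_t := S(t)\xi + \int_0^t S(t-r)\bigl(F(Y_r)-F(\tilde Y_r)\bigr)\,\txtd r, \qquad t\in[0,1].
\]
With the ansatz $Z'\equiv 0$, the remainder is simply the increment $R^Z_{s,t}=Z_t-Z_s$, and the equivalent $\cD^{2\gamma}_W$-norm from the displayed remark reduces to $\norm{Z}_\I+\norm{R^Z}_{2\gamma}$. So the plan splits into three steps: justify the zero Gubinelli derivative, bound $\norm{Z}_\I$, and bound $\norm{R^Z}_{2\gamma}$.

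For the $\norm{Z}_\I$ estimate I use the elementary semigroup bound $\norm{S(t)}_\cX\le \txte^{\Xnorm{A}}$ on $[0,1]$, which directly gives
\[
\norm{Z}_\I \le \txte^{\Xnorm{A}}\Xnorm{\xi} + \txte^{\Xnorm{A}}\norm{F(Y)-F(\tilde Y)}_\I.
\]
For the $\norm{R^Z}_{2\gamma}$ estimate, I would split $Z_t-Z_s$ into three pieces:
\[
Z_t-Z_s = \bigl(S(t)-S(s)\bigr)\xi + \int_s^t S(t-r)\bigl(F(Y_r)-F(\tilde Y_r)\bigr)\,\txtd r + \int_0^s \bigl(S(t-r)-S(s-r)\bigr)\bigl(F(Y_r)-F(\tilde Y_r)\bigr)\,\txtd r.
\]
Using the Lipschitz-in-time estimate $\norm{S(t)-S(s)}_\cX\le \Xnorm{A}\txte^{\Xnorm{A}}|t-s|$, the first and third pieces are bounded by $C[\Xnorm{A}]|t-s|\,(\Xnorm{\xi}+\norm{F(Y)-F(\tilde Y)}_\I)$, while the middle piece is bounded by $\txte^{\Xnorm{A}}|t-s|\,\norm{F(Y)-F(\tilde Y)}_\I$. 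Since $2\gamma\le 1$ and we are on the bounded interval $[0,1]$, these Lipschitz bounds immediately dominate $|t-s|^{2\gamma}$, yielding the first claimed inequality.

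The second claimed inequality follows from $F\in C^1_b$: with $L_F:=\norm{DF}_\I$, one has $\norm{F(Y)-F(\tilde Y)}_\I\le L_F\norm{Y-\tilde Y}_\I\le L_F\Dnorm{Y-\tilde Y,Y'-\tilde Y'}$, the last step using that $\norm{\cdot}_\I$ is one of the summands in the equivalent $\cD^{2\gamma}_W$-norm displayed in the preceding remark. There is no real obstacle in this lemma; the only point that needs care is the treatment of the third piece in the decomposition of $Z_t-Z_s$, where one must pull the difference $S(t-r)-S(s-r)$ (and not $S(t)-S(s)$) through uniformly in $r\in[0,s]$, which is precisely what the bound $\norm{S(t-r)-S(s-r)}_\cX\le \Xnorm{A}\txte^{\Xnorm{A}}|t-s|$ provides.
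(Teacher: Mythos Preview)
Your argument is correct and complete. The paper itself does not prove this lemma but cites it from \cite[Lemma~2.10]{KN23}; your write-up is the standard verification that the smooth-in-time drift term has zero Gubinelli derivative and that the Lipschitz-in-time bound $\norm{S(t)-S(s)}_\cX\le \Xnorm{A}\txte^{\Xnorm{A}}|t-s|$ (available here because $S(t)=\txte^{tA}$ with $A\in\cL(\cX)$) controls the $2\gamma$-H\"older remainder on $[0,1]$.
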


\begin{lemma}{\em{(\cite[Lemma 2.7, Lemma 2.8]{KN23}})}\label{D2gamma bound for stoch integral}
    Let $G\in C_b^3(\cX;\cL(\cV,\cX))$. Then for $(Y,Y'),(\tilde Y,\tilde Y')\in \cD^{2\gamma}_{W}([0,1];\cX)$ we get
    \begin{align*}
        &\Dnorm{\int_0^\cdot S(\cdot-r)\left(G(Y_r)-G(\tilde Y_r)\right)~\txtd \bfW_r, G(Y_r)-G(\tilde Y_r)}\\
        &\le C[\Xnorm{A}]\left(1+\norm{W}_\gamma\right)\left(\norm{W}_\gamma +\norm{\W}_{2\gamma}\right)\Dnorm{G(Y)-G(\tilde Y),G(Y)'-G(\tilde Y)'} \\
        &\le C[\Xnorm{A}]\left(1+\norm{W}_\gamma\right)\left(\norm{W}_\gamma +\norm{\W}_{2\gamma}\right)CN^2 \norm{G}_{C_b^3}\Dnorm{Y-\tilde Y,Y'-\tilde Y'},
    \end{align*}
    where $N$ is a positive constant such that $\Dnorm{Y,Y'}\le N$ and $\Dnorm{\tilde Y,\tilde Y'}\le N$.
\end{lemma}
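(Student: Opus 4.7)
The plan is to decouple the claim into the two inequalities and handle each with a standard controlled rough path argument. The first inequality is the rough integral / semigroup convolution estimate applied to the controlled rough path $(Z,Z')$ with $Z := G(Y)-G(\tilde Y)$ and Gubinelli derivative $Z' := G'(Y)Y'-G'(\tilde Y)\tilde Y'$ (the natural candidate by the chain rule). The second inequality is a composition-type estimate that controls $\Dnorm{Z,Z'}$ by $\Dnorm{Y-\tilde Y,Y'-\tilde Y'}$. Together these reproduce the two successive estimates in the statement.

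For the composition step (second inequality), the plan is to verify directly that $(Z,Z')\in\cD^{2\gamma}_W$ and bound its norm. The increment control $\|Z'\|_\gamma$ is obtained by writing
\[
Z'_t-Z'_s=\bigl(G'(Y_t)-G'(Y_s)\bigr)Y'_t+G'(Y_s)(Y'_t-Y'_s)-\bigl(G'(\tilde Y_t)-G'(\tilde Y_s)\bigr)\tilde Y'_t-G'(\tilde Y_s)(\tilde Y'_t-\tilde Y'_s),
\]
grouping terms to isolate factors of $Y-\tilde Y$ and $Y'-\tilde Y'$, and using the Lipschitz continuity of $G'$ and $G''$ (which is where $G\in C_b^3$ enters). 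For the remainder $R^Z$ I would use the second-order Taylor expansion
\[
G(Y_t)-G(Y_s)=G'(Y_s)(Y_t-Y_s)+\int_0^1(1-\theta)\,G''\bigl(Y_s+\theta(Y_t-Y_s)\bigr)(Y_t-Y_s)^{\otimes 2}~\txtd\theta,
\]
together with $Y_t-Y_s=Y'_sW_{s,t}+R^Y_{s,t}$, subtract the analogue for $\tilde Y$, and regroup. The leading contribution to $R^Z_{s,t}$ becomes $G'(Y_s)R^Y_{s,t}-G'(\tilde Y_s)R^{\tilde Y}_{s,t}$, whose $2\gamma$-H\"older bound follows by splitting $G'(Y_s)-G'(\tilde Y_s)$ via Taylor in $G''$ and using $\|R^Y\|_{2\gamma},\|R^{\tilde Y}\|_{2\gamma}\le N$. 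The second-order Taylor remainders contribute cross-products of $(Y_t-Y_s)$-type increments, bounded in $\cX$-norm by $N^2$ times $\Dnorm{Y-\tilde Y,Y'-\tilde Y'}$ after using $\|Y\|_\gamma\le C(1+\|W\|_\gamma)\Dnorm{Y,Y'}$ as at the end of Lemma \ref{lem:addition and multiplication of RP}. Collecting all terms yields the required $CN^2\|G\|_{C_b^3}\Dnorm{Y-\tilde Y,Y'-\tilde Y'}$.

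For the integral/semigroup step (first inequality), the plan is to apply Theorem \ref{gubinelliintegral} to $(Z,Z')$ to obtain the sewing estimate
\[
\left|\int_s^t Z_r~\txtd\mathbf{W}_r-Z_sW_{s,t}-Z'_s\W_{s,t}\right|\le C\bigl(\|W\|_\gamma\|R^Z\|_{2\gamma}+\|\W\|_{2\gamma}\|Z'\|_\gamma\bigr)|t-s|^{3\gamma},
\]
which directly identifies the Gubinelli derivative of $\int_0^\cdot Z_r~\txtd\mathbf{W}_r$ at time $t$ as $Z_t$. The semigroup factor $S(\cdot-r)$ is then incorporated exactly as in \cite[Lemma 2.7]{KN23}: one splits $S(t-r)=S(t)-\int_r^t AS(\sigma-r)~\txtd\sigma$-type decompositions, exploiting smoothness of $t\mapsto S(t)$ and Lemma \ref{lem:addition and multiplication of RP} to multiply by the semigroup while controlling all four components of the $\cD^{2\gamma}_W$-norm; the resulting constants absorb into $C[\|A\|](1+\|W\|_\gamma)(\|W\|_\gamma+\|\W\|_{2\gamma})$.

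The main obstacle is the $\|R^Z\|_{2\gamma}$ estimate in the composition step: the correct separation of the cross terms $G'(Y_s)R^Y_{s,t}-G'(\tilde Y_s)R^{\tilde Y}_{s,t}$, $(G'(Y_s)-G'(\tilde Y_s))(Y'_sW_{s,t})$, and the second-order Taylor remainders must be done so that each group produces a factor proportional to $\Dnorm{Y-\tilde Y,Y'-\tilde Y'}$ rather than only to $\|Y-\tilde Y\|_\infty$, and so that the combinatorial explosion is absorbed into the clean $CN^2\|G\|_{C_b^3}$ constant. Everything else is either a direct appeal to Theorem \ref{gubinelliintegral} or a semigroup bookkeeping exercise of the type performed in \cite{KN23}.
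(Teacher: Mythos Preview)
Your approach is correct and is essentially the standard one: the paper does not prove this lemma at all but merely cites \cite[Lemma 2.7, Lemma 2.8]{KN23}, where the two inequalities are established by precisely the two steps you outline --- the sewing/semigroup estimate for the convolution of a controlled path against $\mathbf{W}$, and the $C_b^3$-composition estimate for differences of composed controlled rough paths. Your identification of the Gubinelli derivative $Z'=\txtD G(Y)Y'-\txtD G(\tilde Y)\tilde Y'$, the Taylor-based decomposition of $R^Z$, and the source of the $N^2\|G\|_{C_b^3}$ factor all match the arguments in the cited reference, so there is nothing to add.
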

Combining the previous Lemmas we get the following result.
\begin{lemma}\label{integral bound} 
Let $(Y,Y'),(\tilde Y,\tilde Y')\in \cD^{2\gamma}_{W}\left([0,1];\cX\right)$. Moreover, let $F\in C_b^1\left(\cX;\cX\right)$ and $G\in C_b^3(\cX;\cL\left(\cV,\cX\right))$. Then 
    \begin{align*}
        &\Dnorm{\int_0^\cdot S(\cdot-r)\left(F(Y_r)-F(\tilde Y_r)\right)~\txtd r + \int_0^\cdot S(\cdot-r)\left(G(Y_r)-G(\tilde Y_r)\right)~\txtd \textbf{W}_r, G(Y)-G(\tilde Y)} \\
        &\le C[\Xnorm{A}]\Bigg(\norm{F(Y)-F(\tilde Y)}_\I\\&~~~~+\left(1+\norm{W}_\gamma\right)\left(\norm{W}_\gamma+\norm{\mathbb{W}}_{2\gamma}\right)\Dnorm{G(Y)-G(\tilde Y),G(Y)'-G(\tilde Y)'} \Bigg) \\
        &\le CC[\Xnorm{A}]\left(L_F + \left(1+\norm{W}_\gamma\right)\left(\norm{W}_\gamma +\norm{\W}_{2\gamma}\right)N^2 \norm{G}_{C_b^3}\right)\Dnorm{Y-\tilde Y, Y' - \tilde Y'},
    \end{align*}
    where $N$ is a positive constant such that $\Dnorm{Y,Y'}\le N$ and $\Dnorm{\tilde Y,\tilde Y'}\le N$.
\end{lemma}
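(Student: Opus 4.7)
The plan is to realise the two integrals as controlled rough paths and then apply the triangle inequality from Lemma~\ref{lem:addition and multiplication of RP} to the sum, invoking the two previous lemmas as black boxes.

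First I would identify the Gubinelli derivatives of each summand. By Lemma~\ref{D2gamma bound for non-random integral} applied with $\xi = 0$, the drift integral $\int_0^{\cdot} S(\cdot - r)(F(Y_r) - F(\tilde Y_r))~\txtd r$ is controlled with Gubinelli derivative identically $0$, and satisfies
\benn
\left\| \int_0^{\cdot} S(\cdot - r)(F(Y_r) - F(\tilde Y_r))~\txtd r,\, 0 \right\|_{\cD^{2\gamma}_W} \le C[\Xnorm{A}]\, \|F(Y) - F(\tilde Y)\|_{\I}.
\eenn
By Lemma~\ref{D2gamma bound for stoch integral}, the rough integral $\int_0^{\cdot} S(\cdot - r)(G(Y_r) - G(\tilde Y_r))~\txtd\bfW_r$ is controlled with Gubinelli derivative $G(Y) - G(\tilde Y)$, and satisfies the corresponding estimate in terms of $\|W\|_\gamma$ and $\|\mathbb{W}\|_{2\gamma}$ times $\Dnorm{G(Y)-G(\tilde Y),G(Y)'-G(\tilde Y)'}$.

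Next I would add the two controlled rough paths together. Since adding paths in $\cD^{2\gamma}_W$ amounts to adding their Gubinelli derivatives, the sum has derivative $0 + (G(Y) - G(\tilde Y)) = G(Y) - G(\tilde Y)$, which matches the expression in the statement. Lemma~\ref{lem:addition and multiplication of RP} then provides
\benn
\Dnorm{\text{sum},\, G(Y)-G(\tilde Y)} \le \Dnorm{\text{drift part},\, 0} + \Dnorm{\text{rough part},\, G(Y)-G(\tilde Y)},
\eenn
and substituting the two bounds above gives the first displayed inequality in the lemma.

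For the second displayed inequality, I would further estimate the right-hand side using the Lipschitz property of $F$ (so that $\|F(Y)-F(\tilde Y)\|_\I \le L_F \|Y-\tilde Y\|_\I \le L_F \Dnorm{Y-\tilde Y, Y'-\tilde Y'}$ via the equivalent norm~\eqref{Dnorm}) together with the second bound in Lemma~\ref{D2gamma bound for stoch integral}, which already converts $\Dnorm{G(Y)-G(\tilde Y), G(Y)'-G(\tilde Y)'}$ into $CN^2 \|G\|_{C_b^3}\Dnorm{Y-\tilde Y, Y'-\tilde Y'}$. Factoring the constants gives the stated estimate. There is no serious obstacle: the work has been done in Lemmas~\ref{lem:addition and multiplication of RP}--\ref{D2gamma bound for stoch integral}, and the only care required is to record the Gubinelli derivatives correctly and to keep track of the dependence on $\Xnorm{A}$, $L_F$, $\|G\|_{C_b^3}$, $N$ and the rough-path norms in the final sum.
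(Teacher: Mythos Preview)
The proposal is correct and follows essentially the same approach as the paper, which simply states that the claim follows by putting Lemma~\ref{D2gamma bound for non-random integral} and Lemma~\ref{D2gamma bound for stoch integral} together. Your write-up is in fact more explicit than the paper's one-line proof, spelling out the role of Lemma~\ref{lem:addition and multiplication of RP} for the sum and the use of the equivalent norm~\eqref{Dnorm} in passing from $\|F(Y)-F(\tilde Y)\|_\I$ to the controlled norm of the difference.
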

\begin{proof}
    The claim follows by putting Lemma \ref{D2gamma bound for non-random integral} and Lemma \ref{D2gamma bound for stoch integral} together. \qed\\
\end{proof}

In order to obtain center manifolds for~\eqref{sde1}, we further assume the following exponential dichotomy for the semigroup $(S_t)_{t\geq 0}$. 

\begin{assumptions}
\label{ass:linearpart} We assume that we are in a center-stable situation, namely there 
are eigenvalues $\{\lambda^c_j\}_{j=1}^{n_c}$ of the linear operator $A$ on 
the imaginary axis $\txti\R$ as well as eigenvalues $\{\lambda^s_j\}_{j=1}^{n_s}$ 
in the left-half plane $\{z\in\C:\textnormal{Re}(z)<0\}$. Upon counting multiplicities we
have $n_c+n_s=n$. Hence, there 
exists a decomposition of the phase space $\R^n=\cX=\cX^{c}\oplus \cX^{s}$, where 
the linear spaces $\cX^{c}$ and $\cX^{s}$ are spanned by the (generalized) 
eigenvectors with eigenvalues $\lambda^c_j$ and $\lambda^s_j$ respectively. 
We denote the restrictions of $A$ on $\cX^{c}$ and $\cX^{s}$ by $A^{c}:=A|_{\cX^{c}}$ and $A^{s}:=A|_{\cX^{s}}$. Then, $S^{c}(t):=e^{tA^{c}}$ and $S^{s}(t):=e^{tA^{s}}$ are groups of linear operators on $\cX^{c}$ respectively $\cX^{s}$. Moreover, there exist two bounded projections $P^{c}$ and $P^{s}$ associated 
to this splitting such that
	\begin{itemize} 
	\item [1)] $\Id= P^{s}+ P^{c}$;
	\item [2)] $P^{c}S(t)=S(t)P^{c}$ and $P^{s}S(t)=S(t)P^{s}$ 
	for $t\geq 0$.
\end{itemize}
Additionally, we assume that there exist two exponents $\nu$ and $\beta$ 
with $-\beta<0\leq \nu<\beta$ and constants $M_{c},M_{s}\geq 1$, such that
	\begin{align}
	&\Xnorm{S^{c}(t)  x} \leq M_{c} \txte^{\nu t} \Xnorm{x}, 
	~~\mbox{  for } t\leq 0 \mbox{ and } x\in \cX;\label{nu}\\
	& \Xnorm{S^{s}(t)x} \leq M_{s} \txte^{-\beta t} \Xnorm{x}, 
	~\mbox{for } t\geq 0 \mbox{ and } x\in \cX.\label{beta}
	\end{align}
\end{assumptions}

In this situation we have that
$\nu\geq 0$ and $-\beta<0$ which gives us the spectral gap $\nu+\beta>0$.
We also refer to $\cX^{c}$ and $\cX^{s}$ as center, respectively stable, subspace.\medskip

\subsection{Random dynamics}\label{s:rd}
We now state some basic concepts from the theory of random dynamical systems~\cite{Arnold}.
\begin{definition} 
Let $(\Omega,\mathcal{F},\mathbb{P})$ stand for a probability space and 
$\theta:\mathbb{R}\times\Omega\rightarrow\Omega$ be a family of 
$\mathbb{P}$-preserving transformations (i.e.,~$\theta_{t}\mathbb{P}=
\mathbb{P}$ for $t\in\mathbb{R}$) having the following properties:
\begin{description}
		\item[(i)] the mapping $(t,\omega)\mapsto\theta_{t}\omega$ is 
		$(\mathcal{B}(\mathbb{R})\otimes\mathcal{F},\mathcal{F})$-measurable, where 
		$\mathcal{B}(\cdot)$ denotes the Borel sigma-algebra;
		\item[(ii)] $\theta_{0}=\textnormal{Id}_{\Omega}$;
		\item[(iii)] $\theta_{t+s}=\theta_{t}\circ\theta_{s}$ for all 
		$t,s,\in\mathbb{R}$.
\end{description}
Then the quadruple $(\Omega,\mathcal{F},\mathbb{P},(\theta_{t})_{t\in\mathbb{R}})$ 
is called a metric dynamical system.
\end{definition}
 For a $\gamma$-H\"older rough path $\textbf{W}=(W,\mathbb{W})$ 
and $\tau\in\mathbb{R}$ let us define the time-shift $\Theta_{\tau}\textbf{W}
:=(\Theta_{\tau}W,\Theta_{\tau}\mathbb{W})$ by
\begin{align*}
& \Theta_{\tau} W_t : =W_{t+\tau} - W_{\tau}\\
& \Theta_{\tau}\mathbb{W}_{s,t}:=\mathbb{W}_{s+\tau,t+\tau}
\end{align*}
Note that 
$\Theta_{\tau} W_{s,t}=W_{t+\tau} - W_{s+\tau}$.
Moreover, we consider the following concept introduced in \cite{BailleulRiedelScheutzow}.\\

\begin{definition}\label{rough path cocycle}
    Let $(\Omega,\mathcal{F},\P,(\theta_t)_{t\in\R})$ be a metric dynamical system. We call $\bfW=(W,\W)$ a rough path cocycle if the following identities hold true for every $s,t\in[0,\infty)$ and $\omega\in\Omega$
   \begin{align*}
       &W_{s,s+t}(\omega)=W_t(\theta_s\omega)\\
    &\mathbb{W}_{s,s+t}(\omega)=\mathbb{W}_{0,t}(\theta_s\omega)
   \end{align*}
\end{definition}

\begin{example}\label{ex:rp cocycle}
According to~\cite[Section 2]{BailleulRiedelScheutzow} rough path lifts of various stochastic processes define cocycles. These include Gaussian processes with stationary increments under certain assumption on the covariance function~\cite[Chapter 7]{Fritz}.
Therefore we keep the following examples in mind:
\begin{itemize}
    \item [1)] We start with the canonical probability space $(C_0(\R,\R^d),\cB(C_0(\R,\R^d),\P)$, where $C_0(\R,\R^d)$ is the space of $\R^d$-valued continuous functions, which are $0$ in 0, endowed with the compact open topology and $\P$ is the Gauss measure. We define the shift
    $$(\Theta_t f)(\cdot) := f(t+\cdot)-f(t),~~~t\in\R,~~~f\in C_0(\R,\R^d).$$
    Furthermore, we restrict it to the set $\Omega:= C_0^{\gamma'}(\R,\R^d)$. We equip this set with the trace $\sigma$-algebra $\cF := \Omega\cap\cB(C_0(\R,\R^d))$ and also take the restriction of $\P$.
    Let $(B_t(\cdot))_{t\in[0,T]}$ be a $d$-dimensional Brownian motion on the probability space $(\Omega,\cB(C_0(\R,\R^d),\P)$ and consider the Stratonovich lift for a realisation $\omega\in\Omega$
        $$\mathbb{B}^{\textnormal{Strat}}_{s,t}(\omega):= \int_s^t (B_r(\omega)-B_s(\omega))\circ\txtd B_r(\omega).$$
        Then  $\textbf{B}^{\textnormal{Strat}}(\omega)=(B(\omega),\mathbb{B}^{\textnormal{Strat}}(\omega))$ is a $\gamma$-H\"older geometric rough path for $\gamma<1/2$. This forms a rough path cocycle according to \cite[Section 2]{BailleulRiedelScheutzow}.
    \item [2)] We consider a $d$-dimensional fractional Brownian motion $(B^H_t)_{t\in[0,T]}$ with Hurst index $H\in(1/3,1/2]$. Again we work with the metric dynamical system $(\Omega,\cB(C_0(\R,\R^d)),\P,\Theta)$, where now $\P$ is the fractional Gauss measure. We can approximate $B^H(\omega)$ with a sequence of piecewise dyadic linear functions $(B^{H,n}(\omega))_{n\in\N}$. For each $B^{H,n}(\omega)$ the iterated integral 
    $$\mathbb{B}^{H,n}_{s,t}(\omega):= \int_s^t (B_r^{H,n}(\omega)-B_s^{H,n}(\omega))~\txtd B_r^{H,n}(\omega)$$
    exists in the classical sense for all $s,t\in [0,T]$. According to ~\cite[Theorem 2]{CoutinQian}, the sequence of smooth paths $(\textbf{B}^{H,n}(\omega))_{n\in\N}=(B^{H,n}(\omega),\mathbb{B}^{H,n}(\omega))_{n\in\N}$ converges to a rough path $\textbf{B}^H(\omega)=(B^H(\omega),\mathbb{B}^H(\omega))$ in the rough path distance $\rho_\gamma$ defined in \eqref{rpmetric}. We get the rough path on the whole real line by gluing $\textbf{B}^H(\omega)=(B^H(\omega),\mathbb{B}^H(\omega))$ from each compact interval together. This again forms a rough path cocycle according to~\cite[Section 2]{BailleulRiedelScheutzow}. For more details we refer to \cite[Example 3.5]{KN23}. 
\end{itemize}  
\end{example}
Next we define random dynamical systems, which allow us to investigate the dynamics of the RDE~\eqref{sde1}.
\begin{definition}
\label{rds} 
A random dynamical system on $\cX$ over a metric dynamical 
system $(\Omega,\mathcal{F},\mathbb{P},(\theta_{t})_{t\in\mathbb{R}})$ 
is a mapping $$\varphi:[0,\I)\times\Omega\times \cX\to \cX,
\mbox{  } (t,\omega,x)\mapsto \varphi(t,\omega,x), $$
which is $(\mathcal{B}([0,\I))\times\mathcal{F}\times
\mathcal{B}(\cX),\mathcal{B}(\cX))$-measurable and satisfies:
	\begin{description}
		\item[(i)] $\varphi(0,\omega,\cdot{})=\textnormal{Id}_{\cX}$ 
		for all $\omega\in\Omega$;
		\item[(ii)]$ \varphi(t+\tau,\omega,x)=
		\varphi(t,\theta_{\tau}\omega,\varphi(\tau,\omega,x)), 
		\mbox{ for all } x\in \cX, ~t,\tau\in[0,\I),~\omega\in\Omega;$
		\item[(iii)] $\varphi(t,\omega,\cdot{}):\cX\to \cX$ is 
		continuous for all $t\in[0,\I)$ and all $\omega\in\Omega$.
	\end{description}
\end{definition}
We further define random variables that satisfy certain growth conditions.   
\begin{definition}\label{def:tempered}
A random variable $\widetilde{R}:\Omega\to (0,\infty)$ is called tempered with 
respect to a metric dynamical system $(\Omega,\mathcal{F},\mathbb{P},
(\theta_{t})_{t\in\mathbb{R}})$, if
\begin{equation}
\label{tempered}
\limsup\limits_{t\to\pm\infty}\frac{\ln \widetilde{R} (\theta_{t}\omega)}{t}=0, 
\quad \mbox{ for all } \omega\in\Omega.
\end{equation}	
Furthermore we call a random variable $\tilde{R}:\Omega\to (0,\infty)$ tempered from above if 
\begin{equation}
\label{tempered from above}
\limsup\limits_{t\to\pm\infty}\frac{\ln^+ \widetilde{R} (\theta_{t}\omega)}{t}=0, 
\quad \mbox{ for all } \omega\in\Omega,
\end{equation}	
where $\ln^+a:=\max\left\{\ln a,0\right\}$.
A random variable is called tempered from below if $1/\widetilde{R}$ is tempered from above.
\end{definition}
In conclusion, a random variable is tempered if and only if it is tempered from above and from below.\\

We further emphasize that temperedness is equivalent to {subexponential growth} and allows one to control the growth of the random input along the orbits $(\theta_t)_{t\in\R}$. 
Note that the set of all $\omega\in\Omega$ satisfying~\eqref{tempered from above} 
is invariant with respect to any shift map $(\theta_{t})_{t\in\mathbb{R}}$,
which is an observation applicable to our case when $\theta_t=\Theta_t$. A 
sufficient condition for temperedness from above is according to~\cite[Prop.~4.1.3]{Arnold} 
that
\begin{equation}\label{c:temp}
\mathbb{E} \sup\limits_{t\in[0,1]}  \widetilde{R}(\theta_{t}\omega)<\infty.
\end{equation}
\begin{remark}
\item [1)]  Similar to \cite{KN23} we will use the notation $W\in \Omega$ instead of the standard notation from RDS where one identifies the path $W_t(\omega)=\omega_t$, for $\omega$ belonging to a canonical probability space $(\Omega,\cF,\P)$.
    \item [2)] For a rough path cocycle $\bfW=(W,\W)$, the random variables $\norm{W}_\gamma$ and $\norm{\W}_{2\gamma}$ are tempered from above. Moreover, every polynomial of $\norm{W}_\gamma$ and $\norm{\W}_{2\gamma}$ is tempered from above. Both statements easily follow verifying~\eqref{c:temp}.
    \item [3)] Given a rough path cocycle $\bfW=(W,\W)$, we will consider random variables $R(\bfW)$ which are given by the inverse of polynomials containing  $\norm{W}_\gamma$ and $\norm{\W}_{2\gamma}$. In this case, $R(\bfW)$ is tempered from below and  $1/R(\bfW)$ is tempered from above. 
\end{remark}

For our aims we need the following characterization of temperedness from above/below. Since we perform a discretization argument in Subsection~\ref{sec:discretization}, we state the following well-known results for a random dynamical system with discrete time $\Z^-$.
\begin{lemma}{\em(\cite[Lemma 5.1]{GarridoLuSchmalfuss})}\label{lemma:tbelow} Let $r$ be a positive tempered from below random variable. Then there exists a positive tempered from below random variable $\rho$ and positive constants $\varepsilon$ and $C$ such that
\begin{align}\label{tbelow}
    C\rho(\omega)e^{\varepsilon i} \leq r(\theta_{i}\omega),~~\text{ for all }i\in \Z^{-}.
\end{align}
\end{lemma}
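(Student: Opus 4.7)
The plan is to construct $\rho$ directly from $r$ as a weighted infimum of the orbit of $r$, which will automatically produce the desired lower bound on $r(\theta_i \omega)$; the remaining task is then to verify positivity and temperedness from below of $\rho$, both of which exploit the fact that $r$ is tempered from below at an arbitrarily small rate. Fix any $\varepsilon > 0$ (to serve as the exponent in the claim) and set
\[
\rho(\omega) := \inf_{j \in \Z^-} r(\theta_j \omega)\, e^{-\varepsilon j / 2}.
\]
The estimate $C\rho(\omega) e^{\varepsilon i} \le r(\theta_i \omega)$ is then immediate with $C = 1$: by construction $r(\theta_i \omega) \ge \rho(\omega)\, e^{\varepsilon i / 2}$ for $i \le 0$, and on $\Z^-$ one has $e^{\varepsilon i / 2} \ge e^{\varepsilon i}$.

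To check $\rho(\omega) > 0$ and temperedness from below, I use that since $r$ is tempered from below, for every $\delta > 0$ there exists a random constant $c_\delta(\omega) > 0$ with $r(\theta_j \omega) \ge c_\delta(\omega)\, e^{-\delta |j|}$ for all $j \in \Z$. Taking $\delta \in (0, \varepsilon/2)$, on $\Z^-$ one has $r(\theta_j \omega) e^{-\varepsilon j / 2} \ge c_\delta(\omega)\, e^{(\delta - \varepsilon/2) j}$, and since the exponent is nonnegative on $\Z^-$, this is bounded below by $c_\delta(\omega) > 0$, so $\rho(\omega) \ge c_\delta(\omega) > 0$.

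For the temperedness, I compute the shifted quantity
\[
\rho(\theta_n \omega) = e^{\varepsilon n / 2} \inf_{j \le n} r(\theta_j \omega)\, e^{-\varepsilon j / 2},
\]
split the infimum into $j \le 0$ and, when $n \ge 0$, $0 \le j \le n$, and insert the bound $r(\theta_j \omega) \ge c_\delta(\omega) e^{-\delta |j|}$. A short routine calculation yields the uniform bound $\rho(\theta_n \omega) \ge c_\delta(\omega)\, e^{-\delta |n|}$ for all $n \in \Z$, whence
\[
\limsup_{|n| \to \infty} \frac{\ln^+\!\left(1/\rho(\theta_n \omega)\right)}{|n|} \le \delta.
\]
Since $\delta > 0$ is arbitrary, this $\limsup$ equals $0$, proving that $\rho$ is tempered from below.

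The delicate point is this last step. A naive argument that merely enlarges the infimum window yields $\rho(\theta_n \omega) \ge \rho(\omega)\, e^{\varepsilon n / 2}$ for $n \le 0$, which corresponds to exponential decay at rate $\varepsilon/2 > 0$, not at rate $0$. One has to additionally invoke the temperedness of $r$ at an arbitrarily small rate $\delta$, so that the factors $e^{\varepsilon n/2}$ and $e^{(\delta - \varepsilon/2) n}$ combine to leave only $e^{-\delta n}$; it is only by letting $\delta \downarrow 0$ that one recovers the required rate $0$ temperedness of $\rho$.
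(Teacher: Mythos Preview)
The paper does not give its own proof of this lemma; it is quoted directly from \cite[Lemma~5.1]{GarridoLuSchmalfuss}. Your argument is correct and is essentially the standard construction found in that reference: one defines $\rho$ as a weighted infimum of $r$ along the backward orbit, the inequality $r(\theta_i\omega)\ge \rho(\omega)e^{\varepsilon i}$ is then built into the definition, and positivity together with temperedness from below of $\rho$ follow by inserting the subexponential lower bound $r(\theta_j\omega)\ge c_\delta(\omega)e^{-\delta|j|}$ with $\delta<\varepsilon/2$ and letting $\delta\downarrow 0$ at the end.
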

Since for a random variable $r$ which is tempered from below, we obtain that $1/r$ is tempered from above, we can infer
\begin{lemma}
    Let $\tilde{R}$ be a positive, tempered from above random variable. Then there exists a positive tempered from above random variable $\tilde{\rho}$ and positive constants $\varepsilon$ and $\tilde{C}$ such that
    \begin{align}\label{tabove}
        \tilde{R}(\theta_{i}\omega) \leq \tilde{C}\tilde{\rho}(\omega) e^{-\varepsilon i},~~\text{ for all }i\in \Z^{-}. 
    \end{align}
\end{lemma}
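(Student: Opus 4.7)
The plan is to reduce the statement directly to Lemma \ref{lemma:tbelow} by passing to reciprocals, which is natural given the definition of temperedness from above/below in Definition~\ref{def:tempered}.

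First I would set $r(\omega) := 1/\tilde R(\omega)$. By the definition of temperedness from below (namely that $1/r$ is tempered from above), the hypothesis that $\tilde R$ is positive and tempered from above immediately gives that $r$ is a positive random variable tempered from below. Therefore Lemma \ref{lemma:tbelow} applies to $r$, producing a positive tempered-from-below random variable $\rho$ and constants $C,\varepsilon>0$ with
\begin{equation*}
C\rho(\omega)\,\txte^{\varepsilon i} \;\leq\; r(\theta_i\omega) \;=\; \frac{1}{\tilde R(\theta_i\omega)}, \qquad i\in\Z^-.
\end{equation*}

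Since $\tilde R(\theta_i\omega) > 0$ and $C\rho(\omega)>0$, I can invert both sides of this inequality to obtain
\begin{equation*}
\tilde R(\theta_i\omega) \;\leq\; \frac{1}{C\rho(\omega)}\,\txte^{-\varepsilon i}, \qquad i\in\Z^-.
\end{equation*}
Setting $\tilde\rho(\omega):=1/\rho(\omega)$ and $\tilde C := 1/C$, this is exactly the desired bound~\eqref{tabove}. It remains only to check that $\tilde\rho$ is tempered from above, but this is immediate from Definition~\ref{def:tempered}: a random variable is tempered from below precisely when its reciprocal is tempered from above, so $\tilde\rho = 1/\rho$ is tempered from above because $\rho$ was produced by Lemma \ref{lemma:tbelow} as tempered from below.

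There is no real obstacle here; the statement is a symmetric reformulation of Lemma \ref{lemma:tbelow} under the duality $r\leftrightarrow 1/r$ built into Definition~\ref{def:tempered}. The only bookkeeping point worth flagging explicitly in the write-up is the positivity of $\tilde R$ (ensuring that inversion preserves the inequality and that $r=1/\tilde R$ is well-defined and positive), which is part of the assumption.
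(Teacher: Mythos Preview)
Your proof is correct and is exactly the approach the paper indicates: the sentence preceding the lemma (``Since for a random variable $r$ which is tempered from below, we obtain that $1/r$ is tempered from above, we can infer'') already signals that the result is obtained from Lemma~\ref{lemma:tbelow} by passing to reciprocals, and you have carried this out in full detail.
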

Last we state a result about the product of tempered from above random variables.
\begin{lemma}\label{lem:product of tempered variables}
    Let $\tilde R,\hat R$ be positive, tempered from above random variables. Then the product $\tilde R\hat R$ is also tempered from above. If $\tilde R,\hat R$ are tempered from below, then the product $\tilde R\hat R$ is also tempered from below.
\end{lemma}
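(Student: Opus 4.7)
The plan is to prove the statement for temperedness from above first and then deduce the tempered-from-below claim by duality. The key analytic input is the subadditivity of the positive logarithm:
\begin{equation*}
\ln^+(ab) \leq \ln^+ a + \ln^+ b \quad \text{for all } a,b > 0.
\end{equation*}
This follows by case analysis: if $ab\leq 1$ the left-hand side vanishes, and if $ab>1$ then $\ln^+(ab) = \ln a + \ln b \leq \ln^+ a + \ln^+ b$ since $\ln x \leq \ln^+ x$ always.

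Applying this pointwise in $\omega$ with $a = \tilde R(\theta_t\omega)$ and $b = \hat R(\theta_t\omega)$ yields
\begin{equation*}
\ln^+\bigl((\tilde R \hat R)(\theta_t\omega)\bigr) \leq \ln^+ \tilde R(\theta_t\omega) + \ln^+ \hat R(\theta_t\omega).
\end{equation*}
Dividing by $t$ and taking $\limsup$ as $t\to+\infty$, together with the elementary inequality $\limsup(f+g)\leq \limsup f + \limsup g$, gives an upper bound of $0$ by the hypothesis on $\tilde R$ and $\hat R$; since $\ln^+\geq 0$ and $t>0$, the limsup is also bounded below by $0$, so equality holds. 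For the branch $t\to-\infty$, I would either repeat the argument after flipping signs when dividing by $t<0$, or more transparently invoke the standard equivalent characterization of temperedness from above: $R$ is tempered from above iff for every $\alpha>0$ one has $R(\theta_t\omega) e^{-\alpha|t|}\to 0$ as $|t|\to\infty$. Under this formulation, writing $\tilde R(\theta_t\omega) e^{-\alpha|t|/2}\to 0$ and $\hat R(\theta_t\omega) e^{-\alpha|t|/2}\to 0$ immediately implies $(\tilde R \hat R)(\theta_t\omega) e^{-\alpha|t|}\to 0$ for every $\alpha>0$, which is exactly temperedness from above of $\tilde R \hat R$.

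For the second assertion, suppose $\tilde R$ and $\hat R$ are tempered from below. By definition, $1/\tilde R$ and $1/\hat R$ are tempered from above. Since $1/(\tilde R \hat R) = (1/\tilde R)(1/\hat R)$, the first part of the lemma applied to $1/\tilde R$ and $1/\hat R$ shows that $1/(\tilde R \hat R)$ is tempered from above, which by definition means that $\tilde R \hat R$ is tempered from below.

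The only mildly delicate point is the sign bookkeeping in the $t\to-\infty$ branch, where $\ln^+/t$ is non-positive; the $\limsup\leq 0$ side is trivial, while the matching lower bound is most cleanly obtained by passing to the equivalent subexponential-growth formulation mentioned above, which avoids any subtle issues with subsequences in the direct $\limsup$-based proof.
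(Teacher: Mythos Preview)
Your proof is correct and follows essentially the same route as the paper: the subadditivity inequality $\ln^+(ab)\leq \ln^+ a + \ln^+ b$, subadditivity of $\limsup$, and then the duality $1/(\tilde R\hat R)=(1/\tilde R)(1/\hat R)$ for the tempered-from-below part. If anything, you are more careful than the paper about the $t\to-\infty$ branch, where dividing by a negative $t$ reverses the inequality; the paper glosses over this, whereas you explicitly flag it and offer the equivalent subexponential-growth formulation as a clean workaround.
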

\begin{proof}
    We want to show that 
    \begin{align*}
        \limsup\limits_{t\to\pm\I}\frac{\ln^+\left(\tilde R(\theta_t\omega)\hat R(\theta_t\omega)\right)}{t}=0,~~~\textnormal{for all }\omega\in\Omega.
    \end{align*}
    Let $\omega\in\Omega$ be arbitrary. We use that $\tilde R,\hat R$ are both tempered from above and therefore get 
    \begin{align*}
        0&\le\limsup\limits_{t\to\pm\I}\frac{\ln^+\left(\tilde R(\theta_t\omega)\hat R(\theta_t\omega)\right)}{t} \\
        &\le\limsup\limits_{t\to\pm\I}\frac{\ln^+\tilde R(\theta_t\omega) + \ln^+\hat R(\theta_t\omega)}{t} \\
        &\le\limsup\limits_{t\to\pm\I}\frac{\ln^+\tilde R(\theta_t\omega) }{t} + \limsup\limits_{t\to\pm\I}\frac{\ln^+\hat R(\theta_t\omega) }{t}
        =0.
    \end{align*}
    If $\tilde R,\hat R$ are tempered from below we consider $\frac{1}{\tilde R},\frac{1}{\hat R}$, which are tempered from above, and the claim follows. \qed
\end{proof}

\subsection{Existence of local random center manifolds}
In this subsection we give a short overview on the main results of \cite{KN23} on the existence of local center manifolds for~\eqref{sde1}. 
\begin{definition}
    We call a random set $\cM^c(\bfW)$, which is invariant with respect to $\varphi$, a center manifold if this can be represented as
    \begin{align*}
        \cM^c(\bfW) = \left\{ \xi+ h^c(\xi,\bfW): \xi\in\cX^c  \right\},
    \end{align*}
    where $h^c(\cdot,\bfW):\cX^c\to\cX^s$ is Lipschitz continuous and differentiable in zero.~Moreover, $h^c(0,\bfW)=0$ and $\cM^c(\bfW)$ is tangent to $\cX^c$ at the origin, meaning that the tangency condition $\txtD h^c(0,\bfW)=0$ is satisfied.\\
    We call a random set $\cM^c_{loc}(\bfW)$, which is invariant with respect to $\varphi$, a local center manifold if 
    \begin{align}\label{local center manifold def}
        \cM^c_{loc}(\bfW) = \left\{ \xi+ h^c(\xi,\bfW): \xi\in B_{\cX^c}(0,\rho(\bfW))  \right\},
    \end{align}
    for some tempered from below radius $\rho(\textbf{W})$ and $h^c(\cdot,\bfW)$ fulfills the same conditions as for $\cM^c(\bfW)$.
\end{definition}
The next results were established under the following conditions for $F$ and $G$:
\begin{description}
	\item [(\textbf{F})\label{f}] $F:\cX\to \cX\in C^{1}_{b}$ is Lipschitz continuous with $F(0)=\txtD F(0)=0$;
	\item [(\textbf{G})\label{gi}] $G:\cX\to\mathcal{L}(\cV,\cX)\in C^{3}_{b}$ is Lipschitz continuous with $G(0)=\txtD G(0)=\txtD^2 G(0)=0$.
\end{description}

\begin{remark}
     We conjecture that the assumption $\txtD G(0)=\txtD^2G(0)=0$ could be dropped, in particular in the small noise regime. The existence of center manifolds for RDEs has been established under weaker assumptions in~\cite[Theorem 2.15]{GhaniVarzanehRiedel}. However, these techniques do not rely on the Lyapunov-Perron method and the graph structure of the manifold cannot be directly obtained. Since the approximation of the function representing the graph structure of the manifold is the starting point of this work, we stick to the stronger assumptions on $G$ imposed in~\cite{KN23}. 
\end{remark}

As already mentioned, the existence of a local center manifold given by \eqref{local center manifold def} relies on the Lyapunov-Perron method. The Lyapunov-Perron map $J$ associated to the RDE~\eqref{sde1} is formally given by 
\begin{align*}
    J(\bfW,U,\xi)[t] &= S^c(t) \xi + \int_0^t S^c(t-r) P^cF(U_r) ~\txtd r + \int_{0}^t S^c(t-r) P^cG(U_r) ~\txtd \bfW_r\\& + \int_{-\I}^t S^s(t-r) P^sF(U_r) ~\txtd r 
    + \int_{-\I}^t S^s(t-r) P^sG(U_r) ~\txtd \bfW_r.
\end{align*}
Because we are working in a rough path setting, the estimates of the integrals with respect to $\bfW$ depend on the H\"older norms of $W$ and $\mathbb{W}$. In order to deal with these expressions, the integrals will be discretized. Moreover, we need a suitable function space to work with after the discretization. This is discussed in detail in Section \ref{sec:discretization}. For now, we state the following theorems in an informal way, such that we can give an overview of the results in~\cite{KN23} without getting into technical details.
\begin{theorem}{\em{(\cite[Theorem 4.7]{KN23}})}
\label{contraction} Let Assumptions~\ref{ass:linearpart},~\nameref{f}  and~\nameref{gi} 
hold true.~Under a suitable gap condition, the discretized Lyapunov-Perron map $J$ possesses a unique fixed-point 
$\Gamma=\Gamma(\xi, \bfW)$. 
\end{theorem}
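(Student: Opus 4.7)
The plan is to apply the Banach fixed-point theorem to $J(\bfW,\cdot,\xi)$ on a suitable complete metric space of ``discretized'' paths with an exponential weight that sits inside the spectral gap $(\nu,\beta)$ produced by Assumption~\ref{ass:linearpart}. Concretely, I would pick an exponent $\eta$ with $\nu<\eta<\beta$ (this is the ``suitable gap condition''), and work with sequences $(U^{(i)})_{i\in\Z^-}$, where each $U^{(i)}$ is a controlled rough path on the unit interval $[i,i+1]$ with respect to the shifted rough path $\Theta_i\bfW$. The ambient Banach space $BC^\eta(\cD^{2\gamma}_W)$ consists of those sequences for which
\[
\sup_{i\in\Z^-}\,\txte^{\eta i}\,\Dnorm{U^{(i)},(U^{(i)})'} <\infty,
\]
together with the appropriate compatibility at the discretization points; this is essentially the space flagged in Section~\ref{sec:discretization}. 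The weight $\txte^{\eta i}$ compensates the $\txte^{-\beta(t-r)}$ decay of $S^s$ as $r\to-\infty$ while still being dominated by the forward growth $\txte^{\nu t}$ on the center part.

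The first step is self-mapping: I would write $J = J^c + J^s$ with the two lines of the Lyapunov-Perron map, restrict each rough integral to a discretized sum of integrals over $[i,i+1]$, and estimate each summand on a unit interval using the controlled rough path estimates of Lemma~\ref{D2gamma bound for non-random integral} and Lemma~\ref{D2gamma bound for stoch integral}. On $[i,i+1]$ the rough path semi-norms are replaced by $\norm{\Theta_i W}_\gamma$ and $\norm{\Theta_i\W}_{2\gamma}$, which by the cocycle property of Definition~\ref{rough path cocycle} and Lemma~\ref{lem:product of tempered variables} are tempered from above. Summing the geometric series in $i$ then requires $\beta-\eta>0$ for the stable part and $\eta-\nu>0$ for the center part, which is exactly the gap condition. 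A temperedness argument as in Lemma~\ref{lemma:tbelow} and~\eqref{tabove} shows that the resulting random constant is finite for $\P$-a.e.~$\bfW$, so $J$ indeed lands in the space.

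The second step is contractivity. For two inputs $U,\tilde U$, Lemma~\ref{integral bound} applied on each unit interval gives
\[
\Dnorm{J(\bfW,U,\xi)^{(i)}-J(\bfW,\tilde U,\xi)^{(i)},\cdot}
\le C[\Xnorm{A}]\bigl(L_F + (1+\norm{\Theta_i W}_\gamma)(\norm{\Theta_i W}_\gamma+\norm{\Theta_i\W}_{2\gamma})N^2 \norm{G}_{C_b^3}\bigr)\Dnorm{U^{(i)}-\tilde U^{(i)},\cdot},
\]
where $N$ bounds the relevant $\cD^{2\gamma}_W$-norms. After multiplying by $\txte^{\eta i}$ and summing against the propagators $S^c(t-r)$, $S^s(t-r)$, the geometric sums converge thanks to the gap condition, and the prefactors $L_F$ and $\norm{G}_{C_b^3}$ can be made arbitrarily small by the cut-off argument of Subsection~\ref{sec:cutoff} (truncating $F$ and $G$ at a sufficiently small radius, which is legitimate since $F(0)=\txtD F(0)=0$ and $G(0)=\txtD G(0)=\txtD^2 G(0)=0$ in assumptions~\nameref{f}--\nameref{gi}). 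This yields a Lipschitz constant strictly less than one uniformly in $\xi$, and the Banach fixed-point theorem furnishes the unique fixed point $\Gamma=\Gamma(\xi,\bfW)$.

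The hardest part, which is precisely what Subsection~\ref{sec:discretization} is designed to handle, is setting up the weighted discretized function space so that the rough integrals on $(-\infty,t]$ make sense and so that the random constants coming from $\norm{\Theta_i W}_\gamma$, $\norm{\Theta_i\W}_{2\gamma}$ can be absorbed into the exponential weight. Once the space is in place, the contraction estimate follows routinely from Lemma~\ref{integral bound}, the exponential dichotomy~\eqref{nu}--\eqref{beta}, and a sufficiently small cut-off in the nonlinearities. The measurability of $\Gamma$ in $\bfW$ follows, as in~\cite{KN23}, from the uniform contraction principle applied fiberwise.
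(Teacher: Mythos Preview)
The paper does not give its own proof of this theorem; it is simply quoted from \cite[Theorem~4.7]{KN23} and used as a black box. Your sketch is a faithful reconstruction of the argument in \cite{KN23} as summarized in Subsections~\ref{s:rd}--\ref{sec:discretization}: Banach fixed-point theorem on the weighted sequence space $BC^{\eta}(\cD^{2\gamma}_W)$, unit-interval rough path estimates via Lemma~\ref{integral bound}, geometric summation against the dichotomy~\eqref{nu}--\eqref{beta}, and smallness of the Lipschitz constant via the path-dependent cut-off of Subsection~\ref{sec:cutoff} (Lemmas~\ref{lip:fr} and~\ref{composition:cutoff}). One cosmetic discrepancy: in this paper the weight exponent is taken with $-\beta<\eta<0$ (Assumption~\ref{beta assumption}) and the norm uses $\txte^{-\eta(j-1)}$, whereas you write $\nu<\eta<\beta$ with weight $\txte^{\eta i}$; these are the same condition up to sign convention, but if you want your sketch to match the paper's notation you should flip the sign of $\eta$.
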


\begin{lemma}{\em{(\cite[Lemma 4.11]{KN23}})}
\label{localcman} There exists a tempered from below random variable 
$\rho({\bfW})$ such that the local center manifold of~\eqref{sde1} is given 
by the graph of a Lipschitz function, namely
\begin{equation}
\cM^{c}_{loc} ({\bfW})=\{ \xi + h^{c}(\xi,\bfW) 
: \xi\in B_{\cX^{c}}(0,\rho({\bfW})) \}.
\end{equation}
where we define
\begin{align*}
&	h^{c}(\xi,\bfW):=P^{s}\Gamma(\xi,\bfW)[0],
\end{align*}
consequently 
\begin{align}\label{h:cm}
	h^{c}(\xi,{\bfW}) &=  
	\int\limits_{-\I}^{0} S^{s}(-r) P^{s }F(\Gamma(\xi,{\bfW})[r]) ~\txtd r \nonumber \\
    &+\int\limits_{-\I}^{0} 
	S^{s}(-r)P^{s } G(\Gamma(\xi,{\bfW})[r]) ~\txtd \bm{{W}}_{r}.
\end{align}
\end{lemma}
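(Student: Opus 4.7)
The plan is to assemble the lemma from the fixed point $\Gamma(\xi,\bfW)$ of the discretized Lyapunov-Perron map provided by Theorem~\ref{contraction}. First I would evaluate the fixed-point equation at $t=0$: since $S^c(0)\xi=\xi$ and the two integrals over $[0,0]$ vanish, one obtains
\[
\Gamma(\xi,\bfW)[0] = \xi + \int_{-\infty}^0 S^s(-r)P^s F(\Gamma(\xi,\bfW)[r])\,\txtd r + \int_{-\infty}^0 S^s(-r)P^s G(\Gamma(\xi,\bfW)[r])\,\txtd\bfW_r.
\]
Because $\xi\in\cX^c$, applying $P^s$ to this identity yields formula~\eqref{h:cm} directly and identifies $h^c(\xi,\bfW):=P^s\Gamma(\xi,\bfW)[0]$, while the complementary projection $P^c\Gamma(\xi,\bfW)[0]$ recovers the initial datum $\xi$, so points of $\cM^c_{loc}(\bfW)$ are of the form $\xi+h^c(\xi,\bfW)$.

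Next I would verify the qualitative properties of $h^c$. Lipschitz continuity in $\xi$ is inherited from the Banach contraction argument behind Theorem~\ref{contraction}: the map $\xi\mapsto\Gamma(\xi,\bfW)$ is Lipschitz in the discretized norm of Subsection~\ref{sec:discretization} with a constant depending only on the contraction rate, and composing with the bounded projection $P^s$ and evaluation at the origin preserves this property. Since $F(0)=G(0)=0$, the null path solves the fixed-point equation at $\xi=0$, hence $h^c(0,\bfW)=0$ by uniqueness. For the tangency condition $\txtD h^c(0,\bfW)=0$ I would linearise the Lyapunov-Perron equation around zero and use that $\txtD F(0)=0$ together with $\txtD G(0)=\txtD^2 G(0)=0$ force the Fr\'echet derivative of $\xi\mapsto\Gamma(\xi,\bfW)$ at $\xi=0$ to coincide with the pure semigroup contribution $r\mapsto S^c(r)\xi$, whose stable projection at $r=0$ vanishes for $\xi\in\cX^c$.

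To obtain invariance under the cocycle $\varphi$ I would combine the cocycle property of $\bfW$ with uniqueness of $\Gamma$: translating the fixed-point equation by the shift $\theta_t$ reproduces the same Lyapunov-Perron equation, so the image of $\cM^c_{loc}(\bfW)$ under $\varphi(t,\bfW)$ lies in $\cM^c_{loc}(\theta_t\bfW)$. Finally, to pass from the truncated system of Subsection~\ref{sec:cutoff} back to the original equation~\eqref{sde1}, one restricts $\xi$ to a random ball $B_{\cX^c}(0,\rho(\bfW))$ on which the fixed-point trajectory stays below the cut-off threshold and therefore solves~\eqref{sde1} rather than its truncated version.

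The main obstacle is the temperedness from below of $\rho(\bfW)$. The admissible cut-off scale forced by the contraction estimates of Lemma~\ref{integral bound} and Theorem~\ref{contraction} is controlled by polynomial expressions in $\|W\|_\gamma$ and $\|\mathbb{W}\|_{2\gamma}$, so $\rho(\bfW)$ ultimately appears as the reciprocal of such a polynomial. By the third item of the concluding remark of Subsection~\ref{s:rd}, together with Lemma~\ref{lem:product of tempered variables}, any such quantity is tempered from below. Carefully tracking this dependence of the contraction radius on $\bfW$ through the discretization of Subsection~\ref{sec:discretization} is the bookkeeping-heavy step I would carry out last.
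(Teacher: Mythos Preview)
The paper does not provide its own proof of this lemma: it is stated as a citation of \cite[Lemma~4.11]{KN23} in the preliminary overview and no argument is given in the present manuscript. Your outline is a correct reconstruction of the standard proof strategy one finds in that reference: evaluating the Lyapunov--Perron fixed point at $t=0$ to obtain formula~\eqref{h:cm}, deducing Lipschitz dependence on $\xi$ from the contraction, reading off $h^c(0,\bfW)=0$ and the tangency condition from the vanishing assumptions on $F,G$ and their derivatives, obtaining invariance from the shift/cocycle compatibility, and finally extracting a tempered-from-below radius $\rho(\bfW)$ as the reciprocal of a polynomial in $\|W\|_\gamma,\|\mathbb{W}\|_{2\gamma}$. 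There is nothing to compare against here beyond noting that your sketch matches the argument the paper is importing.
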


The following statement establishes the smoothness of the manifold. 
\begin{theorem}{\em{(\cite[Theorem 5.2]{KN23}})}
\label{glattheit} Assume that $F$ is $C^{m}$ and $G$ is $C^{m+3}_{b}$ 
for $m\geq 1$. If a suitable gap condition holds, then $\cM^{c}_{loc}({\bfW})$ is a local $C^{m}$-center manifold.
\end{theorem}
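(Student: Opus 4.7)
The plan is to prove the statement by induction on $m$, at each level upgrading the fixed-point argument of Theorem~\ref{contraction} to higher derivatives of $\Gamma(\xi,\bfW)$ with respect to $\xi\in\cX^c$. Since $h^c(\xi,\bfW)=P^{s}\Gamma(\xi,\bfW)[0]$, it suffices to show that $\xi\mapsto\Gamma(\xi,\bfW)$ is $C^m$ as a map from $\cX^c$ into the discretized controlled rough path space from Subsection~\ref{sec:discretization}; evaluation at $t=0$ together with the bounded projection $P^s$ then transfers the regularity to $h^c$.

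For the base case $m=1$ I would formally differentiate the identity $\Gamma(\xi,\bfW)=J(\bfW,\Gamma(\xi,\bfW),\xi)$ in a direction $\eta\in\cX^c$ to obtain the linear fixed-point equation
\begin{align*}
 V_t &= S^c(t)\eta + \int_0^t S^c(t-r) P^c \txtD F(\Gamma_r) V_r \, \txtd r + \int_0^t S^c(t-r) P^c \txtD G(\Gamma_r) V_r \, \txtd \bfW_r\\
 &\quad + \int_{-\I}^t S^s(t-r) P^s \txtD F(\Gamma_r) V_r \, \txtd r + \int_{-\I}^t S^s(t-r) P^s \txtD G(\Gamma_r) V_r \, \txtd \bfW_r.
\end{align*}
Applying Lemmas~\ref{D2gamma bound for non-random integral} and~\ref{D2gamma bound for stoch integral} with $F,G$ replaced by the affine maps $V\mapsto \txtD F(\Gamma)V$ and $V\mapsto \txtD G(\Gamma)V$ (which forces $F\in C^2$ and $G\in C^4$ in order to keep $\txtD G(\Gamma)V$ a valid controlled rough path, compare Lemma~\ref{lem:addition and multiplication of RP}), this is an affine contraction on the same discretized space under a suitable strengthening of the gap condition. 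Its unique fixed point $V(\xi,\bfW)\eta$ is then identified with $\txtD_\xi\Gamma(\xi,\bfW)\eta$ by a uniform contraction / fiber-contraction argument: writing the difference quotient $\Delta_h=h^{-1}(\Gamma(\xi+h\eta,\bfW)-\Gamma(\xi,\bfW))$, one checks that $\Delta_h$ solves a perturbation of the $V$-equation whose error vanishes as $h\to 0$, and the uniformity of the contraction constant delivers $\Delta_h\to V(\xi,\bfW)\eta$ in the discretized $\cD^{2\gamma}_W$-norm.

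For the inductive step, successive formal differentiation yields for the $k$-th derivative an affine fixed-point equation whose homogeneous part is identical to the one treated above, and whose inhomogeneity is a polynomial in the lower-order derivatives $\txtD^{j}\Gamma$, $j<k$, with coefficients $\txtD^{j} F(\Gamma)$ and $\txtD^{j} G(\Gamma)$ for $j\le k$. Contractivity is therefore inherited, and these nonhomogeneities lie in $\cD^{2\gamma}_W$ precisely when $F\in C^{k+1}$ and $G\in C^{k+3}$; the shift $+3$ in $G$ reflects the two derivatives consumed by the multiplication of controlled rough paths in Lemma~\ref{lem:addition and multiplication of RP} and the one extra derivative consumed each time $G(\Gamma)$ is differentiated in $\xi$. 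Taking $k=m$ matches the hypotheses of the theorem.

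The main obstacle is the identification step in the base case: since $\Gamma$ lives in a space weighted by tempered random variables built from the rough path norms of $\bfW$, convergence of the difference quotients to $V$ must be controlled uniformly in $\xi$ and in the weighted discretized norm simultaneously. The \emph{suitable gap condition} in the statement is therefore an $m$-dependent strengthening, chosen large enough both to enforce contractivity for the linearized equation and to absorb the polynomial nonhomogeneities at every order $k\le m$; verifying that a single such gap can be chosen compatibly with the entire ladder of spaces hosting $\txtD^{1}\Gamma,\dots,\txtD^{m}\Gamma$ is the main technical bookkeeping.
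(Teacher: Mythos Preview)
The paper does not contain a proof of this statement: Theorem~\ref{glattheit} is quoted verbatim as a preliminary result from \cite[Theorem~5.2]{KN23} and is used only as input for the Taylor-approximation programme of Section~\ref{sec:main}. There is therefore no proof in the present paper to compare your proposal against.

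That said, your sketch is the standard route and matches in outline what \cite{KN23} does: differentiate the Lyapunov--Perron fixed-point equation, obtain for each order $k\le m$ an affine fixed-point problem on the same weighted discretized space $BC^{\eta}(\cD^{2\gamma}_W)$ whose homogeneous part is the linearization $V\mapsto \txtD F(\Gamma)V,\ \txtD G(\Gamma)V$, and identify its unique solution with $\txtD_\xi^{k}\Gamma$ via a uniform/fiber-contraction argument. Your bookkeeping of the regularity loss is essentially right: the extra three derivatives on $G$ come from the $C^3_b$ needed to control the controlled-rough-path Lipschitz estimate (Lemma~\ref{D2gamma bound for stoch integral}), plus one further derivative consumed per $\xi$-differentiation. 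One small correction: the $m$-dependent strengthening of the gap condition is not just to ``absorb the polynomial nonhomogeneities'' --- those sit in the inhomogeneous part and do not affect contractivity --- but rather to ensure that the derivatives $\txtD_\xi^{k}\Gamma$ themselves lie in spaces with growth rate $k\eta$, which must still be dominated by the stable rate $\beta$; this is where the condition $m\eta+\beta>0$ (or its analogue) enters in \cite{KN23}.
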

Based on this result, one naturally expects to approximate $h^c$ by a suitable Taylor-like expansion. This is the topic of the next section and the main result of this work. 
\section{Main Result}\label{sec:main}
\subsection{Our setting}\label{setting}
The main goal is to approximate $h^c$ by polynomials, following a similar approach as in the deterministic case described in Appendix~\ref{sec:carr}. Here we derive such an approximation result in the stochastic setting based on rough path theory.\\

We first fix a $\cV$-valued $\gamma$-H\"older geometric rough path cocycle $\bfW=(W,\W)$ as specified in Definition~\ref{rough path cocycle}. Consider the following system of coupled RDEs given by 
\begin{align}\label{SDE without cut-off}
\begin{cases}
    &\txtd x = (A^c x + F^c(x,y))~\txtd t + G^c(x,y)~\txtd \textbf{W}_t\\
&    \txtd y = (A^s y + F^s(x,y))~\txtd t + G^s(x,y)~\txtd \textbf{W}_t \\
   & x_0=\xi\in B_{\cX^c}(0,\rho(\bfW)),~ y_0\in \cX^s,
    \end{cases}
 \end{align}
where $\cX^c,~\cX^s$ both have dimension one and $y_0$ belongs to the manifold $\cM^c_{loc}(\bfW)$. Moreover, the radius $\rho(\bfW)$ is tempered from below and smaller than one. Later in Lemma \ref{lem:omit cut-off} we will specify the assumption for $\rho(\bfW)$. 
\begin{remark}
    We consider the setting in which $x$ and $y$ are one-dimensional not to overload the notation. However, most of the proofs and ideas remain the same in higher dimensions. In particular, the proofs in Section \ref{sec:fp} are independent of the dimension of the considered system. 
    In contrast to $\cX$, $\cV$ is not necessarily of dimension two but is $d$-dimensional for some $d\in\N$.
\end{remark}
\begin{notation}\label{notation:X norm}
 Throughout the manuscript we use the following convention 
 \[ \|\cdot\|_{\cX} =\|\cdot\|_{\cX^c} +\|\cdot\|_{\cX^s}. \]
 For an element $\xi\in \cX^c$ we have that $(\xi,0)\in \cX$ and analogously if $\xi\in \cX^s$ then $(0,\xi)\in \cX$.
\end{notation}

\begin{assumptions}    \item[(\textbf{F})\label{F}] $F^c:\cX^c \times \cX^s \to \cX^c\in C^m$ and $F^s:\cX^c \times \cX^s \to \cX^s\in C^m$ for $m\ge1$ are locally Lipschitz continuous in both variables with $F^c(0,0)=\txtD F^c(0,0)= F^s(0,0)=\txtD F^s(0,0)=0$. 
\item[(\textbf{G})\label{G}] $G^c: \cX^c\times\cX^s\to \cL(\cV,\cX^c)\in C^{m+3}$ and $G^s: \cX^c\times\cX^s\to \cL(\cV,\cX^s)\in C^{m+3}$ for $m\ge1$ are locally Lipschitz continuous in both variables with $G^c(0,0)=\txtD G^c(0,0)=\txtD^2G^c(0,0)=G^s(0,0)=\txtD G^s(0,0)=\txtD^2G^s(0,0)=0$.
\end{assumptions}

\begin{remark}\label{cutofflocal}
Note that $F^{c/s}$ and $G^{c/s}$ can be made globally Lipschitz continuous by a standard cut-off argument.~This is enough for our aims since we are only interested in the dynamics in a neighbourhood of the origin.~We perform a similar argument in the next subsection at the level of paths and not in the phase space $\cX$ as required here.~Therefore we refrain from performing the cut-off argument in $\cX$ here. 
\end{remark}

\subsection{A truncation argument}\label{sec:cutoff}
We are only interested in a local approximation of the center manifold, i.e.~in a neighborhood of the origin, which is the fixed point of the system~\eqref{SDE without cut-off}. Hence, we truncate the coefficients $F^s, F^c, G^s, G^c$ outside a ball around the origin, similar to~\cite[Section 2.1]{KN23}. In contrast to the classical cut-off techniques, where one performs the truncation argument at the level of vector fields, we work here at the level of paths in order to incorporate the space $\cD^{2\gamma}_W$.  \\

First of all, we set \[\cD^{2\gamma}_W([0,1];\cX):= \cD^{2\gamma}_W([0,1],\cX^c)\times \cD^{2\gamma}_W([0,1],\cX^s)\]
    and emphasize that by $((x,y), (x,y)')\in \cD^{2\gamma}_W([0,1],\cX^c)\times \cD^{2\gamma}_W([0,1],\cX^s)$ we mean that $(x,x')\in \cD^{2\gamma}_W([0,1],\cX^c)$ and $(y,y')\in \cD^{2\gamma}_W([0,1],\cX^s)$. 
\begin{definition}\label{cut-off def}
    Let $\chi:\cD^{2\gamma}_W([0,1],\cX^c)\times \cD^{2\gamma}_W([0,1],\cX^s)\to \cD^{2\gamma}_W([0,1],\cX^c)\times \cD^{2\gamma}_W([0,1],\cX^s)$ be a Lipschitz function defined as
\begin{align*}
    \chi(x,y)[\cdot] := \begin{cases}
        (x,y) & \Dnorm{(x,y),(x',y')}\le 1/2 \\
        0 & \Dnorm{(x,y),(x',y')}\ge 1.
    \end{cases}
\end{align*}
For a positive number $R>0$ we define $\chi_R(x,y)[\cdot]:=R\chi((x,y)/R)[\cdot]$. Hence
\begin{align*}
    \chi_R(x,y)[\cdot] := \begin{cases}
        (x,y) & \Dnorm{(x,y),(x',y')}\le R/2 \\
        0 & \Dnorm{(x,y),(x',y')}\ge R .
    \end{cases}
\end{align*}
We now compose the nonlinear terms $F^{c/s}$ and $G^{c/s}$ with $\chi_R$. This means that we define 
$$F^{c/s}_R(x,y)[\cdot]:=F^{c/s}\circ\chi_R(x,y)[\cdot],~~~G^{c/s}_R(x,y)[\cdot]:=G^{c/s}\circ\chi_R(x,y)[\cdot].$$
\end{definition}
\begin{lemma}
    Let $(x,y),(\tilde{x},\tilde y)\in \cD^{2\gamma}_W([0,1];\cX)$ and let $H\in C^1$ be locally Lipschitz continuous with $H(0,0)=\txtD H(0,0)=0$. Then
    \begin{align*}
        &\Dnorm{H_R(x,y)[\cdot]-H_R(\tilde x,\tilde y)[\cdot], H_R(x,y)[\cdot]'-H_R(\tilde x,\tilde y)[\cdot]'} \\
        &\le C_H R\left(\Dnorm{x-\tilde{x}, x' - \tilde x'}+\Dnorm{y-\tilde y, y' - \tilde y'}\right),
    \end{align*}
    where $C_H$ depends on the Lipschitz constants of $H$ and $\txtD H$.
\end{lemma}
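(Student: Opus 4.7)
The strategy is to first exploit the Lipschitz character of the cut-off $\chi_R$ to reduce the problem to controlled rough paths of $\cD^{2\gamma}_W$-norm at most $R$, and then to use the vanishing $H(0,0)=\txtD H(0,0)=0$ to extract the factor $R$ in the final estimate.

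Set $(X,X'):=\chi_R(x,y)[\cdot]$ and $(\tilde X,\tilde X'):=\chi_R(\tilde x,\tilde y)[\cdot]$. By construction in Definition~\ref{cut-off def} the truncated objects satisfy $\Dnorm{X,X'},\Dnorm{\tilde X,\tilde X'}\le R$, and the identity $\chi_R(\cdot)=R\chi(\cdot/R)$ shows that the Lipschitz constant of $\chi_R$ equals that of $\chi$, say $L_\chi$, independently of $R$. Hence
\[
\Dnorm{X-\tilde X,X'-\tilde X'}\le L_\chi\bigl(\Dnorm{x-\tilde x,x'-\tilde x'}+\Dnorm{y-\tilde y,y'-\tilde y'}\bigr).
\]
Invoking the chain rule for controlled rough paths identifies $\txtD H(X)X'$ as a Gubinelli derivative for $H(X)$, and it suffices to prove
\[
\Dnorm{H(X)-H(\tilde X),\txtD H(X)X'-\txtD H(\tilde X)\tilde X'}\le C\,L_{\txtD H}\,R\,\Dnorm{X-\tilde X,X'-\tilde X'}.
\]

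The key pointwise inputs come from $H(0,0)=\txtD H(0,0)=0$ together with the local Lipschitz continuity of $H$ and $\txtD H$: whenever $z,\tilde z$ lie in a ball of radius $R$ in $\cX$,
\[
\Xnorm{\txtD H(z)}\le L_{\txtD H}R,\quad \Xnorm{\txtD H(z)-\txtD H(\tilde z)}\le L_{\txtD H}\Xnorm{z-\tilde z},\quad \Xnorm{H(z)-H(\tilde z)}\le L_{\txtD H}R\,\Xnorm{z-\tilde z}.
\]
I would then expand each of the four summands of $\Dnorm{\cdot}$ defined in~\eqref{Dnorm}, namely $\norm{H(X)-H(\tilde X)}_\I$, $\norm{\txtD H(X)X'-\txtD H(\tilde X)\tilde X'}_\I$, $\norm{\txtD H(X)X'-\txtD H(\tilde X)\tilde X'}_\gamma$ and $\norm{R^{H(X)-H(\tilde X)}}_{2\gamma}$, using the standard telescoping identity $aA-bB=(a-b)A+b(A-B)$. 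Combined with $\Dnorm{X,X'},\Dnorm{\tilde X,\tilde X'}\le R$ and the Hölder estimate $\norm{X}_\gamma\le C(1+\norm{W}_\gamma)R$ used in Lemma~\ref{lem:addition and multiplication of RP}, each resulting summand factorises into precisely one term of size $R$ and one term of size $\Dnorm{X-\tilde X,X'-\tilde X'}$.

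The technically heaviest part is the remainder. I would use the identity
\[
R^{H(X)}_{s,t}=\txtD H(X_s)R^X_{s,t}+E_{s,t},\qquad E_{s,t}:=\int_0^1\bigl[\txtD H(X_s+\tau(X_t-X_s))-\txtD H(X_s)\bigr](X_t-X_s)\,\txtd\tau,
\]
and the analogue for $\tilde X$. The contribution $\txtD H(X_s)R^X_{s,t}-\txtD H(\tilde X_s)R^{\tilde X}_{s,t}$ is controlled by the same telescoping trick together with $\norm{R^X}_{2\gamma}\le R$. The Taylor-error difference $E_{s,t}-\tilde E_{s,t}$ is split analogously; the local Lipschitz estimate on $\txtD H$ then yields one factor of order $\norm{X}_\gamma|t-s|^\gamma\lesssim R|t-s|^\gamma$ and another of order $\norm{X-\tilde X}_\gamma|t-s|^\gamma$, producing the required $|t-s|^{2\gamma}$-bound with a single $R$ in front. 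The central obstacle is exactly this bookkeeping: one must distribute the differences so that every summand carries precisely one ``vanishing-at-origin'' factor, avoiding any spurious $R^2$ and consistently absorbing the remaining factors into $\Dnorm{X-\tilde X,X'-\tilde X'}$, after which the Lipschitz estimate for $\chi_R$ from the first paragraph finishes the proof.
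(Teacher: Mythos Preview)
Your proposal is correct and shares the same core idea as the paper's proof: reduce via the Lipschitz cut-off $\chi_R$ to paths of $\cD^{2\gamma}_W$-norm at most $R$, then use $\txtD H(0,0)=0$ together with the local Lipschitz continuity of $\txtD H$ to extract the single factor $R$. The route, however, is different. You unpack the $\cD^{2\gamma}_W$-norm into its four constituents and estimate each by hand, with the Taylor-remainder difference $E_{s,t}-\tilde E_{s,t}$ as the delicate part. The paper instead applies the fundamental theorem of calculus once at the level of controlled rough paths,
\[
H_R(x,y)-H_R(\tilde x,\tilde y)=\int_0^1 \txtD H\bigl(r\chi_R(x,y)+(1-r)\chi_R(\tilde x,\tilde y)\bigr)\,\txtd r\cdot\bigl(\chi_R(x,y)-\chi_R(\tilde x,\tilde y)\bigr),
\]
and then bounds the $\cD^{2\gamma}_W$-norm of the integrand by $C_H R$ in one stroke via the product and composition calculus for controlled paths (Lemma~\ref{lem:addition and multiplication of RP}). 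This is shorter and bypasses the explicit remainder bookkeeping you identify as the heaviest step; on the other hand, your term-by-term argument makes the mechanism behind the factor $R$ fully visible. Both presentations implicitly rely on $\txtD H$ having enough regularity for the composition $\txtD H(\cdot)$ to behave well in $\cD^{2\gamma}_W$, which the paper does not spell out either.
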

\begin{proof}
Since $\txtD H(0,0)=0$ we get by Taylor's formula that 
    \begin{align*}
        &\Dnorm{H_R(x,y)[\cdot]-H_R(\tilde x,\tilde y)[\cdot], H_R(x,y)[\cdot]'-H_R(\tilde x,\tilde y)[\cdot]'} \\
        &\le \int_0^1\Dnorm{\txtD H\left(r\chi_R(x,y)[\cdot] +(1-r)\chi_R(\tilde x,\tilde y)[\cdot] \right), \txtD H\left(r\chi_R(x,y)[\cdot] +(1-r)\chi_R(\tilde x,\tilde y)[\cdot] \right)'} ~\txtd r\\
        &~~~~\Dnorm{(x,y)-(\tilde{x},\tilde y), (x',y')-(\tilde{x}',\tilde y')} .        
    \end{align*}
    Moreover, we can use the local Lipschitz continuity of $\txtD H$ with Lemma \ref{D2gamma bound for non-random integral} and the definition of $\chi_R$ to infer that
    \begin{align*}
        &\Dnorm{H_R(x,y)[\cdot]-H_R(\tilde x,\tilde y)[\cdot], H_R(x,y)[\cdot]'-H_R(\tilde x,\tilde y)[\cdot]'} \\
        &\le C_H\max\left\{\Dnorm{\chi_R(x,y)[\cdot], \chi_R(x,y)[\cdot]'},\Dnorm{\chi_R(\tilde{x},\tilde y)[\cdot], \chi_R(\tilde{x},\tilde y)[\cdot]'} \right\}\\
        &~~~~~~\times\Dnorm{(x,y)-(\tilde{x},\tilde y), (x',y')-(\tilde{x}',\tilde y')}\\
        &\le C_H R\Dnorm{(x,y)-(\tilde{x},\tilde y), (x',y')-(\tilde{x}',\tilde y')}\\
        &\le C_H R\left(\Dnorm{x-\tilde{x}, x' - \tilde x'}+\Dnorm{y-\tilde y, y' - \tilde y'}\right).
    \end{align*}
    \qed \\
\end{proof}
Based on this lemma, the following bounds can be shown for $F_R$ and $G_R$ by similar computations as in~\cite{KN23}. 
\begin{lemma}{\em{(\cite[Lemma 2.14]{KN23}})}\label{lip:fr}
Let $((x,y),(x',y')), (\left(\tilde{x},\tilde y\right),\left(\tilde{x}',\tilde y'\right))\in \cD^{2\gamma}_W([0,1],\cX^c)\times \cD^{2\gamma}_W([0,1],\cX^s)$. Then there exists a constant $C_F=C[F,\chi]$ such that
\begin{align}\label{fr}
\|F_{R}(x,y)[\cdot] - F_{R}\left(\tilde{x},\tilde y\right)[\cdot]\|_{\infty}  &\leq C_F R\left(\Dnorm{x-\tilde{x}, x' - \tilde x'}+\Dnorm{y-\tilde y, y' - \tilde y'}\right).
\end{align}
\end{lemma}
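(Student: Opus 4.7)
The plan is to deduce this Lipschitz estimate directly from the more general lemma proved just above, applied to $H = F$ (which satisfies exactly the required hypotheses $H\in C^1$, locally Lipschitz, $H(0,0) = \txtD H(0,0) = 0$ by Assumption \nameref{F} with $m\ge 1$). The point is that the previous lemma controls the full $\cD^{2\gamma}_W$-norm of $F_R(x,y)[\cdot]-F_R(\tilde x,\tilde y)[\cdot]$, whereas the present statement only asks for the weaker sup-norm. Since, by the definition of the equivalent $\cD^{2\gamma}_W$-norm in \eqref{Dnorm}, one has $\|Z\|_\infty \le \|Z,Z'\|_{\cD^{2\gamma}_W}$, the estimate will follow immediately.

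Concretely, I would proceed as follows. First, I would note that the cut-off composition $\chi_R$ satisfies $\|\chi_R(x,y),\chi_R(x,y)'\|_{\cD^{2\gamma}_W}\le R$ (by the very construction in Definition~\ref{cut-off def}) and that $\chi_R$ is Lipschitz with a constant depending only on $\chi$. Next, applying Taylor's formula at the level of the controlled paths, I would write
\[
F_R(x,y)[\cdot]-F_R(\tilde x,\tilde y)[\cdot] = \int_0^1 \txtD F\!\left(r\chi_R(x,y)[\cdot]+(1-r)\chi_R(\tilde x,\tilde y)[\cdot]\right)~\txtd r \cdot \bigl(\chi_R(x,y)[\cdot]-\chi_R(\tilde x,\tilde y)[\cdot]\bigr),
\]
and use $\txtD F(0,0)=0$ together with the local Lipschitz continuity of $\txtD F$ to bound the integrand by $C_F R$ on the region where the cut-off is active (and by $0$ outside). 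Taking the sup norm then yields the desired estimate, with $C_F$ depending on the Lipschitz constant of $\txtD F$ on the ball of radius $R$ and on $\chi$.

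There is no real obstacle here: the work has already been done in the preceding lemma, which establishes the analogous bound in the stronger $\cD^{2\gamma}_W$-norm. The only thing one has to check is that the bound passes through the sup norm bound $\|\cdot\|_\infty \le \|\cdot,\cdot\|_{\cD^{2\gamma}_W}$ without losing the factor $R$, which is straightforward. Alternatively, one could give a direct, shorter proof at the level of $\|\cdot\|_\infty$ alone, by applying the mean value theorem pointwise in time to $F\circ\chi_R$ and using $\|\chi_R(x,y)\|_\infty \le R$, which avoids the remainder term computation from Lemma~\ref{lem:addition and multiplication of RP} altogether. I would likely present the latter, since only the sup norm is needed in the statement.
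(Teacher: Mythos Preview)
Your proposal is correct and matches the paper's own treatment: the paper does not give a separate proof but simply notes that the bound follows from the preceding lemma (applied with $H=F$) ``by similar computations as in~\cite{KN23}''. Your observation that $\|\cdot\|_\infty \le \|\cdot,\cdot'\|_{\cD^{2\gamma}_W}$ makes the sup-norm estimate an immediate consequence of that preceding $\cD^{2\gamma}_W$-bound is exactly the intended argument.
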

\begin{lemma}{\em{(\cite[Lemma 2.15]{KN23}})}\label{composition:cutoff} 
	Let $((x,y),(x',y')), (\left(\tilde{x},\tilde y\right),\left(\tilde{x}',\tilde y'\right))\in \cD^{2\gamma}_W([0,1],\cX^c)\times \cD^{2\gamma}_W([0,1],\cX^s)$. Then there exists a constant $C_G=C[\|W\|_{\gamma},\norm{\W}_{2\gamma},G,\chi]$ such that
\begin{align}\label{composition:r}
	&\|G_{R}(x,y)[\cdot] - G_R(\tilde x, \tilde y)[\cdot], (G_{R}(x,y)[\cdot]- G_R(\tilde x, \tilde y)[\cdot])'\|_{\cD^{2\gamma}_{W}} \nonumber \\
    &\leq C_G R \left(\Dnorm{x-\tilde{x}, x' - \tilde x'}+\Dnorm{y-\tilde y, y' - \tilde y'}\right).
	\end{align}
\end{lemma}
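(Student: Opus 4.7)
The plan is to adapt the standard composition estimate for $C_b^3$ functions with controlled rough paths (compare Lemma~\ref{D2gamma bound for stoch integral}) to the truncated setting, exploiting the fact that the cut-off $\chi_R$ confines its output to a $\cD^{2\gamma}_W$-ball of radius $R$ in order to extract the factor $R$ via Taylor expansion at the origin.

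First I would identify the Gubinelli derivative of $G_R(x,y)$. Since $G\in C^{m+3}$, in particular $C^3$, the chain rule for controlled rough paths yields
\[
(G_R(x,y))' \;=\; \txtD G(\chi_R(x,y))\,\chi_R(x,y)',
\]
where $\chi_R(x,y)'$ is the Gubinelli derivative of $\chi_R(x,y)$. For the natural realisation of $\chi$ as multiplication of $(x,y)$ by a smooth scalar bump of $\|(x,y),(x',y')\|_{\cD^{2\gamma}_W}$, this derivative is a bounded scalar multiple of $(x',y')$, and the construction ensures $\|\chi_R(x,y),\chi_R(x,y)'\|_{\cD^{2\gamma}_W}\le R$. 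The same holds for $(\tilde x,\tilde y)$ and for the convex combination $Z_r:=r\,\chi_R(x,y)+(1-r)\,\chi_R(\tilde x,\tilde y)$.

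Next, I would split the difference using the fundamental theorem of calculus,
\[
G(\chi_R(x,y))-G(\chi_R(\tilde x,\tilde y)) \;=\; \int_0^1 \txtD G(Z_r)\,\txtd r\cdot\bigl(\chi_R(x,y)-\chi_R(\tilde x,\tilde y)\bigr),
\]
and an analogous expansion to the Gubinelli derivative, which produces contributions involving both $\txtD G$ and $\txtD^2 G$ evaluated at $Z_r$. Because $\txtD G(0,0)=0$, Taylor's theorem gives $\|\txtD G(Z_r)\|_\infty \le \|\txtD^2 G\|_\infty\, R$; combined with the Lipschitz property of $\chi_R$ in $\cD^{2\gamma}_W$, whose Lipschitz constant is independent of $R$ by the scaling $\chi_R(\cdot)=R\,\chi(\cdot/R)$, this is what produces the linear factor $R$. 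The $\gamma$- and $2\gamma$-Hölder parts of $\|\cdot\|_{\cD^{2\gamma}_W}$ then bring in $\|W\|_\gamma$ and $\|\W\|_{2\gamma}$ exactly as in the unrestricted composition bound of Lemma~\ref{D2gamma bound for stoch integral}, producing the stated dependence $C_G=C[\|W\|_\gamma,\|\W\|_{2\gamma},G,\chi]$.

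The main technical obstacle is the $2\gamma$-Hölder seminorm of the remainder $R^{G_R(x,y)-G_R(\tilde x,\tilde y)}$, which requires a second-order Taylor expansion of $\txtD G$ along the interpolation $r\mapsto Z_r$, combined with a cross-estimate in the spirit of Lemma~\ref{lem:addition and multiplication of RP} (treating $\txtD G$ as a vector-valued smooth map of its argument). The vanishing condition $\txtD^2 G(0,0)=0$ is used at this step to prevent an extra power of $R$ from entering through the second-order term, ensuring that the final bound remains linear in $R$ and therefore small when $R\to 0$.
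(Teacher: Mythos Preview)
Your plan matches the paper's argument: the paper does not give a self-contained proof here but cites \cite[Lemma~2.15]{KN23} after presenting exactly this template in the unnamed lemma immediately preceding Lemma~\ref{lip:fr} --- Taylor/FTC along the interpolation $Z_r = r\chi_R(x,y)+(1-r)\chi_R(\tilde x,\tilde y)$, followed by the product estimate of Lemma~\ref{lem:addition and multiplication of RP} together with the bound $\|\txtD G(Z_r),(\txtD G(Z_r))'\|_{\cD^{2\gamma}_W}\le CR$ (coming from $\txtD G(0,0)=0$) and the Lipschitz property of $\chi_R$.

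One correction: the hypothesis $\txtD^2G(0,0)=0$ is \emph{not} used for this lemma. Expanding the four components of $\|\txtD G(Z_r),(\txtD G(Z_r))'\|_{\cD^{2\gamma}_W}$, the second-order contributions to the remainder $R^{\txtD G(Z_r)}$ are controlled by $\|\txtD^2G\|_\infty\|R^{Z_r}\|_{2\gamma}\le CR$ and $\|\txtD^3G\|_\infty\|Z_r\|_\gamma^2\le C(1+\|W\|_\gamma)^2R^2\le CR$ (recall $R\le 1$), so the linear factor $R$ appears using only $\txtD G(0,0)=0$ and $G\in C^3$. The condition $\txtD^2G(0,0)=0$ is imposed in the paper for a different reason --- it forces $g_1=0$ in the invariance equation~\eqref{invariance equation g_i} and hence $\alpha_1\equiv 0$ --- not for this Lipschitz bound.
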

\begin{remark}
    Later we will perform the truncation argument for a random variable $R=R(W)$,  such that we can keep track of the size of the noise reflected by the H\"older norms $\|W\|_\gamma$ respectively $\|\mathbb{W}\|_{2\gamma}$. In particular, we will see that $R$ is tempered from below. 
\end{remark}

From now on we consider the original system \eqref{SDE without cut-off} with the cut-off applied to the nonlinearities given by 
\begin{align}\label{SDE}
\begin{cases}
    &\txtd x = (A^c x + F^c_R(x,y)[t])~\txtd t + G_R^c(x,y)[t]~\txtd \textbf{W}_t\\
&    \txtd y = (A^s y + F^s_R(x,y)[t])~\txtd t + G_R^s(x,y)[t]~\txtd \textbf{W}_t \\
   & x(0)=\xi\in B_{\cX^c}(0,\rho(\bfW)),~ y(0)\in \cX^s.
    \end{cases}
\end{align}
\begin{remark}
\begin{itemize}
    \item [1)] We apply the cut-off function specified in Definition \ref{cut-off def} uniformly in $x$ and $y$ for $F^{c/s}$ and $G^{c/s}$. 
    This is justified by the fact that our results are local, i.e.~hold in a suitable neighborhood of the origin. We further assume that the cut-off radius $R$ is smaller than one and determine later a more precise bound for $R$ depending on the relevant parameters of the system and on the rough path ${\bf W}=(W,\mathbb{W})$.
    \item [2)] We emphasize that the nonlinearities $F^{c/s}, ~G^{c/s}$ are initially time-independent. After applying the cut-off we obtain time-dependent nonlinearities, since the cut-off function truncates the $\|\cdot\|_{\cD^{2\gamma}_W}$-norm and therefore depends on the whole path of the argument. We write $F_R^{c/s}(x,y)[t]$ and $G_R^{c/s}(x,y)[t]$ in order to emphasize this path-dependence.
    \item [3)] As the cut-off bounds the $\cD^{2\gamma}_W$-norm, it also bounds $\|\cdot\|_\infty$. Hence, our path-dependent coefficients are bounded, i.e.~satisfy \[(F_R^{c/s},(F_R^{c/s})')\in C_b^m\left(\cD^{2\gamma}_W([0,1],\cX^c) \times \cD^{2\gamma}_W([0,1],\cX^s),\cD^{2\gamma}_W([0,1],\cX^{c/s}) \right)\] and \[(G_R^{c/s}, (G_R^{c/s})')\in C^{m+3}_b\left(\cD^{2\gamma}_W([0,1],\cX^c) \times \cD^{2\gamma}_W([0,1],\cX^s),\cD^{2\gamma}_W([0,1],\cL(\cV, \cX^{c/s})) \right).\] 
    \item [4)] The well-posedness of the modified system~\eqref{SDE} follows by a fixed-point argument similar to~\cite[Theorem 2.16]{KN23}.
\end{itemize}
\end{remark}

\subsection{Approximation of local center manifolds}\label{sec:approx}
The goal is to approximate in a neighbourhood of the origin the function $h^c(x,\bfW)$, which represents the graph of the $C^m$ center manifold given by 
$$\cM^c_{loc}(\bfW) = \{x+h^c(x,\bfW): x\in B_{\cX^c}(0,\rho(\bfW))\}$$
as stated in Lemma~\ref{localcman}.

\paragraph{Ansatz.} We approximate $h^c$ by a Taylor expansion of order $q$ for $2\le q \le m$, given by 
\begin{equation}\label{ansatz}
y_t=h^c(x_t,\bfW_t)=\phi(x_t):=\sum\limits_{i=1}^{q} \alpha_i(\bfW_t) x_t^i,
\end{equation}
where the coefficients $\alpha_i=\alpha_i(\bfW_t)\in\cL(\cX^c,\cX^s)$ solve controlled RDEs given by
\begin{align}\label{alpha RDE}
    \txtd \alpha_i = [A^{\alpha_i}\alpha_i + f_i]~\txtd t + g_i ~\txtd \textbf{W}_t,~~i\in\{1,\ldots q\}. 
\end{align}
\begin{remark}
\begin{enumerate}
    \item  The dependence of the solutions $\alpha_i$ on the rough path $\bfW$ i.e.~$\alpha_i=\alpha_i(\bfW_t)$, will be dropped for notational simplicity, whenever this is clear from the context. Similarly we write for the coefficients $f_i=f_i(\alpha_1,\ldots, \alpha_{i-1})$ respectively $g_i=g_i(\alpha_1,\ldots,\alpha_{i})$ for $i\in\{1,\ldots,q\}$. 
    \item The system of RDEs can be solved recursively.~This will be seen later from the equations defining $f_i$ \eqref{M def} and $g_i$ \eqref{tilde M def} and in the Examples \ref{Chekroun linear example} and \ref{Chekroun nonlinear example}.~The strategy is to first determine the coefficients $f_1,g_1$ and afterwards solve the RDE for $\alpha_1$.~Thereafter we determine $f_2,g_2$, which can potentially depend on $\alpha_1$, and solve the RDE for $\alpha_2$. This procedure can be repeated until $\alpha_q$ is computed.~An algorithm for this computation will be provided in a future work.
    \item The ansatz~\eqref{ansatz} is inspired by a local Taylor expansion of $h^c(x,\bfW)$ for $x\in B_{\cX^c}(0,\rho(\bfW))$ 
    \begin{align*}
        h^c(x_t,\bfW_t) = \sum_{i=1}^q \txtD^i_\xi h^c(\xi,\bfW_t)|_{\xi=0} x^i_t,
    \end{align*}
    where the coefficients $\alpha_i(\bfW_t):=\txtD^i_{\xi} h^c(\xi,\bfW_t)|_{\xi=0}$ for $i\in\{1,\ldots,q\}$ can be determined from the Lyapunov-Perron map using~\eqref{h:cm}. 
    \item A similar procedure is exploited in~\cite{PoetzscheRasmussen} for the approximation of stable and unstable manifolds for deterministic nonautonomous systems.~The coefficients of the Taylor approximation solve nonautonomous ODEs, which are derived from the Lyapunov-Perron map.~The ODEs have also a hierarchical structure, so the coefficients of the approximation only depend on coefficients of lower order. Hence, the system is solved recursively, similar to the case investigated here. 
\end{enumerate}
\end{remark}
\begin{notation}
    From here on we only write $\cD^{2\gamma}_W$ suppressing the time interval $[0,1]$ and the phase space whenever they are clear from the context.~We emphasize that $(x,x')\in\cD^{2\gamma}_W\left([0,1];\cX^c \right)$, $(y,y')\in\cD^{2\gamma}_W\left([0,1];\cX^s \right)$ and for all $i\in\{1,\dots,q\}$ we have $(\alpha_i,\alpha_i')\in\cD^{2\gamma}_W\left([0,1];\cL\left(\cX^c,\cX^s\right) \right)$.
\end{notation}

\begin{assumptions}\label{ass:alpha}
 \begin{itemize}
 \item [1)] We assume that there exist $\delta_i>0$  for $i\in \{1,\dots,q\}$ such that the semigroups generated by $A^{\alpha_i}$ are exponentially stable, meaning that
\begin{align}
    \Xnorm{e^{A^{\alpha_i}t}x}\le M_{A^{\alpha_i}} e^{-\delta_i t}\Xnorm{x}, ~\mbox{for } t\geq 0 \mbox{ and } x\in \cX.\label{delta}
\end{align}
     \item [2)]    We further assume that the RDEs~\eqref{alpha RDE} admit global solutions. Since these solutions are defined pathwise, they automatically generate RDS, see~\cite[Lemma 3.7]{KN23}. 
     \item [3)]  Moreover, these solutions are assumed to be stationary in the RDS sense. This means that for the corresponding RDS $\varphi_i$ generated by $\alpha_i$, for $i\in\{1,\ldots,q\}$, we have
\[ \varphi_i(t,W,\alpha_i(\bfW))=\alpha_i(\Theta_t \bfW), \]
where $W\in\Omega$, where the canonical probability space $(\Omega,\cF,\P,(\theta_t)_{t\in\R})$ is introduced in Example \ref{ex:stationary}. 
An example of a stationary solution is also provided below in Example~\ref{ex:stationary}. 
\end{itemize}
    
\end{assumptions}

\begin{remark}
     The stationarity of the solutions of~\eqref{alpha RDE} is required in order to guarantee the invariance of the corresponding local center manifolds with respect to the shift $\Theta$, i.e.~$\varphi(t,W,\cM^c_{loc}(\bfW))\subseteq \cM^c_{loc}(\Theta_t \bfW)$.~This assumption is natural in the context of invariant manifolds for stochastic systems, see~\cite{ChekrounLiuWang} and~\cite{ChekrounLiuWang example} for a similar setting for SDEs with linear multiplicative noise. Moreover, in \cite{GhaniVarzanehRiedel,Ibounds,NA}, R(P)DEs are linearized along a stationary solution in order to apply the multiplicative ergodic theorem and infer the existence of invariant manifolds based on the sign of the top Lyapunov exponent. 
\end{remark}
\begin{example}\label{ex:stationary}
    A standard example for a stationary solution of a linear SDE driven by a two-sided Brownian motion $(B_t)_{t\in\R}$
\begin{align}\label{lin:sde}
    \txtd z = - z~\txtd t + \txtd B_t
\end{align}
is given by the stationary Ornstein-Uhlenbeck process
\begin{align*}
    z(t,B):=z(\theta_tB) = \int_{-\I}^t e^{-(t-s)}\txtd B_s = \int_{-\I}^0 e^s ~\txtd \theta_tB_s = \int_{-\I}^0e^s\theta_tB_s~\txtd s.
\end{align*}
Here $\theta$ denotes the usual Wiener-shift defined on the canonical probability space $(C_0(\R),\cB(\R),\mathbb{P} )$ associated to a two-sided Brownian motion, i.e.~$\theta_t B_s:= B_{t+s} - B_s$ for $s,t\in\R.$ Based on this property and denoting by $\Phi$ the RDS generated by~\eqref{lin:sde}, one easily checks that the stationary Ornstein-Uhlenbeck process is a random fixed point for $\Phi$, meaning that
\begin{align*} \Phi(t,B,z(B))&= e^{-t} z(B) +\int_0^t e^{-(t-s)}~\txtd B_s = e^{-t} \int_{-\infty}^0 e^{s}~\txtd B_s + \int_0^t e^{-(t-s)}~\txtd B_s\\
&=\int_{-\infty}^t e^{-(t-s)}~\txtd B_s =z(\theta_tB).  \end{align*}
For more details on the existence of stationary solutions for S(P)DEs with fractional Brownian motion we refer to~\cite{MS} and for RDEs to \cite[Section 2.2]{GhRiedel}.~The results in~\cite{GhRiedel} are stated for rough delay differential equations and setting the delay term equal to zero, we recover our framework.
\end{example}

\subsection{Rough path estimates}\label{sec:rpest}

Throughout this subsection we consider the system~\eqref{SDE} on the time interval $[0,1]$. Afterwards we extend our results to arbitrary time intervals. In this section we provide suitable estimates in the rough path norm for the solution of~\eqref{SDE} and of the systems of controlled RDEs~\eqref{alpha RDE}.~Due to Remark~\ref{cutofflocal} we can assume that the nonlinear terms are globally Lipschitz.\\

Furthermore, throughout this and the following sections $C$ denotes an universal constant which varies from line to line.~The dependence on the constant $C$ on certain parameters is denoted by $C[\cdot]$. 
\begin{lemma}\label{x is 2 gamma and xi bound}
    Let $x$ be the solution of \eqref{SDE} with initial condition $\xi\in B_{\cX^c}(0,\rho(\bfW))$ on the time interval $[0,1]$. Then   $(x,x')\in\cD^{2\gamma}_W$ and there exists a constant $C_x:=C[\Xnorm{A^c}]\ge 1$ such that
    $$\Dnorm{x,x'}\le C_x\Xnorm{\xi}.$$
\end{lemma}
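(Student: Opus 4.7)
The plan is to write the first equation of~\eqref{SDE} in mild form and then bootstrap the rough path estimates already established in the preceding lemmas. Writing
\[
x_t = S^c(t)\xi + \int_0^t S^c(t-r) F^c_R(x,y)[r]\,\txtd r + \int_0^t S^c(t-r) G^c_R(x,y)[r]\,\txtd \bfW_r,
\]
one reads off the Gubinelli derivative $x'_t = G^c_R(x,y)[t]$. The key observation is that $F^c(0,0)=0$ and $G^c(0,0)=0$ imply $F^c_R(0,0)=0$ and $G^c_R(0,0)=0$ after composing with $\chi_R$, so the cutoff bounds can be applied with vanishing reference path.

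Using Lemma~\ref{integral bound} with $(\tilde Y,\tilde Y')=(0,0)$ combined with the Lipschitz estimates for the truncated nonlinearities (Lemma~\ref{lip:fr} and Lemma~\ref{composition:cutoff}), I obtain an inequality of the shape
\[
\Dnorm{x,x'} \leq C[\Xnorm{A^c}]\Xnorm{\xi} + C[\Xnorm{A^c}]\cdot R\cdot K(\bfW)\left(\Dnorm{x,x'}+\Dnorm{y,y'}\right),
\]
where $K(\bfW):=1+(1+\norm{W}_\gamma)(\norm{W}_\gamma+\norm{\W}_{2\gamma})$ collects the polynomial rough path factors.  A symmetric argument, using the exponential decay of $S^s$ in place of $S^c$, gives
\[
\Dnorm{y,y'} \leq C[\Xnorm{A^s}]\Xnorm{y_0} + C[\Xnorm{A^s}]\cdot R\cdot K(\bfW)\left(\Dnorm{x,x'}+\Dnorm{y,y'}\right).
\]
Since $y_0\in\cM^c_{loc}(\bfW)$ and $h^c(\cdot,\bfW)$ is Lipschitz with $h^c(0,\bfW)=0$, we have $\Xnorm{y_0}\leq L(\bfW)\Xnorm{\xi}$.

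Adding the two inequalities and choosing the truncation radius $R=R(\bfW)$, tempered from below (as flagged in the remark following Lemma~\ref{composition:cutoff}), small enough that $R\,K(\bfW)$ is bounded by a fixed fraction of $1/(C[\Xnorm{A^c}]+C[\Xnorm{A^s}])$, I can absorb the joint norm on the right-hand side into the left and arrive at
\[
\Dnorm{x,x'}+\Dnorm{y,y'}\leq C_x\Xnorm{\xi},
\]
with a constant $C_x$ of the announced dependence, which implies the claimed bound on $(x,x')$.

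The main obstacle is the calibration of the truncation radius $R(\bfW)$: it must be small enough (relative to the polynomial rough-path factor $K(\bfW)$) to drive the contraction in the above inequality, yet still tempered from below so that the localisation $B_{\cX^c}(0,\rho(\bfW))$ captures a genuine neighbourhood of the origin for $\mathbb{P}$-almost every $\bfW$.  This is a standard point within the rough-path cutoff framework developed in~\cite{KN23} and will also govern later steps of the paper; no new technical idea is required beyond matching the smallness of $R$ to $K(\bfW)$.
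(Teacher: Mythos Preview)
Your argument is correct and close in spirit to the paper's, but the paper takes a shorter path at one point. Instead of writing a second inequality for $\Dnorm{y,y'}$ and adding the two, the paper uses directly that $y$ lies on the manifold, i.e.\ $y=h^c(x,\bfW)$, and invokes the Lipschitz property of $h^c$ at the level of controlled rough paths to substitute
\[
\Dnorm{y,y'}=\Dnorm{h^c(x,\bfW),h^c(x,\bfW)'}\le L_{h^c}\Dnorm{x,x'}
\]
straight into the estimates coming from Lemma~\ref{lip:fr} and Lemma~\ref{composition:cutoff}. This produces a single closed inequality in $\Dnorm{x,x'}$; the factor $(1+L_{h^c})$ is then absorbed into the polynomial $Q(\bfW)$ and eliminated through the choice of the cut-off radius $R_1(\bfW)\le\min\{1/(2Q(\bfW)),1\}$. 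As a result the final constant is genuinely $C[\Xnorm{A^c}]$, with no residual dependence on $A^s$ or on the Lipschitz constant of $h^c$.

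Your coupled-inequality route also works, but note that your constant $C_x$ ends up depending on $\Xnorm{A^s}$ and on $L(\bfW)$ through the term $C[\Xnorm{A^s}]\Xnorm{y_0}\le C[\Xnorm{A^s}]L(\bfW)\Xnorm{\xi}$, so it is not quite of ``the announced dependence''. On the other hand, your version only needs the pointwise Lipschitz bound $\Xnorm{y_0}\le L(\bfW)\Xnorm{\xi}$, which is the weaker of the two Lipschitz properties of $h^c$ available from Lemma~\ref{localcman}.
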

\begin{proof}
    Applying the variation of constants formula to \eqref{SDE} for $t\in[0,1]$, we get
    \[ x_t = S^c(t)\xi + \int_0^t S^c(t-r) F^c_R(x,y)[r]~\txtd r \int_0^t S^c(t-r) G^c_R(x,y)[r]~\txtd \bfW_r .\]
    By Lemma \ref{integral bound} we have that $x' = G^c_R(x,y)[\cdot]$. Furthermore, using Lemma \ref{integral bound} we infer that
    \begin{align*}
        \Dnorm{x,x'}
        &\le C[\Xnorm{A^c}]\Big(\Xnorm{\xi} + \norm{F_R^c(x,y)[\cdot]}_\I \\
        &~~+ \left(1+\norm{W}_\gamma\right)\left(\norm{W}_\gamma+\norm{\mathbb{W}}_{2\gamma}\right)\Dnorm{G_R^c(x,y)[\cdot],G_R^c(x,y)[\cdot]'}\Big).
    \end{align*}
    Moreover, we can use Lemma \ref{lip:fr} to bound $\norm{F_R^c(x,y)[\cdot]}_\I$. Since we are interested in $y\in\cM_{loc}^c(\bfW)$, we know that $y=h^c(x,\bfW)$. This further results in 
    \begin{align*}
        \norm{F_R^c(x,y)[\cdot]}_\I
        &\le C_{F^c} R \left(\Dnorm{x,x'} + \Dnorm{y,y'}\right) \nonumber\\
        &\le C_{F^c} R \left(\Dnorm{x,x'} + \Dnorm{h^c(x,W), h^c(x,W)'}\right) \nonumber\\
        &\le C_{F^c} R \left(1 + L_{h^c}\right) \Dnorm{x,x'}.
    \end{align*}
    Here $L_{h^c}$ denotes the Lipschitz constant of $h^c$. 
    Analogously we get with Lemma \ref{composition:cutoff}
    \begin{align*}
        \Dnorm{G_R^c(x,y)[\cdot],G_R^c(x,y)[\cdot]'}
        &\le C_{G^c} R \left(\Dnorm{x,x'} + \Dnorm{y,y'}\right) \nonumber\\
        &\le C_{G^c} R \left(\Dnorm{x,x'} + \Dnorm{h^c(x,W), h^c(x,W)'}\right) \nonumber\\
        &\le C_{G^c} R \left(1 + L_{h^c}\right) \Dnorm{x,x'}.
    \end{align*}
  Hence we infer
    \begin{align*}
        \Dnorm{x,x'}
        &\le C[\Xnorm{A^c}]\Xnorm{\xi} + C[\Xnorm{A^c}] \Big(C_{F^c}  \\ &~~+\left(1+\norm{W}_\gamma\right)\left(\norm{W}_\gamma+\norm{\mathbb{W}}_{2\gamma}\right) C_{G^c}  \Big) \left(1 + L_{h^c}\right)R \Dnorm{x,x'} .
    \end{align*}
    By rearranging the terms we obtain that 
    \begin{align*}
        \Dnorm{x,x'}
        \le \frac{C[\Xnorm{A^c}]}{1-Q(\bfW)R}\Xnorm{\xi},
    \end{align*}
    where 
    \begin{align}\label{def:Q}
        Q(\bfW):=C[\Xnorm{A^c}] \Big(C_{F^c} +\left(1+\norm{W}_\gamma\right)\left(\norm{W}_\gamma+\norm{\mathbb{W}}_{2\gamma}\right) C_{G^c}\Big)\left(1 + L_{h^c}\right).
    \end{align}
    By choosing \begin{align}\label{def:R_1}
        R_1(\bfW)\le\min\left\{\frac{1}{2Q(\bfW)},1\right\},
    \end{align}
    the denominator above is positive and we get 
    \begin{align*}
        \Dnorm{x,x'}
        \le C[\Xnorm{A^c}]\Xnorm{\xi}.
    \end{align*}    
    \qed \\
\end{proof}
Recalling our ansatz~\eqref{ansatz} we state the following result. 
\begin{lemma}\label{phi D2gamma estimate}
    Let $(x,x')\in \cD^{2\gamma}_{W}$ be the solution of the first equation of \eqref{SDE} with initial condition $\xi\in B_{\cX^c}(0,\rho(\bfW))$ and let $(\alpha_i,\alpha_i')\in \cD^{2\gamma}_{W}$ for $i\in\{1,\ldots q\}$ be the stationary solutions of the system \eqref{alpha RDE}. We further set 
    $$\phi(x_t) := \sum_{i=1}^{q}\alpha_ix_t^i.$$
    Then 
    $$\left(\phi(x),\phi(x)' \right)\in\cD^{2\gamma}_W,$$
    and
    $$\Dnorm{\phi(x),\phi(x)'}\le  CC_x^qC_\alpha q^2\left(1+\norm{W}_\gamma\right)^2 \Xnorm{\xi} ,$$
where we set
    \begin{align}\label{C_alpha def}
        C_\alpha:=\max\limits_{i\in\{1,\dots,q\}} \Dnorm{\alpha_i,\alpha_i'}.
    \end{align}
\end{lemma}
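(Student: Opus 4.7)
The plan is to bound $\phi(x)=\sum_{i=1}^q \alpha_i x^i$ term by term, using Lemma~\ref{lem:addition and multiplication of RP} to handle sums and products of controlled rough paths, and then to combine the a priori estimates on $(x,x')$ from Lemma~\ref{x is 2 gamma and xi bound} with the defining bound $\Dnorm{\alpha_i,\alpha_i'}\le C_\alpha$ from \eqref{C_alpha def}.

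First, I would establish by induction on $i\in\{1,\dots,q\}$ that the monomial $x^i$ is a controlled rough path with Gubinelli derivative $(x^i)'=i\,x^{i-1}x'$. The base case $i=1$ is immediate. For the inductive step, I would write $x^i=x\cdot x^{i-1}$ and apply the product part of Lemma~\ref{lem:addition and multiplication of RP}, which yields
\[
\Dnorm{x^i,(x^i)'}\;\le\; C\,(1+\norm{W}_\gamma)^2\,\Dnorm{x,x'}\,\Dnorm{x^{i-1},(x^{i-1})'}.
\]
Iterating gives $\Dnorm{x^i,(x^i)'}\le C^{i-1}(1+\norm{W}_\gamma)^{2(i-1)}\Dnorm{x,x'}^i$, and a further application of the product lemma produces
\[
\Dnorm{\alpha_i x^i,(\alpha_i x^i)'}\;\le\; C^i(1+\norm{W}_\gamma)^{2i}\,C_\alpha\,\Dnorm{x,x'}^i.
\]

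Next, I would substitute the bound $\Dnorm{x,x'}\le C_x\Xnorm{\xi}$ from Lemma~\ref{x is 2 gamma and xi bound} and exploit that $\Xnorm{\xi}\le\rho(\bfW)<1$ to collapse the powers via $\Xnorm{\xi}^i\le\Xnorm{\xi}$, and that $C_x\ge 1$ to bound $C_x^i\le C_x^q$. The addition part of Lemma~\ref{lem:addition and multiplication of RP} then yields
\[
\Dnorm{\phi(x),\phi(x)'}\;\le\;\sum_{i=1}^q \Dnorm{\alpha_i x^i,(\alpha_i x^i)'}\;\le\;C_\alpha\,C_x^q\,\Xnorm{\xi}\sum_{i=1}^q C^i(1+\norm{W}_\gamma)^{2i},
\]
from which the asserted bound follows, with the $q^2$ factor arising from the combinatorial bookkeeping of the Leibniz-type derivatives $(x^i)'=ix^{i-1}x'$ and of the cross terms that appear when expanding $R^{x^i}$ in the product formula.

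The main obstacle is reconciling the naïve $(1+\norm{W}_\gamma)^{2q}$ growth that a straightforward iteration of Lemma~\ref{lem:addition and multiplication of RP} produces with the cleaner factor $(1+\norm{W}_\gamma)^2$ in the claimed bound. To achieve the stated estimate, one cannot simply iterate the pairwise product inequality; instead I would unfold the definition of the $\cD^{2\gamma}_W$-norm directly for the polynomial $x^i$, using the explicit formulas for $\|x^i\|_\infty\le\|x\|_\infty^i$, $\|(x^i)'\|_\gamma\le i\|x^{i-1}\|_\infty\|x'\|_\gamma+i(i-1)\|x\|_\gamma\|x^{i-2}\|_\infty\|x'\|_\infty$, and the analogous expansion of the remainder $R^{x^i}$, together with $\|x\|_\gamma\le C(1+\norm{W}_\gamma)\Dnorm{x,x'}$. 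This direct expansion isolates a single factor of $(1+\norm{W}_\gamma)^2$ (coming from the rough-path-controlled part of the Hölder seminorm in the remainder), while the remaining powers are absorbed into $\|x\|_\infty^{i-1}\le\Dnorm{x,x'}^{i-1}\le C_x^{q-1}$, and the combinatorial coefficients $i,\,i(i-1)\le q^2$ aggregate into the prefactor $q^2$. Summing over $i$ and finally multiplying each monomial by $\alpha_i$ (again using Lemma~\ref{lem:addition and multiplication of RP}) gives the claimed inequality.
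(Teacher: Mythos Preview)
Your final approach—directly unfolding the $\cD^{2\gamma}_W$-norm of $x^i$ instead of iterating the pairwise product inequality, and then applying Lemma~\ref{lem:addition and multiplication of RP} once to pair with $\alpha_i$—is exactly what the paper does. The only difference is bookkeeping: the paper asserts the clean bound $\Dnorm{x^i,(x^i)'}\le i\,\Dnorm{x,x'}^i$ with no $W$-factor (so that the single $(1+\norm{W}_\gamma)^2$ in the statement comes entirely from the $\alpha_i\cdot x^i$ product), whereas your more careful inclusion of the cross-terms in $\|(x^i)'\|_\gamma$ and $R^{x^i}$ via $\|x\|_\gamma\le C(1+\norm{W}_\gamma)\Dnorm{x,x'}$ would pick up an additional power of $(1+\norm{W}_\gamma)$. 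This discrepancy is immaterial for every downstream use of the lemma (only polynomial dependence on $\norm{W}_\gamma$ and the factor $\Xnorm{\xi}$ matter), and your expansion is arguably the more honest one.
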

\begin{proof}
    We use Lemma \ref{lem:addition and multiplication of RP} and get
    \begin{align*}
        \Dnorm{\sum_{i=1}^{q}\alpha_ix^i, \sum_{i=1}^{q}\alpha_i' x^i + i\alpha_ix^{i-1}x'}
        &\le C\left(1+\norm{W}_\gamma\right)^2\sum_{i=1}^q \Dnorm{\alpha_i,\alpha_i'}\Dnorm{x^i,ix^{i-1}x'}.
    \end{align*}
    Next we simplify $\Dnorm{x^i,ix^{i-1}x'}$. For this we use the definition of the $\cD^{2\gamma}_W$-norm \eqref{Dnorm}.
    \begin{align*}
        \Dnorm{x^i,ix^{i-1}x'} 
        &= \norm{x^i}_\I + \norm{ix^{i-1}x'}_\I + \norm{ix^{i-1}x'}_\gamma + \norm{R^{x^i}}_{2\gamma} \\
        &\le \norm{x}_\I^i + i\norm{x}^{i-1}_\I\norm{x'}_\I + i\norm{x}^{i-1}_\I\norm{x'}_\gamma + \norm{R^{x^i}}_{2\gamma}.
    \end{align*}
    For $\norm{R^{x^i}}_{2\gamma}$ we get 
    \begin{align*}
        \norm{R^{x^i}}_{2\gamma} &= \sup_{s,t\in[0,1]} \frac{\Xnorm{x^i_t-x_s^i-ix^{i-1}_sx_s'W_{s,t}}}{|t-s|^{2\gamma}} \\
        &\le i\norm{x}_\I^{i-1}\norm{R^x}_{2\gamma}.
    \end{align*}
    Plugging this in we get
    \begin{align*}
        \Dnorm{x^i,ix^{i-1}x'} \le i \Dnorm{x,x'}^i.
    \end{align*}
    Combining the previous calculations, we obtain 
    \begin{align*}
        \Dnorm{\sum_{i=1}^{q}\alpha_ix^i, \left(\sum_{i=1}^{q}\alpha_ix^i\right)'} 
        &\le Cq\left(1+\norm{W}_\gamma\right)^2\sum_{i=1}^{q}\Dnorm{\alpha_i,\alpha_i'}\Dnorm{x,x'}^i .
    \end{align*}
    Using Lemma \ref{x is 2 gamma and xi bound} we get
    \begin{align*}
        &\Dnorm{\sum_{i=1}^{q}\alpha_ix^i, \left(\sum_{i=1}^{q}\alpha_ix^i\right)'} \\
        &\le Cq\left(1+\norm{W}_\gamma\right)^2C_x^q\sum_{i=1}^{q}\Dnorm{\alpha_i,\alpha_i'} \Xnorm{\xi}^{i},
    \end{align*} 
    where we use that $C_x\ge1$. 
    Now we note that for all $i\in\{1,\dots,q\}$ we have $\Dnorm{\alpha_i,\alpha_i'}<\I$. 
    Hence, 
    $$C_\alpha=\max_{i\in\{1,\dots,q\}}  \Dnorm{\alpha_i,\alpha_i'}<\infty.$$ 
    Then we get with $\Xnorm{\xi}\le \rho(\bfW)\le 1$
    \begin{align*}
        &\Dnorm{\sum_{i=1}^{q}\alpha_ix^i, \left(\sum_{i=1}^{q}\alpha_ix^i\right)'} \\
        &\le Cq\left(1+\norm{W}_\gamma\right)^2C_x^q\sum_{i=1}^{q}\Dnorm{\alpha_i,\alpha_i'} \Xnorm{\xi}^{i} \\
        &\le Cq\left(1+\norm{W}_\gamma\right)^2C_x^qC_\alpha\sum_{i=1}^{q} \Xnorm{\xi}^{i}\\
        &= Cq\left(1+\norm{W}_\gamma\right)^2C_x^qC_\alpha\Xnorm{\xi} \sum_{i=0}^{q-1} \Xnorm{\xi}^{i} \\
        &\le Cq^2\left(1+\norm{W}_\gamma\right)^2C_x^qC_\alpha\Xnorm{\xi} ,
    \end{align*}
    which proves the statement.  \qed \\
\end{proof}
\begin{lemma}\label{G(phi) in D2gamma}
    Let $(x,x')\in \cD^{2\gamma}_{W}$ be the solution of the first equation of \eqref{SDE} with initial condition $\xi\in B_{\cX^c}(0,\rho(\bfW))$, let $(\alpha_i,\alpha_i')\in \cD^{2\gamma}_{W}$ for $i\in\{1,\ldots q\}$ be the solutions of the system \eqref{alpha RDE} and let $$\phi(x_t) = \sum_{i=1}^{q}\alpha_ix_t^i.$$ Moreover, we consider a nonlinear function $G\in C^3_b$. Then
    the composition of the controlled rough path $(\phi(x),\phi(x)')\in \cD^{2\gamma}_W$ with $G$ is again a controlled rough path, i.e.
    $$(G(\phi(x),G(\phi(x))')\in\cD^{2\gamma}_W,$$
    and satisfies the estimate
    $$\Dnorm{G(\phi(x)), G(\phi(x))'}\le C\norm{G}_{C_b^2}(1+\norm{W}_\gamma)^6C_x^{2q}C_\alpha^2q^4\Xnorm{\xi}^2.$$
\end{lemma}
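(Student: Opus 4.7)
The plan is to reduce the claim to the standard composition estimate for a $C^2_b$ function acting on a controlled rough path, and then substitute the bound from Lemma \ref{phi D2gamma estimate}. First, setting $(Y,Y'):=(\phi(x),\phi(x)')$, I would identify the Gubinelli derivative by the chain rule: the Taylor expansion
\begin{align*}
G(Y_t)-G(Y_s) = \txtD G(Y_s) Y'_s W_{s,t} + \txtD G(Y_s) R^Y_{s,t} + \int_0^1 (1-r) \txtD^2 G\bigl( Y_s + r (Y_t-Y_s)\bigr) ~\txtd r \cdot (Y_t-Y_s)^{\otimes 2}
\end{align*}
shows that $G(Y)'=\txtD G(Y) Y'$ and decomposes $R^{G(Y)}$ into a linear contribution $\txtD G(Y_s) R^Y_{s,t}$ plus a quadratic remainder controlled by $\|\txtD^2 G\|_\infty$ and $\|Y\|_\gamma^2$.

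Second, I would bound each of the four ingredients of the equivalent norm \eqref{Dnorm} separately. Since $\phi(0)=0$ and, in the applications of this lemma, $G$ inherits $G(0)=\txtD G(0)=0$ from Assumptions~\ref{F}--\ref{G}, Taylor's theorem yields $\|G(Y)\|_\infty \le \tfrac12 \|\txtD^2 G\|_\infty \|Y\|_\infty^2$ and $\|\txtD G(Y) Y'\|_\infty \le \|\txtD^2 G\|_\infty \|Y\|_\infty \|Y'\|_\infty$. Adding and subtracting $\txtD G(Y_s)$ and using the Lipschitz property of $\txtD G$ inherited from $G\in C^2_b$ gives the two remaining estimates
\begin{align*}
\|\txtD G(Y) Y'\|_\gamma &\le \|\txtD^2 G\|_\infty \|Y\|_\gamma \|Y'\|_\infty + \|\txtD G\|_\infty \|Y'\|_\gamma, \\
\|R^{G(Y)}\|_{2\gamma} &\le \|\txtD G\|_\infty \|R^Y\|_{2\gamma} + \tfrac12 \|\txtD^2 G\|_\infty \|Y\|_\gamma^2.
\end{align*}

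Third, I would sum the four contributions and convert the $\gamma$-H\"older seminorm of $Y$ via $\|Y\|_\gamma \le C(1+\|W\|_\gamma)\Dnorm{Y,Y'}$, which is exactly the bound already used in the proof of Lemma \ref{lem:addition and multiplication of RP}. This produces a composition estimate of the shape
\begin{align*}
\Dnorm{G(Y),G(Y)'} \le C\|G\|_{C^2_b}(1+\|W\|_\gamma)^2 \Dnorm{Y,Y'}^2.
\end{align*}
Substituting $Y=\phi(x)$ and invoking Lemma \ref{phi D2gamma estimate} to estimate $\Dnorm{\phi(x),\phi(x)'}\le C C_x^q C_\alpha q^2 (1+\|W\|_\gamma)^2 \Xnorm{\xi}$, the square produces $C_x^{2q}C_\alpha^2 q^4 (1+\|W\|_\gamma)^4 \Xnorm{\xi}^2$, which combined with the extra $(1+\|W\|_\gamma)^2$ from the composition step yields exactly the stated bound $C\|G\|_{C^2_b}(1+\|W\|_\gamma)^6 C_x^{2q}C_\alpha^2 q^4 \Xnorm{\xi}^2$.

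The main obstacle is bookkeeping: keeping track of the precise exponents of $(1+\|W\|_\gamma)$, $C_x$, $C_\alpha$ and $q$ so that they match the stated powers. The only conceptually delicate point is extracting the quadratic dependence on $\Xnorm{\xi}$, which crucially relies on both $\phi$ vanishing at zero and the implicit vanishing of $G$ and $\txtD G$ at zero; absent such vanishing one would pick up an additive $\|G\|_{C^1_b}$ contribution independent of $\Xnorm{\xi}$, and the statement should be read modulo such lower-order terms.
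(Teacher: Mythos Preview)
Your approach is essentially the same as the paper's: establish the quadratic composition estimate $\Dnorm{G(Y),G(Y)'}\le C\|G\|_{C^2_b}(1+\|W\|_\gamma)^2\Dnorm{Y,Y'}^2$ and then substitute the bound from Lemma~\ref{phi D2gamma estimate}. The only difference is that the paper obtains the composition estimate as a black box by quoting \cite[Lemma~7.3]{FritzHairer} and absorbing $\|Y_0'\|$ into $\Dnorm{Y,Y'}$ via the norm convention~\eqref{Dnorm}, whereas you reprove it by hand via the Taylor expansion of $G$. Your final caveat about needing $G(0)=\txtD G(0)=0$ for the $\|G(Y)\|_\infty$ and $\|\txtD G(Y)Y'\|_\infty$ contributions to be genuinely quadratic in $\Dnorm{Y,Y'}$ is a fair observation; the paper's citation of Friz--Hairer glosses over this point (that lemma bounds only the seminorm), but in all applications within the paper $G$ indeed satisfies these vanishing conditions by Assumption~\ref{G}.
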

\begin{proof}
    From \cite[Lemma~7.3]{FritzHairer} we know that for $G\in C_b^3$ and $(Y,Y')\in\cD^{2\gamma}_W$ we have
    \begin{align*}
        \Dnorm{G(Y),G(Y)'}&=\Dnorm{G(Y),DG(Y)Y'}\\
        &\le C\norm{G}_{C_b^2}\left(\Xnorm{Y_0'}+\Dnorm{Y,Y'}\right)^2(1+\norm{W}_\gamma)^2.
    \end{align*}
    Since the $\cD^{2\gamma}_W$-norm contains  $\norm{Y'}_\I$, we have
    \begin{align*}
        \Dnorm{G(Y),G(Y)'}
        &\le C\norm{G}_{C_b^2}\Dnorm{Y,Y'}^2(1+\norm{W}_\gamma)^2.
    \end{align*}
    Now applying this inequality and Lemma \ref{phi D2gamma estimate} we get
    \begin{align*}
        \Dnorm{G(\phi(x)),G(\phi(x))'}
        &\le C\norm{G}_{C_b^2}(1+\norm
        {W}_\gamma)^2\Dnorm{\phi(x),\phi(x)'}^2 \\
        &\le C\norm{G}_{C_b^2}(1+\norm{W}_\gamma)^6 C_x^{2q}C_\alpha^2q^4\Xnorm{\xi}^2.
    \end{align*}
    \qed
\end{proof}

\begin{lemma}\label{G-g in D2gamma}
    Let $(x,x')\in\cD^{2\gamma}_W$ be the solution of the first equation of \eqref{SDE} with initial condition $\xi\in B_{\cX^c}(0,\rho(\bfW))$. Further let $(\alpha_i,\alpha_i')\in \cD^{2\gamma}_{W}$ for $i\in\{1,\ldots q\}$ be the solutions of the system \eqref{alpha RDE}, $ (z(x),z(x)')\in\cD^{2\gamma}_W$ and $$\phi(x_t) = \sum_{i=1}^{q}\alpha_ix_t^i.$$
    Moreover, let $G\in C_b^3$. Then
    $$\left(G(z(x) + \phi(x)) - \sum_{i=1}^{q} g_ix^i, \left(G(z(x) + \phi(x)) - \sum_{i=1}^{q} g_ix^i\right)' \right) \in \cD^{2\gamma}_W,$$
\end{lemma}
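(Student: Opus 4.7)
The plan is to decompose the expression as a difference of two controlled rough paths, verify each summand lies in $\cD^{2\gamma}_W$, and then conclude by linearity via Lemma~\ref{lem:addition and multiplication of RP}.

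For the first piece $G(z(x)+\phi(x))$, I would combine the hypothesis $(z(x),z(x)')\in\cD^{2\gamma}_W$ with Lemma~\ref{phi D2gamma estimate}, which gives $(\phi(x),\phi(x)')\in\cD^{2\gamma}_W$, and invoke the additive part of Lemma~\ref{lem:addition and multiplication of RP} to obtain $(z(x)+\phi(x),z(x)'+\phi(x)')\in\cD^{2\gamma}_W$. Since $G\in C_b^3$, the standard composition result ([Lemma 7.3, Friz--Hairer] as already used in the proof of Lemma~\ref{G(phi) in D2gamma}) then yields that $G(z(x)+\phi(x))$ is a controlled rough path with Gubinelli derivative $\txtD G(z(x)+\phi(x))\,(z(x)'+\phi(x)')$, together with a bound of the form
\[
\Dnorm{G(z(x)+\phi(x)),\,G(z(x)+\phi(x))'} \le C\,\|G\|_{C_b^2}\,(1+\|W\|_\gamma)^2\,\Dnorm{z(x)+\phi(x),\,z(x)'+\phi(x)'}^2 .
\]

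For the polynomial piece $\sum_{i=1}^q g_i x^i$, each coefficient $g_i=g_i(\alpha_1,\ldots,\alpha_i)$ arises from the diffusion in the RDE~\eqref{alpha RDE}; applying the composition lemma to the $C^3_b$-map defining it, together with $(\alpha_j,\alpha_j')\in\cD^{2\gamma}_W$, gives $(g_i,g_i')\in\cD^{2\gamma}_W$. The same handling of the powers $x^i$ as in the proof of Lemma~\ref{phi D2gamma estimate}, combined with the multiplicative bound in Lemma~\ref{lem:addition and multiplication of RP}, then shows $(\sum_{i=1}^q g_i x^i,\,\sum_{i=1}^q (g_i' x^i + i g_i x^{i-1} x'))\in\cD^{2\gamma}_W$ with a bound controlled by $C_\alpha$, $C_x$, $q$, $(1+\|W\|_\gamma)$, and $\|\xi\|_{\cX}$. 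Subtracting the two controlled rough paths and invoking additivity once more closes the argument.

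The main technical point I expect to watch is the Gubinelli derivative bookkeeping: the statement builds the subtracted polynomial precisely so that, when paired with the specific definitions of $f_i,g_i$ coming from the ansatz, the remainder $G(z+\phi)-\sum g_i x^i$ should ultimately be of high order in $\|\xi\|_{\cX}$. For the purely qualitative claim $(\cdot,\cdot')\in\cD^{2\gamma}_W$ this cancellation is not needed, so the proof reduces to the composition/sum/product bookkeeping above; however the quantitative estimate that presumably follows in the full statement will rely on exploiting $\txtD G(0)=\txtD^2 G(0)=0$ (hypothesis~\nameref{G}) via a Taylor expansion of $G$ around zero to extract the correct power of $\|\xi\|_\cX$, which is the step where real care is required.
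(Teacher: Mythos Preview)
Your proposal is correct and follows essentially the same approach as the paper: the paper's proof is a two-line argument invoking Lemma~\ref{lem:addition and multiplication of RP} for closure under sums and products, and Lemma~\ref{G(phi) in D2gamma} for closure under composition with $G\in C_b^3$. Your write-up simply unpacks these steps in more detail; the additional remarks on quantitative bounds and Taylor cancellation go beyond what the statement (and the paper's proof) actually require here.
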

\begin{proof}
    Due to Lemma \ref{lem:addition and multiplication of RP} we know that sums and products of $\cD^{2\gamma}_W$ functions are again $\cD^{2\gamma}_W$ functions. Moreover, the composition of $G\in C_b^3$ and a $\cD^{2\gamma}_W$ function is a $\cD^{2\gamma}_W$ function due to Lemma \ref{G(phi) in D2gamma}. Hence, the claim follows. \qed
\end{proof}

\subsection{The general idea}\label{sec:generalidea}

The aim of this subsection is to formally illustrate the main idea required in order to show that~\eqref{ansatz} represents an approximation of $h^c$ by polynomials of order $q$. The idea is a modified version of the proof in the deterministic case \cite[Theorem.~3]{Carr}, which is summarized in Appendix \ref{sec:carr}.
Given our ansatz $y_t=\phi(x_t)$, the first step is to compute $\txtd y$ given by~\eqref{SDE} in two ways. First of all we obtain
\begin{align*}
    \txtd y_t = (A^s \phi(x_t) + F^s_R(x,\phi(x))[t])~\txtd t + G_R^s(x,\phi(x))[t] ~\txtd \textbf{W}_t.
\end{align*}
Moreover, since $\bfW =(W,\mathbb{W)}$ is a geometric rough path, we use the chain rule and get
\begin{align*}
    \txtd y_t &= \txtd \phi(x_t) = \sum_{i=1}^{q}\left(\alpha_i~ \txtd(x_t^i) + x_t^i\txtd \alpha_i\right)\\
    &= \sum_{i=1}^{q} i\alpha_i x_t^{i-1}~\txtd x_t + \sum_{i=1}^{q} \left(A^{\alpha_i}\alpha_i+f_i\right)x_t^i~\txtd t + \sum_{i=1}^{q}g_ix_t^i~\txtd \textbf{W}_t\\
    &= \left[\sum_{i=1}^{q} i\alpha_i x_t^{i-1}\left[A^c x_t + F^c_R(x,\phi(x))[t]\right] + \sum_{i=1}^{q} \left(A^{\alpha_i}\alpha_i+f_i\right)x_t^i\right]~\txtd t \\ 
    &+ \left[\sum_{i=1}^{q} i\alpha_i x_t^{i-1} G^c_R(x,\phi(x))[t] + \sum_{i=1}^{q}g_ix_t^i\right]~\txtd \textbf{W}_t.
\end{align*}
Here we recall that $f_i:=f_i(\alpha_1,\ldots \alpha_{i-1})$, $g_i:=g_i(\alpha_1,\dots,\alpha_{i})$ and define 
\begin{align}\label{tilde phi def}
    \tilde{\phi}(x_t):= \sum\limits_{i=1}^{q} \left(A^{\alpha_i}\alpha_i+f_i\right)x_t^i.
\end{align}
Comparing coefficients for the equations for $y$ given in~\eqref{SDE} and above, we get the invariance equations
\begin{align*}
    A^s\phi(x_t) + F^s_R(x,\phi(x))[t] 
    &= \sum_{i=1}^{q} i\alpha_i x_t^{i-1}[A^c x_t + F^c_R(x,\phi(x))[t]] + \tilde{\phi}(x_t), \\
    G_R^s(x,\phi(x))[t] &= \sum_{i=1}^{q} i\alpha_i x_t^{i-1} G^c_R(x,\phi(x))[t] + g_i x_t^i .
\end{align*}
We can eliminate the linear parts from the invariance equation by choosing $A^{\alpha_i}= A^s - i A^c$ and get the following invariance equations
\begin{align}
    A^{\alpha_i}&= A^s - i A^c \label{invariance equation A}\\
    \sum_{i=1}^q f_i x_t^i
    &=  F^s_R(x,\phi(x))[t] - \sum_{i=1}^{q} i\alpha_i x_t^{i-1} F^c_R(x,\phi(x))[t] , \label{invariance equation f_i}\\
    \sum_{i=1}^{q} g_i x_t^i &= G_R^s(x,\phi(x))[t] - \sum_{i=1}^{q} i\alpha_i x_t^{i-1} G^c_R(x,\phi(x))[t] \label{invariance equation g_i} .
\end{align}
We rewrite the previous expressions as
\begin{align}
    M\phi(x_t) &:= \sum_{i=1}^q f_i x_t^i - F^s_R(x,\phi(x))[t] + \sum_{i=1}^{q} i\alpha_i x_t^{i-1} F^c_R(x,\phi(x))[t] \label{M def}\\
    \tilde{M}\phi(x_t) &:= \sum_{i=1}^{q} g_ix_t^i - G_R^s(x,\phi(x))[t]  + \sum_{i=1}^{q} i\alpha_i x_t^{i-1} G^c_R(x,\phi(x))[t] . \label{tilde M def}
\end{align}
Here $M\phi(x)$ and $\tilde M\phi(x)$ indicate how good our approximation is, since $Mh^c(x,\bfW)=0$ and $\tilde Mh^c(x,\bfW)=0$. So the goal is to choose the coefficients $A^{\alpha_i},f_i$ and $g_i$ such that $M\phi(x)=0$ and $\tilde M\phi(x)=0$.
We will later see that we can control $M\phi$ and $\tilde M\phi$ choosing $q$ large and $A^{\alpha_i},~f_i,~g_i$ correctly. This means that we solve the system resulting from \eqref{M def} and \eqref{tilde M def} by setting the left-hand side equal to zero.
\begin{remark}
    For any system of the form~\eqref{SDE} it holds $\alpha_1\equiv0$. We know that $f_1=0$ because the nonlinearities $F_R^{c/s}$ have only terms of order at least two. Moreover, due to the assumption $G^{c/s}(0,0)=\txtD G^{c/s}(0,0)=0$ we exclude additive or linear multiplicative noise (which can be treated by flow transformations, see Section~\ref{examples}). So, $g_1$ is equal to zero. Hence $\alpha_1$ solves
    $$\txtd\alpha_1 = \left(A^s-A^c\right)\alpha_1~\txtd t.$$
    The equation is solved by $\alpha_1\equiv0$. With this we infer $\txtD\phi\left(0\right)=0$. This assumption is consistent with the deterministic autonomous case \ref{Carr Theorem}. As $\alpha_1\equiv0$, we have $$\phi(x_t)=\sum_{i=2}^q \alpha_i x_t^i.$$ 
\end{remark}

Keeping this in mind, we formally describe the main idea of the approximation proof of $h^c(x,\bfW)$ by $\phi(x)$. We remind the reader that our end goal is to approximate $h^c(\xi,\bfW)$ for $\xi\in B_{\cX^c}(0,\rho(\bfW))$ by $\phi(x_0)$, where $x$ solves \eqref{SDE} with $x_0=\xi$. 
\paragraph{Formal proof strategy.}
For $t\le 0$, $\xi\in \cX^c$ and $U_t=U_t(\xi,\bfW)\in\cX^s$ we let $J$ be the Lyapunov-Perron map associated to~\eqref{SDE} given by
\begin{align*}
    J(\bfW,x,U,\xi)[t] &= S^c(t) \xi + \int_0^t S^c(t-r) F^c_R(x, U)[r] ~\txtd r + \int_{0}^t S^c(t-r) G_R^c(x,U)[r] ~\txtd \bfW_r\\& + \int_{-\I}^t S^s(t-r) F^s_R(x, U)[r] ~\txtd r 
    + \int_{-\I}^t S^s(t-r) G_R^s(x,U)[r] ~\txtd \bfW_r.
\end{align*}
From \cite[Section 4.2]{KN23} we know that $h^c(\xi,\bfW) = P^s\Gamma(\xi,\bfW)[0],$ where $\Gamma = (P^c\Gamma,P^s\Gamma)$ is the fixed point of $J$. As we are only interested in the solution on the invariant manifold, we fix $x_t=P^c\Gamma(\xi,\bfW)[t]$ which is a solution of $$\txtd x_t = \left(A^c x_t + F^c_R(x,P^s\Gamma(\xi,\bfW)[\cdot])[t]\right)\txtd t + G_R^c(x,P^s\Gamma(\xi,\bfW)[\cdot])[t]~\txtd \bfW_t,~~~ x_0 = \xi.$$
Hence, $J$ becomes only a function of $U$ and $\xi$.~So from now on we consider the following Lyapunov-Perron map
\begin{align}\label{J cont def}
    J(\bfW,U,\xi)[t] &= S^c(t) \xi + \int_0^t S^c(t-r) F^c_R(x, U)[r] ~\txtd r  + \int_{0}^t S^c(t-r) G_R^c(x,U)[r] ~\txtd \bfW_r\nonumber\\& + \int_{-\I}^t S^s(t-r) F^s_R(x, U)[r] ~\txtd r 
    + \int_{-\I}^t S^s(t-r) G_R^s(x,U)[r] ~\txtd \bfW_r,
\end{align}
which is a contraction with a fixed point.~Next we define the map $\tilde{J}$ as
\begin{align}\label{tilde J formal def}
    \tilde{J}(\bfW,U,\xi)[t] :=P^sJ(\bfW,U+\phi(x),\xi)[t] - \phi(x_t).
\end{align}
We want to show that $\tilde{J}$ is a contraction on 
a suitable subspace $V$, which according to Banach's fixed point theorem has a fixed point $U^*$. For now we assume that such a fixed point $U^*$ exists. Since $U^*$ is a fixed-point of $\tilde J$, we also know that $U^*+\phi(x)$ is a fixed point of $P^s J$. But $P^sJ$ has a unique fixed point $P^s\Gamma$. Hence, it must hold $P^s\Gamma=U^*+\phi(x).$
Due to the identity $P^s\Gamma = U^*+\phi(x)$, we now consider $x$ as the solution of
\begin{align}\label{center equation}
    \txtd x_t = \left(A^c x_t + F^c_R\left(x,U + \phi(x)\right)[t]\right)~\txtd t + G_R^c(x,U + \phi(x))[t]~\txtd \bfW_t, ~~~~x_0=\xi,
\end{align}
for $U\in V$.
Moreover, we will impose a suitable assumption on the subspace $V$ such that we get for $U\in V$ a bound of the form $\Xnorm{U_0}\le K\Xnorm{\xi}^{q+1}$ and especially
\begin{align}\label{tilde J formal bound}
    \Xnorm{\tilde J(\bfW,U^*,\xi)[0]}\le K\Xnorm{\xi}^{q+1},
\end{align}
for a suitable constant $K\ge1$. 
This result will entail the local approximation of $h^c(\xi,\bfW)$ with polynomials of order $q$. More precisely, the aim is to eventually show that 
\begin{align*}
    \Xnorm{h^c(\xi,\bfW)-\phi(x_0)} &= \Xnorm{P^s\Gamma(\xi,\bfW)[0]-\phi(x_0)} \\
    &= \Xnorm{P^sJ(\bfW,P^s\Gamma(\xi,\bfW),\xi)[0]-\phi(x_0)} \\
    &= \Xnorm{P^sJ(\bfW,U^*+\phi(x),\xi)[0]-\phi(x_0)} \\
    &= \Xnorm{\tilde J(\bfW,U^*(\xi,W),\xi)[0]}\\
    &\le K\Xnorm{\xi}^{q+1},
\end{align*}
where we used the definition of $h^c$, the fact that $P^s\Gamma(\xi,\bfW)=U^*+\phi(x)$ and that both terms are fixed points of $P^s J$. This shows that $\phi$ is a good approximation of the graph of the center manifold given by $h^c$.\\

In conclusion, the main goal is to find an appropriate subspace $V$ and prove that $\tilde{J}$ is a contraction on it. One main difficulty in defining the subspace $V$ and setting up the fixed-point argument is given by the presence of the rough integrals in the Lyapunov-Perron maps $J$ and $\tilde{J}$ given by  \eqref{J cont def} and \eqref{tilde J formal def}. 
In order to use the results established in Subsection~\ref{rp} we first have to discretize $J$ and $\tilde{J}$ as in~\cite[Section 4.1]{KN23}.  \\

\subsection{Discretization}\label{sec:discretization}
In the last subsection we defined the Lyapunov Perron map $J$ \eqref{J cont def}. In the definition we have a stochastic integral from minus infinity to zero. With the methods introduced in Subsection~\ref{rp} we cannot work with this integral. Hence, the goal is to transform it in such a way that we have integrals on $[0,1]$. To this aim, we introduce the following space of sequences of controlled rough paths defined on the interval $[0,1]$ and discretize the integrals in~\eqref{J cont def} as in \cite[Section 4.1]{KN23}. 

\begin{definition}\label{BC def}
Let $\eta<0$. We say that a sequence of controlled rough paths $\mathbb{U}:=\left(\left(U^{j-1}, 
\left(U^{j-1}\right)'\right)\right)_{j\in\mathbb{Z}^{-}}$ with $U^{j-1}_{0}=U^{j-2}_{1}$ belongs to the space 
$BC^{\eta}\left(\cD^{2\gamma}_{{W}}\right) $ if
\begin{equation}\label{bcnorm}
\|\mathbb{U}\|_{BC^{\eta}\left(\cD^{2\gamma}_{{W}}\right)}:=
	\sup\limits_{j\in\mathbb{Z}^{-}} \txte^{-\eta (j-1)} \left\|U^{j-1},
	\left(U^{j-1}\right)'\right\|_{\cD^{2\gamma}_{{W}}}<\infty.
\end{equation} 
\end{definition}
For a sequence $\U = \left(U^{j-1},\left(U^{j-1}\right)'\right)_{j\in\Z^-}\in BC^\eta\left(\cD^{2\gamma}_W\right)$ we know that each element of the sequence belongs to $\cD^{2\gamma}_W$. This means that $U^{j-1}_t$ is a function defined for $t\in[0,1]$ and $j\in\Z^{-}$ denotes the position within the sequence. 
\begin{assumptions}\label{beta assumption}
    For our aims we fix $-\beta<\eta<0$, where $\beta$ was introduced in \eqref{beta}. 
\end{assumptions}

Our next step is to discretize all our variables, so we can work in the function space $BC^\eta\left(\cD^{2\gamma}_W\right)$. We start by discretizing the solution $x$ of the RDE~\eqref{center equation}.
For the discretized solution $x$ of~\eqref{center equation} we show a similar estimate as in Lemma \ref{x is 2 gamma and xi bound}. 
\begin{lemma}\label{xj bound}    Let $x_t$ be the solution of \eqref{center equation} with initial condition $\xi\in\cX^c$ on the time interval $(-\I,0]$. Let $\left(x^{j-1}\right)_{j\in\Z^-}$ be a sequence such that for all $t\in[0,1]$
$$x^{j-1}_t:=x_{j-1+t}.$$
Then for all $j\in\Z^-$ we know $\left(x^{j-1},\left(x^{j-1}\right)'\right)\in\cD^{2\gamma}_W$ and 
    $$\Dnorm{x^{j-1},\left(x^{j-1}\right)'}\le \tilde C_x \Xnorm{\xi},$$
    where $\tilde C_x$ is defined below in \eqref{def:tilde C_x}.
\end{lemma}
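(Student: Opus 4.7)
The plan is to reduce the problem to the single-interval estimate of Lemma \ref{x is 2 gamma and xi bound} by exploiting the cocycle structure of $\bfW$ and then to propagate the resulting bound backward in time. Fix $j\in\Z^-$ and set $\mathbf{V}:=\Theta_{j-1}\bfW$, which by Definition \ref{rough path cocycle} is again a $\gamma$-H\"older rough path. A direct change of variables together with the cocycle identity shows that $(x^{j-1},(x^{j-1})')$ is a $\cD^{2\gamma}_{\Theta_{j-1}W}$-controlled rough path on $[0,1]$ solving the translated RDE
\[
\txtd x^{j-1}_t = \bigl(A^c x^{j-1}_t + F^c_R(x,P^s\Gamma)[j-1+t]\bigr)\txtd t + G^c_R(x,P^s\Gamma)[j-1+t]\,\txtd (\Theta_{j-1}\bfW)_t,
\]
with initial condition $x^{j-1}_0=x_{j-1}$ and Gubinelli derivative $(x^{j-1})'=G^c_R(x,P^s\Gamma)[j-1+\cdot\,]$.

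\noindent Running the argument of Lemma \ref{x is 2 gamma and xi bound} verbatim on the shifted driver yields
\[
\Dnorm{x^{j-1},(x^{j-1})'} \;\le\; \frac{C[\Xnorm{A^c}]}{1-Q(\Theta_{j-1}\bfW)R}\,\Xnorm{x_{j-1}},
\]
where $Q$ is defined in \eqref{def:Q}. Since $\|\Theta_{j-1}W\|_\gamma$ and $\|\Theta_{j-1}\W\|_{2\gamma}$ are only tempered from above, the supremum of $Q(\Theta_{j-1}\bfW)$ over $j$ need not be finite; however, Lemma \ref{lemma:tbelow} applied to $1/(2Q)$ allows us to choose a tempered-from-below truncation radius $R=R(\bfW)$ such that $R(\bfW)Q(\Theta_{j-1}\bfW)\le 1/2$ uniformly in $j\in\Z^-$. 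This guarantees the denominator is bounded below by $1/2$, giving a bound of the form $\Dnorm{x^{j-1},(x^{j-1})'}\le 2C[\Xnorm{A^c}]\Xnorm{x_{j-1}}$.

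\noindent It then remains to control $\Xnorm{x_{j-1}}$ by $\Xnorm{\xi}$. For this I would use the variation of constants representation on $[j-1,0]$, exploiting that $A^c$ generates a group satisfying $\Xnorm{S^c(j-1)\xi}\le M_c\Xnorm{\xi}$ for $j-1\le 0$ by \eqref{nu}. The nonlinear and rough integral contributions are handled by splitting $[j-1,0]$ into unit intervals and applying on each $[k-1,k]$, for $j\le k\le 0$, the cut-off estimates of Lemma \ref{lip:fr} and Lemma \ref{composition:cutoff} combined with the single-interval bound just derived. Collecting the (random, $k$-dependent) factors into a product and absorbing it into a single tempered-from-above random variable defines $\tilde C_x=\tilde C_x(\bfW)$, yielding the stated inequality.

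\noindent\textbf{Main obstacle.} The delicate step is the uniformity in $j$: each shifted estimate is immediate, but the cocycle-shifted rough path norms grow subexponentially in $|j|$ and, crucially, $S^c$ provides no backward decay on the center subspace ($\nu\ge 0$). The contraction needed to sum the backward iteration must therefore come entirely from the smallness of $R(\bfW)$ relative to $Q(\Theta_{j-1}\bfW)$. Establishing that a single tempered-from-below choice of $R$ simultaneously controls (i) the denominator $1-R\,Q(\Theta_{j-1}\bfW)$ for every $j\in\Z^-$ and (ii) the geometric series arising from backward iteration is the core technical content, and motivates the eventual $BC^\eta$-functional setting of Definition \ref{BC def}, where the exponential weight $\txte^{-\eta(j-1)}$ with $\eta<0$ accommodates any residual subexponential growth that cannot be absorbed into $\tilde C_x$.
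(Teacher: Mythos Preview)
Your overall architecture---reduce to a single-interval estimate on each $[k-1,k]$ via the shifted driver $\Theta_{k-1}\bfW$, then iterate backward---matches the paper's. The gap is in how you control the $j$-dependence of the rough path norms. You invoke Lemma~\ref{lemma:tbelow} to claim that a \emph{single} $R(\bfW)$ can be chosen with $R(\bfW)Q(\Theta_{j-1}\bfW)\le 1/2$ uniformly in $j\in\Z^-$. But Lemma~\ref{lemma:tbelow} only yields $C\rho(\bfW)e^{\eps(j-1)}\le 1/(2Q(\Theta_{j-1}\bfW))$, i.e.\ $\rho(\bfW)Q(\Theta_{j-1}\bfW)\le C^{-1}e^{-\eps(j-1)}/2$, and for $j-1\le -1$ the right-hand side \emph{grows} as $|j|\to\infty$. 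Since $Q(\Theta_{j-1}\bfW)$ is only tempered from above, $\inf_{j\le 0}1/Q(\Theta_{j-1}\bfW)$ may well be zero, so no positive fixed $R(\bfW)$ can dominate all shifts simultaneously. This is exactly the obstacle you flag in your last paragraph, and it is not resolved by passing to the $BC^\eta$ weight: the lemma asserts a bound uniform in $j$ with the \emph{deterministic} constant $\tilde C_x$ of \eqref{def:tilde C_x}, not a random $\tilde C_x(\bfW)$ nor a $j$-weighted one.

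The paper circumvents this by letting the cut-off radius itself shift with the driver: on the interval $[k-1,k]$ the truncation uses $R(\Theta_{k-1}\bfW)$, where $R$ is defined pointwise by $R_2(\omega)\le e^{\nu}/(2M_cQ(\omega))$ as in \eqref{def:R_2}. Then, for every $k$, the product $R(\Theta_{k-1}\bfW)Q(\Theta_{k-1}\bfW)\le e^{\nu}/(2M_c)$ is a fixed deterministic constant by construction---no temperedness argument is needed. This turns the backward recursion into a purely deterministic inequality of the form \eqref{ineq:recursive formula x^j-1}, which is then summed to give the deterministic $\tilde C_x$ in \eqref{def:tilde C_x}. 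The conceptual point you are missing is that in the RDS framework the cut-off is a random variable on $\Omega$, and the cocycle naturally evaluates it along the orbit $\Theta_{k-1}\bfW$; once you use this, the randomness cancels in the product $RQ$ and the constant becomes deterministic.
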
 
\begin{proof}
    The Lyapunov Perron map $J$ defined in \eqref{J cont def} exists and due to Theorem \ref{contraction} $J$ has a fixed point $\Gamma(\xi,\bfW)[t]$. We discretize $\Gamma$ such that for $k\in\Z^-,t\in[0,1]$ we have $$\Gamma(\xi,\bfW)[k-1,t]:=\Gamma(\xi,\bfW)[k-1+t],$$
    where again the first index indicates the position within the sequence and the second one refers to the time variable.~We further use the same arguments as in Lemma \ref{x is 2 gamma and xi bound}. We have
    \begin{align*}
        &x_t^{j-1} = x_{j-1+t}\\
        &= S^c(j-1+t)\xi - \int_0^{j-1+t} S^c(j-1+t-r)F_R^c(x,P^s\Gamma(\xi,\bfW)[\cdot])[r]~\txtd r \\
        &~~~~- \int_0^{j-1+t} S^c(j-1+t-r)G_R^c(x,P^s\Gamma(\xi,\bfW)[\cdot])[r]~\txtd \bfW_r\\
        &= S^c(j-1+t)\xi - \sum_{k=0}^{j+1}S^c(j-1+t-k) \Bigg[\int_0^{1} S^c(1-r)F_R^c(x,P^s\Gamma(\xi,\bfW)[\cdot])[r+k-1]~\txtd r \\
        &~~~~- \int_0^{1} S^c(1-r)G_R^c(x,P^s\Gamma(\xi,\bfW)[\cdot])[r+k-1]~\txtd \Theta_{k-1}\bfW_r \Bigg]\\
        &~~~~- \int_t^{1} S^c(1-r)F_R^c(x,P^s\Gamma(\xi,\bfW)[\cdot])[r+j-1]~\txtd r \\
        &~~~~- \int_t^{1} S^c(1-r)G_R^c(x,P^s\Gamma(\xi,\bfW)[\cdot])[r+j-1]~\txtd \Theta_{j-1}\bfW_r \\
        &= S^c(j-1+t)\xi - \sum_{k=0}^{j+1}S^c(j-1+t-k) \Bigg[\int_0^{1} S^c(1-r)F_R^c\left(x^{k-1},P^s\Gamma(\xi,\bfW)[k-1,\cdot]\right)[r]~\txtd r \\
        &~~~~- \int_0^{1} S^c(1-r)G_R^c\left(x^{k-1},P^s\Gamma(\xi,\bfW)[k-1,\cdot]\right)[r]~\txtd \Theta_{k-1}\bfW_r\Bigg] \\
        &~~~~- \int_t^{1} S^c(1-r)F_R^c\left(x^{j-1},P^s\Gamma(\xi,\bfW)[j-1,\cdot]\right)[r]~\txtd r \\
        &~~~~- \int_t^{1} S^c(1-r)G_R^c\left(x^{j-1},P^s\Gamma(\xi,\bfW)[j-1,\cdot]\right)[r]~\txtd \Theta_{j-1}\bfW_r.
    \end{align*}
    Now we use the bound for the linear part \eqref{nu} and Lemma \ref{integral bound} to infer that
    \begin{align*}
            \Dnorm{x^{j-1},\left(x^{j-1}\right)'} &\le M_c e^{\nu (j-1)}\Xnorm{\xi} + \sum_{k=0}^{j} M_c e^{\nu (j-1-k)} Q(\Theta_{k-1}\bfW)R(\Theta_{k-1}\bfW) \Dnorm{x^{k-1},\left(x^{k-1}\right)'},
    \end{align*}
    where $Q(\bfW)$ is defined in \eqref{def:Q}. We define 
    \begin{align}\label{def:R_2}
        R_2(\bfW):=\min\left\{\frac{e^\nu}{2M_cQ(\bfW)},R_1(\bfW) \right\},
    \end{align}
    where $R_1(\bfW)$ is defined in \eqref{def:R_1}. With this we get
    \begin{align*}
        \Dnorm{x^{j-1},\left(x^{j-1}\right)'} &\le M_c e^{\nu (j-1)}\Xnorm{\xi} + \frac{1}{2}\sum_{k=0}^{j} e^{\nu (j-1-(k-1))} \Dnorm{x^{k-1},\left(x^{k-1}\right)'} \\
        &\le M_c e^{\nu (j-1)} \Xnorm{\xi} + \frac{1}{2}\sum_{k=0}^{j+1} \Dnorm{x^{k-1},\left(x^{k-1}\right)'} + \frac{1}{2}\Dnorm{x^{j-1},\left(x^{j-1}\right)'},
    \end{align*}
    where for the second inequality we used $ e^{\nu (j-1-(k-1))}\le 1$ for all $k\in\{0,\dots,j-1\}$. 
    So we get
    \begin{align}\label{ineq:recursive formula x^j-1}
        \Dnorm{x^{j-1},\left(x^{j-1}\right)'} &\le  C  M_c e^{\nu (j-1)} \Xnorm{\xi} +   \sum_{k=0}^{j+1} \Dnorm{x^{k-1},\left(x^{k-1}\right)'}.
    \end{align}
    With this recursive formula we can show 
    \begin{align*}
        \Dnorm{x^{j-1},\left(x^{j-1}\right)'} \le C M_c e^{\nu (j-1)} \left( 1 + \frac{e^\nu}{1-2e^\nu}\right)\Xnorm{\xi} 
        \le C M_c \left( 1 + \frac{e^\nu}{1-2e^\nu}\right)\Xnorm{\xi}.
    \end{align*}
    So we define
    \begin{align}\label{def:tilde C_x}
        \tilde C_x := CM_c \left( 1 + \frac{e^\nu}{1-2e^\nu}\right),
    \end{align}
    and the claim follows.
    \qed\\ 
\end{proof}
Next, we discretize the stationary solutions of the RDEs \eqref{alpha RDE}. To this aim, we let $i\in\{2,\dots,q\}$ and set $S^{\alpha_i}(t):=e^{A^{\alpha_i}t}$. Rewriting \eqref{alpha RDE} in integral form gives us for $s\le 0$ 
\begin{align*}
    \alpha_i(s) :=\alpha_i(\Theta_s\bfW)= \int_{-\I}^s S^{\alpha_i}(s-r) f_i(r)~\txtd r + \int_{-\I}^s S^{\alpha_i}(s-r) g_i(r)~\txtd \bfW_r.
\end{align*}
The coefficients $A^{\alpha_i},~f_i$ and $g_i$ are given by the invariance equations \eqref{invariance equation A}, \eqref{invariance equation f_i} and \eqref{invariance equation g_i}. For the discretization we make a change of variable $s=j-1+t$ where $t\in[0,1]$ and $j\in\Z^-$. With this change of variable we get 
\begin{align*}
    &\alpha_i(j-1+t) \\
    &= \int_{-\I}^{j-1+t} S^{\alpha_i}(j-1+t-r) f_i(r)~\txtd r + \int_{-\I}^{j-1+t} S^{\alpha_i}(j-1+t-r) g_i(r)~\txtd \bfW_r \\
    &= \sum_{k=-\I}^{j-1}S^{\alpha_i}(j-1+t-k)\left(\int_{0}^{1} S^{\alpha_i}(1-r) f_i(k-1+r)~\txtd r + \int_{0}^{1} S^{\alpha_i}(1-r) g_i(k-1+r)~\txtd \Theta_{k-1}\bfW_r \right)\\
    &~~~~ +\int_{0}^{t} S^{\alpha_i}(t-r) f_i(j-1+r)~\txtd r + \int_{0}^{t} S^{\alpha_i}(t-r) g_i(j-1+r)~\txtd \Theta_{j-1}\bfW_r.
\end{align*}
Now we define $\alpha_i^{j-1}(t):=\alpha_i(j-1+t)$, $f_i^{j-1}(t):=f_i(j-1+t)$ and $g_i^{j-1}(t):=g_i(j-1+t)$. With this notation we get
\begin{align}\label{discretized:alpha}
    \alpha_i^{j-1}(t) &= \sum_{k=-\I}^{j-1}S^{\alpha_i}(j-1+t-k)\left(\int_{0}^{1} S^{\alpha_i}(1-r) f_i^{k-1}(r)~\txtd r + \int_{0}^{1} S^{\alpha_i}(1-r) g_i^{k-1}(r)~\txtd \Theta_{k-1}\bfW_r \right) \nonumber\\
    &~~~~ +\int_{0}^{t} S^{\alpha_i}(t-r)f_i^{j-1}(r)~\txtd r + \int_{0}^{t} S^{\alpha_i}(t-r)g_i^{j-1}(r)~\txtd \Theta_{j-1}\bfW_r.
\end{align}
From now on we suppress again the dependence of $\alpha_i^{j-1},~f_i^{j-1},~g_i^{j-1}$ on the time $t$ when this is clear from the context. \\

After discretizing $\alpha_i$ it only remains to discretize $\phi$ defined in~\eqref{ansatz}. 
\begin{definition}
    We define for $t\in[0,1]$ and $j\in\Z^-$
    \begin{align}\label{phi discretized}
        \phi^{j-1}\left(x^{j-1}_t\right)=\sum_{i=2}^{q}\alpha_i^{j-1}\left(x^{j-1}_t\right)^i
    \end{align}
    and introduce the notation 
    \begin{align}\label{Phi def}
        \Phi(x):=\left(\phi^{j-1}\left(x^{j-1}\right), \phi^{j-1}\left(x^{j-1}\right)'\right)_{j\in\Z^-}.
    \end{align}
\end{definition}

Next, we show a bound for $M\phi^{j-1}$ and $\tilde M\phi^{j-1}$ defined as 
\begin{align}
    M\phi^{j-1}\left(x^{j-1}_t\right) &= \sum_{i=2}^qf_i^{j-1}\left(x_t^{j-1}\right)^i  - F^s_R\left(x^{j-1},\phi^{j-1}\left(x^{j-1}\right)\right)[t] \nonumber\\
    &~~~~+ \sum_{i=2}^{q} i\alpha_i^{j-1} \left(x_t^{j-1}\right)^{i-1} F^c_R\left(x^{j-1},\phi^{j-1}\left(x^{j-1}\right)\right)[t] \label{def:discretized M}\\
    \tilde{M}\phi^{j-1}\left(x^{j-1}_t\right) &= \sum_{i=2}^{q} g_i^{j-1} \left(x_t^{j-1}\right)^i - G_R^s(x^{j-1},\phi^{j-1}\left(x^{j-1}\right))[t]  \nonumber\\
    &~~~~+ \sum_{i=2}^q i\alpha_i^{j-1} \left(x_t^{j-1}\right)^{i-1} G^c_R\left(x^{j-1},\phi^{j-1}\left(x^{j-1}\right)\right)[t] \label{def:discretized tilde M}. 
\end{align}  
To this aim, we assume a bound for $\alpha_i$, which is reasonable since we perform a local analysis around the stationary point zero of~\eqref{SDE}. With this we then can show a bound for $M\phi(x)$ that depends on $x$ similarly to the bound assumed in the deterministic autonomous case specified in Theorem \ref{Carr Theorem}.

\begin{assumptions}\label{M assumption}
   \begin{enumerate}
       \item [1)]
    Let $i\in\{2,\dots,q\},~j\in\Z^-$. We assume that $\alpha_i^{j-1}$ defined by \eqref{discretized:alpha} fulfills
    $$\Dnorm{\alpha_i^{j-1},\left(\alpha_i^{j-1}\right)'}\le C~R(\Theta_{j-1}\bfW).$$
    Given this we infer that 
    \begin{align}\label{C_alpha bound}
        \tilde C_\alpha^{j-1}:=\max_{i\in\{2,\dots,q\}}\Dnorm{\alpha_i^{j-1}, \left(\alpha_i^{j-1}\right)'} \le C~R(\Theta_{j-1}\bfW).
    \end{align}
   \item [2)] Moreover, we assume that $F_R^{c/s}$ and $G_R^{c/s}$ are polynomials in all variables with highest order $k\le q$.
   \end{enumerate}
\end{assumptions}
We first discuss these assumptions.
\begin{remark}
\begin{enumerate}
    \item Let $z$ be the stationary Ornstein-Uhlenbeck process introduced in Example~\ref{ex:stationary}. Then it is well-known that $|z(B)|_{\cX}$ is tempered, so in particular tempered from below. 
    Here we impose a similar condition for the coefficients $\alpha_i^{j-1}$ but with respect to the stronger $\cD^{2\gamma}_W$-norm.  
    \item One can perform a cut-off argument as in Subsection~\ref{sec:cutoff} in order to obtain that $$\Dnorm{\alpha_i^{j-1},\left(\alpha_i^{j-1}\right)'}\le C~R(\Theta_{j-1}\bfW).$$~This suffices for our aims, since we only develop a local approximation theory. 
    \item Moreover, if the previous assumptions are not satisfied by $F_R^{c/s}$ and $G_R^{c/s}$, we can consider a Taylor approximation of order $q$ and proceed with this Taylor approximation. 
\end{enumerate}
\end{remark}
Based on Assumption~\ref{M assumption},  we can show the following bound for $\phi^{j-1}\left(x^{j-1}\right)$.
\begin{lemma}\label{phi BC estimate}
Let $x_t$ be the solution of \eqref{center equation} with initial condition $\xi$ on the time interval $(-\I,0]$. Let $\left(x^{j-1}\right)_{j\in\Z^-}$ be a sequence such that for all $t\in[0,1]$ and $j\in\Z^-$
$$x^{j-1}_t=x_{j-1+t}.$$ Moreover, for $i\in\{2,\dots,q\}$ let $\left(\alpha_i^{j-1},\left(\alpha_i^{j-1}\right)'\right)_{j\in\Z^-}$ be the solution of \eqref{discretized:alpha} and $\left(\phi^{j-1}\right)_{j\in\Z^-}$ as defined in \eqref{phi discretized}. 
Then, for all $j\in\Z^-$
$$\left(\phi^{j-1}\left(x^{j-1}\right),\phi^{j-1}\left(x^{j-1}\right)' \right)\in\cD^{2\gamma}_W,$$
and satisfies
\begin{align}\label{phi discretized bound}
    \Dnorm{\phi^{j-1}\left(x^{j-1}\right),\phi^{j-1}\left(x^{j-1}\right)'}\le  C\tilde C_x^q q^2\left(1+\norm{\Theta_{j-1}W}_\gamma\right)^2 \Xnorm{\xi} .
\end{align}
\end{lemma}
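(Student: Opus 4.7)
The plan is to mimic the proof of Lemma \ref{phi D2gamma estimate} essentially line-by-line, replacing continuous objects by their discretized counterparts and invoking Lemma \ref{xj bound} (the discrete analogue of Lemma \ref{x is 2 gamma and xi bound}) together with the $\alpha$-bound from Assumption~\ref{M assumption}. Since for fixed $j \in \Z^-$ the expression $\phi^{j-1}(x^{j-1}) = \sum_{i=2}^{q} \alpha_i^{j-1} (x^{j-1})^i$ is a finite sum of products of controlled rough paths on the interval $[0,1]$ (with driver $\Theta_{j-1}W$), the machinery of Lemma \ref{lem:addition and multiplication of RP} applies verbatim.

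First, by Lemma \ref{lem:addition and multiplication of RP} applied to the sum and to each product $\alpha_i^{j-1}(x^{j-1})^i$ I obtain
\begin{align*}
\Dnorm{\phi^{j-1}(x^{j-1}),\phi^{j-1}(x^{j-1})'}
\leq C(1+\norm{\Theta_{j-1}W}_\gamma)^2 \sum_{i=2}^{q} \Dnorm{\alpha_i^{j-1},(\alpha_i^{j-1})'}\, \Dnorm{(x^{j-1})^i, i(x^{j-1})^{i-1}(x^{j-1})'}.
\end{align*}
Next, repeating the direct computation from the proof of Lemma \ref{phi D2gamma estimate} (expanding the $\cD^{2\gamma}_W$-norm via \eqref{Dnorm} and using $\norm{R^{(x^{j-1})^i}}_{2\gamma} \le i\norm{x^{j-1}}_\I^{i-1}\norm{R^{x^{j-1}}}_{2\gamma}$) yields
\begin{align*}
\Dnorm{(x^{j-1})^i, i(x^{j-1})^{i-1}(x^{j-1})'} \leq i\,\Dnorm{x^{j-1},(x^{j-1})'}^i.
\end{align*}

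Plugging in Lemma \ref{xj bound} gives $\Dnorm{x^{j-1},(x^{j-1})'} \le \tilde C_x \Xnorm{\xi}$, while Assumption~\ref{M assumption}.1, rewritten as \eqref{C_alpha bound}, provides a uniform bound $\Dnorm{\alpha_i^{j-1},(\alpha_i^{j-1})'} \le \tilde C_\alpha^{j-1}$. Assembling everything and using $\tilde C_x \ge 1$ together with $\Xnorm{\xi}\le \rho(\bfW)\le 1$ to factor out one power of $\Xnorm{\xi}$ from the geometric-type sum,
\begin{align*}
\sum_{i=2}^{q} i\, \Xnorm{\xi}^i \leq \Xnorm{\xi} \sum_{i=2}^q i\,\Xnorm{\xi}^{i-1} \leq q^2 \Xnorm{\xi},
\end{align*}
produces
\begin{align*}
\Dnorm{\phi^{j-1}(x^{j-1}),\phi^{j-1}(x^{j-1})'} \le C\, \tilde C_\alpha^{j-1}\, \tilde C_x^q\, q^2 (1+\norm{\Theta_{j-1}W}_\gamma)^2 \Xnorm{\xi},
\end{align*}
which matches the target estimate (absorbing $\tilde C_\alpha^{j-1}$, bounded by $C R(\Theta_{j-1}\bfW)$ via \eqref{C_alpha bound}, into the universal constant $C$).

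The argument is essentially a transcription; the only non-routine point is making sure that each ingredient really has a discrete counterpart available at the fixed shift $\Theta_{j-1}\bfW$, which is guaranteed because $(x^{j-1},(x^{j-1})') \in \cD^{2\gamma}_{\Theta_{j-1}W}$ by Lemma \ref{xj bound} and $(\alpha_i^{j-1},(\alpha_i^{j-1})') \in \cD^{2\gamma}_{\Theta_{j-1}W}$ by construction in \eqref{discretized:alpha}. I do not anticipate a genuine obstacle; the main book-keeping issue is simply to verify that the multiplicative constants coming from Lemma \ref{lem:addition and multiplication of RP} only contribute the claimed $(1+\norm{\Theta_{j-1}W}_\gamma)^2$ factor rather than a higher power, which follows because each application of that lemma to a product of two factors contributes exactly one such factor and here the only nonlinear combination happens through the powers of $x^{j-1}$ handled by the direct expansion above.
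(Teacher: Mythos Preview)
Your proposal is correct and follows exactly the approach of the paper: the paper's proof simply says to apply Lemma~\ref{phi D2gamma estimate} (with the discretized data and $\tilde C_x$ from Lemma~\ref{xj bound} replacing $C_x$) and then absorb $\tilde C_\alpha^{j-1}\le C R(\Theta_{j-1}\bfW)\le C$ via \eqref{C_alpha bound}, which is precisely the transcription you carried out explicitly.
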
 
\begin{proof}
    Due to Assumption \ref{M assumption} we get for all $j\in\Z^-$ that $\left(\alpha_i^{j-1},\left(\alpha_i^{j-1}\right)'\right)\in \cD_W^{2\gamma} $. The claim follows applying Lemma \ref{phi D2gamma estimate} and using the bound \eqref{C_alpha bound} for $\Dnorm{\alpha_i^{j-1},\left(\alpha_i^{j-1}\right)'}$.\qed\\
\end{proof}
\begin{remark}\label{rem:form of bound}
Our goal is to find a bound for $\Dnorm{M\phi^{j-1}\left(x^{j-1}\right),M\phi^{j-1}\left(x^{j-1}\right)'}$ and $\Dnorm{\tilde M\phi^{j-1}\left(x^{j-1}\right),\tilde M\phi^{j-1}\left(x^{j-1}\right)'}$ of the form $CR(\Theta_{j-1}\bfW)\Xnorm{\xi}^{q+1}$, where $C$ may depend on $\norm{\Theta_{j-1}W}_\gamma$ and $\norm{\Theta_{j-1}\W}_{2\gamma}$. Such a bound makes it possible to appropriately choose $R(\bfW)$ in \eqref{def:R_4}.
\end{remark}
To this end, we first analyze the terms $F^{c/s}_R\left(x^{j-1},\phi^{j-1}\left(x^{j-1}\right)\right)[t]$ and $$\sum_{i=2}^qi \alpha^{j-1}_i\left(x^{j-1}_t\right)^{i-1} F^c_R\left(x^{j-1},\phi^{j-1}\left(x^{j-1}\right)\right)[t].$$ For $G^{c/s}$ the statements will follow analogously.
We first show that the cut-off parameter $R$ can be omitted, so $F_R^{c/s} = F^{c/s}$ and $G_R^{c/s} = G^{c/s}$.
\begin{lemma}\label{lem:omit cut-off}
    Under the assumptions of Lemma \ref{phi BC estimate} there exists a tempered from below random variable $\rho(\bfW)$ such that for $\xi\in B_{\cX^c}(0,\rho(\bfW))$, we have for all $j\in\Z^-$ 
    \begin{align*}
        \Dnorm{\left(x^{j-1},\phi^{j-1}\left(x^{j-1}\right)\right), \left(x^{j-1},\phi^{j-1}\left(x^{j-1}\right)\right)'}\le R(\Theta_{j-1}\bfW).
    \end{align*}
    In particular we have
    \begin{align*}
        &F^{c/s}_R\left(x^{j-1},\phi^{j-1}\left(x^{j-1}\right)\right)[t] = F^{c/s}\left(x^{j-1}_t,\phi^{j-1}\left(x^{j-1}_t\right)\right) ,
    \end{align*}
    and
    \begin{align*}
        G_R^{c/s}\left(x^{j-1},\phi^{j-1}\left(x^{j-1}\right)\right)[t] = G^{c/s}\left(x^{j-1}_t,\phi^{j-1}\left(x^{j-1}_t\right)\right) .
    \end{align*}
\end{lemma}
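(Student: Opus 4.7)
The plan is to combine the $\cD^{2\gamma}_W$-bounds already available from Lemma~\ref{xj bound} and Lemma~\ref{phi BC estimate}, and then to select $\rho(\bfW)$ so that the resulting estimate is dominated by $R(\Theta_{j-1}\bfW)/2$ uniformly in $j\in\Z^-$. Adding the two bounds, for every $j\in\Z^-$,
$$
\Dnorm{(x^{j-1},\phi^{j-1}(x^{j-1})),(x^{j-1},\phi^{j-1}(x^{j-1}))'} \le \big[\tilde C_x + C\tilde C_x^{q} q^{2}\left(1+\|\Theta_{j-1}W\|_\gamma\right)^{2}\big]\Xnorm{\xi}.
$$
Denoting the bracket by $P(\Theta_{j-1}\bfW)$, the required inequality becomes $P(\Theta_{j-1}\bfW)\Xnorm{\xi}\le R(\Theta_{j-1}\bfW)/2$, i.e.~$\Xnorm{\xi}\le \widehat R(\Theta_{j-1}\bfW)$ with $\widehat R(\bfW) := R(\bfW)/(2P(\bfW))$.

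Since $R$ is chosen in Subsection~\ref{sec:cutoff} as the reciprocal of a polynomial in $\|W\|_\gamma$ and $\|\W\|_{2\gamma}$, and $P$ is polynomial in $\|W\|_\gamma$, the quotient $\widehat R$ is again of the same reciprocal-polynomial form, so it is tempered from below by the third item of the remark in Subsection~\ref{s:rd}. I would then take as the admissible radius
$$
\rho(\bfW) := \inf_{j\in\Z^-} \widehat R(\Theta_{j-1}\bfW).
$$
The main obstacle is to show that $\rho(\bfW)$ is strictly positive and itself tempered from below. Since Lemma~\ref{lemma:tbelow} only yields an exponentially decaying lower bound of the form $C\rho_0(\bfW)\,e^{\varepsilon(j-1)}$ for $\widehat R(\Theta_{j-1}\bfW)$, the pointwise infimum cannot be handled by that estimate alone; the plan is to exploit instead the explicit reciprocal-polynomial structure of $\widehat R$, writing $\widehat R(\bfW)=1/\bar P(\|W\|_\gamma,\|\W\|_{2\gamma})$, and to enlarge $\bar P$ (equivalently, to shrink $R$ and thereby $\rho(\bfW)$) so that $\rho(\bfW)$ is itself represented as the reciprocal of a polynomial in $\|W\|_\gamma,\|\W\|_{2\gamma}$. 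Combining this with the temperedness-from-above of $\|W\|_\gamma,\|\W\|_{2\gamma}$ and Lemma~\ref{lem:product of tempered variables} then yields temperedness-from-below of $\rho(\bfW)$.

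Once $\Xnorm{\xi}\le\rho(\bfW)$ guarantees $\Dnorm{(x^{j-1},\phi^{j-1}(x^{j-1})),(x^{j-1},\phi^{j-1}(x^{j-1}))'}\le R(\Theta_{j-1}\bfW)/2$ for every $j\in\Z^-$, the remaining identities are immediate: by Definition~\ref{cut-off def}, the map $\chi_{R(\Theta_{j-1}\bfW)}$ is the identity on the ball of radius $R(\Theta_{j-1}\bfW)/2$ in $\cD^{2\gamma}_W$, so $F_R^{c/s}$ and $G_R^{c/s}$ evaluated at $(x^{j-1},\phi^{j-1}(x^{j-1}))$ coincide with $F^{c/s}$ and $G^{c/s}$ evaluated at $(x^{j-1}_t,\phi^{j-1}(x^{j-1}_t))$ for every $t\in[0,1]$ and $j\in\Z^-$.
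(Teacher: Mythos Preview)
Your opening is fine and matches the paper: you correctly combine Lemma~\ref{xj bound} and Lemma~\ref{phi BC estimate} to obtain a bound of the form $P(\Theta_{j-1}\bfW)\Xnorm{\xi}$, and your $\widehat R(\bfW)=R(\bfW)/(2P(\bfW))$ is exactly the paper's $\hat\rho(\bfW)$ (up to the harmless factor $2$). The divergence --- and the gap --- is in how you pass from $\widehat R$ to the admissible radius $\rho(\bfW)$.

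Your proposal $\rho(\bfW):=\inf_{j\in\Z^-}\widehat R(\Theta_{j-1}\bfW)$ is almost surely zero for the rough path cocycles of interest (Brownian or fractional Brownian): along the orbit $(\Theta_{j-1}\bfW)_{j\in\Z^-}$ the H\"older norms $\|\Theta_{j-1}W\|_\gamma$ are unbounded, so $\widehat R(\Theta_{j-1}\bfW)\to 0$ along subsequences. The workaround you sketch --- ``enlarge $\bar P$ so that $\rho(\bfW)$ is itself a reciprocal polynomial in $\|W\|_\gamma,\|\W\|_{2\gamma}$'' --- cannot help, because the infimum runs over \emph{all} shifts and no expression in the un-shifted norms $\|W\|_\gamma,\|\W\|_{2\gamma}$ controls $\sup_{j}\bar P(\|\Theta_{j-1}W\|_\gamma,\|\Theta_{j-1}\W\|_{2\gamma})$. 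So this route does not produce a positive, let alone tempered-from-below, radius.

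The paper avoids the infimum altogether. It observes that $\hat\rho$ is tempered from below (Lemma~\ref{lem:product of tempered variables}) and then invokes Lemma~\ref{lemma:tbelow} \emph{directly} to obtain a tempered-from-below $\rho(\bfW)$ together with an exponential comparison $\hat\rho(\Theta_{j-1}\bfW)\ge \rho(\bfW)\,e^{\eta(j-1)}$. That exponential factor (with the sign of $\eta$ and $j-1\le -1$) is what allows the chain
\[
P(\Theta_{j-1}\bfW)\,\Xnorm{\xi}\;\le\;P(\Theta_{j-1}\bfW)\,e^{\eta(j-1)}\rho(\bfW)\;\le\;P(\Theta_{j-1}\bfW)\,\hat\rho(\Theta_{j-1}\bfW)\;\le\;R(\Theta_{j-1}\bfW)
\]
to close for every $j\in\Z^-$. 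The point is that Lemma~\ref{lemma:tbelow} already encapsulates the ``uniform-in-$j$'' lower bound in a form that outputs a tempered-from-below variable; you should call that lemma rather than attempt to rebuild it via an infimum. Once the $\cD^{2\gamma}_W$-bound is in place, your final paragraph (using Definition~\ref{cut-off def} to drop the cut-off) is correct and is exactly how the paper concludes.
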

\begin{proof}
We define $\hat\rho(\Theta_{j-1}\bfW):=\frac{R(\Theta_{j-1}\bfW)}{\tilde C_x  + C\tilde C_x^q q^2 (1+\norm{\Theta_{j-1}W}_\gamma)^2}$. As $R(\bfW)$ and $\frac{1}{\tilde C_x  + C\tilde C_x^q q^2 (1+\norm{W}_\gamma)^2}$ are tempered from below random variables, the product is also tempered from below due to Lemma \ref{lem:product of tempered variables}.
    Since $\hat\rho(\bfW)$ is tempered from below, there exists by Lemma~\ref{lemma:tbelow} a random variable $\rho(\bfW)$ such that 
    $$\hat\rho(\Theta_{j-1}\bfW)\ge \rho(\bfW)e^{\eta (j-1)}.$$
    We use Lemma \ref{xj bound} and \ref{phi BC estimate} to infer that
    \begin{align*}
        &\Dnorm{\left(x^{j-1},\phi^{j-1}\left(x^{j-1}\right)\right), \left(x^{j-1},\phi^{j-1}\left(x^{j-1}\right)\right)'} \\
        &\le \Dnorm{x^{j-1}, \left(x^{j-1}\right)'} + \Dnorm{\phi^{j-1}\left(x^{j-1}\right), \phi^{j-1}\left(x^{j-1}\right)'} \\
        &\le \tilde C_x \Xnorm{\xi} + C\tilde C_x^q q^2 (1+\norm{\Theta_{j-1}W}_\gamma)^2\Xnorm{\xi}. 
    \end{align*}
    Now we use $\xi\in B_{\cX^c}(0,\rho(\bfW))$ and our choice of $\hat\rho$ 
    \begin{align*}
        &\Dnorm{\left(x^{j-1},\phi^{j-1}\left(x^{j-1}\right)\right), \left(x^{j-1},\phi^{j-1}\left(x^{j-1}\right)\right)'} \\
        &\le \tilde C_x \Xnorm{\xi} + C\tilde C_x^q q^2 (1+\norm{\Theta_{j-1}W}_\gamma)^2\Xnorm{\xi} \\
        &\le (\tilde C_x  + C\tilde C_x^q q^2 (1+\norm{\Theta_{j-1}\bfW}_\gamma)^2)e^{\eta(j-1)}\rho(\bfW) \\
        &\le (\tilde C_x  + C\tilde C_x^q q^2 (1+\norm{\Theta_{j-1}\bfW}_\gamma)^2)\hat\rho(\Theta_{j}\bfW). \\
        &\le R(\Theta_{j-1}\bfW).
    \end{align*}
    This allows us to drop the cut-off parameter $R$ from $F^{c/s}_R$ and $G^{c/s}_R$. \\
    \qed\\
\end{proof}

Next we obtain a representation for $F^{c/s}(\cdot,\cdot)$ respectively $G^{c/s}(\cdot,\cdot)$ which will lead to an appropriate choice of coefficients $f_i$ and $g_i$ in~\eqref{def:discretized M} and~\eqref{def:discretized tilde M}.
\begin{lemma}\label{lem:F reformulated}
    Let the assumptions of Lemma \ref{phi BC estimate} hold and let $\xi\in B_{\cX^c}(0,\rho(\bfW))$. Then 
    \begin{align}\label{eq:F reformulated}
        &F^{c/s}\left(x^{j-1}_t,\phi^{j-1}\left(x^{j-1}_t\right)\right) \nonumber\\
        &= \sum_{i=2}^q \left(P^{F^{c/s}}_i\left(\alpha^{j-1}_2,\dots,\alpha^{j-1}_{i-1}\right) + C\right) \left(x^{j-1}_t\right)^i + \sum_{i=q+1}^{q^2} P^{F^{c/s}}_i\left(\alpha^{j-1}_2,\dots,\alpha^{j-1}_{q}\right) \left(x^{j-1}_t\right)^i,
    \end{align}
    where $P^{F^{c/s}}_i$ is a polynomial depending on the $\alpha_k^{j-1}$ for $k<i$, the coefficients of $F^{c/s}$ and on constants that can depend on $i$. For $P^{F^{c/s}}_i$ we have for $2\le i\le q$ the estimate    \begin{align*}
        \Dnorm{P^{F^{c/s}}_i\left(\alpha^{j-1}_2,\dots,\alpha^{j-1}_{i-1}\right),P^{F^{c/s}}_i\left(\alpha^{j-1}_2,\dots,\alpha^{j-1}_{i-1}\right)'} \le C[\norm{\Theta_{j-1}W}_\gamma,F^{c/s},i] R(\Theta_{j-1}\bfW),
    \end{align*}
    and for $i>q$
    \begin{align*}
        \Dnorm{P^{F^{c/s}}_i\left(\alpha^{j-1}_2,\dots,\alpha^{j-1}_{q}\right),P^{F^{c/s}}_i\left(\alpha^{j-1}_2,\dots,\alpha^{j-1}_{q}\right)'} \le C[\norm{\Theta_{j-1}W}_\gamma,F^{c/s},i] R(\Theta_{j-1}\bfW).
    \end{align*}
    In both cases the constant $C[\norm{\Theta_{j-1}W}_\gamma,F^{c/s},i]$ is increasing in $i$ and $C[\norm{\Theta_{j-1}W}_\gamma]=P^W[\norm{\Theta_{j-1}W}_\gamma]$, for a polynomial $P^W$.
    The same claim holds for $G^{c/s}$. 
\end{lemma}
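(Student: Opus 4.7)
The plan is a combinatorial expansion. By Assumption~\ref{M assumption}(2), $F^{c/s}$ is a polynomial of degree at most $k\le q$, and the vanishing $F^{c/s}(0,0)=\txtD F^{c/s}(0,0)=0$ lets me write
\[ F^{c/s}(x,y)=\sum_{2\le l+m\le k} c_{l,m}^{F^{c/s}}\, x^l y^m. \]
Substituting the ansatz $y=\phi^{j-1}(x^{j-1}_t)=\sum_{i=2}^q \alpha_i^{j-1}(x^{j-1}_t)^i$ and expanding each $m$-th power by the multinomial formula produces a finite sum of monomials of the form $c_{l,m}^{F^{c/s}}\alpha_{i_1}^{j-1}\cdots\alpha_{i_m}^{j-1}(x^{j-1}_t)^{l+i_1+\cdots+i_m}$ with $i_1,\dots,i_m\in\{2,\dots,q\}$. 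The minimal total power of $x^{j-1}_t$ is $2$ (from $l=2,m=0$) and the maximal is $qk\le q^2$ (from $l=0,m=k$, all $i_{j'}=q$), matching the index range $[2,q^2]$ in~\eqref{eq:F reformulated}.

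The key combinatorial claim is that for $i\in\{2,\dots,q\}$, every $m\ge 1$ monomial contributing to the coefficient of $(x^{j-1}_t)^i$ involves only $\alpha_{k}^{j-1}$ with $k\le i-1$. Given $l+i_1+\cdots+i_m=i$ with all $i_{j'}\ge 2$, the largest index satisfies $i_{j'}\le i-l-2(m-1)$; the constraint $l+m\ge 2$ excludes the sole problematic case $l=0,m=1$, so $l+2(m-1)\ge 1$ and hence $i_{j'}\le i-1$. The $m=0$ contribution for a given $i$ is the pure constant $c_{i,0}^{F^{c/s}}$, absorbed into the $+\,C$ of the statement. Thus the coefficient of $(x^{j-1}_t)^i$ for $2\le i\le q$ has the desired form $P^{F^{c/s}}_i(\alpha_2^{j-1},\dots,\alpha_{i-1}^{j-1})+C$. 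For $q<i\le q^2$, only indices in $\{2,\dots,q\}$ are available, so the coefficient is automatically a polynomial in $\alpha_2^{j-1},\dots,\alpha_q^{j-1}$, as claimed.

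For the $\cD^{2\gamma}_W$-norm estimate I apply Lemma~\ref{lem:addition and multiplication of RP} iteratively to each monomial: an $m$-fold product of controlled rough paths is bounded by $C(1+\norm{\Theta_{j-1}W}_\gamma)^{2(m-1)}\prod_{j'}\Dnorm{\alpha_{i_{j'}}^{j-1},(\alpha_{i_{j'}}^{j-1})'}$. By Assumption~\ref{M assumption}(1) each factor is at most $CR(\Theta_{j-1}\bfW)$, and since the cut-off radius is chosen with $R(\Theta_{j-1}\bfW)\le 1$, any product of $m\ge 1$ such factors remains bounded by $CR(\Theta_{j-1}\bfW)$. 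Summing the finitely many contributing monomials and absorbing combinatorial and $F^{c/s}$-dependent factors into $C[\norm{\Theta_{j-1}W}_\gamma,F^{c/s},i]$ yields the claimed bound, polynomial in $\norm{\Theta_{j-1}W}_\gamma$ and monotone in $i$ (more and higher-order monomials enter as $i$ grows). The argument for $G^{c/s}$ is word-for-word identical, the only change being that $\txtD^2 G^{c/s}(0,0)=0$ forces $c_{l,m}^{G^{c/s}}=0$ for $l+m\le 2$, which only strengthens the structure. The sole mild obstacle is the combinatorial step $i_{j'}\le i-1$ for $i\le q$, which crucially uses both $i_{j'}\ge 2$ (from the absence of constant and linear terms in $\phi^{j-1}$) and $l+m\ge 2$ (from $\txtD F^{c/s}(0,0)=0$); everything else is a routine application of Lemma~\ref{lem:addition and multiplication of RP}.
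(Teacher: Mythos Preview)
Your proof is correct and follows essentially the same approach as the paper: expand $F^{c/s}$ as a polynomial in $(x,y)$, substitute $y=\phi^{j-1}(x^{j-1}_t)$, collect by powers of $x^{j-1}_t$, and bound each resulting $\balpha^{j-1}$-monomial via Lemma~\ref{lem:addition and multiplication of RP} together with Assumption~\ref{M assumption} and $R\le 1$. If anything, your combinatorial justification that only $\alpha_k^{j-1}$ with $k\le i-1$ appear in the coefficient of $(x^{j-1}_t)^i$ for $i\le q$ is more explicit than the paper's sketch, which simply asserts this structure.
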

\begin{proof}
   See Appendix \ref{proof:F reformulated}. 
\end{proof}\\

Moreover we get
\begin{lemma}\label{lem:sum F reformulated}
    Let the assumptions of Lemma \ref{phi BC estimate} hold and let $\xi\in B_{\cX^c}(0,\rho(\bfW))$. Then 
    \begin{align}\label{eq:sum F reformulated}
        &\sum_{i=2}^q i \alpha_i^{j-1} \left(x^{j-1}_t\right)^{i-1} F^{c}\left(x^{j-1}_t,\phi^{j-1}\left(x^{j-1}_t\right)\right) \nonumber \\
        &= \sum_{i=2}^q \left(\tilde P^{F^{c}}_i\left(\alpha^{j-1}_2,\dots,\alpha^{j-1}_{i-1}\right) + C\right) \left(x^{j-1}_t\right)^i + \sum_{i=q+1}^{q^3-q^2} \tilde P^{F^{c}}_i\left(\alpha^{j-1}_2,\dots,\alpha^{j-1}_{q}\right) \left(x^{j-1}_t\right)^i,
    \end{align}
    where $\tilde P^{F^{c}}_i$ is a polynomial depending on the $\alpha_k^{j-1}$ for $k<i$, the coefficients of $F^{c}$ and on constants that can depend on $i$. For $\tilde P^{F^{c}}_i$ we have for $i\le q$ the bound
    \begin{align*}
        \Dnorm{\tilde P^{F^{c}}_i\left(\alpha^{j-1}_2,\dots,\alpha^{j-1}_{i-1}\right),\tilde P^{F^{c}}_i\left(\alpha^{j-1}_2,\dots,\alpha^{j-1}_{i-1}\right)'}  \le C[\norm{\Theta_{j-1}W}_\gamma,F^{c},i] R(\Theta_{j-1}\bfW),
    \end{align*}
    and for $i>q$
    \begin{align*}
        \Dnorm{\tilde P^{F^{c}}_i\left(\alpha^{j-1}_2,\dots,\alpha^{j-1}_{q}\right),\tilde P^{F^{c}}_i\left(\alpha^{j-1}_2,\dots,\alpha^{j-1}_{q}\right)'} \le C[\norm{\Theta_{j-1}W}_\gamma,F^{c},i] R(\Theta_{j-1}\bfW).
    \end{align*}
    In both cases the constant $C[\norm{\Theta_{j-1}W}_\gamma,F^{c/s},i]$ is increasing in $i$ and $C[\norm{\Theta_{j-1}W}_\gamma]=P^W(\norm{\Theta_{j-1}W}_\gamma)$ for a polynomial $P^W$.
    The same claim holds for $G^c$.
\end{lemma}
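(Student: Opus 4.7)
The plan is to reduce Lemma~\ref{lem:sum F reformulated} to Lemma~\ref{lem:F reformulated} by straightforward polynomial multiplication in the indeterminate $x^{j-1}_t$, combined with the multiplication estimate for controlled rough paths from Lemma~\ref{lem:addition and multiplication of RP}. First, I would substitute the representation~\eqref{eq:F reformulated} from Lemma~\ref{lem:F reformulated} into the expression for $F^c\bigl(x^{j-1}_t,\phi^{j-1}(x^{j-1}_t)\bigr)$, obtaining a polynomial in $x^{j-1}_t$ whose coefficients have the claimed dependence on $\alpha_2^{j-1},\dots,\alpha_q^{j-1}$ and are bounded in $\cD^{2\gamma}_W$ by $C[\norm{\Theta_{j-1}W}_\gamma,F^c,i]R(\Theta_{j-1}\bfW)$. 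Then I would multiply with the factor $\sum_{k=2}^q k\,\alpha_k^{j-1}(x^{j-1}_t)^{k-1}$ and collect powers of $x^{j-1}_t$.

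The key structural step is the index-tracking for terms of degree $i\leq q$. When expanding the product, a monomial $(x^{j-1}_t)^i$ can only arise from pairings $(k,\ell)$ with $k-1+\ell=i$, where $k\geq 2$ and $\ell\geq 2$ (by Lemma~\ref{lem:F reformulated} the lowest order in the $F^c$ expansion is $\ell=2$). Hence $k\leq i-1<i$ and $\ell\leq i-1<i$, so the corresponding polynomial coefficient only involves $\alpha^{j-1}_2,\dots,\alpha^{j-1}_{i-1}$, as required for $\tilde P^{F^c}_i$. For $i>q$ the pairing forces $\ell$ or $k-1$ to exceed $q$, which corresponds to the second sum in Lemma~\ref{lem:F reformulated}, and then all of $\alpha^{j-1}_2,\dots,\alpha^{j-1}_q$ may enter $\tilde P^{F^c}_i$. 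The constant $C$ in the $x^{j-1}_t$-coefficient for $i\le q$ coming from the linear piece in $F^c$ is preserved, since multiplying the scalar constant in Lemma~\ref{lem:F reformulated} by $k\alpha_k^{j-1}$ produces a pure $\alpha$-polynomial contribution which gets absorbed into $\tilde P^{F^c}_i$.

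For the $\cD^{2\gamma}_W$-norm estimate of each $\tilde P^{F^c}_i$, I would iterate Lemma~\ref{lem:addition and multiplication of RP}: each multiplication by a factor $(\alpha_k^{j-1},(\alpha_k^{j-1})')$ yields a constant $C(1+\norm{\Theta_{j-1}W}_\gamma)^2$ together with a factor $\Dnorm{\alpha_k^{j-1},(\alpha_k^{j-1})'}\leq C\,R(\Theta_{j-1}\bfW)$ from Assumption~\ref{M assumption}~\eqref{C_alpha bound}. Using $R(\Theta_{j-1}\bfW)\leq 1$, any product of such factors is still bounded by $C\,R(\Theta_{j-1}\bfW)$ times a polynomial in $\norm{\Theta_{j-1}W}_\gamma$, and the polynomial bounds inherited from $P^{F^c}_\ell$ in Lemma~\ref{lem:F reformulated} combine in the same multiplicative fashion. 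Combining these factors produces the polynomial $P^W$ in $\norm{\Theta_{j-1}W}_\gamma$ that is monotone in $i$.

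The computation is essentially bookkeeping of polynomial coefficients, and I expect the main obstacle to be precisely the index analysis confirming that at every degree $i\leq q$ the new coefficient $\tilde P^{F^c}_i$ only depends on $\alpha_k^{j-1}$ for $k<i$; this is what will later allow the invariance equations~\eqref{invariance equation f_i}--\eqref{invariance equation g_i} to be solved recursively. The case $G^c$ is handled identically by replacing $F^c$ with $G^c$ and invoking the $G^c$-part of Lemma~\ref{lem:F reformulated}, with the same estimates valid because Lemma~\ref{lem:addition and multiplication of RP} is agnostic to the target space. Since all constants remain finite as long as $q$ is fixed, the full proof is most naturally deferred to Appendix~\ref{Polynomial calculation} alongside the proof of Lemma~\ref{lem:F reformulated}.
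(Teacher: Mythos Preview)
Your proposal is correct and follows essentially the same approach as the paper: substitute the polynomial representation of $F^c$ from Lemma~\ref{lem:F reformulated}, multiply by $\sum_{k=2}^q k\,\alpha_k^{j-1}(x^{j-1}_t)^{k-1}$, and collect powers of $x^{j-1}_t$, with the $\cD^{2\gamma}_W$-bounds inherited via the product estimate in Lemma~\ref{lem:addition and multiplication of RP} and Assumption~\ref{M assumption}. Your index-tracking argument (that a degree-$i$ term with $i\le q$ only involves $\alpha_k^{j-1}$ for $k<i$) is in fact more explicit than the paper's sketch, which simply asserts the resulting structure and refers back to the argument of Lemma~\ref{lem:F reformulated} for the bounds.
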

\begin{proof}
See Appendix \ref{proof:sum F reformulated}.    
\end{proof}\\

The previous two Lemmas allow us to choose the coefficients of the RDEs in \eqref{discretized:alpha} such that we get $$\Dnorm{M\phi^{j-1}\left(x^{j-1}\right),M\phi^{j-1}\left(x^{j-1}\right)'} \le C \Dnorm{x^{j-1},\left(x^{j-1}\right)'}^{q+1},$$ and $$\Dnorm{\tilde M\phi^{j-1}\left(x^{j-1}\right),\tilde M\phi^{j-1}\left(x^{j-1}\right)'} \le C\Dnorm{x^{j-1},\left(x^{j-1}\right)'}^{q+1}.$$ For the linear parts of the RDEs~\eqref{alpha RDE} we already stated in Section \ref{sec:generalidea} that $$A^{\alpha_i} = A^s - iA^c.$$
Since we are working with the discretization, we now choose the coefficients as sequences $\left(f_i^{j-1}\right)_{j\in\Z^-},\left(g_i^{j-1}\right)_{j\in\Z^-}$ for all $j\in\Z^-,~t\in[0,1],~i\in\{2,\dots,q\}$ as
\begin{align}\label{def:f_i^{j-1}}f_i^{j-1} := P^{F^s}_i\left(\alpha_2^{j-1},\dots,\alpha_{i-1}^{j-1}\right) - \tilde P^{F^c}_i\left(\alpha_2^{j-1},\dots,\alpha_{i-1}^{j-1}\right) + C,\end{align}
\begin{align}\label{def:g_i^{j-1}}g_i^{j-1} := P^{G^s}_i\left(\alpha_2^{j-1},\dots,\alpha_{i-1}^{j-1}\right) - \tilde P^{G^c}_i\left(\alpha_2^{j-1},\dots,\alpha_{i-1}^{j-1}\right) + C.\end{align}

With these preparations we now can show a useful estimate for $M\phi^{j-1}(x^{j-1})$ and $\tilde M\phi^{j-1}(x^{j-1})$ in the form that is discussed in Remark \ref{rem:form of bound}.
\begin{lemma}\label{lem:M bound}
    Let the assumptions of Lemma \ref{phi BC estimate} hold and let $\xi\in B_{\cX^c}(0,\rho(\bfW))$. Then for $j\in\Z^-$ we get 
    \begin{align}
        \Dnorm{M\phi^{j-1}(x^{j-1}),M\phi^{j-1}(x^{j-1})'} &\le C_M[\norm{\Theta_{j-1}W}_\gamma]R(\Theta_{j-1}\bfW)\Xnorm{\xi}^{q+1} , \label{M bound}\\ 
        \Dnorm{\tilde M\phi^{j-1}(x^{j-1}),\tilde M\phi^{j-1}(x^{j-1})'} &\le C_{\tilde M}[\norm{\Theta_{j-1}W}_\gamma]R(\Theta_{j-1}\bfW)\Xnorm{\xi}^{q+1}. \label{tilde M bound}
    \end{align}
\end{lemma}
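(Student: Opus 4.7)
The plan is to exploit the careful choice of the coefficients $f_i^{j-1}$ and $g_i^{j-1}$ in \eqref{def:f_i^{j-1}} and \eqref{def:g_i^{j-1}}, which are designed so that all terms of order $i\in\{2,\dots,q\}$ in $x^{j-1}$ cancel in $M\phi^{j-1}$ and $\tilde M\phi^{j-1}$, leaving only remainder terms of order $i\ge q+1$. Since $\Xnorm{\xi}\le \rho(\bfW)\le 1$, each factor $\Xnorm{\xi}^i$ with $i\ge q+1$ is dominated by $\Xnorm{\xi}^{q+1}$, which will deliver the desired exponent in the bound.

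First, I would invoke Lemma~\ref{lem:omit cut-off} to drop the cut-off parameter $R$, so that $F^{c/s}_R$ and $G^{c/s}_R$ in \eqref{def:discretized M} and \eqref{def:discretized tilde M} become $F^{c/s}$ and $G^{c/s}$. Then I substitute the polynomial representation of $F^s(x^{j-1}_t,\phi^{j-1}(x^{j-1}_t))$ from Lemma~\ref{lem:F reformulated} and the polynomial representation of $\sum_{i=2}^q i\alpha_i^{j-1}(x_t^{j-1})^{i-1}F^c(\cdot)$ from Lemma~\ref{lem:sum F reformulated} into the definition of $M\phi^{j-1}$. Using \eqref{def:f_i^{j-1}}, the three contributions indexed by $i\in\{2,\dots,q\}$ cancel exactly, and what remains is
\begin{equation*}
M\phi^{j-1}(x_t^{j-1}) = -\sum_{i=q+1}^{q^2} P^{F^s}_i(\alpha_2^{j-1},\ldots,\alpha_q^{j-1}) (x_t^{j-1})^i + \sum_{i=q+1}^{q^3-q^2} \tilde P^{F^c}_i(\alpha_2^{j-1},\ldots,\alpha_q^{j-1}) (x_t^{j-1})^i.
\end{equation*}

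Next, I would estimate the $\cD^{2\gamma}_W$-norm of this remainder term by term. By Lemma~\ref{lem:addition and multiplication of RP}, every product costs a factor $(1+\norm{\Theta_{j-1}W}_\gamma)^2$, and iterating gives $\Dnorm{(x^{j-1})^i,((x^{j-1})^i)'}\le i\Dnorm{x^{j-1},(x^{j-1})'}^i$ as shown inside the proof of Lemma~\ref{phi D2gamma estimate}. Combining this with the bound $\Dnorm{x^{j-1},(x^{j-1})'}\le \tilde C_x\Xnorm{\xi}$ from Lemma~\ref{xj bound} and the polynomial-coefficient estimates $\Dnorm{P^{F^{c/s}}_i,(P^{F^{c/s}}_i)'},\Dnorm{\tilde P^{F^{c/s}}_i,(\tilde P^{F^{c/s}}_i)'}\le C[\norm{\Theta_{j-1}W}_\gamma,F^{c/s},i]\,R(\Theta_{j-1}\bfW)$ supplied by Lemmas~\ref{lem:F reformulated}--\ref{lem:sum F reformulated}, each summand is controlled by a constant times $R(\Theta_{j-1}\bfW)\,\tilde C_x^i\,i\,\Xnorm{\xi}^i$. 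Using $\Xnorm{\xi}\le 1$ to reduce $\Xnorm{\xi}^i$ to $\Xnorm{\xi}^{q+1}$ and summing the finitely many (at most $q^3$) terms yields \eqref{M bound} with a constant $C_M[\norm{\Theta_{j-1}W}_\gamma]$ depending polynomially on $\norm{\Theta_{j-1}W}_\gamma$.

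The proof of \eqref{tilde M bound} is identical in structure, using the polynomial representations of $G^s$ and $\sum_{i=2}^q i\alpha_i^{j-1}(x^{j-1})^{i-1}G^c$ from the $G$-versions of Lemmas~\ref{lem:F reformulated} and \ref{lem:sum F reformulated}, together with the choice \eqref{def:g_i^{j-1}} of $g_i^{j-1}$; the only difference is that estimating the $\cD^{2\gamma}_W$-norms of compositions with $G^{c/s}$ via Lemma~\ref{G(phi) in D2gamma} introduces additional factors $(1+\norm{\Theta_{j-1}W}_\gamma)$ and $\norm{\Theta_{j-1}\W}_{2\gamma}$, which are absorbed into $C_{\tilde M}[\norm{\Theta_{j-1}W}_\gamma]$. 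Since the algebraic cancellation is forced by the very definition of $f_i^{j-1}$ and $g_i^{j-1}$, no genuine obstacle arises; this lemma is primarily a bookkeeping step that synthesizes Lemmas~\ref{lem:omit cut-off}--\ref{lem:sum F reformulated} and propagates the order-$(q+1)$ smallness in $\Xnorm{\xi}$ through the $\cD^{2\gamma}_W$-norm, with the random constant $R(\Theta_{j-1}\bfW)$ tracking the noise dependence as required for the fixed-point argument in Subsection~\ref{sec:fp}.
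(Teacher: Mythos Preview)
Your proposal is correct and follows essentially the same route as the paper: use the choice of $f_i^{j-1}$ (resp.\ $g_i^{j-1}$) to cancel the terms of order $2\le i\le q$ via Lemmas~\ref{lem:F reformulated}--\ref{lem:sum F reformulated}, then estimate the remaining polynomial tail in the $\cD^{2\gamma}_W$-norm using Lemma~\ref{lem:addition and multiplication of RP}, the bound $\Dnorm{x^{j-1},(x^{j-1})'}\le \tilde C_x\Xnorm{\xi}$, and $\Xnorm{\xi}\le 1$. One small remark: for $\tilde M\phi^{j-1}$ the paper does not invoke Lemma~\ref{G(phi) in D2gamma} separately but simply applies the $G$-versions of Lemmas~\ref{lem:F reformulated}--\ref{lem:sum F reformulated}, whose constants already depend only on $\norm{\Theta_{j-1}W}_\gamma$, so no extra $\norm{\Theta_{j-1}\W}_{2\gamma}$ factor appears in $C_{\tilde M}$.
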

\begin{proof}
    We show the claim for $\Dnorm{M\phi^{j-1}(x^{j-1}),M\phi^{j-1}(x^{j-1})'}$. The claim for $\Dnorm{\tilde M\phi^{j-1}(x^{j-1}),\tilde M\phi^{j-1}(x^{j-1})'}$ follows analogously. We omit the Gubinelli derivative for notational simplicity when this is clear from the context. \\
    
    We use the choice of $f_i^{j-1}$ specified in \eqref{def:f_i^{j-1}} to infer that
    \begin{align*}
        &M\phi^{j-1}\left(x^{j-1}_t\right) \\
        &= \sum_{i=2}^qf_i^{j-1}\left(x_t^{j-1}\right)^i  - F^s\left(x^{j-1}_t,\phi^{j-1}\left(x^{j-1}_t\right)\right) 
        + \sum_{i=2}^{q} i\alpha_i^{j-1} \left(x_t^{j-1}\right)^{i-1} F^c\left(x^{j-1}_t,\phi^{j-1}\left(x^{j-1}_t\right)\right) \\
        &= \sum_{i=q+1}^{q^3-q^2} \tilde P_i^{F^c}\left(\alpha_2^{j-1},\dots,\alpha_{q}^{j-1}\right) \left(x_t^{j-1}\right)^i - \sum_{i=q+1}^{q^2}  P_i^{F^s}\left(\alpha_2^{j-1},\dots,\alpha_{q}^{j-1}\right) \left(x_t^{j-1}\right)^i.
    \end{align*}
This combined with Lemma \ref{lem:addition and multiplication of RP} entail
    \begin{align*}
        &\Dnorm{M\phi^{j-1}(x^{j-1}),M\phi^{j-1}(x^{j-1})'} \\
        &\le \Dnorm{\sum_{i=q+1}^{q^2}  \left(\tilde P_i^{F^c}\left(\alpha_2^{j-1},\dots,\alpha_{q}^{j-1}\right) - P_i^{F^s}\left(\alpha_2^{j-1},\dots,\alpha_{q}^{j-1}\right)\right) \left(x_t^{j-1}\right)^i} \\
        &~~~~+ \Dnorm{\sum_{i=q^2+1}^{q^3-q^2}  \tilde P_i^{F^c}\left(\alpha_2^{j-1},\dots,\alpha_{q}^{j-1}\right)  \left(x_t^{j-1}\right)^i} \\
        &\le Cq^2\left(1+\norm{\Theta_{j-1}W}_\gamma\right)^2\tilde C_x^{q^2}\sum_{i=q+1}^{q^2}\left(\Dnorm{\tilde P_i^{F^c}\left(\alpha_2^{j-1},\dots,\alpha_{q}^{j-1}\right)} + \Dnorm{P_i^{F^s}\left(\alpha_2^{j-1},\dots,\alpha_{q}^{j-1}\right)}\right) \Xnorm{\xi}^i \\
        &~~~~+ Cq^3\left(1+\norm{\Theta_{j-1}W}_\gamma\right)^2\tilde C_x^{q^3}\sum_{i=q^2+1}^{q^3-q^2}\Dnorm{\tilde P_i^{F^c}\left(\alpha_2^{j-1},\dots,\alpha_{q}^{j-1}\right)}\Xnorm{\xi}^i.
    \end{align*}
    Due to Lemma \ref{lem:F reformulated} and Lemma \ref{lem:sum F reformulated} we infer that
    \begin{align*}
        &\Dnorm{M\phi^{j-1}(x^{j-1}),M\phi^{j-1}(x^{j-1})'} \\
        &\le Cq^2\left(1+\norm{\Theta_{j-1}W}_\gamma\right)^2\tilde C_x^{q^2}\sum_{i=q+1}^{q^2} C[\norm{\Theta_{j-1}W}_\gamma,F^c,F^s,i] R(\Theta_{j-1}\bfW) \Xnorm{\xi}^i \\
        &~~~~+ Cq^3\left(1+\norm{\Theta_{j-1}W}_\gamma\right)^2\tilde C_x^{q^3}\sum_{i=q^2+1}^{q^3-q^2} C[\norm{\Theta_{j-1}W}_\gamma,F^c,i] R(\Theta_{j-1}\bfW)\Xnorm{\xi}^i \\
        &\le C[\norm{\Theta_{j-1}W}_\gamma,F^c,F^s,q] \tilde C_x^{q^3} R(\Theta_{j-1}\bfW) \sum_{i=q+1}^{q^3-q^2} \Xnorm{\xi}^i .
    \end{align*}
    As $\Xnorm{\xi}\le1$ we know that $\Xnorm{\xi}^i\le\Xnorm{\xi}^{q+1}$ for all $i\ge q+1$, so
    \begin{align*}
        \Dnorm{M\phi^{j-1}(x^{j-1}),M\phi^{j-1}(x^{j-1})'} 
        &\le C[\norm{\Theta_{j-1}W}_\gamma,F^c,F^s,q] \tilde C_x^{q^3} R(\Theta_{j-1}\bfW) \Xnorm{\xi}^{q+1} .
    \end{align*}
    Setting $C_M[\norm{\Theta_{j-1}W}_\gamma]:=C[\norm{\Theta_{j-1}W}_\gamma,F^c,F^s,q] \tilde C_x^{q^3}$ and $C_{\tilde M}[\norm{\Theta_{j-1}W}_\gamma]:=C[\norm{\Theta_{j-1}W}_\gamma,G^c,G^s,q] \tilde C_x^{q^3}$, proves the claim.
    \qed\\
\end{proof}

Now we can define the mapping $J:BC^\eta\left(\cD^{2\gamma}_W\right)\times \cX^c\to BC^\eta\left(\cD^{2\gamma}_W\right)$ as the discretization of the Lyapunov-Perron map~\eqref{J cont def} replacing $t$ by $t+j-1$ for $j\in\Z^-$ and $t\in[0,1]$ obtaining 
\begin{align}\label{J discr def}
&J(\bfW,\U,\xi)[j-1,t] \nonumber\\
&= S^{c}(t+j-1) \xi  - \sum\limits_{k=0}^{j+1} S^{c} (t+j-1-k)  
\Bigg(\int\limits_{0}^{1} S^{c}(1-r) F^c_R\left(x^{k-1}, U^{k-1}\right)[r] ~\txtd r \nonumber\\
&+ \int\limits_{0}^{1} S^{c}(1-r) G^c_R\left(x^{k-1}, U^{k-1}\right)[r] ~\txtd \Theta_{k-1}\bfW_r \Bigg) \nonumber\\
&-\int\limits_{t}^{1} S^{c} (1-r) F^c_R \left(x^{j-1}, U^{j-1}\right)[r] ~\txtd r - \int\limits_{t}^{1} S^{c}(1-r) G^c_R\left(x^{j-1}, U^{j-1}\right)[r] ~\txtd \Theta_{j-1}\bfW_r \nonumber\\
& +\sum\limits_{k=-\infty}^{j-1}  S^{s} (t+j-1-k)
\Bigg( \int\limits_{0}^{1} S^{s}(1-r) F^s_R\left(x^{k-1},U^{k-1}\right)[r] ~\txtd r \nonumber\\
&+  \int\limits_{0}^{1} S^{s}(1-r) G_R^s\left(x^{k-1},U^{k-1}\right)[r]
 ~\txtd \Theta_{k-1}\bfW_{r} \Bigg) \nonumber\\
& +\int\limits_{0}^{t}S^{s}(t-r) F^s_R\left(x^{j-1},U^{j-1}\right)[r] ~\txtd r 
+ \int\limits_{0}^{t} S^{s}(t-r)G_R^s\left(x^{j-1},U^{j-1}\right)[r] 
~\txtd \Theta_{j-1}\bfW_{r}.
\end{align}
Therefore, the Lyapunov-Perron map can be viewed as a map on the spaces of sequences $BC^\eta\left(\cD^{2\gamma}_W\right)$. \\

Given this we define the map $\tilde{J}: BC^\eta\left(\cD^{2\gamma}_W\right)\times\cX^c\to BC^\eta\left(\cD^{2\gamma}_W\right)$
\begin{align}\label{tilde J def}
    \tilde{J}(\bfW,\U,\xi)[j-1,t]:=P^sJ(\bfW,\U+\Phi(x),\xi)[j-1,t] - \phi^{j-1}_t\left(x_t^{j-1}\right).
\end{align}
Moreover, we recall that the center manifold $\cM^c_{loc}(\bfW)$ is given by the graph of the Lipschitz function $h^c(\xi,\bfW)=P^s\Gamma(\xi,\bfW)[-1,1]$ due to Lemma \ref{localcman}, where $\Gamma$ is the fixed point of $J$. Moreover the center manifold is invariant meaning that if 
$$\varphi(0,W,\Gamma(\xi,\bfW)[-1,1])=\Gamma(\xi,\bfW)[-1,1]\in \cM^c_{loc}(\bfW) = \{\xi+h^c(\xi,\bfW):\xi\in B_{\cX^c}(0,\rho(\bfW))\},$$
then
$$\varphi(j,W,\Gamma(\xi, \bfW)[-1,1])\in \cM^c_{loc}(\Theta_j \bfW) = \{\xi+h^c(\xi,\Theta_j\bfW):\xi\in B_{\cX^c}(0,\rho(\Theta_j\bfW))\}.$$ 
We want to show that $\tilde{J}$ is a contraction on
$$V:=\left\{ \U\in BC^\eta\left(\cD^{2\gamma}_W\right): \Dnorm{U^{j-1},\left(U^{j-1}\right)'}\le K\Xnorm{\xi}^{q+1}~~\textnormal{ for all } j\in\Z^-\right\}.$$ 
\begin{remark}
    \begin{enumerate}
        \item [1)] In order to sketch the main idea of the approximation, in \eqref{tilde J formal bound} we formally imposed a bound for the norm of $U$ in the phase space $\cX$. This is a natural choice in the deterministic case speficied in \ref{Carr Theorem}.
        \item [2)]  In our setting, we assume a bound for the $\cD^{2\gamma}_W$-norm in the definition of the space $V$ for two reasons. Firstly, we can bound the spatial norm of the path component by its $\cD^{2\gamma}_W([0,1])$-norm. Namely, for an arbitrary controlled rough path $(Y,Y')\in \cD^{2\gamma}_W$ we get using~\eqref{def:crp} for all  $t\in[0,1]$ that
$$\Xnorm{Y_t}\le \norm{Y}_\I\le \Dnorm{Y,Y'}.$$
Secondly, the bounds for the rough integral depend on the $\cD^{2\gamma}_W([0,1])$-norm.~Therefore it is meaningful to impose a uniform bound on this norm in the definition of $V$.~Obviously, this condition also implies a bound for the $BC^\eta$-norm as
\begin{align*}
    \Bnorm{\U} &= \sup_{j\in\Z^-}e^{-\eta(j-1)}\Dnorm{U^{j-1},\left(U^{j-1}\right)'}\\
&\le \sup_{j\in\Z^-}e^{-\eta(j-1)}K\Xnorm{\xi}^{q+1}\\
&\le K\Xnorm{\xi}^{q+1}.
\end{align*}
This is useful since the fixed point argument in Section~\ref{sec:fp} is set up in the function space $BC^\eta\left(\cD^{2\gamma}_W\right)$.
    \end{enumerate}
\end{remark}

In order to show that $\tilde J$ is a contraction on $V$, we first show an alternative representation of $\tilde J$. To this aim, we first introduce some notations for clarity.
\begin{notation}
First, for the discretized solution of the system~\eqref{alpha RDE} we write \[\boldsymbol{\alpha}^{k-1}:=\left(\alpha_2^{k-1},\dots,\alpha_{q}^{k-1}\right) \text{ for } k\in\Z^-\]
keeping track of the coefficients $\alpha_2,\ldots,\alpha_{q}$ of the ansatz~\eqref{discretized:alpha}. 
    We further introduce some notations for the following terms because they turn out to be part of the discretization of $\tilde J$. We define for $k\in\Z^-$ and $t\in[0,1]$ 
    \begin{align}\label{def I}
    &I_t\left(x^{k-1},U^{k-1},\boldsymbol{\alpha}^{k-1}\right) \nonumber\\
    &:= F_R^s\left(x^{k-1},U^{k-1} + \phi^{k-1}\left(x^{k-1}\right)\right)[t] + A^s\phi^{k-1}\left(x_t^{k-1}\right) \nonumber\\
    &~~~~- \sum_{i=2}^{q} i\alpha^{k-1}_i \left(x_t^{k-1}\right)^{i-1} \left[A^cx_t^{k-1} + F_R^c\left(x^{k-1},U^{k-1} + \phi^{k-1}\left(x^{k-1}\right)\right)[t]\right] - \tilde{\phi}^{k-1}\left(x_t^{k-1}\right),
    \end{align}
    and
    \begin{align}\label{def II}
        II_t\left(x^{k-1},U^{k-1},\boldsymbol{\alpha}^{k-1}\right) 
        &:= G_R^s\left(x^{k-1},U^{k-1} + \phi^{k-1}\left(x^{k-1}\right)\right)[t] - \sum_{i=2}^{q} g^{k-1}_i \left(x_t^{k-1}\right)^{i} \nonumber\\
    &~~~~- \sum_{i=2}^{q} i\alpha^{k-1}_i \left(x_t^{k-1}\right)^{i-1} G_R^c\left(x^{k-1},U^{k-1} + \phi^{k-1}\left(x^{k-1}\right)\right)[t].
    \end{align}
   We finally set for $t\in[0,1]$
    \begin{align}\label{def T}
        T_t\left(\Theta_{k-1}\bfW,x^{k-1},U^{k-1},\boldsymbol{\alpha}^{k-1}\right) 
        &:= \int_0^tS^s(t-r)I_r\left(x^{k-1},U^{k-1},\balpha^{k-1}\right) ~\txtd r \nonumber\\
        &~~~~+ \int_0^tS^s(t-r) II_r\left(x^{k-1},U^{k-1},\balpha^{k-1}\right)~\txtd\Theta_{k-1}\bfW_r
    \end{align}
    and remark that $II$ is the Gubinelli derivative of $T$.
\end{notation}
Lastly, we state the discretization for $\tilde{J}(\bfW,\U,\xi)$.
\begin{lemma}\label{tilde J}
    Let $\U\in V, \tilde{J}, \Phi$ as defined in \eqref{tilde J def} and \eqref{Phi def}. Let further $x$  be the solution of \eqref{center equation} on $(-\I,0]$ with initial condition $\xi\in B_{\cX^c}(0,\rho(\bfW))$ and $x_t^{j-1}=x_{j-1+t}$ be the discretization for $j\in\Z^-$ and $t\in[0,1]$. Then we find the following expression for $\tilde{J}$ for $j\in\Z^-$ and $t\in[0,1]$
    \begin{align*}
    \tilde{J}(\bfW,\U,\xi)[j-1,t] 
    &= \sum_{k=-\I}^{j-1} S^s(t+j-1-k) ~T_1\left(\Theta_{k-1}\bfW,x^{k-1},U^{k-1},\balpha^{k-1}\right)\\
    &~~~~+T_t\left(\Theta_{j-1}\bfW,x^{j-1},U^{j-1},\balpha^{j-1}\right). 
\end{align*}
\end{lemma}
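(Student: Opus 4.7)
The plan is to compute both sides and match them. First I would expand $P^s J(\bfW,\U+\Phi(x),\xi)[j-1,t]$ using the explicit formula \eqref{J discr def}; since $P^s$ annihilates the $S^c$ terms, only the stable semigroup pieces survive, giving
\[
P^s J(\bfW,\U+\Phi,\xi)[j-1,t] \;=\; \sum_{k=-\infty}^{j-1} S^s(t+j-1-k)\,A_k \;+\; B_j(t),
\]
where $A_k$ is the ``level $(k-1)$'' integral on $[0,1]$ against $F^s_R$ and $G^s_R$ with argument $(x^{k-1},U^{k-1}+\phi^{k-1}(x^{k-1}))$, and $B_j(t)$ is the same integral on $[0,t]$ at level $j-1$.

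Next I would apply the chain rule for the geometric rough path $\bfW$ to compute $\txtd\phi^{j-1}(x_t^{j-1})=\txtd\bigl(\sum_i\alpha_i^{j-1}(t)(x_t^{j-1})^i\bigr)$ exactly as in Subsection \ref{sec:generalidea}, and then apply Duhamel's formula with respect to the semigroup $S^s$. Adding and subtracting $A^s\phi^{j-1}(x^{j-1})\txtd t$ in the drift, the integrands regroup precisely into $F^s_R-I$ (for the Lebesgue part) and $G^s_R-II$ (for the rough part), cf.~the definitions \eqref{def I} and \eqref{def II}. This yields the key identity
\[
\phi^{j-1}(x_t^{j-1}) \;=\; S^s(t)\,\phi^{j-1}(x_0^{j-1}) \;+\; B_j(t) \;-\; T_t(\Theta_{j-1}\bfW,x^{j-1},U^{j-1},\balpha^{j-1}),
\]
and, specializing to $t=1$ at arbitrary past level $\ell=k-1$,
\[
A_k \;=\; T_1(\Theta_{k-1}\bfW,x^{k-1},U^{k-1},\balpha^{k-1}) \;+\; \phi^{k-1}(x_1^{k-1}) \;-\; S^s(1)\,\phi^{k-1}(x_0^{k-1}).
\]

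Now I would use the stationarity of the $\alpha_i$ (Assumption \ref{ass:alpha}~3)) together with the continuity condition $x_0^{k-1}=x_1^{k-2}$ to obtain $\phi^{k-1}(x_1^{k-1})=\phi^{k}(x_0^{k})$. Substituting into the sum $\sum_{k=-\infty}^{j-1}S^s(t+j-1-k)\,A_k$ makes the initial-value contributions telescope: the term $S^s(t+j-1-k)\,\phi^{k}(x_0^{k})$ at index $k$ cancels against $S^s(t+j-1-(k-1))\,\phi^{k-1}(x_0^{k-1})$ at index $k+1$ shifted by the extra $S^s(1)$, leaving the net contribution $S^s(t)\phi^{j-1}(x_0^{j-1})$ at the top. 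The tail boundary term $S^s(t+j-1+N)\phi^{-N}(x_0^{-N})$ vanishes as $N\to\infty$: the norm of $\phi^{-N}(x_0^{-N})$ is controlled by $\|\Phi\|_{BC^\eta(\cD^{2\gamma}_W)}$ (via Lemma \ref{phi BC estimate}) and grows at most like $\txte^{-\eta N}$, while $\|S^s(N)\|\lesssim\txte^{-\beta N}$, and the spectral gap $-\beta<\eta<0$ (Assumption \ref{beta assumption}) gives the desired decay.

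Putting these three ingredients together:
\[
P^s J(\bfW,\U+\Phi,\xi)[j-1,t] \;=\; \sum_{k=-\infty}^{j-1}S^s(t+j-1-k)\,T_1(\Theta_{k-1}\bfW,\ldots) \;+\; S^s(t)\phi^{j-1}(x_0^{j-1}) \;+\; B_j(t),
\]
and inserting $B_j(t)=\phi^{j-1}(x_t^{j-1})-S^s(t)\phi^{j-1}(x_0^{j-1})+T_t(\Theta_{j-1}\bfW,\ldots)$ from the Duhamel identity, the two $S^s(t)\phi^{j-1}(x_0^{j-1})$ terms cancel; subtracting $\phi^{j-1}(x_t^{j-1})$ per the definition \eqref{tilde J def} of $\tilde J$ yields exactly the claimed formula. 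The main technical obstacle is justifying the chain-rule application at the rough level (which uses that $\bfW$ is geometric so that integration by parts/change of variables for controlled rough paths produces no bracket correction) and verifying convergence of the telescoping series via the $BC^\eta$-bound and spectral gap; both ingredients are already in place from the preceding subsections.
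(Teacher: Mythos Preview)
Your proposal is correct and follows essentially the same approach as the paper's proof in Appendix~\ref{a}: chain rule for the geometric rough path applied to $\phi(x)$, integration by parts/Duhamel against $S^s$, and decay of the far-past boundary term. The paper organizes the computation by first writing $S^s(j-1+t-l)\phi^l(x_0^l)-\phi^{j-1}(x_t^{j-1})$ as a single integral over $[l,j-1+t]$, then discretizing and sending $l\to-\infty$; you work at the discrete level from the start and telescope---but the content is the same.

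Two minor remarks. First, the identity $\phi^{k-1}(x_1^{k-1})=\phi^{k}(x_0^{k})$ follows from the continuity of the discretization ($x_1^{k-1}=x_0^k$ and $\alpha_i^{k-1}(1)=\alpha_i(k)=\alpha_i^{k}(0)$), not from the stationarity Assumption~\ref{ass:alpha}~3). Second, the vanishing of the tail $S^s(t+j+N)\phi^{-N-1}(x_0^{-N-1})$ is not controlled by a $BC^\eta$-bound on $\Phi$ (Lemma~\ref{phi BC estimate} gives $\|\phi^{j-1}(x^{j-1})\|_{\cD^{2\gamma}_W}\le C(1+\|\Theta_{j-1}W\|_\gamma)^2\|\xi\|$, which is not an $e^{-\eta(j-1)}$ bound); rather, as in the paper, one uses that $(1+\|\Theta_{j-1}W\|_\gamma)^2$ is tempered from above, so by~\eqref{tabove} it grows at most like $e^{-\varepsilon(j-1)}$ for arbitrarily small $\varepsilon>0$, and choosing $\varepsilon<\beta$ gives the decay. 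Neither point affects the validity of your argument.
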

The proof of this Lemma is given in Appendix \ref{a}.

\subsection{Fixed point argument}\label{sec:fp}
After discretizing $\tilde{J}$ we now focus on proving that $\tilde{J}$ maps $V$ into $V$. To this end, we establish for $\U\in V$ and for all $j\in\Z^-$ that
$$\Dnorm{\tilde{J}(\bfW,\U,\xi)[j-1,\cdot],\tilde{J}(\bfW,\U,\xi)[j-1,\cdot]'}\le K\Xnorm{\xi}^{q+1}.$$ To this aim, our goal is to find a bound of the form $CRK\Xnorm{\xi}^{q+1}$ where the constant $C$ does not depend on $R$ and $K$ as we mentioned in Remark \ref{rem:form of bound}. After finding such a bound we then can choose $R$ small enough such that $CR<1$.~This is the topic of the next result. 

\begin{lemma}\label{tilde J maps into V}
    Let $\xi\in B_{\cX^c}(0,\rho(\bfW))$, let $x$ be the solution of \eqref{center equation} and $\U\in V$. Then for all $j\in\Z^-$ we get
    $$\Dnorm{\tilde J(\bfW,\U,\xi)[j-1,\cdot], \tilde J(\bfW,\U,\xi)[j-1,\cdot]'}\le K\Xnorm{\xi}^{q+1}.$$
\end{lemma}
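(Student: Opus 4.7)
The plan is to exploit the explicit representation given by Lemma~\ref{tilde J}, namely
\[
\tilde{J}(\bfW, \U, \xi)[j-1, t] = \sum_{k=-\infty}^{j-1} S^s(t+j-1-k)\, T_1\bigl(\Theta_{k-1}\bfW, x^{k-1}, U^{k-1}, \balpha^{k-1}\bigr) + T_t\bigl(\Theta_{j-1}\bfW, x^{j-1}, U^{j-1}, \balpha^{j-1}\bigr),
\]
and to decompose each integrand defining $T$ into an \emph{approximation-error} part plus a Lipschitz-in-$U$ remainder. By the choice $A^{\alpha_i}=A^s-iA^c$ together with \eqref{def:f_i^{j-1}}--\eqref{def:g_i^{j-1}}, setting $U^{k-1}=0$ in \eqref{def I}--\eqref{def II} yields $I_t(x^{k-1},0,\balpha^{k-1})=-M\phi^{k-1}(x_t^{k-1})$ and $II_t(x^{k-1},0,\balpha^{k-1})=-\tilde M\phi^{k-1}(x_t^{k-1})$. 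Hence
\[
I_t(x^{k-1},U^{k-1},\balpha^{k-1})=-M\phi^{k-1}(x_t^{k-1})+\Delta I_t^{k-1},\qquad II_t(x^{k-1},U^{k-1},\balpha^{k-1})=-\tilde M\phi^{k-1}(x_t^{k-1})+\Delta II_t^{k-1},
\]
where $\Delta I^{k-1}$, $\Delta II^{k-1}$ are Lipschitz differences of $F^{c/s}_R$, $G^{c/s}_R$ between the arguments $U^{k-1}+\phi^{k-1}(x^{k-1})$ and $\phi^{k-1}(x^{k-1})$.

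Applying Lemma~\ref{integral bound} to the rough integral defining $T$ gives
\[
\|T(\Theta_{k-1}\bfW,\cdot),T(\Theta_{k-1}\bfW,\cdot)'\|_{\cD^{2\gamma}_W}\le C[\|A^s\|,\|\Theta_{k-1}W\|_\gamma,\|\Theta_{k-1}\W\|_{2\gamma}]\bigl(\|I\|_\infty+\|II,II'\|_{\cD^{2\gamma}_W}\bigr).
\]
Lemma~\ref{lem:M bound} controls the $M\phi$ and $\tilde M\phi$ contributions by $C(\Theta_{k-1}\bfW)R(\Theta_{k-1}\bfW)\|\xi\|^{q+1}$, whereas Lemmas~\ref{lip:fr} and~\ref{composition:cutoff}, combined with the defining bound $\|U^{k-1},(U^{k-1})'\|_{\cD^{2\gamma}_W}\le K\|\xi\|^{q+1}$ for $\U\in V$, control the Lipschitz remainders $\Delta I^{k-1},\Delta II^{k-1}$ by a term of order $C(\Theta_{k-1}\bfW)R(\Theta_{k-1}\bfW)K\|\xi\|^{q+1}$; the prefactor $\sum_{i=2}^q i\alpha_i^{k-1}(x^{k-1})^{i-1}$ is bounded in $\cD^{2\gamma}_W$-norm via Lemmas~\ref{lem:addition and multiplication of RP}, \ref{xj bound} and Assumption~\ref{M assumption}. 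Summing all contributions yields
\[
\|T_1(\Theta_{k-1}\bfW,\cdot),T_1(\Theta_{k-1}\bfW,\cdot)'\|_{\cD^{2\gamma}_W}\le \tilde{C}(\Theta_{k-1}\bfW)\,R(\Theta_{k-1}\bfW)(1+K)\|\xi\|^{q+1},
\]
with $\tilde{C}(\bfW)$ polynomial in the rough-path seminorms, hence tempered from above.

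To conclude, insert this bound into the representation of $\tilde{J}$, use the exponential decay $\|S^s(t+j-1-k)\|\le M_s e^{-\beta(t+j-1-k)}$, and bound $\tilde{C}(\Theta_{k-1}\bfW)R(\Theta_{k-1}\bfW)\le C_0\rho_0(\bfW)e^{-\eps(k-1)}$ by \eqref{tabove} applied to the product, which is tempered from above by Lemma~\ref{lem:product of tempered variables}, for any $\eps\in(0,\beta)$. The resulting geometric series $\sum_{k\le j-1}e^{-(\beta-\eps)(j-1-k)}$ converges and produces
\[
\|\tilde{J}(\bfW,\U,\xi)[j-1,\cdot],\tilde{J}(\bfW,\U,\xi)[j-1,\cdot]'\|_{\cD^{2\gamma}_W}\le \tilde{K}(\bfW)\,R(\bfW)(1+K)\|\xi\|^{q+1},
\]
uniformly in $j\in\Z^-$. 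Shrinking $R(\bfW)$ further (analogously to \eqref{def:R_1}--\eqref{def:R_2}) so that $\tilde{K}(\bfW)R(\bfW)(1+K)\le K$ closes the estimate.

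The main technical obstacle is precisely the uniformity in $j\in\Z^-$: the factors $\tilde{C}(\Theta_{k-1}\bfW)$ and $R(\Theta_{k-1}\bfW)$ can grow subexponentially in $|k|$, so they must be dominated by the exponential decay produced by $S^s$. This uses in an essential way the spectral gap from Assumption~\ref{beta assumption} together with the temperedness of the involved random variables, and it forces $R$ to be chosen as a suitable tempered-from-below random variable whose shifts, multiplied by the polynomial growth of the rough-path seminorms of $\Theta_{k-1}\bfW$, remain summable against the decay of~$S^s$.
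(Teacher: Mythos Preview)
Your decomposition of $I$ and $II$ into the approximation-error pieces $-M\phi^{k-1}$, $-\tilde M\phi^{k-1}$ plus Lipschitz-in-$U$ remainders is exactly the route the paper takes, and the use of Lemma~\ref{integral bound} to bound $T$ in terms of $\|I\|_\infty$ and $\|II,II'\|_{\cD^{2\gamma}_W}$ is likewise the same. The difficulty is in the summation step, where your argument has a real gap.

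When you invoke \eqref{tabove} to write $\tilde C(\Theta_{k-1}\bfW)R(\Theta_{k-1}\bfW)\le C_0\rho_0(\bfW)e^{-\eps(k-1)}$, the resulting series is \emph{not} $\sum_{k\le j-1}e^{-(\beta-\eps)(j-1-k)}$. Substituting $n=j-1-k\ge 0$ gives
\[
\sum_{k\le j-1}M_s e^{-\beta(j-1-k)}\,C_0\rho_0(\bfW)e^{-\eps(k-1)}
= M_s C_0\rho_0(\bfW)\,e^{-\eps(j-2)}\sum_{n\ge 0}e^{-(\beta-\eps)n},
\]
and the prefactor $e^{-\eps(j-2)}$ diverges as $j\to -\infty$, so the bound is not uniform in $j\in\Z^-$ and the self-mapping property of $V$ does not follow. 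Shifting the base point in \eqref{tabove} to $\Theta_{j-1}\bfW$ would trade this prefactor for $\rho_0(\Theta_{j-1}\bfW)$, which still depends on $j$; temperedness alone cannot produce a $j$-uniform constant here.

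The paper circumvents this by a different, simpler mechanism: rather than estimating the product $\tilde C(\Theta_{k-1}\bfW)R(\Theta_{k-1}\bfW)$ via temperedness, it \emph{defines} the cut-off radius pointwise so that this product equals a fixed deterministic constant. Concretely, $\tilde R_3(\bfW)$ is taken to be the unique solution of
\[
C[\|A\|]\bigl(C_I[\|W\|_\gamma]+Z(\bfW)\,C_{II}[\|W\|_\gamma]\bigr)\tilde R_3(\bfW)=C_T
\]
for a constant $C_T>0$ (see \eqref{def:tilde R_3}). Since this is a pointwise identity on $\Omega$, it holds at every shift, so for \emph{every} $k$ one has the $k$-independent bound $\|T(\Theta_{k-1}\bfW,\ldots)\|_{\cD^{2\gamma}_W}\le C_T K\|\xi\|^{q+1}$. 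The sum then becomes the genuinely $j$-independent geometric series $C_T\bigl(\frac{M_s}{1-e^{-\beta}}+1\bigr)K\|\xi\|^{q+1}$, and one finishes by choosing $C_T$ (hence $\tilde R_3$) so that the coefficient is $\le 1$; see \eqref{def C_T}. Your closing remark that $R$ must be chosen so that its shifts compensate the polynomial growth of the rough-path seminorms is precisely the right intuition, but the way to realise it is this pointwise definition of $R$, not a temperedness estimate.
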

\begin{proof}
    From Lemma \ref{tilde J} we know 
    \begin{align*}
        \tilde J(\bfW,\U,\xi)[j-1,t] 
        &= \sum_{k=-\I}^{j-1}\left[S^s(t+j-1-k) ~T_1\left(\Theta_{k-1}\bfW,x^{k-1},U^{k-1},\boldsymbol{\alpha}^{k-1}\right)\right]\\
        &~~~~+T_t\left(\Theta_{j-1}\bfW,x^{j-1},U^{j-1},\boldsymbol{\alpha}^{j-1}\right).
    \end{align*}
    Using the triangle inequality and \eqref{beta} we get
    \begin{align*}
        &\Dnorm{\tilde J(\bfW,\U,\xi)[j-1,\cdot], \tilde J(\bfW,\U,\xi)[j-1,\cdot]'}\\
        &\le \sum_{k=-\I}^{j-1}M_se^{-\beta(j-1-k)}\Dnorm{ T\left(\Theta_{k-1}\bfW,x^{k-1},U^{k-1},\balpha^{k-1}\right), T\left(\Theta_{k-1}\bfW,x^{k-1},U^{k-1},\balpha^{k-1}\right)'}\\
        &~~~~+ \Dnorm{T\left(\Theta_{j-1}\bfW,x^{j-1},U^{j-1},\boldsymbol{\alpha}^{j-1}\right),T\left(\Theta_{j-1}\bfW,x^{j-1},U^{j-1},\boldsymbol{\alpha}^{j-1}\right)'}.
    \end{align*}
We first fix $k\in\Z^-$ and use Lemma \ref{integral bound} to estimate $T$.~This is possible since $I\left(x^{k-1}, U^{k-1},\balpha^{k-1}\right)\in C_b^1$ and $II\left(x^{k-1}, U^{k-1},\balpha^{k-1}\right)\in C^3_b$ with respect to $U^{k-1}$ and $\balpha^{k-1}$, which immediately follows from the definitions of $I$ and $II$ specified in \eqref{def I} and \eqref{def II}.~Therefore, we use Lemma \ref{integral bound} with $Y=(U^{k-1},\balpha^{k-1})$ and $\tilde Y=0$ to get
    \begin{align*}
        &\Dnorm{T\left(\Theta_{k-1}\bfW,x^{k-1},U^{k-1},\balpha^{k-1}\right),II\left(x^{k-1},U^{k-1},\balpha^{k-1}\right)}\\
        &\le C[\Xnorm{A}]\Big(\norm{I\left(x^{k-1},U^{k-1},\balpha^{k-1}\right)}_\I + Z(\Theta_{k-1}\bfW)\Dnorm{II\left(x^{k-1},U^{k-1},\balpha^{k-1}\right)}\Big),
    \end{align*}
    where $Z(\Theta_{k-1}\bfW)$ is defined as 
    \begin{align}\label{Z def}
        Z(\Theta_{k-1}\bfW): = \left(1+\norm{\Theta_{k-1}W}_\gamma\right)\left(\norm{\Theta_{k-1}W}_\gamma + \norm{\Theta_{k-1}\W}_{2\gamma} \right).
    \end{align}    
    Next we derive an estimate for $\norm{I\left(x^{k-1},U^{k-1},\balpha^{k-1}\right)}_\I$ using the definition of $M\phi$ specified in \eqref{M def} to rewrite $I$. This leads to 
    \begin{align*}
        &\norm{I\left(x^{k-1},U^{k-1},\balpha^{k-1}\right)}_\I \\
        &\le \norm{F_R^s\left(x^{k-1},U^{k-1} + \phi^{k-1}\left(x^{k-1}\right)\right)[\cdot] - F_R^s\left(x^{k-1},\phi^{k-1}\left(x^{k-1}\right)\right)[\cdot]}_\I + \norm{M\phi^{k-1}\left(x^{k-1}\right)}_\I \\
        &+\norm{\sum_{i=2}^{q} i\alpha^{k-1}_i \left(x^{k-1}\right)^{i-1} \left[F_R^c\left(x^{k-1}, \phi^{k-1}\left(x^{k-1}\right)\right)[\cdot] - F_R^c\left(x^{k-1},U^{k-1} + \phi^{k-1}\left(x^{k-1}\right)\right)[\cdot]\right]}_\I.
    \end{align*}
    We consider each summand separately. For the first summand we use Lemma \ref{lip:fr} together with the fact that $\mathbb{U}\in V$ to get
    \begin{align*}
        &\norm{F_R^s\left(x^{k-1},U^{k-1} + \phi^{k-1}\left(x^{k-1}\right)\right)[\cdot] - F_R^s\left(x^{k-1},\phi^{k-1}\left(x^{k-1}\right)\right)[\cdot]}_\I\\
        &\le C_{F^s}R(\Theta_{k-1}\bfW)\Dnorm{U^{k-1}, \left(U^{k-1}\right)'}\\
        &\le C_{F^s}R(\Theta_{k-1}\bfW)K\Xnorm{\xi}^{q+1}.
    \end{align*}
    For $M\phi^{k-1}$ we use \eqref{M bound} to obtain
    \begin{align*}
        \norm{M\phi^{k-1}\left(x^{k-1}\right)}_\I 
        &\le \Dnorm{M\phi^{k-1}\left(x^{k-1}\right),M\phi^{k-1}\left(x^{k-1}\right)'}\\
        &\le C_M[\norm{\Theta_{k-1}W}_\gamma] R(\Theta_{k-1}\bfW)\Xnorm{\xi}^{q+1}.
    \end{align*}
    In order to deal with the last term, we first treat for $k\in \Z^-$  $$\norm{\sum_{i=2}^{q}i\alpha_i^{k-1}\left(x^{k-1}\right)^{i-1}}_\I.$$
    Using similar arguments to Lemma \ref{phi D2gamma estimate} and using Lemma \ref{xj bound}, we get 
    \begin{align*}
        \norm{\sum_{i=2}^{q}i\alpha_i^{k-1}\left(x^{k-1}\right)^{i-1}}_\I 
        &\le q \tilde C_x^{q-1}\sum_{i=0}^{q-1}\Xnorm{\xi}^{i} \\
        &\le q^2 \tilde C_x^{q-1},
    \end{align*}
    where we used inequality \eqref{C_alpha bound} and $\Xnorm{\xi}\le\rho(\bfW)\le1$. 
    This estimate together with Lemma \ref{lip:fr} and the fact that $\U\in V$, imply that
    \begin{align}\label{sum i alpha bound}
        &\norm{\sum_{i=2}^{q} i\alpha^{k-1}_i \left(x^{k-1}\right)^{i-1} \left[F_R^c\left(x^{k-1}, \phi^{k-1}\left(x^{k-1}\right)\right)[\cdot] - F_R^c\left(x^{k-1},U^{k-1} + \phi^{k-1}\left(x^{k-1}\right)\right)[\cdot]\right]}_\I\nonumber\\
        &\le q^2 \tilde C_x^{q-1} \norm{F_R^c\left(x^{k-1}, \phi^{k-1}\left(x^{k-1}\right)\right)[\cdot] - F_R^c\left(x^{k-1},U^{k-1} + \phi^{k-1}\left(x^{k-1}\right)\right)[\cdot]}_\I \nonumber\\
        &\le q^2 C_{F^c} \tilde C_x^{q-1}  R(\Theta_{k-1}\bfW) \Dnorm{U^{k-1},\left(U^{k-1}\right)'}\nonumber\\
        &\le q^2 C_{F^c} \tilde C_x^{q-1} R(\Theta_{k-1}\bfW)K\Xnorm{\xi}^{q+1}.
    \end{align}
    Adding up all these terms, we infer that
    \begin{align*}
        &\norm{I\left(x^{k-1},U^{k-1},\balpha^{k-1}\right)}_\I\\
        &\le \left(C_{F^s} + C_M[\norm{\Theta_{k-1}W}_\gamma] + q^2C_{F^c}\tilde C_x^{q-1} \right)R(\Theta_{k-1}\bfW)K\Xnorm{\xi}^{q+1}\\
        &=C_I[\norm{\Theta_{k-1}W}_\gamma]R(\Theta_{k-1}\bfW)K\Xnorm{\xi}^{q+1},
    \end{align*}
    where we set
    $C_I: =\left(C_{F^s} + C_M[\norm{\Theta_{k-1}W}_\gamma] + q^2C_{F^c} \tilde C_x^{q-1} \right)$.  
    Next we consider $\Dnorm{II\left(x^{k-1},U^{k-1},\balpha^{k-1}\right)}$, where we will omit the Gubinelli derivative for clarity. For $\Dnorm{II\left(x^{k-1},U^{k-1},\balpha^{k-1}\right)}$ we can show with the same steps as for $\norm{I\left(x^{k-1},U^{k-1},\balpha^{k-1}\right)}_\I$
    \begin{align*}
        &\Dnorm{II\left(x^{k-1},U^{k-1},\balpha^{k-1}\right)} \\
        &\le \Dnorm{G_R^s\left(x^{k-1},U^{k-1} + \phi^{k-1}\left(x^{k-1}\right)\right) - G_R^s\left(x^{k-1},\phi^{k-1}\left(x^{k-1}\right)\right)} + \Dnorm{\tilde M\phi^{k-1}\left(x^{k-1}\right)} \\
        &+\Dnorm{\sum_{i=2}^{q} i\alpha^{k-1}_i \left(x^{k-1}\right)^{i-1} \left[G_R^c\left(x^{k-1}, \phi^{k-1}\left(x^{k-1}\right)\right) - G_R^c\left(x^{k-1},U^{k-1} + \phi^{k-1}\left(x^{k-1}\right)\right)\right]} \\
        &\le \left(C_{G^s}[\norm{\Theta_{k-1}W}_\gamma]
        + C_{\tilde M}[\norm{\Theta_{k-1}W}_\gamma] + q^2 C_{G^c}[\norm{\Theta_{k-1}W}_\gamma]\tilde C_x^{q-1}\left(1+\norm{\Theta_{k-1}W}\right)^6\right)\\
        &~~~~\times R(\Theta_{k-1}\bfW)K\Xnorm{\xi}^{q+1}\\
        &=C_{II}[\norm{\Theta_{k-1}W}_\gamma]R(\Theta_{k-1}\bfW)K\Xnorm{\xi}^{q+1},
    \end{align*}
    where \begin{align}\label{C_II def}
        C_{II}[\norm{\Theta_{k-1}W}_\gamma]:=C_{G^s}[\norm{\Theta_{k-1}W}_\gamma]
        + C_{\tilde M}[\norm{\Theta_{k-1}W}_\gamma] + q^2 C_{G^c}[\norm{\Theta_{k-1}W}_\gamma]\tilde C_x^{q-1}\left(1+\norm{\Theta_{k-1}W}\right)^6
    \end{align}
    is tempered from above. 
    Next we put the estimates for $I$ and $II$ together 
    \begin{align*}
        &\Dnorm{T\left(\Theta_{k-1}\bfW,x^{k-1},U^{k-1},\balpha^{k-1}\right),II\left(x^{k-1},U^{k-1},\balpha^{k-1}\right)}\\
        &\le C[\Xnorm{A}]\Big(C_I [\norm{\Theta_{k-1}W}_\gamma]+ Z(\Theta_{k-1}\bfW)C_{II}[\norm{\Theta_{k-1}W}_\gamma]
        \Big)R(\Theta_{k-1}\bfW)K\Xnorm{\xi}^{q+1}.
    \end{align*}
   Let $C_T>0$, where its particular choice of $C_T$ will be specified below.~Then we denote the unique solution $\tilde R_3(\bfW)$  of the equation
    \begin{align}\label{def:tilde R_3}
        \tilde R_3({\bf W}) C[\Xnorm{A}]\Big(C_I[\norm{W}_\gamma]+ Z(\bfW)C_{II}[\norm{W}_\gamma]
        \Big) = C_T.
    \end{align}
    In particular, this implies for all $k\in \Z^{-}$ that 
 \begin{align*}
        \tilde R_3(\Theta_{k-1}\bfW) C[\Xnorm{A}]\Big(C_I[\norm{\Theta_{k-1}W}_\gamma]+ Z(\Theta_{k-1}\bfW)C_{II}[\norm{\Theta_{k-1}W}_\gamma]
        \Big) = C_T.
    \end{align*}
    
    The choice of $C_T$, specified below, will therefore also determine $\tilde R_3(\bfW)$. We note that $\tilde R_3(\bfW)$ is tempered from below, as it is the inverse of a polynomial in $\norm{W}_\gamma$ and $\norm{\W}_{2\gamma}$. 
    The choice of $\tilde R_3(\bfW)$ entails
    \begin{align*}
        &\Dnorm{T\left(\Theta_{k-1}\bfW,x^{k-1},U^{k-1},\balpha^{k-1}\right),II\left(x^{k-1},U^{k-1},\balpha^{k-1}\right)}\\
        &\le C_TK\Xnorm{\xi}^{q+1}.
    \end{align*}
    Using the bound for $T$ obtained above, we get based on Assumption~\ref{ass:linearpart}
    \begin{align*}
        &\Dnorm{\tilde J(\bfW,\U,\xi)[j-1,\cdot], \tilde J(\bfW,\U,\xi)[j-1,\cdot]'}\\
        &\le \sum_{k=-\I}^{j-1}M_se^{-\beta(j-1-k)}\Dnorm{ T\left(\Theta_{k-1}\bfW,x^{k-1},U^{k-1},\boldsymbol{\alpha}^{k-1}\right), II\left(x^{k-1},U^{k-1},\balpha^{k-1}\right)}\\
        &~~~~+ \Dnorm{T\left(\Theta_{j-1}\bfW,x^{j-1},U^{j-1},\boldsymbol{\alpha}^{j-1}\right),II\left(x^{j-1},U^{j-1},\balpha^{j-1}\right)}\\
        &\le \sum_{k=-\I}^{j-1}\left(M_se^{-\beta(j-1-k)} C_TK\Xnorm{\xi}^{q+1}\right) + C_TK\Xnorm{\xi}^{q+1} \\
        &=C_T\left( \frac{M_s}{1-e^{-\beta}} + 1\right)K\Xnorm{\xi}^{q+1}.
    \end{align*}
    Our goal is to ensure that
    $$C_T\left(\frac{M_s}{1-e^{-\beta}} + 1\right)\le 1,$$
    which implies that $\Dnorm{\tilde J(\bfW,\U,\xi)[j-1,\cdot], \tilde J(\bfW,\U,\xi)[j-1,\cdot]'}\le K\Xnorm{\xi}^{q+1}$. 
  Therefore, we choose $C_T$ as
    \begin{align}\label{def C_T}
        C_T:=\frac{1}{1+\frac{M_s}{1-e^{-\beta}}}.
    \end{align}
    For this $C_T$ we let $\tilde R_3(\bfW)$ be the unique solution of \eqref{def:tilde R_3} and further set\begin{align}\label{def:R_3}
        R_3(\bfW) := \min\left\{\tilde R_3(\bfW),R_2(\bfW)\right\},
    \end{align}
    where $R_2(\bfW)$ is defined in \eqref{def:R_2}. 
    With this choice of $C_T$ and $R_3(\bfW)$ we get for all $j\in\Z^-$ that
    $$\Dnorm{\tilde J(\bfW,\U,\xi)[j-1,\cdot], \tilde J(\bfW,\U,\xi)[j-1,\cdot]'}\le K\Xnorm{\xi}^{q+1},$$
    as required.
    \qed\\
\end{proof}

After showing that $\tilde J$ maps $V$ into $V$ it remains to show that $\tilde J$ is a contraction on $V$. 
\begin{lemma}\label{tilde J contraction}
    Let $\xi\in B_{\cX^c}(0,\rho(\bfW))$, let $x$ be the solution of \eqref{center equation} and $\U,\tilde\U\in V$. Then there exists $\tilde C<1$ such that
    \begin{align*}
        \Bnorm{\tilde J(\bfW,\U,\xi) - \tilde J(\bfW,\tilde\U,\xi)}\le \tilde C \Bnorm{\U-\tilde\U}.
    \end{align*}
\end{lemma}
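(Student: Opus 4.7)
The plan is to apply essentially the same machinery as in Lemma~\ref{tilde J maps into V}, now to the difference $\tilde J(\bfW,\U,\xi)-\tilde J(\bfW,\tilde\U,\xi)$. Using the representation from Lemma~\ref{tilde J} and noting that $x^{k-1}$, $\balpha^{k-1}$, and therefore $\phi^{k-1}(x^{k-1})$ are all independent of $\U$, the difference reduces to
\[
\tilde J(\bfW,\U,\xi)[j-1,t]-\tilde J(\bfW,\tilde\U,\xi)[j-1,t]
= \sum_{k=-\I}^{j-1} S^s(t+j-1-k)\,\Delta T^{k-1}_1 + \Delta T^{j-1}_t,
\]
where $\Delta T^{k-1} := T(\Theta_{k-1}\bfW,x^{k-1},U^{k-1},\balpha^{k-1})-T(\Theta_{k-1}\bfW,x^{k-1},\tilde U^{k-1},\balpha^{k-1})$. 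A key simplification is that inside the corresponding differences $\Delta I^{k-1}$ and $\Delta II^{k-1}$, the polynomial parts $\tilde\phi^{k-1}(x^{k-1})$ and $\sum_{i=2}^{q} g_i^{k-1}(x^{k-1})^i$ cancel exactly, leaving only the Lipschitz differences of $F_R^{c/s}$ and $G_R^{c/s}$ evaluated at $U^{k-1}+\phi^{k-1}$ versus $\tilde U^{k-1}+\phi^{k-1}$.

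Next I would apply Lemma~\ref{integral bound} together with Lemmas~\ref{lip:fr} and~\ref{composition:cutoff} and the polynomial bound $\|\sum_{i=2}^{q} i\alpha_i^{k-1}(x^{k-1})^{i-1}\|_\I \le q^2\tilde C_x^{q-1}$ from estimate~\eqref{sum i alpha bound} to obtain
\[
\Dnorm{\Delta T^{k-1}} \le C[\Xnorm{A}]\,\tilde C[\norm{\Theta_{k-1}W}_\gamma,\norm{\Theta_{k-1}\W}_{2\gamma}]\, R(\Theta_{k-1}\bfW)\, \Dnorm{U^{k-1}-\tilde U^{k-1}},
\]
in complete analogy with the bounds on $C_I$ and $C_{II}$ obtained in Lemma~\ref{tilde J maps into V}. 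Choosing the cutoff $R(\bfW)$ so that $C[\Xnorm{A}]\,\tilde C[\cdot]\, R(\Theta_{k-1}\bfW) \le C_T$ uniformly in $k\in\Z^-$ (imposing yet another smallness condition that produces a tempered-from-below $\tilde R_4(\bfW)$, in parallel with~\eqref{def:tilde R_3}), I would convert to the $BC^\eta$-norm using $e^{-\eta(j-1)}\Dnorm{U^{k-1}-\tilde U^{k-1}} \le e^{-\eta(j-k)}\Bnorm{\U-\tilde\U}$ and sum the geometric series $\sum_{m\ge 1} e^{-\beta(m-1)}e^{-\eta m}$, which converges by Assumption~\ref{beta assumption} since $\beta+\eta>0$. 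This yields
\[
\Bnorm{\tilde J(\bfW,\U,\xi)-\tilde J(\bfW,\tilde\U,\xi)} \le C_T\left(\frac{M_s\,e^{-\eta}}{1-e^{-(\beta+\eta)}} + 1\right)\Bnorm{\U-\tilde\U},
\]
so fixing $C_T$ strictly smaller than the reciprocal of the bracket on the right produces $\tilde C<1$.

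The \textbf{main obstacle} is handling $\Delta II^{k-1}$ in the controlled rough path norm rather than merely in supremum norm: the product terms $\alpha_i^{k-1}(x^{k-1})^{i-1}[G_R^c(x^{k-1},U^{k-1}+\phi^{k-1})-G_R^c(x^{k-1},\tilde U^{k-1}+\phi^{k-1})]$ require the product estimate from Lemma~\ref{lem:addition and multiplication of RP} together with polynomial bounds on $\Dnorm{\alpha_i^{k-1}}$ and on $\Dnorm{x^{k-1}}$ via Lemma~\ref{xj bound}, and one has to be careful that the Gubinelli-derivative part of $II$ matches correctly after taking differences. A secondary bookkeeping issue is coordinating the present smallness requirement on $R(\bfW)$ with those already made in Lemmas~\ref{x is 2 gamma and xi bound}, \ref{lem:omit cut-off}, and \ref{tilde J maps into V}; setting $R_4(\bfW):=\min\{R_3(\bfW),\tilde R_4(\bfW)\}$ and invoking Lemma~\ref{lem:product of tempered variables} ensures the final cutoff is tempered from below, which is essential for the center manifold statement to make sense.
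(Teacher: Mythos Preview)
Your proposal is correct and follows essentially the same approach as the paper: reduce via Lemma~\ref{tilde J} to differences $\Delta T^{k-1}$, observe that the $\U$-independent polynomial pieces cancel so only the Lipschitz differences of $F_R^{c/s}$ and $G_R^{c/s}$ survive, bound these via Lemmas~\ref{lip:fr}, \ref{composition:cutoff} and the product estimate, absorb the resulting constant into a new cutoff $\tilde R_4(\bfW)$, convert to the $BC^\eta$-norm, and sum the geometric series using $\beta+\eta>0$. The only cosmetic discrepancy is that the paper introduces a fresh constant $\hat C_T$ here (since $C_T$ was already fixed in Lemma~\ref{tilde J maps into V}); your ``main obstacle'' regarding $\Delta II^{k-1}$ is handled in the paper exactly as you anticipate, via Lemma~\ref{lem:addition and multiplication of RP} combined with Lemma~\ref{composition:cutoff}, and is not a genuine difficulty.
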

\begin{proof}
    In this proof we will omit the Gubinelli derivative from the $\Dnorm{\cdot}$ for clarity. We again start with 
    \begin{align*}
        \tilde J(\bfW,\U,\xi)[j-1,t] 
        &= \sum_{k=-\I}^{j-1}\left[S^s(t+j-1-k) ~T_1\left(\Theta_{k-1}\bfW,x^{k-1},U^{k-1},\boldsymbol{\alpha}^{k-1}\right)\right]\\
        &~~~~+T_t\left(\Theta_{j-1}\bfW,x^{j-1},U^{j-1},\boldsymbol{\alpha}^{j-1}\right),
    \end{align*}
    where $T_t\left(\Theta_{k-1}\bfW,x^{k-1},U^{k-1},\boldsymbol{\alpha}^{k-1}\right)$ is defined in \eqref{def T}.\\
    Hence, with the triangle inequality and \eqref{beta} we get
    \begin{align*}
        &\Bnorm{\tilde J(\bfW,\U,\xi)-\tilde J(\bfW,\tilde\U,\xi)} \\
        &= \sup_{j\in\Z^-}e^{-\eta(j-1)}\Dnorm{\tilde J(\bfW,\U,\xi)[j-1,\cdot]-\tilde J(\bfW,\tilde\U,\xi)[j-1,\cdot]}\\
        &\le \sup_{j\in\Z^-}e^{-\eta(j-1)}\Bigg[\sum_{k=-\I}^{j-1}M_se^{-\beta(j-1-k)} \\
        &~~~~ \Dnorm{ T\left(\Theta_{k-1}\bfW,x^{k-1},U^{k-1},\balpha^{k-1}\right) - T\left(\Theta_{k-1}\bfW,x^{k-1},\tilde U^{k-1},\balpha^{k-1}\right)} \\
        &~~~~ + \Dnorm{ T\left(\Theta_{j-1}\bfW,x^{j-1},U^{j-1},\balpha^{j-1}\right) - T\left(\Theta_{j-1}\bfW,x^{j-1},\tilde U^{j-1},\balpha^{j-1}\right)}\Bigg].        
    \end{align*}
    To this goal, we find a bound of the form $CR\Dnorm{U^{k-1}-\tilde U^{k-1}}$ for $\Dnorm{ T\left(\Theta_{k-1}\bfW,x^{k-1},U^{k-1},\balpha^{k-1}\right) - T\left(\Theta_{k-1}\bfW,x^{k-1},\tilde U^{k-1},\balpha^{k-1}\right)}$, for $k\in\Z^-$.
    First, we use Lemma \ref{integral bound} to bound this difference in terms of $I$ and $II$ defined in \eqref{def I} respectively \eqref{def II}. This immediately entails
    \begin{align*}
        &\Dnorm{T\left(\Theta_{k-1}\bfW,x^{k-1},U^{k-1},\balpha^{k-1}\right)- T\left(\Theta_{k-1}\bfW,x^{k-1},\tilde U^{k-1},\balpha^{k-1}\right)}\\
        &\le C[\Xnorm{A}]\Bigg(\norm{I\left(x^{k-1},U^{k-1},\balpha^{k-1}\right) - I\left(x^{k-1},\tilde U^{k-1},\balpha^{k-1}\right)}_\I \\
        &~~+ Z(\Theta_{k-1}\bfW) \Dnorm{II\left(x^{k-1},U^{k-1},\balpha^{k-1}\right) - II\left(x^{k-1},\tilde U^{k-1},\balpha^{k-1}\right)}\Bigg),
    \end{align*}
    where $Z(\Theta_{k-1}\bfW)$ is specified in \eqref{Z def}.\\
    
    Next we find bounds for $\norm{I\left(x^{k-1},U^{k-1},\balpha^{k-1}\right) - I\left(x^{k-1},\tilde U^{k-1},\balpha^{k-1}\right)}_\I$ and $\Dnorm{II\left(x^{k-1},U^{k-1},\balpha^{k-1}\right) - II\left(x^{k-1},\tilde U^{k-1},\balpha^{k-1}\right)}$. \\
    
    By the triangle inequality, we get
    \begin{align*}
        &\norm{I\left(x^{k-1},U^{k-1},\balpha^{k-1}\right) - I\left(x^{k-1},\tilde U^{k-1},\balpha^{k-1}\right)}_\I \\
        &\le \Big\lVert F_R^s\left(x^{k-1},U^{k-1} + \phi^{k-1}\left(x^{k-1}\right)\right) - F_R^s\left(x^{k-1},\tilde U^{k-1} + \phi^{k-1}\left(x^{k-1}\right)\right) \Big\rVert_\I \\
        &+ \left\lVert \sum_{i=2}^{q} i\alpha^{k-1}_i \left(x^{k-1}\right)^{i-1} \left[ F_R^c\left(x^{k-1},U^{k-1} + \phi^{k-1}\left(x^{k-1}\right)\right) - F_R^c\left(x^{k-1},\tilde U^{k-1} + \phi^{k-1}\left(x^{k-1}\right)\right) 
        \right]\right\rVert_\I.
    \end{align*}
    Now we examine each summand. For the first summand we use Lemma \ref{lip:fr} to get
    \begin{align*}
        &\norm{F_R^s\left(x^{k-1},U^{k-1} + \phi^{k-1}\left(x^{k-1}\right)\right) - F_R^s\left(x^{k-1},\tilde U^{k-1} + \phi^{k-1}\left(x^{k-1}\right)\right)}_\I \\
        &\le C_{F^s}R(\Theta_{k-1}\bfW)\Dnorm{U^{k-1} - \tilde U^{k-1}}.
    \end{align*}
    Moreover, using \eqref{sum i alpha bound} we obtain for the second term
    \begin{align*}
        &\norm{\sum_{i=2}^{q} i\alpha^{k-1}_i \left(x^{k-1}\right)^{i-1} \left[F_R^c\left(x^{k-1}, U^{k-1} + \phi^{k-1}\left(x^{k-1}\right)\right) - F_R^c\left(x^{k-1},\tilde U^{k-1} + \phi^{k-1}\left(x^{k-1}\right)\right)\right]}_\I\\
        &\le q^2 C_{F^c} \tilde C_x^{q-1} R(\Theta_{k-1}\bfW) \Dnorm{ U^{k-1} - \tilde U^{k-1}}.
    \end{align*}
    Putting the estimates together we infer
    \begin{align*}
        &\norm{I\left(x^{k-1},U^{k-1},\balpha^{k-1}\right)}_\I \\
        &\le \left(C_{F^s} + q^2 C_{F^c} \tilde C_x^{q-1} \right)R(\Theta_{k-1}\bfW)\Dnorm{U^{k-1} - \tilde U^{k-1}}.
    \end{align*}
    From the definition of $II$ \eqref{def II} we get
    \begin{align*}
        &\Dnorm{II\left(x^{k-1},U^{k-1},\balpha^{k-1}\right) - II\left(x^{k-1},\tilde U^{k-1},\balpha^{k-1}\right)} \\
        &\le \Dnorm{G_R^s\left(x^{k-1},U^{k-1} + \phi^{k-1}\left(x^{k-1}\right)\right) - G_R^s\left(x^{k-1},\tilde U^{k-1} + \phi^{k-1}\left(x^{k-1}\right)\right)} \\
        &+ \Dnorm{ \sum_{i=2}^{q} i\alpha^{k-1}_i \left(x^{k-1}\right)^{i-1} \left[ G_R^c\left(x^{k-1},U^{k-1} + \phi^{k-1}\left(x^{k-1}\right)\right) - G_R^c\left(x^{k-1},\tilde U^{k-1} + \phi^{k-1}\left(x^{k-1}\right)\right) 
        \right]}.
    \end{align*}
    Similarly to the computation above for $I$ we use Lemma \ref{lem:addition and multiplication of RP} and Lemma \ref{composition:cutoff} to infer that
    \begin{align*}
        &\Dnorm{II\left(x^{k-1},U^{k-1},\balpha^{k-1}\right) - II\left(x^{k-1},\tilde U^{k-1},\balpha^{k-1}\right)} \\
        &\le \left(C_{G^s}[\norm{\Theta_{k-1}W}_\gamma] + q^2 C_{G^c}[\norm{\Theta_{k-1}W}_\gamma]\tilde C_x^{q-1}\left(1+\norm{\Theta_{k-1}W}_\gamma\right)^6 \right) R(\Theta_{k-1}\bfW) \Dnorm{U^{k-1} - \tilde U^{k-1}}.
    \end{align*}
    Combining the estimates for $I$ and $II$ we get 
    \begin{align*}
        &\Dnorm{T\left(\Theta_{k-1}\bfW,x^{k-1},U^{k-1},\balpha^{k-1}\right) - T\left(\Theta_{k-1}\bfW,x^{k-1},\tilde U^{k-1},\balpha^{k-1}\right)}\\
        &\le C[\Xnorm{A}]\Bigg(\norm{I\left(x^{k-1},U^{k-1},\balpha^{k-1}\right) - I\left(x^{k-1},\tilde U^{k-1},\balpha^{k-1}\right)}_\I\\
        &~~~~+ Z(\Theta_{k-1}\bfW)\Dnorm{II\left(x^{k-1},U^{k-1},\balpha^{k-1}\right) - II\left(x^{k-1},\tilde U^{k-1},\balpha^{k-1}\right)}\Bigg)\\
        &\le C[\Xnorm{A}]\Bigg(C_{F^s} + q^2 C_{F^c} \tilde C_x^{q-1} + Z(\Theta_{k-1}\bfW)\\
        &~~~~\left(C_{G^s}[\norm{\Theta_{k-1}W}_\gamma] + q^2 C_{G^c}[\norm{\Theta_{k-1}W}_\gamma] \tilde C_x^{q-1} \right)\Bigg) R(\Theta_{k-1}\bfW)\Dnorm{U^{k-1} - \tilde U^{k-1}}.
    \end{align*}
    Similarly to the proof of Lemma \ref{tilde J maps into V}, we can find for $\hat C_T>0$ some cut-off radius $\tilde R_4(\Theta_{k-1}\bfW)$ such that 
    \begin{align}\label{def:tilde R_4}
        &C[\Xnorm{A}]\Bigg(C_{F^s} + q^2 C_{F^c} \tilde C_x^{q-1}   + Z(\Theta_{k-1}\bfW)\nonumber \\
        &~~~~\left(C_{G^s}[\norm{\Theta_{k-1}W}_\gamma] + q^2 C_{G^c}[\norm{\Theta_{k-1}W}_\gamma]\tilde C_x^{q-1}\left(1+\norm{\Theta_{k-1}W}\right)^6 \right)\Bigg) \tilde R_4(\Theta_{k-1}\bfW) = \hat C_T,
    \end{align}
    where $\hat C_T$ will be given down below. Choosing $\tilde R_4(\bfW)$ such that \eqref{def:tilde R_4} holds, we infer
    \begin{align*}
        &\Dnorm{T\left(\Theta_{k-1}\bfW,x^{k-1},U^{k-1},\balpha^{k-1}\right) - T\left(\Theta_{k-1}\bfW,x^{k-1},\tilde U^{k-1},\balpha^{k-1}\right)}\\
        &\le \hat C_T \Dnorm{U^{k-1} - \tilde U^{k-1}}.
    \end{align*}
Therefore we further get with the definition of $\beta$ \eqref{beta} and the definition of the $BC^\eta$-norm \ref{BC def}
    \begin{align*}
        &e^{-\eta(j-1)}\Bigg[\sum_{k=-\I}^{j-1}M_se^{-\beta(j-1-k)}\Dnorm{ T\left(\Theta_{k-1}\bfW,x^{k-1},U^{k-1},\balpha^{k-1}\right) - T\left(\Theta_{k-1}\bfW,x^{k-1},\tilde U^{k-1},\balpha^{k-1}\right)} \\
        &~~~~ + \Dnorm{ T\left(\Theta_{j-1}\bfW,x^{j-1},U^{j-1},\balpha^{j-1}\right) - T\left(\Theta_{j-1}\bfW,x^{j-1},\tilde U^{j-1},\balpha^{j-1}\right)}\Bigg] \\
        &\le\sum_{k=-\I}^{j-1}M_se^{-\beta(j-1-k)}e^{-\eta (j-1)}\hat{C}_T\Dnorm{U^{k-1} - \tilde U^{k-1}}  + e^{-\eta(j-1)}\hat{C}_T\Dnorm{U^{k-1} - \tilde U^{k-1}}\\
        &\le \hat{C}_T\left(\frac{M_se^{-\eta}}{1-e^{-(\eta+\beta)}}+1 \right)\Bnorm{\U - \tilde \U},
    \end{align*}
    where we used Assumption \ref{beta assumption} so $-(\eta+\beta)<0$.
    Now we can easily estimate 
    \begin{align*}
        &\Bnorm{\tilde J(\bfW,\U,\xi)-\tilde J(\bfW,\tilde\U,\xi)} \\
        &\le\sup_{j\in\Z^-} e^{-\eta(j-1)}\Bigg[\sum_{k=-\I}^{j-1}M_se^{-\beta(j-1-k)} \\
        &~~~~\Dnorm{ T\left(\Theta_{k-1}\bfW,x^{k-1},U^{k-1},\balpha^{k-1}\right) - T\left(\Theta_{k-1}\bfW,x^{k-1},\tilde U^{k-1},\balpha^{k-1}\right)} \\
        &~~~~ + \Dnorm{ T\left(\Theta_{j-1}\bfW,x^{j-1},U^{j-1},\balpha^{j-1}\right) - T\left(\Theta_{j-1}\bfW,x^{j-1},\tilde U^{j-1},\balpha^{j-1}\right)}\Bigg] \\
        &\le  \hat{C}_T\left(\frac{M_se^{-\eta}}{1-e^{-(\eta+\beta)}}+1 \right)\Bnorm{\U - \tilde \U}.
    \end{align*}
    Our goal is to show that there exists a constant $\tilde C < 1$ such that 
    $$\Bnorm{\tilde J(\bfW,\U,\xi)-\tilde J(\bfW,\tilde\U,\xi)} \le \tilde C\Bnorm{\U - \tilde \U}.$$
    Based on the previous computations, the inequality above reduces to
    $$\hat C_T\left(\frac{M_se^{-\eta}}{1-e^{-(\eta+\beta)}}+1 \right)< 1.$$
    Hence we choose $\hat C_T$ such that
    $$\hat C_T < 1/\left(\frac{M_se^{-\eta}}{1-e^{-(\eta+\beta)}}+1 \right)$$
    With this we can determine $\tilde R_4(\bfW)$ with \eqref{def:tilde R_4}.
    We define 
    \begin{align}\label{def:R_4}
        R_4(\bfW):=\min\{\tilde R_4(\bfW),R_3(\bfW)\},
    \end{align}
    where $R_3(\bfW)$ is defined in \eqref{def:R_3}, and get
    $$\Bnorm{\tilde J(\bfW,\U,\xi) - \tilde J(\bfW,\tilde\U,\xi)}\le \tilde C \Bnorm{\U-\tilde\U},$$
    for $\tilde C:= \hat C_T\left(\frac{M_se^{-\eta}}{1-e^{-(\eta+\beta)}}+1 \right) <1$.
    \qed\\
\end{proof}

Combining the previous results we can finally show the main result of this work. 
\begin{theorem}\label{thm:approx:m}
    Let Assumptions \ref{ass:linearpart}, \ref{F}, \ref{ass:alpha}, \ref{beta assumption} and \ref{M assumption} be satisfied. Let $\xi\in B_{\cX^c}(0,\rho(\bfW))$ and let $x$ be the solution of the first equation of \eqref{SDE} with initial datum $\xi$. We further let $x^{j-1}_t=x_{j-1+t}$ be the discretization of $x$ as specified in Lemma \ref{xj bound}, where $j\in\Z^-$ and $t\in[0,1]$. Moreover let for $i\in\{2,\dots,q\},~j\in\Z^-$ $\left(\alpha^{j-1}_i,\left(\alpha_i^{j-1}\right)'\right)$ be the solution of \eqref{discretized:alpha} and $$\phi^{j-1}\left(x^{j-1}_t\right)=\sum_{i=2}^{q}\alpha_i^{j-1}\left(x^{j-1}_t\right)^i.$$
    Then
    $$\Xnorm{h^c(\xi,\bfW)-\phi^{-1}\left(x^{-1}_1\right)}\le K\Xnorm{\xi}^{q+1}.$$
\end{theorem}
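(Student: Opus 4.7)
\bigskip

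\noindent\textbf{Proof proposal.}
The plan is to carry out the formal strategy from Subsection~\ref{sec:generalidea} in the discretized setting, using the two preparatory lemmas just established.  First I would verify that the set
\[
V:=\bigl\{\U\in BC^{\eta}\left(\cD^{2\gamma}_W\right):\Dnorm{U^{j-1},\left(U^{j-1}\right)'}\le K\Xnorm{\xi}^{q+1}\text{ for all }j\in\Z^{-}\bigr\}
\]
is a non-empty, closed subset of the Banach space $BC^{\eta}\left(\cD^{2\gamma}_W\right)$, so it inherits completeness.  By Lemma~\ref{tilde J maps into V} the map $\tilde J(\bfW,\cdot,\xi)$ sends $V$ into $V$, and by Lemma~\ref{tilde J contraction} it is a contraction (with constant $\tilde C<1$) on $V$ provided the cut-off radius $R(\bfW)\le R_4(\bfW)$.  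Banach's fixed point theorem then produces a unique $\U^{\ast}=\U^{\ast}(\xi,\bfW)\in V$ with $\tilde J(\bfW,\U^{\ast},\xi)=\U^{\ast}$.

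Next I would identify $\U^{\ast}+\Phi(x)$ with the stable component of the Lyapunov--Perron fixed point.  By the very definition~\eqref{tilde J def},
\[
\U^{\ast}=\tilde J(\bfW,\U^{\ast},\xi)=P^{s}J(\bfW,\U^{\ast}+\Phi(x),\xi)-\Phi(x),
\]
so that $\U^{\ast}+\Phi(x)$ is a fixed point of the map $\mathbb{U}\mapsto P^{s}J(\bfW,\mathbb{U},\xi)$ on its natural domain.  Theorem~\ref{contraction} (combined with the fact that $x=P^{c}\Gamma(\xi,\bfW)$ has already been fixed in~\eqref{center equation}) ensures that $P^{s}\Gamma(\xi,\bfW)$ is the unique such fixed point; hence
\[
P^{s}\Gamma(\xi,\bfW)=\U^{\ast}+\Phi(x).
\]
Evaluating this identity at $(j{-}1,t)=(-1,1)$, which corresponds to time $0$ after the discretization, and invoking Lemma~\ref{localcman} to rewrite $h^{c}(\xi,\bfW)=P^{s}\Gamma(\xi,\bfW)[-1,1]$, I obtain
\[
h^{c}(\xi,\bfW)-\phi^{-1}\bigl(x^{-1}_{1}\bigr)=U^{\ast,-1}_{1}.
\]

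To conclude, I bound the right-hand side using membership in $V$.  For any controlled rough path $(Y,Y')\in\cD^{2\gamma}_W$ one has $\Xnorm{Y_{t}}\le\norm{Y}_{\infty}\le\Dnorm{Y,Y'}$, so
\[
\Xnorm{U^{\ast,-1}_{1}}\le\Dnorm{U^{\ast,-1},\left(U^{\ast,-1}\right)'}\le K\Xnorm{\xi}^{q+1},
\]
which gives the claimed approximation estimate.  The whole argument hinges on the two technical inputs already in place (the self-mapping property and the contraction estimate for $\tilde J$); the only remaining point to spell out carefully is the uniqueness step identifying $\U^{\ast}+\Phi(x)$ with $P^{s}\Gamma$, where I need to check that $\U^{\ast}+\Phi(x)$ lies in the function space on which $J$ was proved to be a contraction in~\cite{KN23} so that Theorem~\ref{contraction} applies.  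This is the main (though mild) obstacle and follows from Lemma~\ref{phi BC estimate} together with the $BC^{\eta}$--bound on $\U^{\ast}$ inherited from $V$, after possibly shrinking $\rho(\bfW)$.
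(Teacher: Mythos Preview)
Your proposal is correct and follows essentially the same route as the paper: obtain the fixed point $\U^{\ast}\in V$ of $\tilde J$ from Lemmas~\ref{tilde J maps into V} and~\ref{tilde J contraction}, use the definition of $\tilde J$ to see that $\U^{\ast}+\Phi(x)$ is a fixed point of $P^{s}J$, invoke the uniqueness from Theorem~\ref{contraction} to identify it with $P^{s}\Gamma$, and then read off the bound at $(j-1,t)=(-1,1)$ via membership in $V$. The paper's proof is slightly terser (it writes the difference as $\tilde J(\bfW,\U^{\ast},\xi)[-1,1]$ rather than $U^{\ast,-1}_{1}$, which is the same thing), and it does not spell out the function-space check you flag at the end, but otherwise the arguments coincide.
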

\begin{proof}
    Due to Lemma \ref{tilde J maps into V} and Lemma \ref{tilde J contraction} we know that a fixed point $\U^*\in V$ of $\tilde J(\bfW,\cdot,\xi)$ exists truncating  $F^{c/s}$, $G^{c/s}$ with $R_4(\bfW)$ defined in \eqref{def:R_4} and using Assumption \ref{M assumption} with $R_4(\bfW)$. Hence
    \begin{align*}
    \U^*=\tilde J(\bfW,\U^*,\xi)&=P^sJ(\bfW,\U^*+\Phi,\xi)-\Phi
    \end{align*}
    and therefore
    $$\U^*+\Phi = P^sJ(\bfW,\U^*+\Phi,\xi),$$
    where $\Phi$ is defined in \eqref{Phi def}.
    So $\U^*+\Phi$ is a fixed point of $P^sJ$. We also know from Lemma \ref{contraction} that $P^sJ$ has the unique fix point $P^s\Gamma(\xi,\bfW)$. Consequently, it must hold that $\U^*+\Phi=P^s\Gamma$.\\
    Based on this and on the definition of $V$, we get
    \begin{align*}
        \Xnorm{h^c(\xi, \bfW)-\phi^{-1}_1\left(x^{-1}_1\right)}
        &= \Xnorm{P^s\Gamma(\xi,\bfW)[-1,1]-\phi^{-1}_1\left(x^{-1}_1\right)} \\
        &= \Xnorm{P^sJ(\bfW,P^s\Gamma(\xi,\bfW),\xi)[-1,1]-\phi^{-1}_1\left(x^{-1}_1\right)}\\
        &= \Xnorm{P^sJ(\bfW,\U^*+\Phi,\xi)[-1,1]-\phi^{-1}_1\left(x^{-1}_1\right)} \\
        &= \Xnorm{\tilde J(\bfW,\U^*,\xi)[-1,1]} \\
        &\le\norm{\tilde J(\bfW,\U^*,\xi)[-1,\cdot]}_\I\\
        &\le\Dnorm{\tilde J(\bfW,\U^*,\xi)[-1,\cdot], \tilde J(\bfW,\U^*,\xi)[-1,\cdot]'}\\
        &\le K\Xnorm{\xi}^{q+1}.
    \end{align*}
    \qed\\
\end{proof}

\section{Applications}~\label{sec:app}
\subsection{Examples}\label{examples}
Both our approach and the approach in \cite{ChekrounLiuWang} entail Taylor expansions for the center manifolds with coefficients being stationary solutions of SDEs. Hence, we can easily compare our results to those obtained by~\cite{ChekrounLiuWang} and \cite{ChekrounLiuWang example} for linear multiplicative noise. For a better comprehension we first treat this case. 
\begin{example}\label{Chekroun linear example}
We consider the following system driven by a one-dimensional Brownian motion $(W_t)_{t\geq 0}$ as in~\cite[Section 2.3]{ChekrounLiuWang example}
\begin{align}\label{chekroun example}
    \txtd x &= (\lambda x + xy)~\txtd t,~~\lambda\ge0 \nonumber\\
    \txtd y &= (\kappa y - x^2)~\txtd t + \sigma y\circ\txtd W_t,~~\kappa<0,~\sigma>0. 
\end{align}

\begin{remark}
    \begin{itemize}
        \item [1)] This example arises in the finite time blow-up for an averaged 3D Navier-Stokes equation \cite{Tao} 
       and in the study of nonlinear crystals \cite{SchenzleBrand}.
        \item [2)] As $\lambda$ has potentially not zero real-part, we now have a splitting of the phase space $\cX=\R^2$ in a stable and a center-unstable one. This is not exactly the setting of Assumption \ref{ass:linearpart} but we can modify this assumption in order to incorporate this situation as well.~Therefore and due to the motivation mentioned above we consider this example in order to illustrate the approximation techniques and compare them with~\cite[Section 2.3]{ChekrounLiuWang example}.
    \end{itemize}
\end{remark}

We are interested in determining the reduced flow on the center-unstable manifold. 
The reduced flow gives the dynamics on the manifold and is given by the differential equation in $x$ after substituting $y$ by $h^c(x,W)$ or some approximation of it. To this end we need to compute the approximation of $h^c$ given by \eqref{h:cm} and then plug it in the differential equation for $x$. 
The approximation derived in~\cite[Section 2.3]{ChekrounLiuWang example} is 
\[ h^c(x,W)= \alpha(W) x^2 +\cO(x^3), \]
leading to the reduced flow 
$$\dot x = \lambda x_t - \alpha(\theta_t W)x_t^3.$$
Here $\alpha$ is the stationary solution of the linear SDE with Stratonovich noise
\begin{equation}\label{alpha:c}
\txtd \alpha = (1 - (2\lambda-\kappa)\alpha)~\txtd t + \sigma \alpha\circ\txtd W_t
\end{equation}
given by 
\[ \alpha(\theta_t W):= \int_{-\infty}^0 e^{(2\lambda-\kappa)s-\sigma \theta_tW_s}~\txtd s. \]
\begin{remark}
Note that $2\lambda>\kappa$ which means that the previous integral is well-defined.~Restrictions of this type are often referred to as non-resonance conditions. 
\end{remark}

For the sake of completeness, we present three ways in order to compute the approximation of $h^c$ together with the reduced flow. \\\\
1. Flow transformation:
The first option to compute the reduced flow follows a similar idea to \cite{ChekrounLiuWang example}. This approach is discussed in the next section and in Remark \ref{rem:alternative approximation results}.~As we consider linear multiplicative noise, we can use the Doss-Sussmann transformation $y^*_t:=e^{-\sigma 
z(\theta_tW)}y_t$ where $z(\theta_tW)$ is the stationary Ornstein Uhlenbeck process defined in \eqref{ex:stationary}. 
\begin{align*}
    \txtd x_t &= (\lambda x_t + e^{\sigma z(\theta_tW)}x_ty_t^*)~\txtd t,\\
    \txtd y^*_t &= ((\sigma z(\theta_tW)+\kappa) y^*_t - e^{-\sigma z(\theta_tW)}x_t^2)~\txtd t .
\end{align*}
Now we make the ansatz $y^*_t = \phi(x_t) = \sum_{i=1}^4\alpha_i(\theta_tW)x^i_t$ with $\txtd \alpha_i = (A^{\alpha_i}(\theta_tW)\alpha_i+f_i(\theta_tW))~\txtd t.$ We omit the dependence on $\theta_tW$ for notational simplicity. With this ansatz we can calculate $\txtd y_t^*$ in two ways. First, we can use the transformed differential equation for $y^*_t$ and plug in $\phi(x_t)$ which entails
\begin{align*}
    \txtd y_t^* = \left[(\sigma z(\theta_tW) + \kappa)\phi(x_t) - e^{-\sigma z(\theta_tW)}x^2_t \right]~\txtd t.
\end{align*}
Second, we can use the definition of $\phi(x_t)$ and the product rule to get
\begin{align*}
    \txtd y_t^* &= \txtd \phi(x_t)= \sum_{i=1}^4 x^i_t~\txtd \alpha_i + i\alpha_i x^{i-1}_t~\txtd x_t \\
    &= \left[\sum_{i=1}^4A^{\alpha_i}\alpha_i x^i_t + f_ix^i_t + i\lambda \alpha_i x^i_t + e^{\sigma z(\theta_tW)}i\alpha_i\phi(x_t) x^i_t\right] \txtd t.
\end{align*}
Comparing the coefficients of the linear terms $\alpha_ix^i_t$ we infer
$$A^{\alpha_i} = \sigma z(\theta_tW)+\kappa - i\lambda.$$
Comparing the remaining terms we have 
\begin{align*}
    \sum_{i=1}^4 f_ix^i_t = -e^{-\sigma z(\theta_tW)}x^2_t - \sum_{i=1}^4e^{\sigma z(\theta_tW)}i\alpha_i\phi(x_t)x_t^i.
\end{align*}
Simplifying $\sum_{i=1}^4e^{\sigma z(\theta_tW)}i\alpha_i\phi(x_t)x_t^i$ we compute
\begin{align*}
    f_1 &= 0 \\
    f_2 &= -e^{-\sigma z(\theta_tW)} - e^{\sigma z(\theta_tW)} \alpha_1^2 \\
    f_3 &= - e^{\sigma z(\theta_tW)} 3\alpha_1\alpha_2 \\
    f_4 &= - e^{\sigma z(\theta_tW)} \left(4\alpha_1\alpha_3+2\alpha_2^2\right).
\end{align*}
So the coefficients solve the following system with non-autonomous random coefficients
\begin{align*}
    \txtd \alpha_1 &=  \left[\sigma z(\theta_tW)+\kappa - \lambda \right]\alpha_1 ~\txtd t \\
    \txtd \alpha_2 &=  \left[(\sigma z(\theta_tW)+\kappa - 2\lambda)\alpha_2 -e^{-\sigma z(\theta_tW)} - e^{\sigma z(\theta_tW)} \alpha_1^2 \right] ~\txtd t \\
    \txtd \alpha_3 &=  \left[(\sigma z(\theta_tW)+\kappa - 3\lambda)\alpha_3 - e^{\sigma z(\theta_tW)} 3\alpha_1\alpha_2 \right] ~\txtd t \\
    \txtd \alpha_4 &=  \left[(\sigma z(\theta_tW)+\kappa - 4\lambda)\alpha_4 - e^{\sigma z(\theta_tW)} (4\alpha_1\alpha_3 + 2\alpha_2^2) \right] ~\txtd t.
\end{align*}
 We see that the differential equation for $\alpha_1$ is solved by $\alpha_1\equiv0$. Hence, $\alpha_3\equiv0$ solves the third differential equation above. The reduced system is given by
\begin{align*}
    \txtd \alpha_2 &=  \left[(\sigma z(\theta_tW)+\kappa - 2\lambda)\alpha_2 -e^{-\sigma z(\theta_tW)} \right] ~\txtd t \\
    \txtd \alpha_4 &=  \left[(\sigma z(\theta_tW)+\kappa - 4\lambda)\alpha_4 - e^{\sigma z(\theta_tW)} 2\alpha_2^2 \right] ~\txtd t.
\end{align*}
Using the transformation $\alpha_i^*:=e^{\sigma z(\theta_tW)}\alpha_i$ we get the SDEs 
\begin{align*}
    \txtd \alpha_2^* &=  \left[(\kappa - 2\lambda)\alpha_2^* -1 \right] ~\txtd t + \sigma\alpha_2^*\circ\txtd W_t \\
    \txtd \alpha_4^* &=  \left[(\kappa - 4\lambda)\alpha_4^* - 2(\alpha_2^*)^2 \right] ~\txtd t + \sigma\alpha_4^*\circ\txtd W_t.
\end{align*}
\\\\
2. Direct Taylor expansion: We make the ansatz $y_t=\phi(x_t)$, where $\phi(x_t)=\sum\limits_{i=1}^4\alpha_i(\theta_tW)x_t^i$ and $\alpha_i$ solve 
\begin{align*}
    \txtd \alpha_i = [A^{\alpha_i}\alpha_i + f_i]~\txtd t + g_i \circ\txtd W_t,~~i\in\{1,\ldots q\}, 
\end{align*}
for some coefficients $A^{\alpha_i},f_i,g_i$ that we have to determine. 
For notational simplicity we drop the argument $\theta_tW$ for $\alpha_i(\theta_tW)$ and write only $\alpha_i$.
Plugging in the ansatz $y_t=\phi(x_t)$ in \eqref{chekroun example} we get
\begin{align*}
    \txtd y_t &= (\kappa \phi(x_t) - x_t^2)~\txtd t + \sigma \phi(x_t)\circ\txtd W_t \\
    &= \left(\kappa\alpha_1 x_t + (\kappa\alpha_2-1) x_t^2 + \kappa\alpha_3 x_t^3 + \kappa\alpha_4x_t^4 \right) \txtd t \\
    &~~+\left(\sigma\alpha_1 x_t + \sigma\alpha_2 x_t^2 + \sigma\alpha_3 x_t^3 + \sigma\alpha_4 x_t^4 \right)\circ\txtd W_t.
\end{align*}
Moreover, we can compute $\txtd y_t$ as $\txtd \phi(x_t)$. This entails 
\begin{align*}
    \txtd y_t &= \sum_{i=1}^4i\alpha_ix_t^{i-1} ~\txtd x_t + \sum_{i=1}^4 x_t^i~\txtd \alpha_i \\
    &= \left(\sum_{i=1}^4i\alpha_i\lambda x_t^i + i\alpha_ix_t^i\phi(x_t) + \left(A^{\alpha_i}\alpha_i+f_ix_t^i\right)\right)~\txtd t +\sum_{i=1}^4 g_ix_t^i \circ\txtd W_t \\
    &= \big(\left[\lambda\alpha_1 + \left(A^{\alpha_1}\alpha_1+f_1\right) \right] x_t \\
    &~~+\left[2\lambda\alpha_2 + \alpha_1^2 + \left(A^{\alpha_2}\alpha_2+f_2\right)\right] x_t^2 \\
    &~~+\left[3\lambda\alpha_3 + 3\alpha_1\alpha_2 + \left(A^{\alpha_3}\alpha_3+f_3\right)\right] x_t^3 \\
    &~~+\left[4\lambda \alpha_4 + 4\alpha_1\alpha_3 + 2\alpha_2^2 + \left(A^{\alpha_4}\alpha_4+f_4\right) \right] x_t^4\big) ~\txtd t \\
    &~~+ \left(g_1 x_t + g_2 x_t^2 + g_3 x_t^3 + g_4x_t^4\right)\circ\txtd W_t,
\end{align*}
where we dropped all terms of order $x_t^5$ and higher. 
Comparing the coefficients we get for $A^{\alpha_i}$, $f_i$ and $g_i$
\begin{align*}
    \begin{array}{cccccc}
        A^{\alpha_1}&=\kappa-\lambda  & f_1= & 0 & g_1= & \sigma\alpha_1\\
        A^{\alpha_2}&=\kappa-2\lambda & f_2= & - \alpha_1^2 -1 & g_2= & \sigma\alpha_2 \\
        A^{\alpha_3}&=\kappa-3\lambda & f_3= & - 3\alpha_1\alpha_2 & g_3= & \sigma\alpha_3 \\
        A^{\alpha_4}&=\kappa-4\lambda & f_4= & - 4\alpha_1\alpha_3 - 2\alpha_2^2 & g_4= & \sigma\alpha_4.\\
    \end{array}
\end{align*}
First we note that $\alpha_1\equiv0$ solves the SDE
$$\txtd \alpha_1 = (\kappa-\lambda) \alpha_1~\txtd t + \sigma\alpha_1\circ\txtd W_t.$$
Moreover, plugging $\alpha_1\equiv0$ in $f_3$ we get that $\alpha_3$ solves the SDE
$$\txtd \alpha_3 = (\kappa-3\lambda)\alpha_3~\txtd t + \sigma\alpha_3\circ\txtd W_t,$$
which is solved by $\alpha_3\equiv 0$. Hence, the system our coefficients solve simplifies to
\begin{align*}
    \txtd \alpha_2 &= \left((\kappa-2\lambda)\alpha_2-1\right)~\txtd t + \sigma\alpha_2\circ\txtd W_t\\
    \txtd \alpha_4 &= \left((\kappa-4\lambda)\alpha_4-2\alpha_2^2\right)~\txtd t + \sigma\alpha_4\circ\txtd W_t.
\end{align*}

3. Coefficient determining equations: Let us also illustrate a different way to compute $A^{\alpha_i},~f_i$ and $g_i$ using the invariance equations defined in \eqref{invariance equation A}. \eqref{invariance equation f_i} and \eqref{invariance equation g_i}. We use the notation introduced in Section \ref{setting} 
\begin{align*}
\begin{array}{ccc}
    A^c = \lambda , &F^c(x_t,y_t) = x_ty_t, & G^c(x_t,y_t) = 0, \\
    A^s = \kappa , &F^s(x_t,y_t) = -x_t^2, & G^s(x_t,y_t) = \sigma y_t.
\end{array}
\end{align*}
We note that $G^s$ does not fulfill the Assumption \ref{G} and therefore the approximation result Theorem \ref{thm:approx:m} cannot be applied. But it is possible to use a Doss-Sussmann type transformation as mentioned in Remark \ref{rem:alternative approximation results}. 

From now on we will suppress the dependence on $t$ and $W$. 
We start with the linear part. Using $A^{\alpha_i}=A^s-iA^c$ we get
\begin{align*}
    \begin{array}{cc}
    A^{\alpha_1} &=\kappa-\lambda \\
    A^{\alpha_2} &=\kappa-2\lambda \\
    A^{\alpha_3} &=\kappa-3\lambda \\
    A^{\alpha_4} &=\kappa-4\lambda.
\end{array}
\end{align*}
Next we use the invariance equation for the $f_i$ \eqref{invariance equation f_i}
\begin{align*}
    \sum_{i=1}^4 f_ix_t^i 
    &= F^s(x_t,\phi(x_t)) - \sum_{i=1}^4 i\alpha_ix_t^{i-1}F^c(x_t,\phi(x_t))\\
    &= - x_t^2 - \sum_{i=1}^4 i\alpha_ix_t^i\phi(x_t) \\
    &= -x_t^2 -\left[\alpha_1^2x_t^2 + 3\alpha_1\alpha_2x_t^3 + \left(4\alpha_1\alpha_3 + 2\alpha_2^2\right)x_t^4 \right] + \cO(x_t^5),
\end{align*}
where the Landau symbol $\cO$ is defined as 
\begin{align}\label{def:Landau symbol}
    f\in\cO(g) ~~~\textnormal{ if } ~~~~\limsup\limits_{x\to 0}\Xnorm{\frac{f(x)}{g(x)}}<\I.
\end{align}

Comparing the coefficients we get
\begin{align*}
\begin{array}{cc}
    f_1= & 0 \\
    f_2= & -\alpha_1^2+1 \\
    f_3= & -3\alpha_1\alpha_2 \\
    f_4= & - 4\alpha_1\alpha_3 + 2\alpha_2^2.
\end{array}
\end{align*}

Last we use the invariance equation for $g_i$ \eqref{invariance equation g_i}
\begin{align*}
    \sum_{i=1}^4 g_ix_t^i &= G^s(x_t,\phi(x_t)) - \sum_{i=1}^4i\alpha_i x_t^{i-1}G^c(x_t,\phi(x_t))\\
    &= \sigma \phi(x_t) = \sum_{i=1}^4 \sigma\alpha_ix_t^i,
\end{align*}
so comparing the coefficients entails
\begin{align*}
\begin{array}{cc}
    g_1= & \sigma\alpha_1  \\
    g_2= & \sigma\alpha_2  \\
    g_3= & \sigma\alpha_3  \\
    g_4= & \sigma\alpha_4 .
\end{array}
\end{align*}

Hence we obtain the same results as with the second approach. We get again $\alpha_1\equiv0$ and $\alpha_3\equiv0$ and 
\begin{align}\label{alpha2}
    \txtd \alpha_2 &= \left((\kappa-2\lambda)\alpha_2-1\right)~\txtd t + \sigma\alpha_2\circ\txtd W_t  \\
    \txtd \alpha_4 &= \left((\kappa-4\lambda)\alpha_4-2\alpha_2^2\right)~\txtd t + \sigma\alpha_4\circ\txtd W_t.\nonumber
\end{align}
\\\\
We see that all three methods entail the same SDEs for the coefficients.~Plugging in their stationary solutions $\alpha_2,~\alpha_4$, we get the following approximation 
$$h^c(x,W)=\alpha_2(W)x^2 + \alpha_4(W) x^4 + \cO(x^5)$$
and the reduced flow
\begin{align*}
    \txtd x_t = \lambda x_t + \alpha_2(\theta_t W) x_t^3 + \alpha_4(\theta_t W) x_t^5 ~\txtd t.
\end{align*}
\begin{remark}
    Note that the only difference is the sign in front of $\alpha$ given in~\eqref{alpha:c} and $\alpha_2$ in~\eqref{alpha2} which cancels out by the fact that we have $1-(2\lambda-\kappa)$ as the drift for $\alpha$ and $-1-(2\lambda-\kappa)$ as the drift for $\alpha_2$. If we choose to only take the first order of the Taylor approximation, we get a consistent result with \cite{ChekrounLiuWang example}. But we can also take higher order terms into account if it is necessary. 
\end{remark}

Let us now compute the error terms for $M\phi(x_t)$ and $\tilde M\phi(x_t)$. We want to check if they have indeed the structure claimed in Lemma \ref{lem:M bound}. We first calculate $M\phi(x_t)$ which is given by the remainder $\cO(x_t^5)$ because we use the ansatz $M\phi(x_t)=0$ in order to calculate the coefficients $f_i$. To this aim we compute
\begin{align*}
    M \phi(x_t)
    &= \sum_{i=1}^4 f_ix_t^i - F^s(x_t,\phi(x_t)) + \sum_{i=1}^4 i\alpha_ix_t^{i-1}F^c(x_t,\phi(x_t))\\
    &= \left[-\alpha_1^2x_t^2 - 3\alpha_1\alpha_2 x_t^3 - (4\alpha_1\alpha_3 + 2\alpha_2^2)x_t^4 + \sum_{i=1}^4 i\alpha_ix_t^i\phi(x_t)\right] \\
    &=\left[(5\alpha_1\alpha_4 + 5\alpha_2\alpha_3) x_t^5 
    + (6\alpha_2\alpha_4+3\alpha_3^2) x_t^6
    + 7\alpha_3\alpha_4 x_t^7
    + 4\alpha_4^2 x_t^8\right] \\
    &=\left[6\alpha_2\alpha_4 x_t^6
    + 4\alpha_4^2 x_t^8\right],
\end{align*}
where we used in the last equality that $\alpha_1\equiv 0$ and $\alpha_3\equiv 0$. As we can see, the coefficients in front of the $x^i$ are polynomials in $\alpha_1,\dots,\alpha_{i}$. These terms correspond to $\tilde P^{F^c}_i(\alpha_2,\dots,\alpha_{i-1})- P^{F^c}_i(\alpha_2,\dots,\alpha_{i-1})$ appearing in the proof of Lemma \ref{M bound}.
For $\tilde M\phi(x_t)$ we get directly $\tilde M\phi(x_t)=0$ as there are no higher order remainder terms in the calculation and hence $$\tilde M\phi(x_t)=\sum_{i=1}^4g_ix_t^i - G^s(x_t,\phi(x_t)) + \sum_{i=1}^4 i\alpha_ix_t^{i-1}G^c(x_t,\phi(x_t))=0.$$
\end{example}
\begin{example}\label{Chekroun nonlinear example}
    Now we consider the following system driven by a geometric rough path $\mathbf{W}=(W,\mathbb{W})$ 
    \begin{align}\label{chekroun example nonlinear noise}
    \txtd x_t &= (x_ty_t+x_t^3)~\txtd t + \sigma_1 x_t^2y_t~\txtd \bfW_t, \nonumber\\
    \txtd y_t &= (- y_t + x_t^2)~\txtd t + \sigma_2 y_t^3~\txtd \bfW_t,
\end{align}
where $\sigma_1,\sigma_2>0$ represent the noise intensities.
We use again the ansatz $y_t=\phi(x_t)=\sum_{i=1}^6\alpha_ix_t^i$ and $\alpha_i$ solves
$$\txtd \alpha_i = (A^{\alpha_i}\alpha_i + f_i)~\txtd t + g_i~\txtd \bfW_t,~~~i\in\{1,\ldots q\}.$$
We rewrite the system using the notation in Section \ref{setting}. 
\begin{align*}
\begin{array}{ccc}
    A^c = 0 , &F^c(x_t,y_t) = x_ty_t + x_t^3, & ~~G^c(y_t) =  x_t^2y_t ,\\
    A^s = -1  , &F^s(x_t,y_t) = x_t^2, & ~~G^s(y_t) =  y_t^3.
\end{array}
\end{align*}
We start with determining $A^{\alpha_i}$ and $f_i$. The invariance equation for $A^{\alpha_i}$ given in \eqref{invariance equation A} leads to
$$A^{\alpha_i} = -1.$$ 
In order to compute $f_i$ we consider the invariance equation \eqref{invariance equation f_i} to obtain
\begin{align*}
    \sum_{i=1}^6 f_i x^i_t 
    &= F^s(x_t,\phi(x_t)) - \sum_{i=1}^6 i \alpha_ix_t^{i-1}F^c(x_t,\phi(x_t)) \\
    &= x_t^2 - \sum_{i=1}^6 i\alpha_ix_t^{i-1}(x_t\phi(x_t)+x_t^3) \\
    &= (-\alpha_1^2 + 1) x_t^2 - (3\alpha_1\alpha_2 + \alpha_1)x_t^3 - (2\alpha_2^2 + 4\alpha_1\alpha_3 + 2\alpha_2) x_t^4 \\
    &~~~~ - (5\alpha_1\alpha_4 + 5\alpha_2\alpha_3 + 3\alpha_3)x_t^5 - (3\alpha_3^2 + 6\alpha_1\alpha_5 + 6\alpha_2\alpha_4 + 4\alpha_4)x_t^6+ \cO(x_t^7).
\end{align*}
Comparing the coefficients we conclude
\begin{align*}
    f_1&=  0 \\
    f_2&=  -\alpha_1^2+1 \\
    f_3&=  -3\alpha_1\alpha_2 - \alpha_1 \\
    f_4&=  - 4\alpha_1\alpha_3 + 2\alpha_2^2 + 2\alpha_2 \\
    f_5&=  - 5\alpha_1\alpha_4 + 5\alpha_2\alpha_3 + 3\alpha_3 \\
    f_6&=  - 6\alpha_1\alpha_5 + 6\alpha_2\alpha_4 + 3\alpha_3^2 + 4\alpha_4.
\end{align*}
Similarly we calculate $g_i$ using the invariance equation \eqref{invariance equation g_i}. This yields
\begin{align*}
    \sum_{i=1}^6 g_i x_t^i 
    &= G^s(x_t,\phi(x_t)) - \sum_{i=1}^6 i \alpha_ix_t^{i-1}G^c(x_t,\phi(x_t)) \\
    &= \phi(x_t)^3 - \sum_{i=1}^6 i \alpha_i x_t^{i+1}\phi(x_t) \\
    &= (\alpha_1^3 - \alpha_1^2)x_t^3 + (3\alpha_1^2\alpha_2-3\alpha_1\alpha_2)x_t^4  + (3\alpha_1^2\alpha_3 + 3\alpha_1\alpha_2^2 - \alpha_2^2 - 4\alpha_1\alpha_3)x_t^5 \\
    &~~~~+ (\alpha_2^3+6\alpha_1\alpha_2\alpha_3 - 5\alpha_1\alpha_4 - 5\alpha_2\alpha_3)x_t^6 + \cO(x_t^7).
\end{align*}
Comparing the coefficients we get
\begin{align*}
    g_1&=  0 \\
    g_2&=  0 \\
    g_3&=  \alpha_1^3-\alpha_1^2  \\
    g_4&=  3\alpha_1^2\alpha_2-3\alpha_1\alpha_2  \\
    g_5&= 3\alpha_1^2\alpha_3 + 3\alpha_1\alpha_2^2 - \alpha_2^2 - 4\alpha_1\alpha_3  \\
    g_6&=  \alpha_2^3+6\alpha_1\alpha_2\alpha_3 - 5\alpha_1\alpha_4 - 5\alpha_2\alpha_3 .
\end{align*}
For the first three coefficients, we obtain the following equations
\begin{align*}
    \txtd \alpha_1 &= -\alpha_1~\txtd t, \\
    \txtd \alpha_2 &= \left(-\alpha_2-\alpha_1^2+1\right)~\txtd t, \\
    \txtd \alpha_3 &= \left(-\alpha_3-3\alpha_1\alpha_2-\alpha_1\right)~\txtd t + (\alpha_1^3-\alpha_1^2)~\txtd \bfW_t.
\end{align*}
We note that $\alpha_1\equiv0$ solves the first RDE. Moreover, the RDE for $\alpha_3$ is solved by $\alpha_3\equiv0$. With this we continue with the RDEs for $\alpha_4,\alpha_5$ and $\alpha_6$ and plug in $\alpha_1\equiv0,\alpha_3\equiv0$ wherever they appear.
\begin{align*}
    \txtd \alpha_4 &= \left(-\alpha_4-2\alpha_2^2-2\alpha_2\right)~\txtd t, \\
    \txtd \alpha_5 &= -\alpha_5~\txtd t - \alpha_2^2~\txtd \bfW_t , \\
    \txtd \alpha_6 &= \left(-\alpha_6-6\alpha_2\alpha_4-4\alpha_4\right)~\txtd t + \alpha_2^3~\txtd \bfW_t.
\end{align*}
So, the system our coefficients solve is
\begin{align*}
    \txtd \alpha_2 &= \left(-\alpha_2+1\right)~\txtd t \\
    \txtd \alpha_4 &= \left(-\alpha_4-2\alpha_2^2-2\alpha_2\right)~\txtd t  \\
    \txtd \alpha_5 &= -\alpha_5~\txtd t - \alpha_2^2~\txtd \bfW_t \\
    \txtd \alpha_6 &= \left(-\alpha_6-6\alpha_2\alpha_4-4\alpha_4\right)~\txtd t + \alpha_2^3~\txtd \bfW_t.
\end{align*}
Then, the approximation of $h^c(x,\bfW)$ is given by
$$h^c(x,\bfW)= \alpha_2 x^2 + \alpha_4 x^4 + \alpha_5(\bfW)x^5 + \alpha_6(\bfW) x^6 + \cO(x^7)$$
leading to the reduced flow  
\begin{align*}
    \txtd x_t &= \left(\alpha_2+1\right)x_t^3 + \alpha_4x_t^5 + \alpha_5(\Theta_t\bfW)x_t^6+ \alpha_6(\Theta_t\bfW)x_t^7 ~\txtd t\\
    &~~~~+ \sigma_1\left(\alpha_2 x_t^4 + \alpha_4x_t^6 + \alpha_5(\Theta_t\bfW)x_t^7+ \alpha_6(\Theta_t\bfW)x_t^8\right) \txtd\bfW_t  .
\end{align*}
Here $\alpha_2,\alpha_4$ are independent of the rough path $\mathbf{W}$.~In order to capture its dependence, we need to consider a higher order of the Taylor approximation, at least up to order five. 
\end{example}
\begin{remark}
\begin{itemize}
    \item[1)] In \cite[Chapter 5]{Chekroun2} a general formula was obtained for the systems of SDEs determining the coefficients of the approximation. This is also possible with our approach, but  is more involved as we consider general Taylor polynomials and not only the leading-order terms of the nonlinear coefficients as in Appendix~\ref{ChekrounProofWithRP}. Hence, this aspect will be considered in a future work. 
    \item[2)] In the examples we did not consider the cut-off introduced in the sections before. This can be done because we consider a local approximation of the manifold. So, by choosing the neighborhood around $0$ small enough, we can remove the cut-off due to Lemma \ref{lem:omit cut-off}. 
    \item[3)] Example \ref{Chekroun linear example} does not fulfill the assumptions of Theorem \ref{thm:approx:m} because $\txtD G^{c/s}(0,0)\neq 0$, but can be treated by the standard flow transformation. Since for $G^s(x,y)= y^2$, $\txtD^2G(0)\neq 0$, we considered Example \ref{Chekroun nonlinear example} with $G^s(x,y)= y^3,~G^c(x,y)=x^2y$ for which Theorem \ref{thm:approx:m} can be applied. 
    \item[4)] In Example \ref{Chekroun nonlinear example} we have seen that some coefficients might be independent of the noise. In general if we have non-linear noise, we have at least one coefficient independent of the noise. In the setting we consider in this paper this is always the case because of the assumption $\txtD G^{c/s}(0,0)=0$. 
    Moreover, if $G^{c}\equiv0$ we can show that the first coefficient dependent on the noise is of order $i= k  l$ where $k$ is the lowest order of a term only depending on $x$ in $F^s$ and $l$ is the lowest order of $G^s$.
    \item[5)] In Example \ref{Chekroun nonlinear example} the coefficient $\alpha_5\neq 0$ because the nonlinear diffusion coefficient depends on $\alpha_2$.~In case of linear multiplicative noise, $\alpha_5\equiv0$ would obviously be a solution, as it happens in the deterministic case.~In conclusion, due to the presence of nonlinear diffusion coefficients, we obtain additional coefficients in the approximation of the center manifold that do not appear in the deterministic case.
\end{itemize}
\end{remark}

\subsection{Comparison with other approximation results}
We now compare our ansatz with the one by Chekroun et.~al.~\cite{ChekrounLiuWang,ChekrounLiuWang example} which gives a Taylor-like approximation of $h^c$ using the Lyapunov-Perron map.
 However, there are some significant differences concerning:
\begin{itemize}
    \item [1)] the order of the approximation;
    \item [2)] the type of noise.
\end{itemize}
Firstly, in \cite[Theorem 6.1]{ChekrounLiuWang} the non-linear term $F\in C^p$, with $p>l\ge 2$, is approximated by the leading order term $F_l$ of the Taylor approximation. More precisely~\cite{ChekrounLiuWang} consider a Taylor expansion of $F$ such that 
\begin{align}\label{fl} F(u) = F_l(u, \dots,u)+\cO\left(\norm{u}^{l+1}\right),
\end{align}
where Landau symbol $\cO$ is defined in \eqref{def:Landau symbol}. This ansatz eventually leads to an approximation error of the local center manifold of order $\Xnorm{\xi}^l$.  Due to this fact, the order of the approximation error is given by the problem and cannot be chosen. Based on our ansatz, the order of the Taylor approximation can be chosen through $q\le p$. Furthermore, even if we choose to only use the leading-order term of the Taylor approximation for $F$ as~\cite{ChekrounLiuWang} and additionally make the same ansatz for the nonlinear term $G$, Theorem~\ref{thm:approx:m} gives an approximation error order of $\Xnorm{\xi}^{l+1}$. \\

The second main difference in comparison to~\cite{ChekrounLiuWang} is given by the type of noise that can be considered. Due to our pathwise approach, we can treat nonlinear multiplicative noise. Moreover, the type of noise is not restricted to the Brownian motion. In our setting we need that $\mathbf{W}=(W,\mathbb{W})$ is a $\gamma$-H\"older geometric rough path cocycle as introduced in \ref{rough path cocycle}.~This framework is in particular applicable to fractional Brownian motion with Hurst index $H\in(1/3,1/2)$. 

The proof of \cite[Theorem 6.1]{ChekrounLiuWang} uses a flow transformation based on the stationary Ornstein-Uhlenbeck process which is applicable to linear multiplicative Stratonovich noise. For  a better comprehension, we sketch this approach here and compare it with our techniques.
To this aim we consider the SDE driven by a one-dimensional Brownian motion 
\begin{align}\label{eqstrat}
    \txtd u = [A u +F(u)]~\txtd t + \sigma u\circ~\txtd W_t,~~\sigma>0
\end{align}
where $A$ satisfies Assumption~\ref{ass:linearpart}.
We further introduce the stationary Ornstein-Uhlenbeck process, i.e.~the stationary solution of the Langevin equation
$$ \txtd z_t =-z~\txtd t + \txtd W_t,$$
which is given by
$$ z(\theta_{t}W)=\int\limits_{-\infty}^{t} \txte^{-(t-s)}~\txtd W_s=\int\limits_{-\infty}^{0} \txte^{s}~\txtd\theta_{t}W_{s},$$ recall Example~\ref{ex:stationary}.
Here $\theta$ denotes the usual Wiener-shift, i.e.~$\theta_{t}W_{s}:=W_{t+s} -W_{s}$ for $s,t\in\mathbb{R}$. In this case, using the Doss-Sussmann transformation $u^{*}:=u\txte^{-\sigma z(W)}$, the SDE~\eqref{eqstrat} reduces to the nonautonomous random differential equation
\begin{equation}\label{ou}
\txtd u =[ A u + \sigma z(\theta_{t}{W}) u]~\txtd t +e^{-\sigma z(\theta_t W)} F (e^{\sigma z(\theta_tW)}u )~\txtd t,
\end{equation}
where we dropped the $*$-notation. Note that no stochastic integrals appear in~\eqref{ou} and the approximation of center manifolds 
can be done at the level of the differential equation with random-nonautonomous coefficients using the representation~\eqref{fl} of the nonlinear term $F$.~Therefore the problem of approximating stochastic integrals does not occur in this approach. 

\begin{remark}\label{rem:alternative approximation results}
    \begin{itemize}
        \item [1)] The approximation results for center manifolds developed in  \cite{ChekrounLiuWang} carry over to the rough path setting as shown in Appendix \ref{ChekrounProofWithRP}.
        \item [2)] We decided to use the proof idea of \cite{Carr} instead, as we can choose the order of approximation and do not use the leading-order term $F_l$ as stated in~\eqref{fl}.
        \item [3)] As already stated, for
        systems with linear multiplicative noise given by
\begin{align*}
    \txtd x &= A^c x + F^c(x,y) ~\txtd t + \sigma x\circ\txtd W_t \\
    \txtd y &= A^s y + F^s(x,y) ~\txtd t + \sigma y\circ\txtd W_t,
\end{align*}
one can compute the center-manifold approximation using a Doss-Sussmann type transformation.~We refer to~\cite{Boxler1} for a similar setting and proof which is also based on~\cite{Carr}.~Applying the Doss-Sussmann transformation $x^*_t=x_te^{-\sigma z(\theta_tW)}$ to the SDEs above and making the ansatz $y_t=\phi(x_t) = \sum_{i=1}^q\alpha_ix_t^i$, where 
$$\txtd\alpha_i = A^{\alpha_i}\alpha_i~\txtd t + f_i ~\txtd t,$$
one can compute $\txtd\phi(x)$ in two ways and compare the coefficients.~The computation essentially simplifies, since no stochastic differentials occur, recall Example~\ref{examples}. 

    \end{itemize}
\end{remark}

\appendix
\section{Deterministic approach for the approximation of local center manifolds}\label{sec:carr}
For the convenience of the reader, we state here an approximation result of deterministic center manifolds and refer to \cite{Carr} for more details.~We consider the following deterministic system
\begin{align}\label{syst:carr}
\begin{cases}
\frac{\txtd x}{\txtd t}=    \dot x &= Ax + f(x,y) \\
  \frac{\txtd y}{\txtd t}= \dot y &= By + g(x,y),
\end{cases}
\end{align}
where $x\in \R^n,~y\in\R^m$ and $A$, $B$ are constant matrices such that all eigenvalues of $A$ have zero real part and all the eigenvalues of $B$ have negative real part.~The nonlinearities satisfy the assumptions $f,g\in C^2$ with $f(0,0)=\txtD f(0,0)=g(0,0)=\txtD g(0,0)=0.$ In \cite[Theorem 1]{Carr} the existence of a center manifold is established. This is given by the graph of a function $h(x)$, similar to the stochastic case which we treat here. The goal is to approximate this function using a Taylor expansion. To this end, one defines for a function $\phi:\R^n\to\R^m$, which is $C^1$ in a neighborhood of the origin, the function
$$(M\phi)(x) := \txtD\phi(x)\left[Ax + f(x,\phi(x))\right] - B\phi(x) - g(x,\phi(x)).$$
Here $M\phi$ is determined computing $\dot y$ using the ansatz $y=\phi(x)$ and the equation for $y$ given in~\eqref{syst:carr} and taking the difference between the two terms obtained.~Namely, by plugging in $y=\phi(x)$ we get
\begin{align*}
    \dot y 
    &= B\phi(x) + g(x,\phi(x)),
\end{align*}
and considering $\dot y = \txtD\phi(x)\dot x$ we get
\begin{align*}
    \dot y &= \txtD\phi(x)\dot x = \txtD\phi(x)\left[Ax + f(x,\phi(x)) \right].
\end{align*}
The difference of the previous two terms determines $M\phi$. We note that $(Mh)(x)=0$ as on the center manifold we have $y=h(x)$. Hence, we expect $M\phi$ to get close to zero as $\phi$ approximates $h$.
\begin{theorem}{\em(\cite[Theorem 3]{Carr})}\label{Carr Theorem}
    Let $\phi:\R^n\to\R^m$ be a $C^1$ function in a neighborhood of the origin with $\phi(0)=0,~\txtD\phi(0)=0$ and $(M\phi)(x)=\cO(|x|^q)$ as $x\to0$ where $q>1$. Then as $x\to0$,
    $$|h(x)-\phi(x)|=\cO(|x|^q).$$
\end{theorem}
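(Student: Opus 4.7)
The plan is to exploit the variation-of-constants representation of the center manifold as a fixed point of a stable-direction Lyapunov--Perron operator, and to show that the quantity $M\phi(x)$ is precisely the integrand defect measuring how far $\phi$ deviates from solving this fixed-point equation. First I would recall (from Carr's construction) that after a standard cut-off outside a small ball around the origin, $h$ is characterized as the unique fixed point (in a Lipschitz ball) of the integral operator
\begin{equation*}
(T\psi)(x_0) \;:=\; \int_{-\infty}^{0} e^{-Bs}\, g\bigl(\xi_\psi(s),\psi(\xi_\psi(s))\bigr)\,\txtd s,
\end{equation*}
where $\xi_\psi(\cdot)$ is the solution of $\dot\xi=A\xi+f(\xi,\psi(\xi))$ with $\xi(0)=x_0$. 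Moreover $T$ is a uniform contraction with some Lipschitz constant $L<1$, so it suffices to produce the bound $|T\phi(x_0)-\phi(x_0)|=\cO(|x_0|^{q})$ and conclude by
\begin{equation*}
|h(x_0)-\phi(x_0)|\;\le\;\frac{1}{1-L}\,|T\phi(x_0)-\phi(x_0)|.
\end{equation*}

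Next I would derive a clean integral identity for the defect. Let $\xi(t):=\xi_\phi(t)$ and set $v(t):=\phi(\xi(t))$. Differentiating and rearranging using the very definition of $M\phi$ gives
\begin{equation*}
\dot v(t) \;=\; D\phi(\xi(t))\bigl[A\xi(t)+f(\xi(t),\phi(\xi(t)))\bigr] \;=\; Bv(t) + g(\xi(t),v(t)) + (M\phi)(\xi(t)).
\end{equation*}
Since $\phi(0)=D\phi(0)=0$ we have $|v(t)|\le C|\xi(t)|^2$; together with the standard growth estimate $|\xi(t)|\le K|x_0|\,e^{\varepsilon|t|}$ valid (for any preassigned small $\varepsilon>0$) on $(-\infty,0]$ for the center-direction orbit, this forces $e^{-Bt}v(t)\to 0$ as $t\to-\infty$ because $\|e^{-Bt}\|\le M\,e^{\beta t}$ for $t\le 0$ with some $\beta>0$. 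Integrating the linear ODE for $v$ from $-\infty$ to $0$ then yields
\begin{equation*}
\phi(x_0)\;=\;\int_{-\infty}^{0} e^{-Bs}\,g\bigl(\xi(s),\phi(\xi(s))\bigr)\,\txtd s \;+\; \int_{-\infty}^{0} e^{-Bs}\,(M\phi)(\xi(s))\,\txtd s,
\end{equation*}
i.e.\ $\phi(x_0)-T\phi(x_0)=\int_{-\infty}^{0}e^{-Bs}(M\phi)(\xi(s))\,\txtd s$.

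Finally I would estimate this remainder. By hypothesis there exist $\delta,C>0$ with $|(M\phi)(x)|\le C|x|^{q}$ for $|x|\le\delta$; shrinking the neighborhood of $x_0$ so that $|\xi(s)|\le\delta$ on $(-\infty,0]$, and combining the growth of $\xi$ with the decay of $e^{-Bs}$ gives
\begin{equation*}
|\phi(x_0)-T\phi(x_0)|\;\le\;\int_{-\infty}^{0} M\,e^{\beta s}\, C\,(K|x_0|)^{q}\,e^{q\varepsilon|s|}\,\txtd s \;\le\; \frac{CMK^{q}}{\beta-q\varepsilon}\,|x_0|^{q},
\end{equation*}
provided $\varepsilon$ was chosen smaller than $\beta/q$ (this is always possible by the spectral gap between the $A$- and $B$-spectra). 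Combining with the contraction bound above yields $|h(x_0)-\phi(x_0)|=\cO(|x_0|^{q})$.

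The main obstacle I anticipate is the careful bookkeeping of the growth/decay rates: one must choose $\varepsilon<\beta/q$ \emph{before} fixing the cut-off scale and the neighborhood of $x_0$ on which the estimate $|\xi(s)|\le K|x_0|\,e^{\varepsilon|s|}$ holds, and one must ensure that $\phi$ (perhaps after a cosmetic cut-off) lies inside the function class on which $T$ is defined and contractive. These are routine but require coordinating the truncation of $f,g,\phi$ with the choice of $\varepsilon$; once this is set up, the computation above goes through essentially verbatim.
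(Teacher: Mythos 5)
Your derivation of the defect identity is correct and is exactly the mechanism behind the paper's sketch: writing $v(t)=\phi(\xi(t))$, the definition of $M\phi$ gives $\dot v=Bv+g(\xi,v)+(M\phi)(\xi)$, and integrating against $e^{-Bs}$ over $(-\infty,0]$ yields $\phi(x_0)-T\phi(x_0)=\int_{-\infty}^{0}e^{-Bs}(M\phi)(\xi(s))\,\txtd s$, which is the analogue of the paper's statement that the integrand $Q$ satisfies $Q(x,0)=(M\phi)(x)$; your bookkeeping $\varepsilon<\beta/q$ (and, for the vanishing boundary term, $2\varepsilon<\beta$) is also the right point to be careful about. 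The genuine gap is the concluding step: from ``$T$ is a uniform contraction with constant $L<1$'' you infer the \emph{pointwise} inequality $|h(x_0)-\phi(x_0)|\le\frac{1}{1-L}\,|T\phi(x_0)-\phi(x_0)|$. This does not follow. The contraction property of $T$ is with respect to a norm on a function space (a sup-type norm on a Lipschitz ball), and $T$ is nonlocal: $(T\psi)(x_0)$ depends on $\psi$ along the entire orbit $\xi_\psi(s)$, $s\le0$, whose points can be much larger than $|x_0|$. So the contraction only gives $\|h-\phi\|_\infty\le\frac{1}{1-L}\|T\phi-\phi\|_\infty$, and since $\sup_{|x_0|\le\delta}|T\phi(x_0)-\phi(x_0)|\sim\delta^q$, this yields a uniform bound of order $\delta^q$ on the $\delta$-ball, which is strictly weaker than the pointwise statement $|h(x)-\phi(x)|=\cO(|x|^q)$ as $x\to0$.

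The repair is precisely the device used by Carr and recalled in the paper's appendix: transport the fixed-point problem to the weighted set $V=\{z:\ |z(x)|\le K|x|^q\}$ via $Sz:=T(z+\phi)-\phi$. Then $S$ inherits the contraction property from $T$, your defect identity gives $S0=T\phi-\phi$ with $|S0(x_0)|\le C_1|x_0|^q$, and one additionally needs a weighted Lipschitz estimate $|Sz(x_0)-S0(x_0)|\le C_2\,K|x_0|^q$ with $C_2$ small (this is the ``$Q$ is Lipschitz in $z$'' step, and it requires the same $e^{\beta s}$ versus $e^{q\varepsilon|s|}$ bookkeeping you already did, now applied to the difference of trajectories and of the $g$-arguments via Gronwall). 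Choosing $K$ large enough so that $C_1+KC_2\le K$ shows $S$ maps $V$ into $V$; its fixed point $z^*=h-\phi$ then lies in $V$, which is the desired pointwise bound. So your approach is the right one, but the one-line passage from the defect bound to the conclusion must be replaced by this invariance-of-$V$ argument; as written, that step is not valid.
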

We sketch the main ideas of the proof, as our proof in the stochastic case relies on similar arguments. Let $X$ be the space of bounded Lipschitz functions on $\R^n$.
First, according to \cite[Theorem 1]{Carr} there exists a contraction map $T$ on $X$ having $h$ as fixed point. With this we define $Sz = T(z+\phi)-\phi$ on a closed subset $V\subset X$. As $T$ is a contraction on $X$, $S$ is a contraction on $V$.~Next, we define for $K>0$ $$V=\{z\in X: |z(x)|\le K|x|^q ~~\textnormal{ for all } x\in\R^n\}.$$ If we can find $K$ such that $S$ maps $V$ into $V$ then the claim follows because
\begin{align*}
    |h(x)-\phi(x)| &= |T(h(x))-\phi(x)| \\
    &= |T(z(x)+\phi(x))-\phi(x)| \\
    &= |S(z(x))| \le K|x|^q,
\end{align*}
where we used that for the fixed point $z\in V$ of $S$, $z+\phi$ is a fixed point of $T$.\\ 
So it remains to show that $S$ maps $V$ into $V$. To this goal, it can be shown that $(Sz)(x_0)$ can be rewritten as 
\begin{align}\label{S def}
    (Sz)(x_0) = \int_{-\I}^0e^{-Br}Q(x,z)~\txtd r,
\end{align}
where $Q(x,z)$ depends on the nonlinearities $f,g$ and on $M\phi$. Moreover, it holds that $Q(x,0)=(M\phi)(x)$ and $Q$ is Lipschitz in $z$. So to find a bound for $S$ we first find a bound for $Q$ of the form
\begin{align*}
    |Q(x,z)|&\le |Q(x,0)|+|Q(x,z)-Q(x,0)| \\
    &= |(M\phi)(x)| + |Q(x,z)-Q(x,0)|.
\end{align*}
We have $|(M\phi)(x)|\le C_1|x|^q$ due to the assumption $(M\phi)(x)=\cO(|x|^q)$. Moreover, since $Q$ is Lipschitz, we have $|Q(x,z)-Q(x,0)|\le C_2|z|$. In conclusion,
$$|Q(x,z)|\le (C_1 + KC_2)|x|^q.$$
Using \eqref{S def} for $S$ and the bound for $Q$, we see that $S$ can be bounded. Choosing $K$ large enough, it follows that $S$ maps $V$ into $V$ and hence the claim follows. For more details we refer to \cite[Theorem 3]{Carr}.

\section{Discretization of the map $\tilde J$}
\label{a}
We provide here the proof of Lemma \ref{tilde J}.\\
\begin{proof}
    We proceed in two steps. Firstly, we write $\phi^{j-1}_t$ in integral form similar to the discretized version of $J$, so that we can calculate $J-\phi$ more easily. Secondly, we compute $\tilde J$. For the first step we fix $j\in\Z^-$, $t\in[0,1]$ and use integration by parts to determine $S^s(j-1+t-l) \phi_0^l\left(x_0^l\right)$ for some $l<j-1$.~Thereafter we use the chain rule to compute the differential $\txtd \phi_r(x_r)$, recalling that $\mathbf{W}=(W,\mathbb{W})$ is a geometric rough path.~This leads to
    \begin{align*}
        & S^s(j-1+t-l) \phi^l\left(x_0^l\right)\\
        &= -\int_l^{0} -A^s S^s(j-1+t-r)\phi(x_r)~\txtd r -\int_l^{0} S^s(j-1+t-r) \txtd \phi(x_r)\\
        & = -\int_l^{0} S^s(j-1+t-r) \Bigg[-A^s \phi(x_r) +\sum_{i=2}^{q} i\alpha_i (x_r)^{i-1} [A^c x_r + F^c_R\left(x, U +\phi(x)\right)[r] ]\Bigg]~\txtd r \\
        &~~~~ - \int_l^{0} S^s(j-1+t-r) \tilde\phi(x_r) ~\txtd r 
        - \sum\limits_{i=2}^{q}\int_l^{0} S^s(j-1+t-r) g_i (x_r)^i~\txtd \textbf{W}_r \\
        &~~~~ - \int_l^0 S^s(j-1+t-r) \sum_{i=2}^q i \alpha_i (x_r)^{i-1}G_R^c(x,U+\phi(x))[r]~\txtd \bfW_r,
    \end{align*}
    where $\tilde\phi(x_r)$ is defined in \eqref{tilde phi def}.
    Now we consider the difference 
    $$S^s(j-1+t-l) \phi^l\left(x_0^l\right)-S^s(j-1+t-(j-1+t)) \phi^{j-1}\left(x_t^{j-1}\right)=S^s(j-1+t-l) \phi^l\left(x_0^l\right)- \phi^{j-1}\left(x_t^{j-1}\right)$$ and use the equality above to obtain
    \begin{align}\label{eq:phidif}
        & S^s(j-1+t-l) \phi^l\left(x_0^l\right) - \phi^{j-1}\left(x_t^{j-1}\right)\nonumber\\
        & = -\int_l^{j-1+t} S^s(j-1+t-r) \Bigg[-A^s \phi(x_r) +\sum_{i=2}^{q} i\alpha_i (x_r)^{i-1} [A^c x_r + F^c_R\left(x, U +\phi(x)\right)[r] ]\Bigg]~\txtd r \nonumber\\
        &~~~~ - \int_l^{j-1+t} S^s(j-1+t-r)\tilde\phi(x_r) ~\txtd r - \sum\limits_{i=2}^{q}\int_l^{j-1+t} S^s(j-1+t-r) g_i (x_r)^i~\txtd \textbf{W}_r \nonumber\\
        &~~~~ - \int_l^{j-1+t} S^s(j-1+t-r) \sum_{i=2}^q i \alpha_i (x_r)^{i-1}G_R^c(x,U+\phi(x))[r]~\txtd \bfW_r.
    \end{align}
    Next we discretize the integrals from \eqref{eq:phidif} as in \cite[Section 4.1]{KN23}. This yields
    \begin{align}
        & S^s(j-1+t-l) \phi^l\left(x_0^l\right) - \phi^{j-1}\left(x_t^{j-1}\right)\label{forphi} \\
        &= -\sum_{k=l+1}^{j-1}S^s(j-1+t-k)\Bigg[\int_0^1 S^s(1-r) \Bigg[-A^s \phi^{k-1}\left(x^{k-1}_r\right) +\sum_{i=2}^{q} i\alpha^{k-1}_i \left(x^{k-1}_r\right)^{i-1} \nonumber\\
        &\times\Big[A^c x^{k-1}_r +
         F^c_R\left(x^{k-1}, U^{k-1} +\phi^{k-1}\left(x^{k-1}\right)\right)[r] \Big]+\tilde\phi^{k-1}\left(x^{k-1}_r\right)\Bigg] ~\txtd r \nonumber\\
         &+ \int_0^{1} S^s(1-r) \sum\limits_{i=2}^{q}g_i^{k-1} \left(x^{k-1}_r\right)^i + i \alpha^{k-1}_i \left(x_r^{k-1}\right)^{i-1}G_R^c\left(x^{k-1},U^{k-1}+\phi^{k-1}\left(x^{k-1}\right)\right)[r]~\txtd \Theta_{k-1}\textbf{W}_r\Bigg]\nonumber\\
         &-\int_0^{t}S^s(t-r)\Bigg[-A^s \phi^{j-1}\left(x^{j-1}_r\right) +\sum_{i=2}^{q} i\alpha^{j-1}_i \left(x^{j-1}_r\right)^{i-1} \Big[A^c x^{j-1}_r\nonumber \\
         &+ F^c_R\left(x^{j-1}, U^{j-1} +\phi^{j-1}\left(x^{j-1}\right)\right)[r] \Big]\Bigg]~\txtd r -\int_0^tS(t-r)\tilde\phi^{j-1}\left(x^{j-1}_r\right) ~\txtd r\nonumber \\
         &- \int_0^{t} S^s(t-r) \sum\limits_{i=2}^{q}g_i^{j-1} \left(x^{j-1}_r\right)^i + i \alpha^{j-1}_i \left(x_r^{j-1}\right)^{i-1}G_R^c\left(x^{j-1},U^{j-1}+\phi^{j-1}\left(x^{j-1}\right)\right)[r]~\txtd \Theta_{j-1}\textbf{W}_r.\nonumber
    \end{align}
 We mention that the rough integrals in \eqref{forphi} are well-defined due to Lemma \ref{G-g in D2gamma}.
 Now we let $l\to-\I$ in order to obtain a representation for $\phi^{j-1}\left(x^{j-1}_t\right)$ from~\eqref{forphi}.
    Using \eqref{beta} and \eqref{phi discretized bound} we obtain
    \begin{align*}
        \Xnorm{S^s(j-1+t-l) \phi^l\left(x_0^l\right)}
        &\le M_se^{\beta(l-(j-1+t))}\Xnorm{\phi^l\left(x_0^l\right)} \\
        &\le M_se^{\beta(l-(j-1+t))}\Dnorm{\phi^l\left(x^l\right),\left(\phi^l\left(x^l\right)\right)'}\\
        &\le M_s e^{\beta(l-(j-1+t))}C \tilde C_x^qq^2\left(1+\norm{\Theta_lW}_\gamma\right)^2\Xnorm{\xi} \\
        &\le M_s e^{\beta(l-(j-1+t))}C \tilde C_x^qq^2C[\norm{W}_\gamma]e^{-\eps l}\Xnorm{\xi} \\
        &= M_sC \tilde C_x^qq^2\Xnorm{\xi} e^{-\beta (j-1+t)} e^{(\beta-\eps) l},
    \end{align*}
    where we used the bound \eqref{tabove} to bound for $l\in Z^{-}$ \[\left(1+\norm{\Theta_lW}_\gamma\right)^2 \leq e^{-\varepsilon l} C[\norm{W}_\gamma], \] for a tempered from above random variable $C[\norm{W}_\gamma]$ and $0<\eps<\beta$.
    So $\Xnorm{S^s(j-1+t-l) \phi^l\left(x_0^l\right)}$ tends to $0$ as $l\to-\infty$ due to $\beta-\eps>0$. Using this we obtain the following representation 
    \begin{align*}
        & - \phi^{j-1}\left(x_t^{j-1}\right)\\
        &= -\sum_{k=-\I}^{j-1}S^s(j-1+t-k)\Bigg[\int_0^1 S^s(1-r) \Bigg[-A^s \phi^{k-1}\left(x^{k-1}_r\right) +\sum_{i=2}^{q} i\alpha^{k-1}_i \left(x^{k-1}_r\right)^{i-1} \\
        &\times \Big[A^c x^{k-1}_r 
        + F^c_R\left(x^{k-1}, U^{k-1} +\phi^{k-1}\left(x^{k-1}\right)\right)[r] \Big]\Bigg]~\txtd r 
        + \int_0^{1} S^s(1-r)\tilde\phi^{k-1}\left(x^{k-1}_r\right) ~\txtd r \\
         &+ \int_0^{1} S^s(1-r) \sum\limits_{i=2}^{q}g_i^{k-1} \left(x^{k-1}_r\right)^i+ i \alpha^{k-1}_i \left(x_r^{k-1}\right)^{i-1}G_R^c\left(x^{k-1},U^{k-1}+\phi^{k-1}\left(x^{k-1}\right)\right)[r]~\txtd \Theta_{k-1}\textbf{W}_r\Bigg]\\
         &-\int_0^{t}S^s(t-r)\Bigg[-A^s \phi^{j-1}\left(x^{j-1}_r\right) +\sum_{i=2}^{q} i\alpha^{j-1}_i \left(x^{j-1}_r\right)^{i-1} \\&\times\Big[A^c x^{j-1}_r 
         + F^c_R\left(x^{j-1}, U^{j-1} +\phi^{j-1}\left(x^{j-1}\right)\right)[r] \Big]\Bigg] -\int_0^tS(t-r)\tilde\phi^{j-1}\left(x^{j-1}_r\right) ~\txtd r \\
         &- \int_0^{t} S^s(t-r) \sum\limits_{i=1}^{q}g_i^{j-1} \left(x^{j-1}_r\right)^i+ i \alpha^{j-1}_i \left(x_r^{j-1}\right)^{i-1}G_R^c\left(x^{j-1},U^{j-1}+\phi^{j-1}\left(x^{j-1}\right)\right)[r]~\txtd \Theta_{j-1}\textbf{W}_r.
    \end{align*}
    Combining the expression for $\phi^{j-1}_t$ and applying the stable projection to $J$ given in~\eqref{J discr def}, 
    we get
    \begin{align*}
    &\tilde{J}(\bfW,\U,\xi)[j-1,t] = P^s J(\bfW,\U+\Phi(x),\xi)[j-1,t]-\phi^{j-1}\left(x_t^{j-1}\right) \\
    &= \sum_{k=-\I}^{j-1} S^s(t+j-1-k) ~T_1\left(\Theta_{k-1}\bfW,x^{k-1},U^{k-1},\balpha^{k-1}\right)\\
    &~~~~+T_t\left(\Theta_{j-1}\bfW,x^{j-1},U^{j-1},\balpha^{j-1}\right). 
\end{align*}
    \qed
\end{proof}

\section{Polynomial representation of $F^{c/s}$}\label{Polynomial calculation}
We present here the ideas of the proofs for Lemma \ref{lem:F reformulated} and Lemma \ref{lem:sum F reformulated}. The goal of both Lemmas is 
to have a representation in the form $\sum_i P_i \left(x^{j-1}\right)^i$ where $P_i$ is some term that we can control. We need this representation in order to compare it with $\sum_{i=1}^qf_i^{j-1}\left(x^{j-1}\right)^i$.~This will allow us to choose the coefficients $f_i$ of the RDEs~\eqref{alpha RDE}. 

\begin{refproof}[Proof sketch of Lemma~\ref{lem:F reformulated}]\label{proof:F reformulated}
    As $F^{c/s}\left(x^{j-1},\phi^{j-1}\left(x^{j-1}\right)\right)$ is a polynomial with degree up to $q$ we can write it as 
    $$F^{c/s}\left(x^{j-1},\phi^{j-1}\left(x^{j-1}\right)\right) = \sum_{i=2}^q\sum_{k=0}^i F^{c/s}_{k.i-k} \left(x^{j-1}\right)^k \phi^{j-1}\left(x^{j-1}\right)^{i-k}. $$
    The powers of $\phi^{j-1}\left(x^{j-1}\right)$ are polynomials where each monomial can be written as $C \left(\balpha^{j-1}\right)^\kappa (x^{j-1})^i,$ with $\balpha=(\alpha^{j-1}_2,\dots,\alpha^{j-1}_q)$, $\kappa\in \N^{q-1}$ is a multi-index and $i\in\{4,\dots,q^2\}$. We note that the largest possible monomial is determined by $\phi^{j-1}\left(x^{j-1}\right)^q$ and is $C\left(\alpha_q^{j-1}\right)^q \left(\left(x^{j-1}\right)^q\right)^q = C\left(\alpha_q^{j-1}\right)^q \left(x^{j-1}\right)^{q^2}$. We write $P_i^{F^{c/s}}\left(\alpha_2^{j-1},\dots,\alpha_{i-1}^{j-1}\right)$ for the sum of all $C \left(\balpha^{j-1}\right)^\kappa$ multiplied with $x^i$ to keep track of the coefficients of $x^i$. If $i>q$ we write $P_i^{F^{c/s}}\left(\alpha_2^{j-1},\dots,\alpha_{q}^{j-1}\right)$. Hence we get
    \begin{align*}
        &F^{c/s}\left(x^{j-1},\phi^{j-1}\left(x^{j-1}\right)\right) \\
        &= \sum_{i=2}^q\sum_{k=0}^i F^{c/s}_{k.i-k} \left(x^{j-1}\right)^k \phi^{j-1}\left(x^{j-1}\right)^{i-k} \\
        &=\sum_{i=2}^q \left( P_i^{F^{c/s}}\left(\alpha_2^{j-1},\dots,\alpha_{i-1}^{j-1}\right) + F^{c/s}_{i,0}\right) \left(x^{j-1}\right)^i + \sum_{i=q+1}^{q^2} P_i^{F^{c/s}}\left(\alpha_2^{j-1},\dots,\alpha_{q}^{j-1}\right) \left(x^{j-1}\right)^i.
    \end{align*}
    This shows the first claim as $F^{c/s}_{i,0}$ are just some constants. Moreover, this makes it clear why the second sum does not contain any constants as $F^{c/s}_{i,0}=0$ for $i>q$.\\
    For the second claim we note the monomials $C \left(\balpha^{j-1}\right)^\kappa$ can be bounded in the following way using Lemma \ref{lem:addition and multiplication of RP}
    \begin{align*}
        \Dnorm{C \left(\balpha^{j-1}\right)^\kappa, C \left(\left(\balpha^{j-1}\right)^\kappa\right)'} 
        &\le C[\norm{W}_\gamma] \left(\tilde C_\alpha^{j-1} \right)^{\norm{\kappa}}\\
        &\le C[\norm{W}_\gamma] R(\Theta_{j-1}\bfW)^{\norm{\kappa}}\\
        &\le C[\norm{W}_\gamma] R(\Theta_{j-1}\bfW),
    \end{align*}
    where $\norm{\kappa}$ is just the euclidean norm of $\N^{q-1}$ and $\tilde C^{j-1}_\alpha$ is defined in \eqref{C_alpha bound} and $C[\norm{W}_\gamma]$ is a polynomial with respect to $\norm{W}_\gamma$.\\ 
    Based on the previous deliberations, we get for $i\le q$ an estimate of the form
    \begin{align*}
        \Dnorm{P^{F^{c/s}}_i\left(\alpha^{j-1}_2,\dots,\alpha^{j-1}_{i-1}\right),P^{F^{c/s}}_i\left(\alpha^{j-1}_2,\dots,\alpha^{j-1}_{i-1}\right)'} \le C[\norm{W}_\gamma,F^{c/s},i] R(\Theta_{j-1}\bfW),
    \end{align*}
    where the constant $C$ is increasing in $i$ which is the bound for the number of such monomials in $P_i^{F^{c/s}}\left(\alpha_2^{j-1},\dots,\alpha_{i-1}^{j-1}\right)$. 
    For $i>q$ we have
    \begin{align*}
        \Dnorm{P^{F^{c/s}}_i\left(\alpha^{j-1}_2,\dots,\alpha^{j-1}_{q}\right),P^{F^{c/s}}_i\left(\alpha^{j-1}_2,\dots,\alpha^{j-1}_{q}\right)'} \le C[\norm{W}_\gamma,F^{c/s},i] R(\Theta_{j-1}\bfW).
    \end{align*}
    \qed \\
\end{refproof}

\begin{refproof}[Proof sketch of Lemma \ref{lem:sum F reformulated}]\label{proof:sum F reformulated}
    We consider
    $$\sum_{i=2}^q i \alpha^{j-1}_i \left(x^{j-1}_t\right)^{i-1} F^c\left(x^{j-1}_t,\phi^{j-1}\left(x_t^{j-1}\right)\right)$$
    and plug in the representation for $F^c$ shown in Lemma \ref{lem:F reformulated}. This leads to 
    \begin{align*}
        &\sum_{i=2}^q i \alpha^{j-1}_i \left(x^{j-1}_t\right)^{i-1} \\&~~\left(\sum_{k=2}^q \left( P_k^{F^{c/s}}\left(\alpha_2^{j-1},\dots,\alpha_{k-1}^{j-1}\right) + C\right) \left(x^{j-1}\right)^k + \sum_{k=q+1}^{q^2} P_k^{F^{c/s}}\left(\alpha_2^{j-1},\dots,\alpha_{q}^{j-1}\right) \left(x^{j-1}\right)^k\right).
    \end{align*}
    We rewrite the sum above such that it is again of the form $\sum_{k=2}^q P_k \left(x^{j-1}\right)^k$. We can formally write it as
    \begin{align*}
        &\sum_{i=2}^q i \alpha^{j-1}_i \left(x^{j-1}_t\right)^{i-1} F^c\left(x^{j-1}_t,\phi^{j-1}\left(x_t^{j-1}\right)\right) \\
        &=\sum_{i=2}^q \left(\tilde P^{F^{c}}_i\left(\alpha^{j-1}_2,\dots,\alpha^{j-1}_{i-1}\right) + C\right) \left(x^{j-1}_t\right)^i + \sum_{i=q+1}^{q^3-q^2} \tilde P^{F^{c}}_i\left(\alpha^{j-1}_2,\dots,\alpha^{j-1}_{q}\right) \left(x^{j-1}_t\right)^i,
    \end{align*}
    where the exact form of $\tilde P^{F^{c}}_i\left(\alpha^{j-1}_2,\dots,\alpha^{j-1}_{i-1}\right)$ is not calculated here, but it is a polynomial consisting of monomials $C[i] \left(\balpha^{j-1}\right)^\kappa$. 
    As $\tilde P^{F^{c}}_i$ has the same representation as $ P^{F^{c}}_i$, the claim about the bound follows by the same arguments as in the proof of the Lemma~\ref{lem:F reformulated}. \qed
\end{refproof}

\section{Approximation of local rough center manifolds using the leading order terms of the coefficients}\label{ChekrounProofWithRP}
For the sake of completeness, we sketch the main steps of the approximation result of \cite{ChekrounLiuWang} in the rough path setting. The main difference to the approach  in Section \ref{sec:main} is that we do not rely on a fixed point argument anymore, but impose the additional assumption~\eqref{leading:f} and~\eqref{leading:g} on the coefficients. \\

As in Section~\ref{rp} we consider the rough differential equation on the phase space $\cX$ given by 

\begin{align*}
    \txtd U_t = AU_t + F(U_t) ~\txtd t + G(U_t)~\txtd \bfW_t.
\end{align*}

The operator $A$ satisfies Assumption \ref{ass:linearpart} and for the coefficients we impose the following restrictions.
\begin{assumptions}\label{ass:Appendix D}
    \begin{itemize}
        \item $F:\cX\to\cX\in C^m$  with $F(0)=\txtD F(0)=0$. 
        \item $G:\cX\to\cL(\cV, \cX)\in C^{m+3}$ with $G(0)=\txtD G(0)=\txtD^2G(0)=0$.
    \end{itemize}
\end{assumptions}

As in Subsection~\ref{sec:cutoff}, we truncate the coefficients $F$ and $G$ using the path-dependent cut-off function.  Let the cut-off function
$\chi:\cD^{2\gamma}_W([0,1],\cX)\to\cD^{2\gamma}_W([0,1],\cX) $ be a Lipschitz function defined as
$$\chi_R(U)[\cdot] := \begin{cases}
    U & \Dnorm{U,U'}\le R/2 \\
    0 & \Dnorm{U,U'} \ge R,
\end{cases}$$
for $R>0$. With this we define $F_R(U)[\cdot]:=F\circ\chi_R(U)[\cdot]$ and $G_R(U)[\cdot]:=G\circ\chi_R(U)[\cdot]$ leading to the RDE with path dependent coefficients

\begin{align}\label{main:app}
    \txtd U_t = AU_t + F_R(U)[t] ~\txtd t + G_R(U)[t]~\txtd \bfW_t.
\end{align}
The approximation of $h^c(\xi,\bfW)$ given in \eqref{h:cm} is twofold. Firstly, we consider a leading-order approximation of $F$ and $G$. This means that we assume the following representation for $F$ 
\begin{align}\label{leading:f}
F(U)=F_{l}(U,\dots,U) + \cO\left(\Xnorm{U}^{l+1}\right),
\end{align}
and $G$
\begin{align}\label{leading:g} G(U)=G_{l}(U,\dots,U) + \cO\left(\Xnorm{U}^{l+1}\right).
\end{align}
The Landau symbol $\cO$ is defined in \eqref{def:Landau symbol}. Without loss of generality we assume that $F$ and $G$ have the same leading order $l< m$.~Using Lemma \ref{est:order 2} we get that 
\begin{align}\label{F_R,l bound}
    \norm{F_R(U)-F_{R,l}(U,\dots,U)}_\I \le C\Dnorm{U,U'}^{l+1},
\end{align}
and
\begin{align}\label{G_R,l bound}
    \Dnorm{G_R(U)-G_{R,l}(U,\dots,U), (G_R(U)-G_{R,l}(U,\dots,U))'} \le C\Dnorm{U,U'}^{l+1}.
\end{align}
Secondly, we do not plug in a solution $U$ of \eqref{main:app} into $F_{R,l}$ and $G_{R,l}$ but the solution of the linear equation
\begin{align*}
\begin{cases}&\txtd U_t = A^c U_t~\txtd t \\
& U_0=\xi
\end{cases}
\end{align*}
given by 
$$S^c(t)\xi=P^cS(t)\xi=P^ce^{At}\xi.$$ Our goal is to bound the rough path norm of the difference between $h^c(\xi,\bfW)$ and the approximation $h^{app}(\xi,\bfW)$ as defined in Theorem~\ref{approx:leading:order}. \\


For notational simplicity we set for $(U,U')\in\cD^{2\gamma}_W([0,1];\cX)$ and $t\in[0,1]$ 
\begin{align*}
    T_R^s(\bfW,U)[t] &:= \int_0^t S(t-r)P^sF_R(U)[r] \txtd r + \int_0^t S(t-r)P^sG_R(U)[r] \txtd \mathbf{W}_r \\
    T_{R,k}^s(\bfW,U)[t] &:= \int_0^t S(t-r)P^sF_{R,l}(U)[r] \txtd r + \int_0^t S(t-r)P^sG_{R,l}(U)[r] \txtd {\mathbf{W}}_r.
\end{align*}
 Moreover we define the discretized Lyapunov-Perron map corresponding to \eqref{main:app} for $j\in\Z^-$ and $t\in[0,1]$ as follows 
\begin{align*}
    J(\bfW,\mathbb{U},\xi)[j-1,t] &:= S^c(t+j-1)\xi\\
    &-\sum_{k=0}^{j+1} S^s(t+j-1-k)\int_0^1 S^c(1-r)P^cF_R(U^{k-1})(r)\ \txtd r \\
    &- \sum_{k=0}^{j+1} S^s(t+j-1-k)\int_0^1 S^c(1-r)P^cG_R(U^{k-1})(r)\ \txtd \Theta_{k-1}\bfW_r\\
    &-\int_t^1 S^c(t-r)P^cF_R(U^{j-1})(r)\ \txtd r +\int_t^1 S^c(t-r)P^cG_R(U^{j-1})(r)\ \txtd \Theta_{j-1}\bfW_r\\
    &+\sum_{k=-\I}^{j-1} S^s(t+j-1-k)\int_0^1 S^s(1-r)P^sF_R(U^{k-1})(r)\ \txtd r \\
    &+ \sum_{k=-\I}^{j-1} S^s(t+j-1-k)\int_0^1 S^s(1-r)P^sG_R(U^{k-1})(r)\ \txtd \Theta_{k-1}\bfW_r\\
    &+\int_0^t S^s(t-r)P^sF_R(U^{j-1})(r)\ \txtd r +\int_0^t S^s(t-r)P^sG_R(U^{j-1})(r)\ \txtd \Theta_{j-1}\bfW_r.
\end{align*}
The Lyapunov-Perron map possesses a unique fixed point $\Gamma(\xi,\bfW)$, as proven in \cite[Theorem 4.7]{KN23}.
\begin{theorem}\label{approx:leading:order}
    We impose Assumption \ref{ass:linearpart} and Assumption \ref{ass:Appendix D}. Moreover, we let $\xi \in \cX^c$ and define
    \begin{align*}
        h(\xi,\bfW) &:= P^s\Gamma(\xi,\bfW)[-1,1] \\
        &= \sum_{k=-\I}^0 S^s(-k)\int_0^1 S^s(1-r)P^sF_R(\Gamma(\xi,\bfW)[k-1,r])\ \txtd r \\
        &+ \sum_{k=-\I}^0 S^s(-k)\int_0^1 S^s(1-r)P^sG_R(\Gamma(\xi,\bfW)[k-1,r])\ \txtd \Theta_{k-1}\bfW_r,
    \end{align*}
    and 
    \begin{align*}
        h^{app}(\xi,\bfW) 
        &:= \sum_{k=-\I}^0 S^s(-k)\int_0^1 S^s(1-r)P^sF_{R,l}(S^c(k-1+r)\xi)\ \txtd r \\
        &+ \sum_{k=-\I}^0 S^s(-k)\int_0^1 S^s(1-r)P^sG_{R,l}(S^c(k-1+r)\xi)\ \txtd \Theta_{k-1}\bfW_r.
    \end{align*}
    Then
    \begin{align*}
        \norm{h(\xi,\bfW)-h^{app}(\xi,\bfW), (h(\xi,\bfW)-h^{app}(\xi,\bfW))'}_{\cD_W^{2\gamma}} \le C[\Xnorm{A}, F, G] \Xnorm{\xi}^l.
    \end{align*}
\end{theorem}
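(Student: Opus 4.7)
The plan is to split the difference $h(\xi,\bfW)-h^{app}(\xi,\bfW)$ into two summable pieces by inserting intermediate quantities, and then to estimate each piece using the results of Subsection~\ref{sec:rpest} together with the key bounds~\eqref{F_R,l bound} and~\eqref{G_R,l bound}. Concretely, for each $k\in\Z^{-}$ write
\begin{align*}
F_R(\Gamma(\xi,\bfW)[k-1,\cdot])-F_{R,l}(S^c(k-1+\cdot)\xi)
&=\bigl(F_R-F_{R,l}\bigr)(\Gamma(\xi,\bfW)[k-1,\cdot])\\
&\quad+F_{R,l}(\Gamma(\xi,\bfW)[k-1,\cdot])-F_{R,l}(S^c(k-1+\cdot)\xi),
\end{align*}
and analogously for $G$. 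This splits $h-h^{app}$ into a \emph{leading-order error} (the first line) and a \emph{linearization error} (the second line), each to be estimated in the $\cD^{2\gamma}_W$-norm for every $k$ and then summed against the exponentially decaying factor $S^s(-k)$.

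First I would establish the two a priori bounds needed for the arguments of the nonlinearities. Following the strategy of Lemma~\ref{xj bound}, by choosing a cut-off radius $R=R(\bfW)$ sufficiently small (tempered from below) the fixed point $\Gamma$ of the discretized Lyapunov-Perron map satisfies
\[
\Dnorm{\Gamma(\xi,\bfW)[k-1,\cdot],\Gamma(\xi,\bfW)[k-1,\cdot]'}\le \tilde{C}_\Gamma(\Theta_{k-1}\bfW)\,\Xnorm{\xi},
\]
for a tempered from above random constant, and the same estimate (with the semigroup bound~\eqref{nu}) holds trivially for $(S^c(k-1+\cdot)\xi,0)$. Next, exploiting $F(0)=\txtD F(0)=0$, $G(0)=\txtD G(0)=\txtD^2G(0)=0$ and the defining Lyapunov-Perron identity, the nonlinear part of $\Gamma$ is quadratically controlled, which yields
\[
\Dnorm{\Gamma(\xi,\bfW)[k-1,\cdot]-S^c(k-1+\cdot)\xi,\ldots}\le \tilde{C}_\Delta(\Theta_{k-1}\bfW)\,\Xnorm{\xi}^{2},
\]
again with a tempered from above random constant; this is the most delicate intermediate estimate and the main obstacle of the proof, since it requires bootstrapping the Lyapunov-Perron fixed point with the specific vanishing orders of $F$ and $G$ at $0$, and carefully tracking the dependence on $\Theta_{k-1}\bfW$ through the $\cD^{2\gamma}_W$-norm of the rough integrals.

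With these two bounds in hand, the estimation of each piece is routine. The leading-order error is controlled directly by~\eqref{F_R,l bound} and~\eqref{G_R,l bound}, giving
\[
\Dnorm{(F_R-F_{R,l})(\Gamma[k-1,\cdot]),\ldots}+\Dnorm{(G_R-G_{R,l})(\Gamma[k-1,\cdot]),\ldots}\le C\,\tilde{C}_\Gamma(\Theta_{k-1}\bfW)^{l+1}\Xnorm{\xi}^{l+1}.
\]
For the linearization error, use $l$-linearity of $F_{R,l}$ and $G_{R,l}$ in telescoping form
\[
F_{R,l}(U_1,\dots,U_l)-F_{R,l}(V_1,\dots,V_l)=\sum_{j=1}^{l}F_{R,l}(V_1,\dots,U_j-V_j,\dots,U_l),
\]
together with Lemma~\ref{lem:addition and multiplication of RP} (which controls products in $\cD^{2\gamma}_W$) and the two a priori bounds above, yielding a bound of the form $C(\Theta_{k-1}\bfW)\Xnorm{\xi}^{l-1}\cdot\Xnorm{\xi}^{2}=C(\Theta_{k-1}\bfW)\Xnorm{\xi}^{l+1}$ for each $k$. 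Both pieces therefore produce contributions of order $\Xnorm{\xi}^{l+1}\le\Xnorm{\xi}^{l}$ (since $\Xnorm{\xi}\le 1$).

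Finally I would close the estimate by summing against $S^s(-k)$. Applying Lemma~\ref{integral bound} to each summand in $h-h^{app}$ gives a factor $C[\Xnorm{A}]\bigl(1+Z(\Theta_{k-1}\bfW)\bigr)$ where $Z$ is defined in~\eqref{Z def}, while the semigroup bound~\eqref{beta} produces $M_s\txte^{\beta k}$. The resulting series
\[
\sum_{k=-\infty}^{0}M_s\txte^{\beta k}\,C(\Theta_{k-1}\bfW)\,\tilde{C}_\Gamma(\Theta_{k-1}\bfW)^{l+1}
\]
converges by the temperedness estimate~\eqref{tabove} applied with any $\varepsilon<\beta$, and the sum is absorbed into the final constant $C[\Xnorm{A},F,G]$. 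Adding up the two contributions yields $\Dnorm{h(\xi,\bfW)-h^{app}(\xi,\bfW),(h(\xi,\bfW)-h^{app}(\xi,\bfW))'}\le C[\Xnorm{A},F,G]\,\Xnorm{\xi}^{l}$, as required. The heart of the argument, as indicated above, is the quadratic estimate on $\Gamma-S^c\xi$; everything else consists of careful bookkeeping of tempered random prefactors and standard rough path estimates.
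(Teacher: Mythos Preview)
Your proposal is correct and follows essentially the same decomposition as the paper: split into a leading-order error $(F_R-F_{R,l})(\Gamma)$ and a linearization error $F_{R,l}(\Gamma)-F_{R,l}(S^c\xi)$, bound each piece in $\cD^{2\gamma}_W$, and sum against the stable semigroup. Two minor differences are worth noting. First, for the difference $\Gamma-S^c\xi$ the paper exploits the full leading-order-$l$ structure of $F$ and $G$ (via $\|F(U)\|_\infty\le C\|U,U'\|^l$ and the analogous bound for $G$) to obtain directly $\|\Gamma[k-1,\cdot]-S^c(k-1+\cdot)\xi\|_{\cD^{2\gamma}_W}\le C\|\xi\|^l$ rather than your quadratic bound $\|\xi\|^2$, and then controls the linearization error by the cut-off Lipschitz estimate $\hat C\,R\,\|\Gamma-S^c\xi\|$ instead of your multilinear telescoping; both routes give the claimed order $\|\xi\|^l$, though your telescoping in fact produces the better exponent $l+1$. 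Second, the paper handles the $\Theta_{k-1}\bfW$-dependent prefactors by absorbing them into the choice of the tempered cut-off radius $R(\Theta_{k-1}\bfW)$, so that the summand becomes a deterministic constant $\bar C$ times $e^{\beta k}$; you instead sum the random prefactors via temperedness~\eqref{tabove}, which is equally valid but leaves the final constant, as written, random rather than of the form $C[\Xnorm{A},F,G]$. Otherwise the structure of the argument is the same.
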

\begin{proof}
We sketch the main steps of the proof, omitting Gubinelli's derivative for notational simplicity, whenever this is clear from the context. 
\begin{enumerate}
\item We first bound $\Dnorm{h-h^{app}}$ by $\Dnorm{T_R^s-T_{R,l}^s}$. 
To simplify the notation let $u(k-1,\cdot): = \Gamma(\xi,\bfW)[k-1,\cdot]$ and $v(k-1,\cdot):=S^c(k-1+\cdot)\xi$. We recall that $\Gamma(\xi,\bfW)$ is the fixed point of $J(\bfW,\U,\xi)$.
By definition we get
\begin{align*}
    &\norm{h(\xi,\bfW) - h^{app}(\xi,\bfW)}_{\cD_W^{2\gamma}} \\
    &\le \sum_{k=-\I}^0C_sM_se^{\beta k} \norm{T^s_R(\Theta_{k-1}\bfW,u(k-1,\cdot))
    - T^s_{R,l}(\Theta_{k-1}\bfW,v(k-1,\cdot))}_{\cD_W^{2\gamma}}.
\end{align*}
\item We split $\Dnorm{T_R^s-T_{R,l}^s}$ into terms depending on $u$ and on $u-v$.~Now we focus on $\norm{T^s_R(\bfW,u(k-1,\cdot))
    - T^s_{R,l}(\bfW,v(k-1,\cdot))}_{\cD_W^{2\gamma}}$. 
\begin{align*}
    &\norm{T^s_R(\bfW,u(k-1,\cdot))
    - T^s_{R,l}(\bfW,v(k-1,\cdot))}_{\cD_W^{2\gamma}}\\
    &\le \norm{T^s_R(\bfW,u(k-1,\cdot))
    - T^s_{R,l}(\bfW,u(k-1,\cdot))}_{\cD_W^{2\gamma}} \\
    &+ \norm{T^s_{R,l}(\bfW,u(k-1,\cdot))
    - T^s_{R,l}(\bfW,v(k-1,\cdot))}_{\cD_W^{2\gamma}} \\
    &\le \tilde C[\Theta_{k-1}\bfW]R(\Theta_{k-1}\bfW)\norm{u(k-1,\cdot)}_{\cD_W^{2\gamma}}^{l+1} \\
    &+ \hat C[\Theta_{k-1}\bfW]R(\Theta_{k-1}\bfW) \norm{u(k-1,\cdot)-v(k-1,\cdot)}_{\cD_W^{2\gamma}},
\end{align*}
for some constants $\tilde C[\Theta_{k-1}\bfW],~\hat C[\Theta_{k-1}\bfW]$, where we used Lemma \ref{integral bound}, \eqref{F_R,l bound} and \eqref{G_R,l bound}. 
\item We estimate the terms containing $u$.
Due to Lemma \cite[Lemma 4.9]{KN23} we know that the map $\cX^c\to BC^\eta(\cD^{2\gamma}_W),~\xi \mapsto\Gamma(\xi,W)$ is Lipschitz continuous and we denote by $L_\Gamma$ its Lipschitz constant. Moreover in \cite[Lemma 4.11]{KN23} it is shown, that for all $k\in\Z^-$ we have
$$\norm{\Gamma(\xi,\bfW)[k-1,\cdot]}_{\cD_W^{2\gamma}} \le L_\Gamma\Xnorm{\xi}.$$
Consequently we get for all $k\in\Z^-$
\begin{align*}
     \norm{u(k-1,\cdot)}_{\cD_W^{2\gamma}}^{l+1} 
    &= \norm{\Gamma(\xi,\bfW)[k-1,\cdot]}_{\cD_W^{2\gamma}}^{l+1} \\
    &\le L_\Gamma^{l+1}\Xnorm{\xi}^{l+1}.
\end{align*}
\item We now estimate the term containing the $u-v$ term.~To this aim, we use that $u=\Gamma(\xi,\bfW)$ is the fixed point of $J(\bfW,\U,\xi)$ and hence the difference $u(k-1,\cdot)-v(k-1,\cdot)$ is a sum of $T_R^s(\bfW,\Gamma(\bfW,\xi))$ and $T_R^c(\bfW,\Gamma(\bfW,\xi))$. Then we apply Lemma \ref{integral bound} and the Lipschitz continuity of $\Gamma(\cdot,\bfW)$ to get 
\begin{align*}
    &\norm{u(k-1,\cdot)-v(k-1,\cdot)}_{\cD_W^{2\gamma}} \\
    &\le C[\Xnorm{A},R,\Theta_{k-1}\bfW] \norm{\Gamma(\xi,\bfW)[k-1,\cdot]}_{\cD_W^{2\gamma}}^l \le C[\Xnorm{A},R,\Theta_{k-1}\bfW]L_\Gamma^l\Xnorm{\xi}^l,
\end{align*}
where we used $\norm{F(U)}_\I\le C \Dnorm{U,U'}^l$ and $\Dnorm{G(U),G(U)'}\le C\Dnorm{U,U'}^l$ due to \eqref{leading:f} and \eqref{leading:g}.
\item Combining the steps 2--4, we obtain
\begin{align*}
    &\norm{T^s_R(\bfW,u(k-1,\cdot))
    - T^s_{R,l}(\bfW,v(k-1,\cdot))}_{\cD_W^{2\gamma}} \\
    &\le 2 \tilde C[\Theta_{k-1}\bfW]R(\Theta_{k-1}\bfW)L_\Gamma^{l+1}\Xnorm{\xi}^{l+1} \\
    &+ \hat C[\Theta_{k-1}\bfW]R(\Theta_{k-1}\bfW)C[\Xnorm{A},R,\Theta_{k-1}\bfW] L_\Gamma^{l}\Xnorm{\xi}^l\\
    &\le \bar C \Xnorm{\xi}^l.
\end{align*}
To obtain the previous estimate, one also chooses $R$ depending on $\norm{W}_\gamma$ and $\norm{\W}_{2\gamma}$ with $R$ similar to Lemma \ref{tilde J maps into V} and Lemma \ref{tilde J contraction}. 
\item Plugging in all the bounds above, we get
\begin{align*}
    &\norm{h(\xi,\bfW) - h^{app}(\xi,\bfW)}_{\cD_W^{2\gamma}} \\
    &\le \sum_{k=-\I}^0C_sM_se^{\beta k} \norm{T^s_R(\Theta_{k-1}\bfW,u(k-1,\cdot))
    - T^s_{R,l}(\Theta_{k-1}\bfW,v(k-1,\cdot))}_{\cD_W^{2\gamma}}\\
    &\le \sum_{k=-\I}^0C_sM_se^{\beta k} \bar C \Xnorm{\xi}^l = C[\Xnorm{A},F,G]\Xnorm{\xi}^l,
\end{align*}
which proves the statement.
\end{enumerate}
\qed
\end{proof}
\section{Estimates for the leading order terms of the coefficients in the rough path norm}\label{order:cutoff}
We assume \ref{ass:Appendix D}. Let $1\le l <m$ and assume for all $t\in[0,1]$ that
\begin{align}\label{f:order}
\Xnorm{F(U_t)-F_l(U_t,\dots,U_t)}\le C\Xnorm{U_t}^{l+1},
\end{align}
and 
\begin{align}\label{g:order}
\Xnorm{G(U_t)-G_l(U_t,\dots,U_t)}\le C\Xnorm{U_t}^{l+1}.
\end{align}
We want to show
$$\norm{F_R(U)[\cdot] - F_{R,l}(U,\dots,U)[\cdot]}_\I\le C\norm{U}_\I^{l+1},$$
and 
$$\Dnorm{G_R(U)[\cdot]-G_{R,l}(U,\dots,U)[\cdot],\left(G_R(U)[\cdot]-G_{R,l}(U,\dots,U)[\cdot]\right)'}\le C\Dnorm{U,U'}^{l+1}.$$
This is done in two steps. First we establish the bound for $F_R=F \circ \chi_R$, $G_R= G \circ \chi_R$ in the space norm and in a second step we compute the $\cD^{2\gamma}_W$-norm given the assumptions~\eqref{f:order} and~\eqref{g:order}.
\begin{lemma}\label{est:order}
Let $l\geq 1$, $F:\cX\to\cX\in C_b^1$ with $F(0)=\txtD F(0)=0$ and $G:\cX\to \cL(\cV,\cX)\in C_b^3$ with $G(0)=\txtD G(0)=\txtD^2G(0)=0$. Then for all $(U,U')\in\cD^{2\gamma}_W$ and $t\in[0,1]$
    $$\Xnorm{F(U_t)-F_l(U_t,\dots,U_t)}\le C\Xnorm{U_t}^{l+1}$$
    implies that
    $$\Xnorm{F_R(U)[t]-F_{R,l}(U,\dots,U)[t]}\le C\Xnorm{U_t}^{l+1}.$$
    Analogously
    $$\Xnorm{G(U_t)-G_l(U_t,\dots,U_t)}\le C\Xnorm{U_t}^{l+1}$$
    implies that
    $$\Xnorm{G_R(U)[t]-G_{R,l}(U,\dots,U)[t]}\le C\Xnorm{U_t}^{l+1}.$$
\end{lemma}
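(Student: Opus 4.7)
The proof is essentially a direct consequence of the pointwise hypothesis on $F$ (respectively $G$) combined with the contraction property of the path-dependent cut-off $\chi_R$: no rough-path subtlety enters at this stage, since the statement is a pointwise-in-$t$ bound in the norm $\Xnorm{\cdot}$.

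First I would unfold the definitions of $F_R$ and $F_{R,l}$. By construction $F_R(U)[t]=F(\chi_R(U)[t])$ and $F_{R,l}(U,\dots,U)[t]=F_l(\chi_R(U)[t],\dots,\chi_R(U)[t])$, so that
\[
\Xnorm{F_R(U)[t]-F_{R,l}(U,\dots,U)[t]}=\Xnorm{F(\chi_R(U)[t])-F_l(\chi_R(U)[t],\dots,\chi_R(U)[t])}.
\]
The hypothesis on $F$ is purely pointwise on $\cX$: for every $V\in\cX$, $\Xnorm{F(V)-F_l(V,\dots,V)}\le C\Xnorm{V}^{l+1}$. Applying it at $V=\chi_R(U)[t]\in\cX$ immediately gives the intermediate estimate $\Xnorm{F_R(U)[t]-F_{R,l}(U,\dots,U)[t]}\le C\Xnorm{\chi_R(U)[t]}^{l+1}$.

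It then remains to control $\Xnorm{\chi_R(U)[t]}$ by $\Xnorm{U_t}$. Recall from Definition~\ref{cut-off def} that $\chi_R(U)=R\chi(U/R)$, where $\chi$ is Lipschitz on $\cD^{2\gamma}_W$, equals the identity whenever $\Dnorm{U,U'}\le 1/2$ and vanishes whenever $\Dnorm{U,U'}\ge 1$. The natural realization of such a cut-off is multiplication by a scalar bump, $\chi_R(U)=\psi(\Dnorm{U,U'}/R)\,U$, with a smooth $\psi:\R_+\to[0,1]$ satisfying $\psi\equiv 1$ on $[0,1/2]$ and $\psi\equiv 0$ on $[1,\infty)$; in particular $\chi_R(U)[t]=\psi(\Dnorm{U,U'}/R)\,U_t$ for every $t\in[0,1]$, and therefore $\Xnorm{\chi_R(U)[t]}\le\Xnorm{U_t}$ uniformly in $t$. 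Substituting back produces the announced inequality for $F$.

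The argument for $G$ is verbatim the same, with $F,F_l$ replaced by $G,G_l$: applying the pointwise hypothesis at $V=\chi_R(U)[t]\in\cX$ and then invoking $\Xnorm{\chi_R(U)[t]}\le\Xnorm{U_t}$ yields $\Xnorm{G_R(U)[t]-G_{R,l}(U,\dots,U)[t]}\le C\Xnorm{U_t}^{l+1}$. There is no genuine obstacle here; the only point worth mentioning is that the composition $G\circ\chi_R$ must be interpreted by pointwise evaluation, $(G\circ\chi_R)(U)[t]=G(\chi_R(U)[t])$, so that the scalar Taylor-remainder bound on $G$ can be invoked at each $t$ independently. The lift of this pointwise estimate to the full controlled-rough-path norm $\Dnorm{\cdot}$ — which involves the Gubinelli derivative and the $2\gamma$-H\"older remainder of the composition — is precisely the content of the companion lemma that follows, and is where the actual technical work lies.
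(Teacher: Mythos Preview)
Your argument is correct and follows essentially the same route as the paper: unfold $F_R=F\circ\chi_R$, apply the pointwise Taylor-remainder hypothesis at the point $\chi_R(U)[t]$, and conclude. You are in fact slightly more explicit than the paper, which passes directly from $\Xnorm{F(\chi_R(U)[t])-F_l(\chi_R(U)[t],\dots,\chi_R(U)[t])}$ to $\Xnorm{F(U_t)-F_l(U_t,\dots,U_t)}$ without justification; your intermediate step $\le C\Xnorm{\chi_R(U)[t]}^{l+1}\le C\Xnorm{U_t}^{l+1}$, together with the scalar-bump realization of $\chi_R$, makes this transition transparent.
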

\begin{proof}
    We use the definition of the cut-off $\chi_R$ specified in Definition~\ref{cut-off def} and get 
    \begin{align*}
        \Xnorm{F_R(U)[t]-F_{R,l}(U,\dots,U)[t]}
        &= \Xnorm{F(\chi_R(U)[t])-F_{l}(\chi_R(U)[t],\dots,\chi_R(U)[t])} \\
        &\le \Xnorm{F(U_t)-F_{l}(U_t,\dots,U_t)} \\
        &\le C\Xnorm{U_t}^{l+1}.
    \end{align*}
    With the same argument we get the claim for $G$. \qed
\end{proof}
\begin{lemma}\label{est:order 2}
    Let $F:\cX\to\cX\in C_b^1$ with $F(0)=\txtD F(0)=0$ and $G:\cX\to \cL(\cV,\cX)\in C_b^3$ with $G(0)=\txtD G(0)=\txtD^2G(0)=0$. Then for all $(U,U')\in\cD^{2\gamma}_W$ and $t\in[0,1]$ we have
    $$\Xnorm{F_R(U)[t]-F_{R,l}(U,\dots,U)[t]}\le C\Xnorm{U_t}^{l+1},$$
    which implies that
    $$\norm{F_R(U)[\cdot] - F_{R,l}(U,\dots,U)[\cdot]}_\I\le C\norm{U}_\I^{l+1} \le C\Dnorm{U,U'}^{l+1},$$
    and analogously
    $$\Xnorm{G_R(U)[t]-G_{R,l}(U,\dots,U)[t]}\le C\Xnorm{U_t}^{l+1},$$
    which implies that
    $$\Dnorm{G_R(U)[\cdot]-G_{R,l}(U,\dots,U)[\cdot],\left(G_R(U)[\cdot]-G_{R,l}(U,\dots,U)[\cdot]\right)'}\le C\Dnorm{U,U'}^{l+1}.$$
\end{lemma}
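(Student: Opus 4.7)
The bound for $F$ is immediate from the given pointwise estimate: taking the supremum in $t\in[0,1]$ yields
\[
\norm{F_R(U)[\cdot]-F_{R,l}(U,\dots,U)[\cdot]}_\I\le C\norm{U}_\I^{l+1}\le C\Dnorm{U,U'}^{l+1},
\]
where the second inequality uses $\norm{U}_\I\le \Dnorm{U,U'}$, which is part of the equivalent $\cD^{2\gamma}_W$-norm~\eqref{Dnorm}.

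For the $G$ bound, set $H:=G-G_l(\cdot,\dots,\cdot)$. Under Assumption~\ref{ass:Appendix D}, $H\in C_b^3$, and the pointwise hypothesis $\Xnorm{H(u)}\le C\Xnorm{u}^{l+1}$ combined with the $C^3$ regularity of $H$ forces $H$ and its first $l$ derivatives to vanish at the origin, so in particular $\Xnorm{\txtD^k H(u)}\le C\Xnorm{u}^{l+1-k}$ for $k=0,1,2$ on bounded sets (via Taylor expansion around zero). Since
\[
G_R(U)[\cdot]-G_{R,l}(U,\dots,U)[\cdot]=H\bigl(\chi_R(U)[\cdot]\bigr),
\]
the chain rule gives the Gubinelli derivative $\txtD H(\chi_R(U)[\cdot])\,\chi_R(U)[\cdot]'$. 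The plan is to estimate each of the four contributions to $\Dnorm{H(\chi_R(U)),H(\chi_R(U))'}$ as in \cite[Lemma~7.3]{FritzHairer} (the same calculation underlying Lemma~\ref{G(phi) in D2gamma}), but substituting the quadratic factor coming from $G(0)=\txtD G(0)=0$ by the $(l+1)$-th order factor arising from the higher-order vanishing of $H$.

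Concretely, $\norm{H(\chi_R(U))}_\I\le C\norm{\chi_R(U)}_\I^{l+1}\le C\Dnorm{U,U'}^{l+1}$ from the hypothesis; the supremum of the Gubinelli derivative is controlled by $\norm{\txtD H(\chi_R(U))}_\I\norm{\chi_R(U)'}_\I\le C\Dnorm{U,U'}^{l+1}$ using the bound on $\txtD H$; the $\gamma$-Hölder seminorm of $\txtD H(\chi_R(U))\,\chi_R(U)'$ is treated by the product rule, using Lipschitz control on $\txtD H$ to extract an additional power of $\norm{\chi_R(U)}_\I^{l-1}$; and the $2\gamma$-remainder
\[
R^{H(\chi_R(U))}_{s,t}=H(\chi_R(U)_t)-H(\chi_R(U)_s)-\txtD H(\chi_R(U)_s)\chi_R(U)'_s W_{s,t}
\]
is treated by a second-order Taylor expansion of $H$ around $\chi_R(U)_s$. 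Powers of $(1+\norm{W}_\gamma)$ produced along the way are finite under the cut-off, as in Lemma~\ref{composition:cutoff}, and absorbed into the constant $C$.

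\textbf{Main obstacle.} The delicate bookkeeping is in the $2\gamma$-remainder: a single Taylor expansion of $H$ around $\chi_R(U)_s$ produces a factor $\txtD^2 H(\cdot)\,\chi_R(U)_{s,t}^{\otimes 2}$, which is $2\gamma$-regular but supplies only the power $\norm{\chi_R(U)}_\I^{l-1}$ from $\txtD^2 H$. To recover the target power $\Dnorm{U,U'}^{l+1}$, one must further expand $\txtD^2 H$ around zero (using the vanishing of its lower-order derivatives) and recombine with the bounds on $R^{\chi_R(U)}$ and $\norm{\chi_R(U)}_\gamma$. This higher-order analog of the Friz--Hairer composition estimate is the only step where sharpness in the power of $\Dnorm{U,U'}$ requires genuine care; once this is in place the four-term summation over the equivalent $\cD^{2\gamma}_W$-norm is routine.
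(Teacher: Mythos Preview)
Your approach is essentially the paper's: decompose the $\cD^{2\gamma}_W$-norm of $H(\chi_R(U))$ with $H:=G-G_l$ into its four constituents and bound each using $\Xnorm{\txtD^k H(u)}\le C\Xnorm{u}^{l+1-k}$ for $k=0,1,2$. The paper carries this out writing $G_R(U)-G_{R,l}(U,\dots,U)$ directly rather than $H(\chi_R(U))$, but the computation is the same.

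One clarification on what you flag as the main obstacle: the $2\gamma$-remainder is no harder than the other terms, and no ``further expansion of $\txtD^2H$ around zero'' is needed beyond the pointwise bound $\Xnorm{\txtD^2H(u)}\le C\Xnorm{u}^{l-1}$ you already record. A single second-order Taylor expansion of $H$ about $\chi_R(U)_s$ yields
\[
\Xnorm{R^{H(\chi_R(U))}_{s,t}}\le \tfrac12\norm{\txtD^2H(\chi_R(U))}_\I\Xnorm{\chi_R(U)_{s,t}}^2+\norm{\txtD H(\chi_R(U))}_\I\Xnorm{R^{\chi_R(U)}_{s,t}},
\]
and dividing by $|t-s|^{2\gamma}$ gives $C\norm{U}_\I^{l-1}\norm{U}_\gamma^2+C\norm{U}_\I^{l}\norm{R^U}_{2\gamma}$. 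The two ``missing'' powers in the first piece come from the squared increment via $\norm{U}_\gamma^2\le (1+\norm{W}_\gamma)^2\Dnorm{U,U'}^2$, not from sharpening the $\txtD^2H$ bound. This is exactly the paper's Step~4; the four-term sum then gives $C(1+\norm{W}_\gamma)\Dnorm{U,U'}^{l+1}$ with no further work.
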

\begin{proof}
    First we show the claim for $G$. Due to Lemma \ref{est:order} there exists a constant $C$ such that for all $t\in[0,1]$
    $$\Xnorm{G_R(U)[t]-G_{R,l}(U,\dots,U)[t]}\le C\Xnorm{U^{l+1}_t}\le C\Xnorm{U_t}^{l+1},$$
    and we show that
    $$\Dnorm{G_R(U)[\cdot]-G_{R,l}(U,\dots,U)[\cdot],\left(G_R(U)[\cdot]-G_{R,l}(U,\dots,U)[\cdot]\right)'}\le C\Dnorm{U,U'}^{l+1}.$$
    We recall that for $(Y,Y')\in\cD^{2\gamma}_W$
    $$\Dnorm{Y,Y'}=\norm{Y}_\I + \norm{Y'}_\I + \norm{Y'}_\gamma + \norm{R^Y}_{2\gamma}.$$
    Hence, we compute the controlled rough path norm of the difference $(G_R(U)-G_{R,l}(U,\dots,U),G_R(U)'-G_{R,l}(U,\dots,U)')$ in several steps. 
    \begin{enumerate}
        \item We start with the supremum norm 
        \begin{align*}
        \norm{G_R(U)[\cdot]-G_{R,l}(U,\dots,U)[\cdot]}_\I
        &= \sup_{t\in[0,1]}\Xnorm{G_R(U)[t]-G_{R,l}(U,\dots,U)[t]} \\
        &\le C \sup_{t\in[0,1]} \Xnorm{U_t}^{l+1}\\
        &\le C\norm{U}_\I^{l+1}.
        \end{align*}
        
        \item Next we bound the supremum norm of the Gubinelli derivative.
        First we note that for all $t\in[0,1]$ we have 
        $$\Xnorm{\txtD G_R(U)[t] - \txtD G_{R,l}(U,\dots,U)[t]}\le C\Xnorm{U_t}^l.$$
        This can be seen directly by differentiating the Taylor expansion of $G$. 
        Hence we get
        \begin{align*}
        \norm{G_R(U)[\cdot]'-G_{R,l}(U,\dots,U)[\cdot]'}_\I
        &= \sup_{t\in[0,1]}\Xnorm{G_R(U)[t]'-G_{R,l}(U,\dots,U)[t]'} \\
        &= \sup_{t\in[0,1]}\Xnorm{\left(\txtD G_R(U)[t]-\txtD G_{R,l}(U,\dots,U)[t]\right)U'} \\
        &\le C \sup_{t\in[0,1]} \Xnorm{U_t}^{l}\Xnorm{U'_t}\\
        &\le C\norm{U}_\I^{l}\norm{U'}_\I.
        \end{align*}

        \item For the $\gamma$-H\"older norm of $\left(G_R(U)[\cdot]-G_{R,l}(U,\dots,U)[\cdot]\right)'$ we obtain \begin{align*}
            &\norm{\left(G_R(U)[\cdot]-G_{R,l}(U,\dots,U)[\cdot]\right)'}_\gamma\\
            &= \norm{\left(\txtD G_R(U)[\cdot]-\txtD G_{R,l}(U,\dots,U)[\cdot]\right)U'}_\gamma \\
            &= \sup_{s,t\in[0,1]} \frac{\Xnorm{\left(\txtD G_R(U)[t]-\txtD G_{R,l}(U,\dots,U)[t]\right)U_t' - \left(\txtD G_R(U)[s]-\txtD G_{R,l}(U,\dots,U)[s]\right)U_s'}}{|t-s|^\gamma} \\
            &\le \sup_{s,t\in[0,1]} \frac{\norm{\left(\txtD G_R(U)[\cdot]-\txtD G_{R,l}(U,\dots,U)[\cdot]\right)}_\I\Xnorm{U'_t-U'_s}}{|t-s|^\gamma}\\
            &\le C\norm{U}^l_\I\sup_{s,t\in[0,1]} \frac{\Xnorm{U'_t-U'_s}}{|t-s|^\gamma}\\
            &\le C\norm{U}^l_\I\norm{U'}_\gamma.
        \end{align*}
        \item We finally treat the remainder. Therefore, we have
        \begin{align*}
            &\Xnorm{R^{G(U)}_{s,t}} \\
            &= \Big\| \left(G_R(U)[t]-G_{R,l}(U,\dots,U)[t]\right) - \left(G_R(U)[s]-G_{R,l}(U,\dots,U)[s]\right) \\
            &~~~~-\left(\txtD G_R(U)[s]-\txtD G_{R,l}(U,\dots,U)[s]\right) U_s'W_{s,t} \Big\|_\cX\\
            &= \Big\|\left(G_R(U)[t]-G_{R,l}(U,\dots,U)[t]\right) - \left(G_R(U)[s]-G_{R,l}(U,\dots,U)[s]\right) \\
            &~~~~-\left(\txtD G_R(U)[s]-\txtD G_{R,l}(U,\dots,U)[s]\right) (U_t-U_s) \\
            &~~~~+ \left(\txtD G_R(U)[s]-\txtD G_{R,l}(U,\dots,U)[s]\right)R_{s,t}^U\Big\|_\cX \\
            &\le \big\|\left(G_R(U)[t]-G_{R,l}(U,\dots,U)[t]\right) - \left(G_R(U)[s]-G_{R,l}(U,\dots,U)[s]\right)\\
            &~~~~+\left(\txtD G_R(U)[s]-\txtD G_{R,l}(U,\dots,U)[s]\right) (U_t-U_s)\big\|_\cX \\
            &~~~~+ \Xnorm{\left(\txtD G_R(U)[s]-\txtD G_{R,l}(U,\dots,U)[s]\right)R_{s,t}^U} \\
            &\le \frac{1}{2}\norm{\txtD^2G_R(U)[\cdot]-\txtD^2G_{R,l}(U,\dots,U)[\cdot]}_\I\Xnorm{U_t-U_s}^2\\& + \norm{\txtD G_R(U)[\cdot]-\txtD G_{R,l}(U,\dots,U)[\cdot]}_\I\Xnorm{R^U_{s,t}}\\
            &\le C \norm{U}_\I^{l-1}\Xnorm{U_t-U_s}^2
            + C\norm{U}^l_\I \Xnorm{R_{s,t}^U}.
        \end{align*}
        Hence dividing by $|t-s|^{2\gamma}$ and taking the supremum over $s,t\in[0,1]$ we get
        \begin{align*}             \norm{R^{G(U)}}_{2\gamma} \le C\norm{U}^{l-1}_\I\norm{U}^2_{\gamma} + C\norm{U}_\I^l\norm{R^U}_{2\gamma}
        \end{align*}     
        Combining the 4 terms above we get
        \begin{align*}
            &\Dnorm{G_R(U)-G_{R,l}(U,\dots,U),G_R(U)'-G_{R,l}(U,\dots,U)'} \\
            &\le C\left(\norm{U}^{l+1}_\I + \norm{U}^l_\I\norm{U'}_\I + \norm{U}_\I^l\norm{U'}_\gamma + \norm{U}_\I^{l-1}\norm{U}^2_\gamma + \norm{U}_\I^l\norm{R^U}_{2\gamma}\right) \\
            &\le C\left(\Dnorm{U,U'}^{l+1} + \Dnorm{U,U'}^{l+1} + \Dnorm{U,U'}^{l+1}+(1+\norm{W}_\gamma)\Dnorm{U,U'}^{l+1} + \Dnorm{U,U'}^{l+1}\right) \\
            &\le C~(1+\norm{W}_\gamma)\Dnorm{U,U'}^{l+1}.
        \end{align*}
        The claim for $F$ follows from the first step of the proof.
        \qed
    \end{enumerate}
\end{proof}



\end{document}